\newtheorem{df}{Definition}[section]
\newtheorem{thm}{Theorem}[section]
\newtheorem{lem}{Lemma}[section]
\newtheorem{pro}{Proposition}[section]
\newtheorem{rem}{Remark}[section]
\newcommand{\dis}{\displaystyle}
\newcommand{\R}{{\Bbb R}}
\newcommand{\N}{{\Bbb N}}
\newcommand{\pa}{\partial}
\newcommand{\loc}{\text{loc}}
\newcommand{\h}{\hspace{0.5mm}}
\newcommand{\hh}{\hspace{5mm}}
\def\cad#1{\csname #1\endcsname}
\title{Blow-up behavior of solutions to the heat equation with nonlinear boundary conditions}
\author{Junichi Harada \\[1mm]
{\small Department of Applied Physics, School of Science and Engineering,}\\
{\small Waseda University, 3-4-1 Okubo, Shinjuku-ku, Tokyo 169-8555, Japan}\\
{\small harada-j@aoni.waseda.jp}}
\date{}
\begin{document}
\maketitle

\begin{abstract}
We study the asymptotic behavior of blow-up solutions of
the heat equation with nonlinear boundary conditions.
In particular,
we classify the asymptotic behavior of blow-up solutions
and investigate the spacial singularity of their blow-up profiles.
\end{abstract}

\section{Introduction}

We study positive solutions of the heat equation
with nonlinear boundary conditions:
\[
\begin{cases}
\dis
 \pa_tu = \Delta u
& \text{in }\in\R_+^n\times(0,T),
\\ \dis
 \pa_{\nu}u=u^q
& \text{on }\in\pa\R_+^n\times(0,T),
\\ \dis
 u(x,0) = u_0(x)
& \text{in }\R_+^n,
\end{cases}
\eqno{({\rm P})}
\]
where $\R_+^n=\{x=(x',x_n)\in\R^{n-1}\times\R;x_n>0\}$, $\pa_{\nu}=-\pa/\pa x_n$,
$1<q<n/(n-2)$ if $n\geq3$ and
\[
 u_0\in C(\overline{\R_+^n})\cap L^\infty(\R_+^n),
\hspace{7.5mm}
 u_0\geq0.
\]
A solution $u(x,t)$ is said to blow up in a finite time,
if there exists $T>0$ such that
\[
 \limsup_{t\to T}\|u(t)\|_{L^{\infty}(\R_+^n)} = \infty.
\]
It is known that
a solution blows up in a finite time for any $q>1$,
if the initial data is positive and large enough.
In particular,
every positive solution blows up in a finite time
even if the initial data is small for the case $1<q\leq(n+1)/n$ (\cite{Deng-F-L}).
In this paper,
we study the asymptotic behavior of blow-up solutions of (P) near the blow-up time
and their blow-up profiles.
If a limit
\[
 U(x) = \lim_{t\to T}u(x,t) \in[0,\infty]
\]
exists for any $x\in\overline{\R_+^n}$,
we call $U(x)$ a blow-up profile of $u(x,t)$.
For a one dimensional case,
Fila-Quittner \cite{Fila-Q} (see also \cite{Deng-F-L}) constructed
a backward self-similar blow-up solution:
\[
 u_B(x,t)=(T-t)^{-1/2(q-1)}\varphi_0((T-t)^{-1/2}x),
\]
where $\varphi_0\in BC^2(\R_+)$ is a positive solution of
\begin{equation}\label{BS-eq}
\begin{cases} 
\dis
 \varphi_0'' - \frac{\xi}{2}\varphi_0' - \frac{\varphi_0}{2(q-1)} = 0
& \text{in } \xi\in\R_+,
\\ \dis
 \pa_{\nu}\varphi_0(0) = \varphi_0(0)^q.
\end{cases}
\end{equation}
This special blow-up solution $u_B(x,t)$ has the following blow-up profile:
\[
 U_B(x) = c_Bx^{-1/(q-1)}.
\]
As for general blow-up solutions for a multidimensional case with $1<q<n/(n-2)$ if $n\geq3$,
the author in \cite{Harada} proved that
if a positive $x_n$-axial symmetric solution blows up at the origin in a finite time,
then its blow-up profile $U(x)$ satisfies
\begin{equation}\label{Profile-eq}
 U(x) = c_B(1+o(1))(\cos\theta)^{-1/(q-1)}|x|^{-1/(q-1)}
\end{equation}
along $x_n=|x|\cos\theta$ for any fixed $\theta\in[0,\pi/2)$.
Unfortunately
the expression \eqref{Profile-eq} does not hold on $\pa\R_+^n$,
since $\cos\theta=0$ if $x\in\pa\R_+^n$.
The purpose of this paper is to derive a formula on $\pa\R_+^n$ corresponding to \eqref{Profile-eq}.

We recall known results concerning
the asymptotic behavior of blow-up solutions of the following semilinear heat equation
with $1<p<(n+2)/(n-2)$ if $n\geq3$:
\[
 \pa_tu = \Delta u +u^p
\hspace{5mm}\text{in }\R^n\times(0,T)\hspace{5mm}(u>0).
\eqno{({\rm F})}
\]
Let $u(x,t)$ be a solution of (F) which blows up at the origin
in a finite time $T>0$.
Giga-Kohn (\cite{Giga-K1,Giga-K2,Giga-K3}) derived the following asymptotic formula
for blow-up solutions:
\begin{equation}\label{2-eq}
 \lim_{t\to T}\sup_{|x|<\nu(T-t)^{1/2}}
 \left| (T-t)^{1/(p-1)}u(x,t)-(p-1)^{-1/(p-1)} \right|=0
\hspace{5mm}\text{for }\nu>0.
\end{equation}
This description gives the first approximation for blow-up solutions.
In their paper,
they introduced self-similar variables: $x=(T-t)^{1/2}y$, $s=-\log(T-t)$
and a rescaled function defined by
\[
 \varphi(y,s) = (T-t)^{1/(p-1)}u(x,t).
\]
The asymptotic formula \eqref{2-eq} is equivalent to
\[
 \lim_{s\to\infty}\varphi(y,s) =(p-1)^{-1/(p-1)}
\hspace{5mm}\text{in } C_{\loc}(\R^n).
\]
Filippas-Kohn \cite{Filippas-K} and Herrero-Vel\'azquez \cite{Herrero-V1}
independently studied the second approximation
for blow-up solutions for a one dimensional case (see also \cite{Bebernes-B,Filippas-L,Liu}).
Let
$H_k(y)$ and $\lambda_k$ ($k=0,1,\cdots$) be the $k$-th eigenfunction and eigenvalue of
\[
 -\left( H''-\frac{y}{2}H'+H \right)= \lambda H
\]
in $L_\rho^2(\R)=\{H\in L^2_{\loc}(\R);\int_{-\infty}^\infty H(y)^2e^{-y^2/4}dy<\infty\}$.
It is well known that
$H_k(y)$ is a $k$-th polynomial and $\lambda_k=(k-2)/2$.
Then
one of the following cases occurs (\cite{Filippas-K,Herrero-V2,Herrero-V1}):
\\[2mm]
\begin{tabular}{cl}
(i) &
there exists $\kappa_p>0$ such that\\[1mm]
& \hspace{30mm}
$\varphi(y,s) = (p-1)^{-1/(p-1)}-\kappa_pH_2(y)s^{-1} +o(s^{-1})$
\hspace{3mm}in $L_\rho^2(\R)$,\\[2mm]
(ii) &
there exist even integer $m\geq4$ and $\kappa\not=0$ such that\\[1mm]
& \hspace{29mm}
$\varphi(y,s) = (p-1)^{-1/(p-1)}+\kappa H_m(y)e^{-\lambda_ms}+o(e^{-\lambda_ms})$
\hspace{3mm}in $L_\rho^2(\R)$.
\end{tabular}
\\[2mm]
In particular,
the case (i) actually occurs
if the initial data is even and monotone decreasing on $(0,\infty)$.
As a further step of this second approximation formula,
the blow-up profile for solutions of (F) was derived by Herrero-Vel\'azquez 
(\cite{Herrero-V2,Herrero-V3,Herrero-V4,Herrero-V1,Velazquez1,Velazquez2,Velazquez3}).
For a one dimensional case,
Herrero-Vel\'azquez \cite{Herrero-V3} proved that
if (i) occurs,
then there exists $\kappa_p'>0$ such that the blow-up profile $U(x)$ satisfies
\[
 \lim_{x\to0}\left( \frac{x^2}{|\log x|} \right)^{1/(p-1)}U(x)=\kappa_p',
\]
on the other hand, if (ii) occurs,
then there exists $\kappa'>0$ such that the blow-up profile $U(x)$ satisfies
\[
 \lim_{x\to0}x^{m/(p-1)}U(x)=\kappa'.
\]

In this paper,
we investigate the blow-up profile for solutions of (P).
Following their arguments,
we introduce a rescaled solution of (P):
\[
  \varphi(y,s) = (T-t)^{1/2(q-1)}u((T-t)^{1/2}y,t),
\hspace{5mm}
 s=-\log(T-t).
\]
This rescaled solution $\varphi(y,s)$ solves
($s_T=-\log T$)
\[
\begin{cases}
\dis
 \pa_s\varphi = \Delta\varphi - \frac{y}{2}\cdot\nabla\varphi - \frac{\varphi}{2(q-1)}
& \text{in }\R_+^n\times(s_T,\infty),
\\ \dis
 \pa_{\nu}\varphi = \varphi^q
& \text{on }\in\pa\R_+^n\times(s_T,\infty).
\end{cases}
\]
Then
it is shown in \cite{Chlebik-F2} that
\[
 \lim_{s\to\infty}\varphi(y,s) = \varphi_0(y_n)
\hspace{5mm}\text{in } C_{\loc}(\overline{\R_+^n}),
\]
where $\varphi_0(y_n)$ is a unique bounded positive solution of \eqref{BS-eq}
(see also \cite{Chlebik-F,Hu-Y}).
This formula is equivalent to
\[
 u(x,t) = (T-t)^{1/2(q-1)}\left( \varphi_0((T-t)^{-1/2}x_n)+o(1) \right),
\hspace{5mm}|x|<\nu(T-t)^{1/2}
\]
for any $\nu>0$.
By virtue of the asymptotic formula of $\varphi_0(\xi)$:
$\varphi_0(\xi)\sim c_B\xi^{1/(q-1)}$ as $\xi\to\infty$,
we formally obtain
\[
 U(x) = c_B(1+o(1))x_n^{-1/(q-1)}.
\]
This formal argument was justified in \cite{Harada}.
As is stated above,
this formula has no meaning on $\pa\R_+^n$,
since the right-hand side diverges on $\pa\R_+^n$.
To obtain the blow-up profile on $\pa\R_+^n$,
we need to derive more precise large time behavior of a rescaled solution $\varphi(y,s)$,
that is the second approximation formula.
To do that
we introduce a new function
\[
 v(y,s)=\varphi(y,s)-\varphi_0(y_n)
\]
and study the large time behavior of $v(y,s)$.
Then $v(y,s)$ solves
\begin{equation}\label{PQ-eq}
\begin{cases}
\dis
 \pa_sv = \Delta v - \frac{y}{2}\cdot \nabla v -\frac{v}{2(q-1)}
& \text{in }\R_+^n\times(0,\infty),
\\ \dis
 \pa_{\nu}v = q\varphi_0^{q-1}v +O(v^2)
& \text{on }\pa\R_+^n\times(0,\infty).
\end{cases}
\end{equation}
A corresponding eigenvalue problem is given by
\begin{equation}\label{operator-eq}
\begin{cases}
\dis
 -\left( \Delta E-\frac{y}{2}\cdot\nabla E-\frac{E}{2(q-1)} \right) = \lambda E
& \text{in } \R_+^n,\\ \dis
 \pa_\nu E = q\varphi_0^{q-1}E
& \text{on } \pa\R_+^n.
\end{cases}
\end{equation}
Let
$E_i(y)\in L_\rho^2(\R_+^n)$ be the $i$-th eigenfunction of \eqref{operator-eq},
where $L_\rho^2(\R_+^n)$ is a weighted $L^2$-space defined by
\[
 L_\rho^2(\R_+^n) =
 \left\{
 v\in L_{\loc}^2(\R_+^n);\ \|v\|_{L_\rho^2(\R_+^n)}^2:=\int_{\R_+^n}|v(y)|^2e^{-|y|^2/4}dy<\infty
 \right\}.
\]
The first result in this paper
is a classification of the large time behavior of $v(y,s)$.

\begin{thm}\label{1-thm}
Let $u(x,t)$ be a positive $x_n$-axial symmetric solution of $({\rm P})$
which blows up at the origin and $v(y,s)$ be given above.
Then one of two cases occurs.
\\[2mm]
\begin{tabular}{cl}
{\rm(I)} &
there exists $\nu_q>0$ such that
\hspace{1.5mm}
$\dis \|v(s)-\nu_qs^{-1}{\cal E}\|_{L_\rho^2(\R_+^n)}=o(s^{-1})$,
\\[2mm]
{\rm(II)} &
there exist $c>0$ and $\gamma>0$ such that
\hspace{1.5mm}
$\dis\|v(s)\|_{L_\rho^2(\R_+^n)} \leq ce^{-\gamma s}$,
\end{tabular}
\\[2mm]
where ${\cal E}(y)$ is the eigenfunction of \eqref{operator-eq} with zero eigenvalue,
which is defined in \eqref{calE-eq}.
\end{thm}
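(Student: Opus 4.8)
The plan is to adapt the Giga--Kohn / Filippas--Kohn linearization machinery to the present boundary-value setting. First I would set up the spectral analysis of the linear operator in \eqref{operator-eq}. Writing $\mathcal{L}E = -(\Delta E - \tfrac{y}{2}\cdot\nabla E - \tfrac{E}{2(q-1)})$ with the Robin condition $\partial_\nu E = q\varphi_0^{q-1}E$ on $\partial\R_+^n$, one checks that $\mathcal{L}$ is self-adjoint in $L_\rho^2(\R_+^n)$, has compact resolvent, and hence a discrete spectrum $\lambda_0 \le \lambda_1 \le \cdots \to \infty$. The crucial structural fact is that $\mathcal{E}(y)$, constructed in \eqref{calE-eq}, is an eigenfunction with \emph{zero} eigenvalue, coming from the translation invariance of \eqref{BS-eq} (differentiating the profile equation, or exploiting the scaling symmetry, produces the zero mode), and that the restriction to the $x_n$-axial symmetric subspace kills all positive directions associated with translations in $y'$. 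So on the symmetric subspace the spectrum decomposes into: possibly a finite set of positive eigenvalues (unstable), the zero eigenvalue with eigenfunction $\mathcal{E}$ (neutral, one-dimensional), and a remainder with eigenvalues bounded below by some $\gamma_0 > 0$ (stable).

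Next I would invoke the global convergence $\varphi(y,s) \to \varphi_0(y_n)$ in $C_\mathrm{loc}$ from \cite{Chlebik-F2}, which gives $\|v(s)\|_{L_\rho^2} \to 0$; combined with parabolic smoothing this upgrades to decay of $v$ and its derivatives in stronger norms, so the $O(v^2)$ boundary term in \eqref{PQ-eq} is genuinely a controllable perturbation. Then comes the standard dynamical-systems dichotomy argument à la Filippas--Kohn. Decompose $v(s) = v_+(s) + v_0(s) + v_-(s)$ along the three spectral pieces, with $v_0(s) = a(s)\mathcal{E}$. Because $\|v(s)\| \to 0$, the unstable part $v_+$ cannot dominate (if it did, $\|v\|$ would grow), and a Lyapunov-type or projection argument forces $v_+$ to be negligible relative to $v_0 + v_-$ for large $s$; likewise $v_-$ decays at rate $e^{-\gamma_0 s}$ relative to $v_0$. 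This reduces the analysis to the scalar ODE for $a(s) = \langle v(s), \mathcal{E}\rangle / \|\mathcal{E}\|^2$. Projecting \eqref{PQ-eq} onto $\mathcal{E}$, the linear part contributes nothing (zero eigenvalue), so $a'(s)$ is governed by the leading-order contribution of the quadratic nonlinearity, which evaluates to $a'(s) = -c_0 a(s)^2 + (\text{higher order})$ for some constant $c_0$ whose sign must be determined from the boundary integral $\int_{\partial\R_+^n}(\text{quadratic part})\,\mathcal{E}\,e^{-|y'|^2/4}dy'$. If $c_0 \ne 0$ and $a(s)$ does not vanish identically, integrating gives $a(s) \sim (c_0 s)^{-1}$, i.e.\ case (I) with $\nu_q = 1/c_0$; if instead $a(s) \equiv 0$ (the center-manifold reduction is trivial), then $v = v_+ + v_-$ and the earlier estimates give exponential decay, i.e.\ case (II).

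The main obstacle, and the step deserving the most care, is the rigorous spectral decomposition together with the center-manifold reduction: one must prove that $\lambda = 0$ is a \emph{simple} eigenvalue on the symmetric subspace and that there is a genuine spectral gap below it, since the Robin weight $q\varphi_0^{q-1}$ is not a standard Schr\"odinger-type potential decaying at infinity but rather lives only on the boundary, so the usual one-dimensional Hermite-operator computations must be redone. Establishing the gap likely requires a careful study of the ODE satisfied by $\varphi_0$ and its asymptotics $\varphi_0(\xi) \sim c_B \xi^{-1/(q-1)}$ to control the associated Sturm--Liouville problem in the $y_n$ variable, then tensoring with the Hermite spectrum in $y'$. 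A secondary difficulty is justifying that the formal ODE $a' \approx -c_0 a^2$ is valid uniformly, i.e.\ controlling the error terms (the $v_-$ feedback and the $O(v^2)$ boundary nonlinearity away from its leading part) in the weighted norm — this is where the parabolic regularity bootstrapping and a careful choice of intermediate norms (e.g.\ weighted Sobolev spaces adapted to the boundary) are essential. Once $c_0 > 0$ is confirmed the sign bookkeeping that yields $\nu_q > 0$ in case (I) follows, and positivity of $u$ together with the known blow-up profile from \eqref{Profile-eq} rules out the degenerate possibilities.
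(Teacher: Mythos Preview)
Your plan is essentially the paper's: spectral decomposition of $-A$ into unstable/neutral/stable pieces, the Filippas--Kohn ODE dichotomy (Proposition~\ref{31-pro}), and then the scalar ODE $\dot a_0\approx \nu_q^{-1}a_0^2$ for the neutral coefficient (Proposition~\ref{33-pro}), with the error controlled by showing $\|v-v_{\sf n}\|_{L^4_\rho(\partial\R_+^n)}=o(\|v_{\sf n}\|_\rho)$ via the representation formula and heat-kernel bounds of Section~\ref{Linearbackward-sec} (Lemma~\ref{33-lem}).

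One correction to your heuristic: the zero eigenvalue does \emph{not} come from differentiating the profile ODE \eqref{BS-eq}. The eigenfunctions factor as $H_{\alpha'}(y')I_k(y_n)$ with eigenvalues $|\alpha'|/2+\kappa_k+m$, and the key spectral input is the exact value $\kappa_1=-(m+1)$ for the one-dimensional Robin problem \eqref{Ieigen-eq}; the paper obtains this by writing $I_1$ and $\varphi_0$ in terms of the confluent hypergeometric function $U(a,1/2,\xi^2/4)$ and matching boundary data via a Gamma-function identity (Lemma~\ref{22-lem}). Once $\kappa_1=-(m+1)$ is known, the zero mode is $H_2\otimes I_1$ (since $1-(m+1)+m=0$), and axial symmetry collapses the kernel to the single function $\mathcal{E}$ in \eqref{calE-eq}. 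One can alternatively recognize $I_1\propto m\varphi_0+\tfrac{\xi}{2}\varphi_0'$ as the time-translation mode of the self-similar solution, which gives the same $\kappa_1$ without special functions, but the tangential $H_2$ factor has no symmetry interpretation --- it is the genuine neutral direction, exactly as in the Fujita case. The spectral gap ($\kappa_2>0$, also in Lemma~\ref{22-lem}) is what makes the stable projection decay strictly faster, and this too is proved by the explicit $U$-function representation rather than an abstract Sturm--Liouville argument.
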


The second result gives a partial answer to our motivation.

\begin{thm}\label{2-thm}
Let $u(x,t)$ and $v(y,s)$ be as in Theorem {\rm\ref{1-thm}}.
Additionally we assume that
\[
 x'\cdot \nabla'u_0 \leq 0.
\]
If the case $({\rm I})$ in Theorem {\rm\ref{1-thm}} occurs,
then the blow-up profile $U(x)\in C(\overline{\R_+^n}\setminus\{0\})$ exists
and there exist positive constants $c_1<c_2$ such that
\[
 c_1\left( \frac{|\log|x'||}{|x'|^2} \right)^{1/2(q-1)}
\leq U(|x'|,0) \leq
 c_2\left( \frac{|\log|x'||}{|x'|^2} \right)^{1/2(q-1)}
\hspace{3mm}\mathrm{for}\ |x'|<1.
\]
\end{thm}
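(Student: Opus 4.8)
The plan is to run a matched asymptotic analysis for the rescaled solution $\varphi$, with three regimes: an inner one where $|y|$ is bounded, an intermediate one where $|y'|\sim s^{1/2}$ while $y_n$ stays bounded, and an outer one where $|x|$ is bounded; the sharp input is the inner expansion supplied by Theorem~\ref{1-thm}(I). Two tools are used throughout: the comparison principle, legitimate because $\varphi\mapsto\varphi^q$ is nondecreasing, and the monotonicity forced by the hypothesis — since $u$ is $x_n$-axial symmetric and $x'\cdot\nabla'u_0\le0$, a standard moving-plane / maximum-principle argument gives $x'\cdot\nabla'u(\cdot,t)\le0$ for all $t$, so $u(\cdot,t)$ and $\varphi(\cdot,s)$ are radially nonincreasing in $x'$. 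Combined with Theorem~\ref{1-thm}(I) this forces the blow-up set to be exactly $\{0\}$, and interior/boundary parabolic regularity away from the origin gives $u(x,t)\to U(x)$ in $C^2_{\loc}(\overline{\R_+^n}\setminus\{0\})$, hence $U\in C(\overline{\R_+^n}\setminus\{0\})$; it remains to estimate $U$ on $\pa\R_+^n$ near $0$.

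\emph{Inner regime and two boundary estimates.} From Theorem~\ref{1-thm}(I), Schauder estimates for \eqref{PQ-eq} and the spectral gap of \eqref{operator-eq} at its zero eigenvalue, one propagates the expansion to a parabolic region: $\varphi(y,s)=\varphi_0(y_n)+\nu_q s^{-1}{\cal E}(y)+o(s^{-1})$ uniformly on $|y|\le\varepsilon s^{1/2}$, for each small $\varepsilon>0$. One records (cf.\ \eqref{calE-eq}) that ${\cal E}$ separates variables, ${\cal E}(y)=(2(n-1)-|y'|^2)\,{\cal W}(y_n)$, where ${\cal W}(\xi)=\xi\varphi_0'(\xi)+\tfrac1{q-1}\varphi_0(\xi)$ solves ${\cal W}''-\tfrac\xi2{\cal W}'-(1+\tfrac1{2(q-1)}){\cal W}=0$ with ${\cal W}'(0)+q\varphi_0(0)^{q-1}{\cal W}(0)=0$; a Sturm-type argument gives ${\cal W}>0$, ${\cal W}'<0$, ${\cal W}(0)=\varphi_0(0)/(q-1)$, ${\cal W}(\xi)\to0$, so ${\cal E}(y',0)\sim-{\cal W}(0)|y'|^2$ as $|y'|\to\infty$. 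This yields two estimates on $\pa\R_+^n$: (a) a plateau bound $\varphi(y',0,s)\ge c_*>0$ for $|y'|\le K_* s^{1/2}$ and $s$ large — choose $K_*$ so small that $\nu_q{\cal W}(0)K_*^2<\varphi_0(0)/2$, evaluate the inner expansion at $|y'|=K_* s^{1/2}$ and use the radial monotonicity; (b) an a priori bound $\varphi(y',0,s)\le\varphi(0,0,s)\le C_0<\infty$ for all $y',s$, from the monotonicity, the convergence $\varphi(0,0,s)\to\varphi_0(0)$, and the type-I estimate $u\le C(T-t)^{-1/2(q-1)}$.

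\emph{Lower bound.} Represent $u$ by the Duhamel formula with the Neumann heat kernel $\Gamma_N$ of $\R_+^n$; on $\pa\R_+^n$ one has $\Gamma_N((x',0),(y',0),\tau)=2(4\pi\tau)^{-n/2}e^{-|x'-y'|^2/4\tau}$. Discarding the nonnegative initial term,
\[
 U(x',0)\ \ge\ \int_0^T\!\!\int_{\R^{n-1}}\Gamma_N\big((x',0),(y',0),T-\sigma\big)\,u(y',0,\sigma)^q\,dy'\,d\sigma .
\]
By the plateau bound, $u(y',0,\sigma)\ge c_*(T-\sigma)^{-1/2(q-1)}$ whenever $|y'|\le K_*\big((T-\sigma)|\log(T-\sigma)|\big)^{1/2}$; setting $\tau=T-\sigma$, for $\tau$ small this region contains a ball of radius $\asymp\tau^{1/2}$ about $x'$ precisely when $|x'|\lesssim(\tau|\log\tau|)^{1/2}$, i.e.\ for $\tau\ge\tau_*$ with $\tau_*\asymp|x'|^2/|\log|x'||$, and on that ball the $\Gamma_N$-mass is $\gtrsim\tau^{-1/2}$. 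Since $\tfrac12+\tfrac q{2(q-1)}=\tfrac{2q-1}{2(q-1)}>1$,
\[
 U(x',0)\ \gtrsim\ \int_{\tau_*}^{T_*}\tau^{-1/2}\,\tau^{-q/2(q-1)}\,d\tau\ \asymp\ \tau_*^{-1/2(q-1)}\ \asymp\ \Big(\frac{|\log|x'||}{|x'|^2}\Big)^{1/2(q-1)},
\]
which is the lower bound.

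\emph{Upper bound and the main obstacle.} The matching upper bound requires the far-field decay $\varphi(y',0,s)\le C(1+|y'|^2/s)^{-1/2(q-1)}$, uniformly in $s$; granting it, $u(y',0,\sigma)^q\le C(T-\sigma)^{-q/2(q-1)}(1+|y'|^2/((T-\sigma)|\log(T-\sigma)|))^{-q/2(q-1)}$, and splitting the $\tau$-integral at $\tau_*$, the part $\tau\ge\tau_*$ (kernel concentrated on the plateau) contributes $\asymp\int_{\tau_*}^{T_*}\tau^{-(2q-1)/2(q-1)}d\tau\asymp\tau_*^{-1/2(q-1)}$, while the part $\tau<\tau_*$ (where $|y'|\sim|x'|$ lies past the plateau and $u(y',0,\sigma)\lesssim|\log\tau|^{1/2(q-1)}|x'|^{-1/(q-1)}$) contributes $\asymp|x'|^{-q/(q-1)}\int_0^{\tau_*}\tau^{-1/2}|\log\tau|^{q/2(q-1)}d\tau\asymp|x'|^{-1/(q-1)}|\log|x'||^{1/2(q-1)}$; both are of order $(|\log|x'||/|x'|^2)^{1/2(q-1)}$, so $U(x',0)\lesssim(|\log|x'||/|x'|^2)^{1/2(q-1)}$. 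The decisive, hardest step is thus the far-field decay for $\varphi$ on the boundary: I would prove it by constructing a supersolution of the $\varphi$-equation with profile comparable to $(1+|y'|^2/s)^{-1/2(q-1)}$ — the temporal factor must grow like $s$, and the barrier must be corrected near $y_n=0$ so that $-\pa_{y_n}\overline\varphi\ge\overline\varphi^q$ there — verifying the differential inequality via the ODEs for $\varphi_0$ and ${\cal W}$, and using Theorem~\ref{1-thm}(I), the monotonicity and the type-I estimate to dominate $\varphi$ at an initial time; the comparison principle then concludes (alternatively one iterates on the Duhamel formula). It is exactly here that the logarithmic factor is produced — by the competition between the $s^{-1}$ rate of case (I) and the spreading at scale $s^{1/2}$ — and it is the reason the conclusion is a two-sided bound rather than a sharp equivalence.
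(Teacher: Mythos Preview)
Your route is genuinely different from the paper's. The paper never writes a Duhamel formula for $u$; instead it (i) proves that $k_1B\le\varphi(y',0,s)\le k_2B$ on the single sphere $|y'|=\theta\sqrt{s}$ for some fixed $\theta,k_1,k_2\in(0,1)$ (its Proposition~4.1, the output of a full section of heat-kernel estimates for the linearized Robin problem together with the Herrero--Vel\'azquez trick $e^{s-s_0}=s$), and then (ii) introduces a second rescaling $v_s(x,t)=e^{-ms}u(e^{-s/2}x+\theta\sqrt{s}e^{-s/2}\vec e,\,T+(t-1)e^{-s})$ and shows $c_-\le v_s(0,t)\le c_+$ uniformly in $s$ and $t\in(t_1,1)$; taking $t=1$ and inverting $\sqrt{s}\,e^{-s/2}=r/\theta$ gives the theorem. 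For the upper bound of $v_s$ the key point is that $\varphi<\varphi_0(0)$ strictly at $|y'|=\theta\sqrt{s}$, so the shifted rescaling $w_s(y,\tau)=\varphi(y+\theta\sqrt{s}e^{\tau/2}\vec e,\tau+s)$ starts below the one-dimensional steady state and is driven to zero at rate $e^{-m\tau}$ by energy/spectral arguments (Lemmas~5.1--5.7, in the spirit of Merle--Zaag). What this buys is that no global spatial profile of $\varphi$ is ever required---only its size at one moving radius. Your Duhamel lower bound is correct and elegant once the plateau estimate $\varphi(y',0,s)\ge c_*$ for $|y'|\le K_*s^{1/2}$ is in hand; but note that extending the $L^2_\rho$ expansion of Theorem~\ref{1-thm}(I) to a pointwise bound out to $|y'|\sim s^{1/2}$ is not a consequence of Schauder estimates and a spectral gap---it is exactly the content of the paper's Lemma~4.3, proved via the linearized representation formula, $f(v)\ge0$, and global kernel bounds.

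The real gap is your upper bound. The far-field decay $\varphi(y',0,s)\le C(1+|y'|^2/s)^{-1/2(q-1)}$ is precisely the hard statement, and your supersolution sketch does not close. To run comparison you must dominate $\varphi(\cdot,s_0)$ for \emph{all} $y$, but outside $|y'|\lesssim s_0^{1/2}$ the only a~priori control is the type-I bound $\varphi\le C_0$; any admissible barrier must therefore start $\ge C_0$ there, and you give no mechanism by which it subsequently acquires the claimed decay. Moreover the nonlinear Robin condition $-\partial_{y_n}\overline\varphi\ge\overline\varphi^{\,q}$ does not linearize where $\overline\varphi$ is small, so a barrier assembled from $\varphi_0$ and ${\cal W}$ alone will not automatically satisfy it; one would need a genuinely new construction. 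The paper avoids all of this by never asking for a spatial profile: once $\varphi<B$ strictly at $|y'|=\theta\sqrt{s}$, the $w_s$-dynamics do the rest. If you want to keep the Duhamel route for the upper bound you would need an independent argument of comparable depth for the far-field decay; as written, that step is missing.
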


\begin{rem}
The author in {\rm\cite{Harada}} proved that
if the initial data $u_0$ is $x_n$-axial symmetric and
satisfies
\[
 x'\cdot \nabla'u_0 \leq 0\hspace{2mm}(\nabla' u_0\not\equiv0),\hspace{7.5mm}\pa_nu_0\leq0,
\eqno{\rm{(a)}}
\]
then $v(y,s)$ actually behaves as the case $({\rm I})$ in Theorem {\rm\ref{1-thm}}.
As for the case {\rm(F)},
if the initial data is radially symmetric and monotone decreasing,
then those properties are preserved for $t>0$.
Therefore
the solutions has a unique local maximum point at the origin for $t>0$
and no local minimum points for $t>0$.
From the view point of this geometry of the solution,
it is easily proved that
the rescaled solution $\varphi(y,s)$ satisfies the asymptotic formula {\rm(i)}.
However
this kind of observation can not be applicable to solutions of {\rm(P)},
since solutions treated here are not radially symmetric.
\end{rem}

We explain our strategy of the proof for Theorem \ref{1-thm} and Theorem \ref{2-thm}.
Our argument mainly consists of three steps.
In the first part,
we regard \eqref{PQ-eq} as a dynamical system on the Hilbert space $L_\rho^2(\R_+^n)$
and study the large time behavior of $v(y,s)$ in $L_\rho^2(\R_+^n)$.
To do that we expand $v(y,s)$ by using eigenfunctions of \eqref{operator-eq} as follows:
\[
 v(s) = \sum_{i=1}^{\infty} a_i(s)E_i
\hspace{5mm}\text{in } L_\rho^2(\R_+^n).
\]
Here we determine the large time behavior of the coefficients $a_i(s)$.
Each coefficient $a_i(s)$ satisfies some ordinary differential equations.
We will see that
these ordinary differential equations are finally reduced to
well understood ordinary differential inequalities discussed in \cite{Filippas-K},
and this proves Theorem \ref{1-thm}.

In the second step,
we assume that $v(y,s)$ behaves as the case (I) in Theorem \ref{1-thm}
and investigate the large time behavior of $v(y,s)$ along $|y|\sim s^{1/2}$ on $\pa\R_+^n$.
This step is crucial and much harder than the first step.
The first step provides the precise decay rate of $v(y,s)$ on $L_\rho^2(\R_+^n)$,
however the convergence is taken in a local sense.
Hence
in the original valuable $(x,t)$, the following asymptotic formula:
\[
\begin{array}{lll}
 u(x,t)
\hspace{-2mm}&=&\hspace{-2mm}
 (T-t)^{-1/2(q-1)}\varphi((T-t)^{-1/2}x,-\log(T-t))
\\[2mm]
\hspace{-2mm}&=&\hspace{-2mm}
 (T-t)^{-1/2(q-1)}\left( \varphi_0((T-t)^{-1/2}x_n)+o(1) \right)
\end{array}
\]
holds only for a small region $|x|\leq\nu(T-t)^{1/2}$ for any $\nu>0$.
Arguments given in \cite{Herrero-V3}, \cite{Herrero-V1}
revealed that
the convergence in this small region is not sufficient to extract information
of the blow-up profile of solutions of (F).
In particular for a one dimensional case,
they proved that if an initial data is nonnegative, symmetric and nonincreasing,
then the solution $u(x,t)$ satisfies
\[
 \lim_{t\to T}(T-t)^{1/(p-1)}u(z(T-t)^{1/2}|\log(T-t)|^{1/2},t) =
 \left( (p-1)+K_pz^2 \right)^{-1/(p-1)}
\]
in $C_{\loc}(\R)$.
Unlike this case,
we do not know whether this limit exists for solutions of (P).
However
we will see that
there exist $\theta>0$ and $0<A_-<A_+$ such that
\[
 A_- \leq (T-t)^{1/2(q-1)}u(z(T-t)^{1/2}|\log(T-t)|^{1/2},t)|_{\pa\R_+^n} \leq A_+
\hspace{5mm}\text{for }|z|=\theta.
\]

Finally in the third step,
we determine the singularity of the blow-up profile.
To do that
we introduce another rescaled function:
\[
 v_s(x,t) = e^{-ms}u(e^{-s/2}x+\theta\sqrt{s}e^{-s/2}\vec{e},T+(t-1)e^{-s}),
\]
where $s\gg1$ is a parameter and $\vec{e}=(1,0.\cdots,0)$.
We establish a uniform pointwise estimate of $v_s(x,t)$ with respect to $s\gg1$ and $t\in(0,1)$.
As a consequence of this uniform pointwise estimate,
we can derive Theorem \ref{2-thm}.

The rest of this paper is organized as follows.
In Section \ref{Preliminary-sec},
we provide eigenfunctions and eigenvalues of \eqref{operator-eq} 
and establish the global heat kernel estimates of the linearized equation.
Moreover
we give a representation formula of solutions of the linearized equation.
Section \ref{DynamicalSystemApproach-sec} is devoted to study
the large time behavior of $v(y,s)$ in $L_\rho^2(\R_+^n)$.  
In particular,
an exact decay rate of $v(y,s)$ in $L_\rho^2(\R_+^n)$ is derived.
In Section \ref{LongRange-sec},
we obtain a refined asymptotic formula of $v(y,s)$.
Then
the large time behavior of $v(y,s)$ along $|y|\sim s^{1/2}$ on $\pa\R_+^n$ is discussed,
which is a crucial part in this paper.
Finally in Section \ref{Spacial-sec},
we study the blow-up profile by applying the arguments given in \cite{Herrero-V3,Merle-Z}.
In Appendix,
some inequalities and some properties of the linearized operator are discussed.

\section{Preliminary}
\label{Preliminary-sec}

Throughout this paper,
for simplicity of notations,
we put $\N_0=\N\cup\{0\}$ and
\[
 m=1/2(q-1).
\]
Let $u(x,t)$ be a solution of (P) which blows up at the origin in a finite time  $T>0$.
To study the blow-up behavior of $u(x,t)$,
we put $s_T=-\log T$ and
\[
 \varphi(y,s) = e^{-ms}u(e^{-s/2}y,T-e^{-s}).
\]
Then $\varphi(y,s)$ satisfies
\begin{equation}\label{(B)-eq}
\begin{cases}
\dis
 \pa_s\varphi = \Delta\varphi - \frac{y}{2}\cdot\nabla\varphi - m\varphi
& \text{in }\R_+^n\times(s_T,\infty),
\\ \dis
 \pa_{\nu}\varphi = \varphi^q
& \text{on }\pa\R_+^n\times(s_T,\infty),
\\ \dis
 \varphi(y,s_T)=T^mu_0(T^{1/2}y)
& \text{in }\R_+^n.
\end{cases}
\end{equation}
It is known that
if $1<q<n/(n-2)$,
\eqref{(B)-eq} admits the unique bounded positive stationary solution $\varphi_0(y_n)$
depending only on $y_n$-variable,
that is a positive solution of
\[
\begin{cases}
\dis
 \varphi_0'' - \frac{\xi}{2}\varphi_0'-m\varphi_0 = 0
& \text{in }\xi\in\R_+,
\\ \dis
 \pa_{\nu}\varphi_0(0) = \varphi_0(0)^q.
\end{cases}
\]
The existence and the uniqueness of this equation are shown
in Lemma 3.1 of \cite{Fila-Q} and in Theorem 3.1 of \cite{Chlebik-F2}, respectively.
For the rest of this paper,
we put
\[
 B = \varphi_0(0).
\]
Here
we recall the fact concerning the asymptotic behavior of $\varphi(y,s)$.

\begin{thm}[\cite{Chlebik-F}, Theorem 3.2 in \cite{Chlebik-F2}]
\label{Ua-thm}
Let $\varphi(y,s)$ be defined above.
Then
$\varphi(y,s)$ is uniformly bounded on $\R_+^n\times(s_T,\infty)$
and
$\varphi(y,s)$ converges to $\varphi_0(y_n)$ as $s\to\infty$
uniformly on any compact set in $\overline{\R_+^n}$.
\end{thm}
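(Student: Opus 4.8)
\medskip
\noindent The plan is to prove the statement in two stages: first the type~I bound $\sup_{s>s_T}\|\varphi(s)\|_{L^\infty(\R_+^n)}<\infty$, which is equivalent to $\|u(t)\|_{L^\infty(\R_+^n)}\le C(T-t)^{-m}$, and then, granting this bound, the convergence $\varphi(\cdot,s)\to\varphi_0(y_n)$ by a Lyapunov--functional / $\omega$-limit argument combined with the classification of bounded nonnegative stationary solutions of the rescaled problem \eqref{(B)-eq}.

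For the first stage I would argue by contradiction via parabolic rescaling. From the energy identity used below one first obtains a uniform-in-$s$ local bound $\int_{s}^{s+1}\!\int_{|y|<R}\varphi^{q+1}e^{-|y|^2/4}\,dy\,d\sigma\le C(R)$; in particular the blow-up of $u$ must concentrate near $\pa\R_+^n$. If $M_j:=\|\varphi(s_j)\|_{L^\infty(\R_+^n)}\to\infty$ is attained (up to an arbitrarily small shift) at boundary points $y_j$, put $\lambda_j=M_j^{-(q-1)}$ and $w_j(z,\tau)=M_j^{-1}\varphi(y_j+\lambda_j z,\,s_j+\lambda_j^2\tau)$, which is the self-similar rescaling adapted to the boundary flux. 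Since the drift term $-\tfrac{y}{2}\cdot\nabla$ and the zero-order term $-m\,\cdot$ scale out as $\lambda_j\to0$, a subsequence of $w_j$ converges in $C^{2,1}_{\loc}$ to a bounded nonnegative global solution $w$ of $\pa_\tau w=\Delta w$ on a half-space with $\pa_\nu w=w^q$ on its boundary, normalized by $\sup w=w(0,0)=1$. For $1<q<n/(n-2)$ such a $w$ must vanish identically by a parabolic Liouville-type theorem for the half-space flux problem, contradicting $w(0,0)=1$; hence $\varphi$ is globally bounded. This step is the technical core and the reason the statement is quoted from \cite{Chlebik-F,Chlebik-F2} rather than reproved here.

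For the second stage, once $\varphi$ is uniformly bounded, parabolic interior and boundary Schauder estimates give uniform $C^{2+\alpha}$ bounds on compact subsets of $\overline{\R_+^n}$ for $s\ge s_T+1$, so the orbit $\{\varphi(\cdot,s)\}_{s\ge s_T+1}$ is relatively compact in $C^2_{\loc}(\overline{\R_+^n})$. The functional
\[
 E[\varphi]=\frac12\int_{\R_+^n}\bigl(|\nabla\varphi|^2+m\varphi^2\bigr)e^{-|y|^2/4}\,dy-\frac{1}{q+1}\int_{\pa\R_+^n}\varphi^{q+1}e^{-|y|^2/4}\,dy'
\]
is bounded below along the (now bounded) orbit, and multiplying the equation in \eqref{(B)-eq} by $(\pa_s\varphi)\,e^{-|y|^2/4}$ and integrating (using $\nabla e^{-|y|^2/4}=-\tfrac{y}{2}e^{-|y|^2/4}$ and the boundary condition) gives $\frac{d}{ds}E[\varphi(s)]=-\int_{\R_+^n}(\pa_s\varphi)^2e^{-|y|^2/4}\,dy\le0$; hence $\int_{s_T}^\infty\!\int_{\R_+^n}(\pa_s\varphi)^2e^{-|y|^2/4}\,dy\,d\sigma<\infty$, and every element of the nonempty, compact, connected, invariant $\omega$-limit set of $\varphi$ is a bounded nonnegative solution of the stationary problem \eqref{BS-eq} (extended to $\R_+^n$). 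By the uniqueness for \eqref{BS-eq} (\cite{Fila-Q,Chlebik-F2}) together with the strong maximum principle and Hopf's lemma on $\pa\R_+^n$, the only such solutions are $0$ and $\varphi_0(y_n)$. Finally, since $u$ blows up at the origin, $\varphi$ cannot converge to $0$: a comparison argument gives $\varphi(0,s)\ge c>0$ uniformly in $s$, for otherwise $u$ would stay bounded in a neighborhood of $x=0$. Therefore $\omega(\varphi)=\{\varphi_0\}$ and $\varphi(\cdot,s)\to\varphi_0(y_n)$ in $C_{\loc}(\overline{\R_+^n})$, which is the assertion.

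I expect the main obstacle to be the type~I estimate of the first stage: both the uniform-in-$s$ local energy control up to $\pa\R_+^n$ and, above all, the Liouville theorem for $\pa_\tau w=\Delta w$ with $\pa_\nu w=w^q$ on a half-space genuinely use the subcriticality $1<q<n/(n-2)$, and without this bound the $\omega$-limit analysis of the second stage is unavailable. By comparison the convergence argument is essentially standard, the only delicate point being the exclusion of the zero equilibrium, which relies on $x=0$ being a blow-up point.
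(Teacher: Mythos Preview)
The paper does not prove this theorem at all: it is stated with a citation to \cite{Chlebik-F} and Theorem~3.2 of \cite{Chlebik-F2} and used as a black box, so there is no ``paper's own proof'' to compare against. Your sketch is a reasonable outline of how the cited results are established, and the two-stage structure (type~I blow-up rate, then Lyapunov/$\omega$-limit argument with classification of bounded stationary states) is exactly the standard scheme.

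One remark on the first stage: the Liouville-via-rescaling route you describe is a more modern strategy (in the spirit of Pol\'a\v{c}ik--Quittner--Souplet) than what the cited papers actually do. The original proofs of the rate estimate in \cite{Hu-Y} and \cite{Chlebik-F} proceed through Giga--Kohn type energy and Pohozaev-type identities for the rescaled solution, bounding weighted $L^{q+1}$-norms on the boundary and then bootstrapping to $L^\infty$; they do not invoke a half-space parabolic Liouville theorem for $\partial_\nu w=w^q$, which at the time was not available in the generality you assume. Your approach would work if such a Liouville theorem is in hand, but you should be aware that this is not how the quoted references argue, and that the Liouville statement itself is nontrivial and requires the subcriticality $q<n/(n-2)$ in an essential way. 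The second stage (monotone energy, compactness, uniqueness of the nonzero bounded stationary solution from \cite{Fila-Q,Chlebik-F2}, and exclusion of $0$ because the origin is a blow-up point) matches the argument in \cite{Chlebik-F2} closely and is correct as written.
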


A boundedness of $\varphi(y,s)$ is the first step to study the large time behavior of $\varphi(y,s)$.
Once the boundedness is assured,
from the energy identity,
we immediately obtain $\varphi_s(y,s)\to0$ as $s\to\infty$.
Furthermore
as a consequence of a boundedness of $\varphi(y,s)$,
a boundedness of spacial and time derivatives of $\varphi(y,s)$ are also derived.

\begin{lem}\label{21-lem}
Let $\varphi(y,s)$ be as in Theorem {\rm\ref{Ua-thm}}.
Then
for any $\delta>0$
there exists $c_\delta>0$ such that
\[
 \sum_{|\alpha|\leq2}
 |D^{\alpha}\varphi(y,s)| + (1+|y|)^{-1/2}|\pa_s\varphi(y,s)| \leq c_\delta
\hh\mathrm{for}\ (y,s)\in\R_+^n\times(s_T+\delta,\infty),
\]
where $D$ represents the spacial derivatives.
\end{lem}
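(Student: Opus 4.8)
The plan is to derive every bound from the uniform $L^\infty$ bound on $\varphi$ supplied by Theorem~\ref{Ua-thm}, by applying interior and boundary parabolic regularity at the natural self-similar scale, and then to read off the bound on $\pa_s\varphi$ from the rescaled equation \eqref{(B)-eq}. The only genuinely delicate point will be the weight $(1+|y|)^{1/2}$ in front of $\pa_s\varphi$.

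\emph{From boundedness of $\varphi$ to derivative bounds for $u$.} Theorem~\ref{Ua-thm} gives $M>0$ with $0\le\varphi\le M$, hence $0\le u(x,t)\le M(T-t)^{-m}$ on $\overline{\R_+^n}\times(0,T)$. Fix $(y_0,s_0)$ with $s_0\ge s_T+\delta$, set $t_0=T-e^{-s_0}$, $x_0=e^{-s_0/2}y_0$, $\lambda=e^{-s_0/2}=(T-t_0)^{1/2}$, and put $\tilde u(\xi,\sigma)=\lambda^{2m}u(x_0+\lambda\xi,\,t_0+\lambda^2\sigma)$. Since $m(q-1)=\tfrac12$, a direct computation shows $\tilde u$ solves $\pa_\sigma\tilde u=\Delta\tilde u$ in the interior and the \emph{scale-invariant} condition $\pa_\nu\tilde u=\tilde u^q$ on the flat part of the boundary, while $0\le\tilde u\le M$ on the unit parabolic cylinder $Q=\{|\xi|<1,\ -\tfrac12<\sigma<0,\ \xi_n>-(x_0)_n/\lambda\}$. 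I would then bootstrap: from $\|\tilde u\|_{L^\infty(Q)}\le M$ and $\pa_\nu\tilde u=\tilde u^q\in L^\infty$, the $L^p$ theory for the heat equation with Neumann data gives $\tilde u\in C^{\gamma,\gamma/2}$ on a smaller cylinder; then $\tilde u^q$ is H\"older there, and Schauder estimates for the oblique-derivative (here Neumann) problem give $\tilde u\in C^{2+\gamma,1+\gamma/2}$ near $(0,0)$ with $\sum_{|\beta|\le2}|D^\beta\tilde u(0,0)|+|\pa_\sigma\tilde u(0,0)|\le C(M,q,n)$. Undoing the scaling, $|D_x^\beta u(x_0,t_0)|\le C\lambda^{-2m-|\beta|}=Ce^{(m+|\beta|/2)s_0}$ for $|\beta|\le2$ and $|\pa_tu(x_0,t_0)|\le Ce^{(m+1)s_0}$. (For $s_0$ ranging over a fixed compact interval the cylinder $Q$ degenerates, but there $u$ is bounded and classical on $\overline{\R_+^n}$ times a compact subinterval of $(0,T)$, so the same bounds hold by standard parabolic theory; the two regimes exhaust $s_0\ge s_T+\delta$.)

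\emph{Transfer to $\varphi$ and reduction for $\pa_s\varphi$.} Differentiating $\varphi(y,s)=e^{-ms}u(e^{-s/2}y,T-e^{-s})$ in $y$ attaches the factor $e^{-ms-|\beta|s/2}$ to $(D_x^\beta u)$ evaluated at the rescaled point, so the bounds above give $|D^\beta\varphi(y,s)|\le c_\delta$ for $|\beta|\le2$, which is the first half of the statement. For $\pa_s\varphi$ I would not differentiate directly but use \eqref{(B)-eq}:
\[
 \pa_s\varphi=\Delta\varphi-\frac{y}{2}\cdot\nabla\varphi-m\varphi,\qquad
 |\pa_s\varphi|\le|\Delta\varphi|+\frac{|y|}{2}|\nabla\varphi|+m|\varphi|.
\]
The first and third terms are $\le c_\delta$ by the above and Theorem~\ref{Ua-thm}. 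For the drift term, write $\nabla\varphi=\nabla\varphi_0+\nabla v$ with $v=\varphi-\varphi_0$; then $\tfrac{y}{2}\cdot\nabla\varphi_0=\tfrac{y_n}{2}\varphi_0'(y_n)$, and $\xi\varphi_0'(\xi)$ is bounded on $[0,\infty)$ (from $\varphi_0\in BC^2$ and $\varphi_0''=\tfrac{\xi}{2}\varphi_0'+m\varphi_0$ the term $\tfrac{\xi}{2}\varphi_0'$ is forced to be bounded), so the $\varphi_0$-part contributes only $O(1)$. Everything thus reduces to the weighted gradient estimate
\[
 |\nabla v(y,s)|\le c_\delta(1+|y|)^{-1/2}\qquad\text{on }\R_+^n\times(s_T+\delta,\infty).
\]

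\emph{The main obstacle.} This estimate is the only non-soft step: the rescaling above only yields $|\nabla v|\le c_\delta$, i.e.\ the crude bound $|\pa_s\varphi|\le c_\delta(1+|y|)$, and to gain the factor $(1+|y|)^{-1/2}$ one must use that the blow-up is concentrated at the origin. I would first establish the single-point blow-up envelope $u(x,t)\le C|x|^{-2m}$ for $(T-t)^{1/2}\le|x|$ and $t$ close to $T$ — by rescaling $u$ at spatial scale $|x|$ about points with $|x|(T-t)^{-1/2}\to\infty$ and invoking compactness together with a Liouville-type classification of the resulting limits (cf.\ \cite{Harada}) — and then upgrade it, by a scaled interior/boundary gradient estimate, to $|\nabla_x u(x,t)|\le C|x|^{-2m-1}$. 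Translating back to the self-similar variables (the normal part already being absorbed into $\nabla\varphi_0$), this gives $\tfrac{|y|}{2}|\nabla v(y,s)|=O(|y|^{-2m})+O(1)$, which is even stronger than needed. An alternative is a comparison argument for the linear equations satisfied by the components $\pa_j\varphi$, using a supersolution of the form $c(1+|y|)^{-1/2}$ modified near $\pa\R_+^n$ so as to dominate the boundary condition $\pa_\nu(\pa_j\varphi)=q\varphi^{q-1}\pa_j\varphi$. Either way, this is where the $x_n$-axial symmetry and the blow-up hypothesis genuinely enter; the rest is bookkeeping with the two changes of variables.
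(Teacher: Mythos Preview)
Your scaling argument for the spatial derivatives is exactly the Giga--Kohn argument that the paper cites: rescale $u$ at parabolic scale $(T-t_0)^{1/2}$ around $(x_0,t_0)$, use the scale-invariance of the boundary condition and uniform $L^\infty$ from Theorem~\ref{Ua-thm} to run $L^p$--Schauder, and read off $|D^\alpha\varphi|\le c_\delta$ for $|\alpha|\le2$. This part is fine and matches the paper.

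The problem is everything you do after writing down the ``crude bound'' $|\pa_s\varphi|\le c_\delta(1+|y|)$. You correctly observe that the equation \eqref{(B)-eq} together with the spatial bounds yields only linear growth, because $\tfrac{y}{2}\cdot\nabla\varphi$ is $O(|y|)$. You then launch a substantial program---single-point blow-up envelopes, Liouville classification, scaled gradient decay $|\nabla v|\le c(1+|y|)^{-1/2}$---to squeeze this down to $(1+|y|)^{1/2}$. That program imports the $x_n$-axial symmetry and the hypothesis that blow-up occurs only at the origin, neither of which is assumed in Lemma~\ref{21-lem} (the lemma is stated for $\varphi$ as in Theorem~\ref{Ua-thm}, with no symmetry). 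So as written your ``main obstacle'' step is not a proof of the lemma under its stated hypotheses.

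More to the point, the obstacle is illusory. The paper's proof is literally ``apply the Giga--Kohn scaling argument,'' which gives precisely your crude bound and nothing sharper; and when the time-derivative bound is actually used later (proof of Lemma~\ref{52-lem}), the weight that appears is $(1+|y|^2)^{-1/2}$, not $(1+|y|)^{-1/2}$. The stated weight in Lemma~\ref{21-lem} is almost certainly a misprint for $(1+|y|^2)^{-1/2}$ (equivalently $(1+|y|)^{-1}$), and your crude bound already establishes that. Stop there: the decomposition $\varphi=\varphi_0+v$, the analysis of $\xi\varphi_0'(\xi)$, and the entire single-point blow-up discussion are unnecessary for this lemma.
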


\begin{proof}
The proof follows from the argument in Proposition 1$'$ of \cite{Giga-K1}.
Since their proof relies only on the scaling argument,
the argument can be applicable to (P).
\end{proof}
To derive the second approximation for $\varphi(y,s)$ as $s\to\infty$,
we set
\[
 v(y,s)=\varphi(y,s)-\varphi_0(y_n).
\]
Then
$v(y,s)$ satisfies
\begin{equation}\label{(B2)-eq}
\begin{cases}
\dis
 \pa_sv = \Delta v - \frac{y}{2}\cdot\nabla v - mv
& \text{in }\R_+^n\times(s_T,\infty),
\\[2mm] \dis
 \pa_{\nu}v = qB^{q-1}v + f(v)
& \text{on }\pa\R_+^n\times(s_T,\infty),
\\ \dis
 v(y,s_T)=\varphi(y,s_T)-\varphi_0(y_n)
& \text{in }\R_+^n,
\end{cases}
\end{equation}
where $f(v)$ is given by
\[
 f(v)=(v+B)^q-B^q-qB^{q-1}v.
\]
Then
since $\varphi(y,s)$ is positive and uniformly bounded on $\R_+^n\times(s_T,\infty)$,
we easily see that
\begin{equation}\label{f(v)regularity-eq}
 f(v) = \frac{q(q-1)B^{q-2}}{2}v^2+O(v^3),
\hspace{7.5mm}
 |f'(v)| \leq c'|v|
\end{equation}
for $(y,s)\in\R_+^n\times(s_T,\infty)$.
We define the linear operator $A$ associated with \eqref{(B2)-eq} and its domain by
\[
\begin{array}{c}
\dis
 Av = \left(\Delta-\frac{y}{2}\cdot\nabla-m \right)v,\\[3mm]
D(A) = \{v\in H_{\rho}^2(\R_+^n);
 \pa_{\nu}v=qB^{q-1}v\text{ on } \pa\R_+^n\},
\end{array}
\]
where $H_{\rho}^2(\R_+^n)$ is a weighted Sobolev space defined in
Section \ref{LinearOperator-sec}.

\subsection{Linear operator $A$}
\label{LinearOperator-sec}

Here we consider the following eigenvalue problems:
\begin{equation}\label{eigene-eq}
\begin{cases}
\dis
 -\left( \Delta - \frac{y}{2}\cdot\nabla - m \right)E = \lambda E
& \text{in } \R_+^n,
\\[2mm] \dis
 \pa_{\nu}E = qB^{q-1}E
& \text{on } \pa\R_+^n.
\end{cases}
\end{equation}
We define a wight function:
\[
 \rho(y) = e^{-|y|^2/4}.
\]
Then
it is clear that $\rho(y')=-e^{-|y'|^2/4}$ on $\pa\R_+^n$.
Moreover we define functional spaces:
\[
\begin{array}{c}
\dis
 L_{\rho}^p(\R_+^n) =
 \left\{
 v\in L_{\loc}^p(\R_+^n);\int_{\R_+^n}|v(y)|^p\rho(y)dy<\infty
 \right\},
\\[6mm] \dis
 H_{\rho}^k(\R_+^n) =
 \left\{
 v\in L_{\rho}^2(\R_+^n); D^{\alpha}v\in L_{\rho}^2(\R_+^n)
 \text{ for any } \alpha=(\alpha_1,\cdots,\alpha_n)
 \text{ satisfying } |\alpha|\leq k
 \right\},
\\[2mm] \dis
L_{\rho}^p(\pa\R_+^n) =
 \left\{
 v\in L_{\loc}^p(\pa\R_+^n);\int_{\pa\R_+^n}|v(y')|^p\rho(y')dy'<\infty
 \right\}.
\end{array}
\]
The norms are given by
\[
\begin{array}{c}
\dis
 \|v\|_{L_{\rho}^p(\R_+^n)}^p = \int_{\R_+^n} |v(y)|^p\rho(y)dy,
\hspace{7.5mm}
 \|v\|_{H_{\rho}^k(\R_+^n)}^2 =
 \sum_{|\alpha|\leq k}\|D^{\alpha}v\|_{L_{\rho}^2(\R_+^n)}^2,
\\ \dis
 \|v\|_{L_{\rho}^p(\pa\R_+^n)}^p = \int_{\pa\R_+^n} |v(y')|^p\rho(y')dy'
\end{array}
\]
and the inner product on $L_{\rho}^2(\R_+^n)$ is naturally defined by
\[
 (v_1,v_2)_{\rho} = \int_{\R_+^n}v_1(y)v_2(y)\rho(y)dy.
\]
For simplicity,
the norm of $L_{\rho}^2(\R_+^n)$ is denoted by
$\|\cdot\|_{\rho}=\|\cdot\|_{L_{\rho}^2(\R_+^n)}$.
Since
the operator $A$: $D(A)\to L_{\rho}^2(\R_+^n)$
is self-adjoint and has a compact inverse from $L_{\rho}^2(\R_+^n)$ to $L_{\rho}^2(\R_+^n)$
(see Appendix),
$L_{\rho}^2(\R_+^n)$ is spanned by the eigenfunctions of \eqref{eigene-eq}.
Let $\tilde{H}_k(\xi)$ be the $k$-th Hermite polynomial defined by
\[
 \tilde{H}_k(\xi) = (-1)^ke^{\xi^2}\frac{d^k}{d\xi^k}\left( e^{-\xi^2} \right)
\hspace{5mm}(k\in\N_0)
\]
and set $H_k(\xi)=c_k\tilde{H}_k(\xi/2)$,
where $c_k$ is a normalization constant such that
$\int_{-\infty}^{\infty}H_k(\xi)^2e^{-\xi^2/4}d\xi=1$.
From classical results,
it is known that $H_k(\xi)$ satisfies
\[
 -\left( H_k''-\frac{\xi}{2}H_k' \right) = \frac{k}{2}H_k
\hspace{5mm}\text{in } \R.
\]
Moreover let $I_k(\xi)$ and $\kappa_k$ ($k\in\N$) be the $k$-th eigenfunction with
$\int_0^{\infty}I_k(\xi)^2e^{-\xi^2/4}d\xi=1$ and the $k$-th eigenvalue of
\begin{equation}\label{Ieigen-eq}
\begin{cases}
\dis
 -\left( I'' - \frac{\xi}{2}I' \right) = \kappa I
& \text{in } \R_+,
\\[3mm] \dis
 \pa_{\nu}I(0) = qB^{q-1}I(0)
&
\end{cases}
\end{equation}
and $\alpha$ be a multi-index:
\[
 \alpha = (\alpha_1,\cdots,\alpha_n)
 \in {\cal A} := \N_0^{n-1}\times\N.
\]
Then
the eigenfunction $E_\alpha(y)$ of \eqref{eigene-eq}
and
its eigenvalue $\lambda_{\alpha}$
are given by
\[
 E_\alpha(y) =
 H_{\alpha_1}(y_1)\cdots H_{\alpha_{n-1}}(y_{n-1})I_{\alpha_n}(y_n),
\hspace{5mm}
 \lambda_{\alpha} =
 \sum_{i=1}^{n-1} \frac{\alpha_i}{2} + \kappa_{\alpha_n} + m.
\]
Here we recall a classical result about some special functions
(see Lemma 3.1 in \cite{Fila-Q}).

\begin{lem}\label{Keep-lem}
Let $b(\xi)\in L_\rho^2(\R_+)$ be a positive solution of
\[
 -\left( b'' - \frac{\xi}{2}b'\right) = \mu b
\hspace{5mm} \mathrm{in}\ \R_+
\hspace{5mm} (\mu<0)
\]
and set
\[
 U(a,a',r)
=
 \frac{1}{\Gamma(a)}\int_0^{\infty}e^{-rt}t^{a-1}(1+t)^{a'-a-1}dt
\hspace{5mm}(a,r>0),
\]
where $\Gamma$ is the Gamma function.
Then
there exists $c_1>0$ such that $b(\xi)$ is expressed by
\[
 b(\xi)
=
 c_1U(-\mu,1/2,\xi^2/4).
\]
\end{lem}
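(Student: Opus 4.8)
The plan is to reduce the equation to Kummer's confluent hypergeometric equation and then use the integrability hypothesis $b\in L_\rho^2(\R_+)$ to discard the unbounded solution. First I would substitute $z=\xi^2/4$ and set $w(z)=b(\xi)$. Since $b'(\xi)=\frac{\xi}{2}w'(z)$ and $b''(\xi)=\frac{\xi^2}{4}w''(z)+\frac{1}{2}w'(z)$, the equation $-(b''-\frac{\xi}{2}b')=\mu b$ becomes
\[
 z\,w'' + \left(\tfrac{1}{2}-z\right)w' + \mu\,w = 0,
\]
which is Kummer's equation with parameters $a=-\mu$ and $c=1/2$. Because $\mu<0$ we have $a=-\mu>0$ and $c$ is not a non-positive integer, so the function $U(a,a',r)$ written in the statement is precisely the classical integral representation of Tricomi's confluent hypergeometric function, valid exactly in the range $a=-\mu>0$, $r=z>0$, and it is a solution of the $z$-equation together with Kummer's function $M(-\mu,\tfrac12,z)$. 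Since $\xi=0$ is an ordinary point of the original $\xi$-equation, every solution there is analytic and the solution space on $\R_+$ is two-dimensional; as $\xi\mapsto M(-\mu,\tfrac12,\xi^2/4)$ and $\xi\mapsto U(-\mu,\tfrac12,\xi^2/4)$ are linearly independent (they will be seen below to have different growth at infinity), they form a basis, so
\[
 b(\xi) = A\,M\!\left(-\mu,\tfrac12,\tfrac{\xi^2}{4}\right) + c_1\,U\!\left(-\mu,\tfrac12,\tfrac{\xi^2}{4}\right)
\]
for suitable constants $A,c_1$.

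Next I would invoke the classical large-argument asymptotics $M(a,c,z)\sim\frac{\Gamma(c)}{\Gamma(a)}e^{z}z^{a-c}$ and $U(a,c,z)\sim z^{-a}$ as $z\to\infty$. With $a=-\mu>0$ the Tricomi term decays like a negative power of $\xi$, whereas the Kummer term, if present, grows like $e^{\xi^2/4}$ times a power of $\xi$; the two cannot cancel, one being exponentially large and the other only algebraically small. Hence if $A\neq0$ then $b(\xi)^2e^{-\xi^2/4}$ grows like $e^{\xi^2/4}$ times a power of $\xi$ as $\xi\to\infty$, which is not integrable on $\R_+$, contradicting $b\in L_\rho^2(\R_+)$. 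Therefore $A=0$ and $b(\xi)=c_1\,U(-\mu,1/2,\xi^2/4)$.

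Finally, in the integral representation the integrand $e^{-rt}t^{a-1}(1+t)^{a'-a-1}$ is strictly positive for $t>0$ when $a=-\mu>0$, $a'=1/2$, $r>0$, and $\Gamma(-\mu)>0$, so $U(-\mu,1/2,\xi^2/4)>0$ on $\R_+$; comparing signs with the hypothesis $b>0$ forces $c_1>0$, which is exactly what is claimed. The only point that needs genuine care is matching the integral formula in the statement with Tricomi's $U$ and citing its asymptotics (these are standard facts, cf.\ the reference to \cite{Fila-Q}); once they are in hand the $L_\rho^2$ condition eliminates the exponentially growing mode essentially for free, so I do not expect a real obstacle here.
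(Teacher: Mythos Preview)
Your argument is correct. The substitution $z=\xi^2/4$ does reduce the equation to Kummer's equation $zw''+(\tfrac12-z)w'-(-\mu)w=0$, the pair $M(-\mu,\tfrac12,\cdot)$ and $U(-\mu,\tfrac12,\cdot)$ does furnish a basis of solutions on $(0,\infty)$ (their distinct asymptotics at infinity force independence), and the $L_\rho^2$ hypothesis kills the $M$-component exactly as you say; positivity of the integrand then gives $c_1>0$.

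As for comparison with the paper: there is nothing to compare. The paper does not prove this lemma at all; it is stated as a classical fact with a pointer to Lemma~3.1 of \cite{Fila-Q}. What you have written is essentially the standard derivation behind that citation, so your proposal supplies the missing details rather than offering an alternative route.
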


By using this formula,
we compute the first and the second eigenvalue of \eqref{Ieigen-eq}.

\begin{lem}\label{22-lem}
Let $\kappa_i$ be the $i${\rm-}th eigenvalue of \eqref{Ieigen-eq}.
Then
it holds that
\[
 \kappa_1 = -(m+1),\hspace{7.5mm} \kappa_2>0.
\]
\end{lem}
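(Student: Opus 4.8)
The plan is to compute the eigenvalues of \eqref{Ieigen-eq} explicitly via the representation formula of Lemma \ref{Keep-lem}. For a negative eigenvalue $\kappa<0$, the unique (up to scalar) solution in $L_\rho^2(\R_+)$ of $-(I''-\tfrac{\xi}{2}I')=\kappa I$ is $I(\xi)=c\,U(-\kappa,1/2,\xi^2/4)$; the boundary condition $\pa_\nu I(0)=qB^{q-1}I(0)$, i.e. $-I'(0)=qB^{q-1}I(0)$, then becomes a transcendental equation in $\kappa$. Using the definition $U(a,a',r)=\frac{1}{\Gamma(a)}\int_0^\infty e^{-rt}t^{a-1}(1+t)^{a'-a-1}dt$ and differentiating under the integral sign in $r$ (then using $r=\xi^2/4$, $dr/d\xi=\xi/2$, so that $I'(\xi)=\frac{\xi}{2}\,\pa_r U(\cdot,\cdot,\xi^2/4)\cdot c$ and $I'(0)$ involves $\lim_{\xi\to0}\xi\,\pa_rU$), one reduces everything to values of the Gamma function. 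The key observation is that $\kappa_1=-(m+1)$ corresponds to an explicit elementary eigenfunction: I expect $I_1(\xi)=c\,\varphi_0'(\xi)$ up to a constant — indeed, differentiating the ODE $\varphi_0''-\tfrac{\xi}{2}\varphi_0'-m\varphi_0=0$ gives $(\varphi_0')''-\tfrac{\xi}{2}(\varphi_0')'-(m+1)\varphi_0'=0$, and differentiating the boundary condition $-\varphi_0'(0)=\varphi_0(0)^q=B^q$ against the stationary equation yields $-\varphi_0''(0)=qB^{q-1}\varphi_0'(0)$, i.e. exactly $\pa_\nu(\varphi_0')(0)=qB^{q-1}\varphi_0'(0)$. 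Since $\varphi_0$ is bounded and positive with $\varphi_0'<0$ (it must decay, as $\varphi_0\in BC^2$), $\varphi_0'\in L_\rho^2(\R_+)$ is a negative-eigenvalue eigenfunction with eigenvalue $-(m+1)<0$, and having no zeros it must be the ground state: $\kappa_1=-(m+1)$.

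Next, to locate $\kappa_2$, I would use Sturm–Liouville oscillation theory for \eqref{Ieigen-eq}: the $i$-th eigenfunction $I_i$ has exactly $i-1$ zeros in $\R_+$. So $I_2$ has exactly one zero. The plan is to rule out $\kappa_2\le0$. Suppose $\kappa_2<0$; then Lemma \ref{Keep-lem} gives $I_2(\xi)=c\,U(-\kappa_2,1/2,\xi^2/4)$, and the function $r\mapsto U(a,1/2,r)$ with $a=-\kappa_2>0$ is completely monotone (in particular positive and strictly decreasing on $(0,\infty)$, being a Laplace-type transform of a positive measure), hence has no zero in $\R_+$ — contradicting the one-zero requirement. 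If $\kappa_2=0$, the equation $-(I''-\tfrac{\xi}{2}I')=0$ with $I\in L_\rho^2$ forces $I$ constant (the only other independent solution, $\int_0^\xi e^{t^2/4}dt$, grows like $e^{\xi^2/4}$ and is not in $L_\rho^2$), and a nonzero constant cannot satisfy $-I'(0)=qB^{q-1}I(0)$ unless $I(0)=0$; so $\kappa=0$ is not an eigenvalue. Therefore $\kappa_2>0$, and since eigenvalues are simple and increasing, $\kappa_1=-(m+1)<0<\kappa_2$ with no eigenvalue in between.

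The main obstacle I anticipate is the $\kappa_2\le 0$ exclusion done cleanly: one must be sure that the only $L_\rho^2(\R_+)$-solution of the ODE for a given $\mu<0$ is (up to scalar) the one in Lemma \ref{Keep-lem} — this is where the Kummer/confluent-hypergeometric structure and the asymptotics of the second solution ($\sim e^{\xi^2/4}$ as $\xi\to\infty$) must be invoked — together with establishing the complete monotonicity / sign-definiteness of $U(a,1/2,\cdot)$ so that it genuinely has no zeros. An alternative, perhaps cleaner, route for $\kappa_2>0$ avoiding special functions altogether: use the variational characterization $\kappa_2=\min\{\,\int_0^\infty (I')^2\rho\,d\xi - qB^{q-1}I(0)^2 : \|I\|_{\rho}=1,\ (I,I_1)_\rho=0\,\}$ and test against an explicit competitor orthogonal to $\varphi_0'$, showing the Rayleigh quotient is positive — but this requires a quantitative bound on $qB^{q-1}$ in terms of the trace inequality constant, which may itself be delicate. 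I would present the special-function computation as the primary argument since Lemma \ref{Keep-lem} is already in hand, and remark that $\kappa_1=-(m+1)$ is witnessed by $\varphi_0'$.
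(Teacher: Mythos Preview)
Your overall scheme --- use Lemma~\ref{Keep-lem} to represent negative-eigenvalue $L_\rho^2$ solutions by $U(-\kappa,1/2,\xi^2/4)$, turn the Robin condition into a Gamma-function identity, and then rule out $\kappa_2\le0$ by sign-definiteness of $U$ (or the $\kappa_2=0$ case by explicit integration) --- is exactly what the paper does, and your treatment of $\kappa_2>0$ is essentially identical to the paper's.

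The error is in your ``key observation'' for $\kappa_1$. Differentiating $\varphi_0''-\tfrac{\xi}{2}\varphi_0'-m\varphi_0=0$ gives
\[
(\varphi_0')''-\tfrac{\xi}{2}(\varphi_0')'-\bigl(m+\tfrac12\bigr)\varphi_0'=0,
\]
not $(m+1)$; the commutator $[\,\pa_\xi^2-\tfrac{\xi}{2}\pa_\xi,\;\pa_\xi\,]=\tfrac12\pa_\xi$ shifts the eigenvalue by $\tfrac12$, not $1$. Moreover, $\varphi_0'$ does \emph{not} satisfy the linearized boundary condition: one needs $-\varphi_0''(0)=qB^{q-1}\varphi_0'(0)$, i.e.\ $-mB=-qB^{2q-1}$, which would force $m=qB^{2(q-1)}$ --- and this fails (check it against the explicit value $qB^{q-1}=qm\,\Gamma(m+\tfrac12)/\Gamma(m+1)$). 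So $\varphi_0'$ is not an eigenfunction of \eqref{Ieigen-eq} at all.

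If you want a structural witness for $\kappa_1=-(m+1)$, the correct function is
\[
\psi(\xi)=m\varphi_0(\xi)+\tfrac{\xi}{2}\varphi_0'(\xi)=\varphi_0''(\xi),
\]
which arises from differentiating the one-parameter family $T\mapsto(T-t)^{-m}\varphi_0((T-t)^{-1/2}x)$ in the blow-up time $T$. A direct check gives $\psi''-\tfrac{\xi}{2}\psi'=(m+1)\psi$, and at $\xi=0$ one finds $-\psi'(0)=(m+\tfrac12)B^q$ while $qB^{q-1}\psi(0)=qmB^q$; these agree precisely because $qm=m+\tfrac12$ (equivalently $m=\tfrac{1}{2(q-1)}$). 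Since $\psi\in L_\rho^2(\R_+)$ and $\psi(0)=mB>0$, Lemma~\ref{Keep-lem} forces $\psi>0$, so it is the ground state and $\kappa_1=-(m+1)$. The paper instead verifies $\kappa_1=-(m+1)$ by showing that $g(\lambda)=\Gamma(\lambda+\tfrac12)/\Gamma(\lambda)$ is strictly increasing and that $g(m+1)=(m+\tfrac12)\Gamma(m+\tfrac12)/\Gamma(m+1)=qm\,\Gamma(m+\tfrac12)/\Gamma(m+1)$, which is the transcendental equation coming from the boundary condition; either route works once you have the right candidate.
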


\begin{proof}
Since $\kappa_1$ is the first eigenvalue of \eqref{Ieigen-eq},
it is characterized by
\[
 \kappa_1 =
 \inf_{I\in H_{\rho}^1(\R_+)}
 \frac{\|\pa_{\xi}I\|_{L_{\rho}^2(\R_+)}^2-qB^{q-1}I(0)^2}
 {\|I\|_{L_{\rho}^2(\R_+)}^2}.
\]
This implies $\kappa_1<0$.
Therefore by Lemma \ref{Keep-lem},
there exists $c_1>0$ such that
the first eigenfunction $I_1(\xi)$ is given by
\[
 I_1(\xi) =
 c_1U\left( -\kappa_1,\frac{1}{2},\frac{\xi^2}{4} \right).
\]
From (3.8) in \cite{Fila-Q},
we note that for $a>0$
\begin{equation}\label{U(a,b,c)-eq}
\begin{array}{l}
 \dis
 U(a,1/2,0) = \frac{\sqrt{\pi}}{\Gamma(a+1/2)},
\hspace{7.5mm}
 \lim_{\xi\to0}\pa_\xi U(a,1/2,\xi^2/4) = -\frac{a\sqrt{\pi}}{\Gamma(a+1)}.
\end{array}
\end{equation}
Therefore we obtain
\[
 \pa_{\nu}I_1(0)/I_1(0)
=
 (-\kappa_1)
 \Gamma\left( -\kappa_1+\frac{1}{2} \right)/\Gamma(-\kappa_1+1).
\]
On the other hand,
we recall from the proof of Lemma 3.1 in \cite{Fila-Q} that
$\varphi_0(\xi)$ is written by
\[
 \varphi_0(\xi)
=
 \frac{m^{1/(q-1)}}{\sqrt{\pi}}\left(\frac{\Gamma(m+1/2)^{q}}{\Gamma(m+1)} \right)^{1/(q-1)}
 U(m,1/2,\xi^2/4).
\]
Then
by using \eqref{U(a,b,c)-eq},
we get
\[
 qB^{q-1} = q\varphi_0(0)^{q-1} =
 qm\Gamma\left(m+\frac{1}{2}\right)/\Gamma(m+1).
\]
Therefore
since $\pa_{\nu}I_1(0)=qB^{q-1}I_1(0)$,
$\kappa_1$ is determined by
\begin{equation}
\label{kappa_1-eq}
 qm\Gamma\left(m+\frac{1}{2}\right)/\Gamma(m+1)
=
 (-\kappa_1)
 \Gamma\left( -\kappa_1+\frac{1}{2} \right)/\Gamma(-\kappa_1+1).
\end{equation}
First
we claim that \eqref{kappa_1-eq} admits at most one root.
We set
\[
 g(\lambda) =
 \lambda\Gamma\left( \lambda+\frac{1}{2} \right)/\Gamma(\lambda+1)
\hspace{5mm}(\lambda>0).
\]
By $\Gamma(\lambda+1)=\lambda\Gamma(\lambda)$,
it follows that
\begin{equation}
\label{g-eq}
 g(\lambda) =
 \Gamma\left( \lambda+\frac{1}{2} \right)/\Gamma(\lambda).
\end{equation}
Hence
from the property of the Gamma function (see p.\hspace{1mm}4 in \cite{Iwanami}),
$g(\lambda)$ is strictly increasing function for $\lambda>0$,
which shows the claim.
To assure $\kappa_1=-(m+1)$,
we substitute $\lambda=m+1$ in \eqref{g-eq}:
\begin{eqnarray*}
 g(m+1)
\hspace{-2mm}&=&\hspace{-2mm}
 \Gamma\left( m+\frac{3}{2} \right)/\Gamma(m+1) =
 \left( m+\frac{1}{2} \right)\Gamma\left( m+\frac{1}{2} \right)/\Gamma(m+1)
\\
\hspace{-2mm}&=&\hspace{-2mm}
 qm\Gamma\left( m+\frac{1}{2} \right)/\Gamma(m+1).
\end{eqnarray*}
Therefore
we conclude that $\kappa_1=-(m+1)$.
Next we show that $\kappa_2>0$.
We suppose $\kappa_2\leq0$.
For the case $\kappa_2<0$,
by the same way as above,
the second eigenfunction $I_2(\xi)$ is given by
\[
 I_2(\xi) = c_1'U\left( -\kappa_2,\frac{1}{2},\frac{\xi^2}{4} \right)
\]
for some $c_1'\not=0$.
Hence
it holds that
$I_2(\xi)>0$ on $\R_+$ or $I_2(\xi)<0$ on $\R_+$.
However this contradicts $\int_0^{\infty}I_1(\xi)I_2(\xi)e^{-\xi^2/4}d\xi=0$.
For the case $\kappa_2=0$,
we easily see that
\[
 \left( e^{-\xi^2/4}I_2'(\xi) \right)' = 0.
\]
Integrating over $(0,\xi)$,
we obtain
$I_2'(\xi)=-qB^{q-1}I_2(0)e^{\xi^2/4}$
and
\[
 I_2(\xi) = I_2(0) - qB^{q-1}I_2(0)\int_0^{\xi}e^{t^2/4}dt.
\]
Hence it follows that $I_2\not\in L_{\rho}^2(\R)$,
which is a contradiction.
Thus the proof is completed.
\end{proof}

From the above facts,
the operator $(-A)$ has two negative eigenvalues $-1$ and $-1/2$ and zero eigenvalue.

\subsection{Linear backward heat equation with Robin type boundary conditions}
\label{Linearbackward-sec}

Here we study the following linear parabolic equation:
\begin{equation*}
\begin{cases}
\dis
 \pa_sv = \Delta v - \frac{y}{2}\cdot\nabla v
& \text{in }\R_+^n\times(0,\infty),
\\[2mm] \dis
 \pa_{\nu}v = Kv
& \text{on }\pa\R_+^n\times(0,\infty),
\end{cases}
\end{equation*}
where $K$ is a positive constant.
When $K=qB^{q-1}$,
this equation coincides with the linearized equation of \eqref{(B)-eq} around $\varphi_0$.
Let $b_K(\xi)$ and $\mu_K<0$ be the first positive eigenfunction with $b_K(0)=1$
and the first eigenvalue of
\[
\begin{cases}
\dis
 -\left( b'' - \frac{\xi}{2}b'\right) = \mu b
& \text{in } \R_+,
\\ \dis
 \pa_{\nu}b(0) = Kb(0).
&
\end{cases}
\]
Then
by Lemma \ref{Keep-lem} with \eqref{U(a,b,c)-eq},
we find that $b_K(\xi)$ is written by
\[
\begin{array}{lll}
\dis
 b_K(\xi)
\hspace{-2mm}&=&\hspace{-2mm} \dis
 \left( \frac{\Gamma(-\mu_K+1/2)}{\sqrt{\pi}} \right)
 U(-\mu_K,1/2,\xi^2/4)
\\[4mm]
\hspace{-2mm}&=&\hspace{-2mm} \dis
 c_K\int_0^{\infty}e^{-\xi^2t/4}t^{-\mu_K-1}(1+t)^{-1/2+\mu_K}dt,
\end{array}
\]
where $c_K=\Gamma(-\mu_K+1/2)/\sqrt{\pi}\Gamma(-\mu_K)$.
To estimate the above integral
we change variables $s=\xi^2t$ and integrate by parts,
then we get
\[
\begin{array}{lll}
\dis
 b_K(\xi)
\hspace{-2mm}&=&\hspace{-2mm} \dis
 c_K\xi^{2\mu_K}\int_0^{\infty}e^{-s/4}s^{-\mu_K-1}\left( 1+\xi^{-2}s \right)^{-1/2+\mu_K}ds
\\[4mm] \dis
\hspace{-2mm}&\leq&\hspace{-2mm} \dis
 c_K\xi^{2\mu_K}\int_0^{\infty}e^{-s/4}s^{-\mu_K-1}ds.
\end{array}
\]
Here
we fix $K_0>1$.
Then
there exist two positive constants $C_1<C_2$ such that
$C_1\leq -\mu_K\leq C_2$ for $K_0^{-1}<K<K_0$.
Therefore
there exists $C_0>0$ such that
\[
 \sup_{K_0^{-1}<K<K_0}\int_0^{\infty}e^{-s/4}s^{-\mu_K-1}ds\leq C_0.
\]
As a consequence,
we obtain for $K_0^{-1}\leq K\leq K_0$
\[
\begin{array}{lll}
\dis
 b_K(\xi)
\leq
 C_0c_K\xi^{2\mu_K}.
\end{array}
\]
Furthermore
since $b_K(0)=1$ and $b_K'(0)=-K$,
there exists $C_0'>0$ such that
\begin{equation}\label{Lunch-eq}
 \sup_{K_0^{-1}<K<K_0}\sup_{0<\xi<1}b_K(\xi) \leq C_0'.
\end{equation}
Combining the above estimates,
we obtain the following lemma.

\begin{lem}\label{Pii-lem}
For any $K_0>1$ there exists $c=c(K_0)>0$ such that for $K_0^{-1}<K<K_0$
\[
 b_K(\xi) \leq c(1+\xi)^{2\mu_K}
\hspace{5mm}\mathrm{for}\ \xi\in\R_+.
\]
\end{lem}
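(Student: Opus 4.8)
The plan is to stitch together the two pointwise bounds on $b_K$ already obtained in the discussion preceding the lemma — the decay estimate $b_K(\xi)\leq C_0c_K\xi^{2\mu_K}$ valid on all of $\R_+$, and the near-boundary bound \eqref{Lunch-eq} — by treating the ranges $\xi\in[1,\infty)$ and $\xi\in(0,1)$ separately. The one thing that must be tracked throughout is uniformity in the parameter $K$, and this is supplied by the two-sided estimate $C_1\leq-\mu_K\leq C_2$ on the compact parameter interval $[K_0^{-1},K_0]$ recorded above.

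For $\xi\geq1$ I would start from $b_K(\xi)\leq C_0c_K\xi^{2\mu_K}$. Since $c_K=\Gamma(-\mu_K+1/2)/(\sqrt{\pi}\,\Gamma(-\mu_K))$ and $-\mu_K$ ranges over the compact set $[C_1,C_2]\subset(0,\infty)$, continuity of the Gamma function yields a constant $\bar c=\bar c(K_0)$ with $c_K\leq\bar c$ for all admissible $K$. Moreover $1+\xi\leq2\xi$ for $\xi\geq1$, and since $2\mu_K<0$ this gives $(1+\xi)^{2\mu_K}\geq 2^{2\mu_K}\xi^{2\mu_K}$, hence $\xi^{2\mu_K}\leq 2^{-2\mu_K}(1+\xi)^{2\mu_K}\leq 2^{2C_2}(1+\xi)^{2\mu_K}$. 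Combining, $b_K(\xi)\leq C_0\bar c\,2^{2C_2}(1+\xi)^{2\mu_K}$ for $\xi\geq1$. For $0<\xi<1$ I would instead invoke \eqref{Lunch-eq} to get $b_K(\xi)\leq C_0'$; since $1<1+\xi<2$ and $2\mu_K<0$ we have $(1+\xi)^{2\mu_K}\geq 2^{2\mu_K}\geq 2^{-2C_2}$, so $1\leq 2^{2C_2}(1+\xi)^{2\mu_K}$ and therefore $b_K(\xi)\leq C_0'\,2^{2C_2}(1+\xi)^{2\mu_K}$ on $(0,1)$.

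Setting $c=c(K_0):=2^{2C_2}\max\{C_0\bar c,\,C_0'\}$ then gives $b_K(\xi)\leq c(1+\xi)^{2\mu_K}$ on all of $\R_+$, uniformly for $K_0^{-1}<K<K_0$, which is the assertion. There is no serious obstacle here: the argument is essentially bookkeeping. The only delicate point — and it is a mild one — is the uniformity of $c_K$ (equivalently of $-\mu_K$) over the range $K_0^{-1}<K<K_0$, but this is already furnished by the bounds $C_1\leq-\mu_K\leq C_2$, which themselves follow from the variational characterization of $\mu_K$ together with its monotone dependence on $K$.
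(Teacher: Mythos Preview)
Your proof is correct and follows exactly the approach of the paper, which simply records the lemma as an immediate consequence of the two preceding estimates $b_K(\xi)\leq C_0c_K\xi^{2\mu_K}$ and \eqref{Lunch-eq}; you have merely filled in the elementary interpolation details the paper leaves implicit.
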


Next we compute $b_K'(\xi)/b_K(\xi)$ and $b_K''(\xi)/b_K(\xi)$.
By using integration by parts,
we get
\begin{eqnarray*}
 b_K'(\xi)
\hspace{-2mm}&=&\hspace{-2mm}
 -\frac{c_K\xi}{2}\int_0^\infty e^{-\xi^2t/4}t^{-\mu_K}(1+t)^{-1/2+\mu_K}dt
\\
\hspace{-2mm}&=&\hspace{-2mm}
 -\frac{2c_K}{\xi}\int_0^{\infty} e^{-\xi^2t/4}
 \left( t^{-\mu_K}(1+t)^{-1/2+\mu_K} \right)'dt
\\
\hspace{-2mm}&=&\hspace{-2mm}
 \frac{2c_K}{\xi}\int_0^{\infty} e^{-\xi^2t/4}
 \left( \mu_K+\left(\frac{1}{2}-\mu_K\right)\frac{t}{1+t} \right)
 t^{-\mu_K-1}(1+t)^{-1/2+\mu_K}dt.
\end{eqnarray*}
Hence
it holds that
\[
 \frac{|b_K'(\xi)|}{b_K(\xi)} \leq \frac{1+4|\mu_K|}{\xi}.
\]
Here
we again fix $K_0>1$.
Since $|\mu_K|<c$ for $K\in(0,K_0)$,
there exists $c>0$ such that
\[
 \sup_{0<K<K_0}
 \sup_{\xi\geq1}\left( \frac{|b_K'(\xi)|}{b_K(\xi)} \right) \leq c.
\]
Repeating the above argument,
we see that
\[
 \sup_{0<K<K_0}
 \sup_{\xi\geq1}
 \left( \frac{|b_K''(\xi)|}{b_K(\xi)} \right) \leq c'.
\]
Furthermore
by the same way as \eqref{Lunch-eq},
we obtain
\[
 \sup_{0<K<K_0}
 \sup_{\xi\leq1}\left( \frac{|b_K'(\xi)|+|b_K''(\xi)|}{b_K(\xi)} \right) \leq c''.
\]
Therefore
we conclude the following lemma.

\begin{lem}\label{23-lem}
Let $B_K(\xi)=b_K'(\xi)/b_K(\xi)$.
Then for any $K_0>1$ there exists $c=c(K_0)>0$ such that
\[
 \sup_{0<K<K_0}
 \sup_{\xi\in\R_+}
 \left( B_K(\xi)+|B_K'(\xi)| \right) \leq c.
\]
\end{lem}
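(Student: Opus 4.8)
The plan is to bootstrap from the integral representations of $b_K(\xi)$, $b_K'(\xi)$, and $b_K''(\xi)$ already derived above, separating the analysis into the regime $\xi\geq1$ (where the polynomial decay $b_K(\xi)\sim\xi^{2\mu_K}$ governs everything) and the regime $0<\xi\leq1$ (where $b_K$ is smooth, positive, and bounded away from $0$). Recall the definition $B_K=b_K'/b_K$. For $\xi\geq1$, the computation above already gives the pointwise bound $|b_K'(\xi)|/b_K(\xi)\leq(1+4|\mu_K|)/\xi$, and repeating the integration-by-parts trick once more on $b_K''$ — differentiating $t^{-\mu_K}(1+t)^{-1/2+\mu_K}$ twice and using $s=\xi^2t$ — yields $|b_K''(\xi)|/b_K(\xi)\leq c/\xi^2$ with $c$ depending only on an upper bound for $|\mu_K|$. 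Since $B_K'=b_K''/b_K-(b_K'/b_K)^2$, these two estimates combine to give $|B_K'(\xi)|\leq c/\xi^2\leq c$ for $\xi\geq1$, and $B_K(\xi)\leq(1+4|\mu_K|)/\xi\leq1+4|\mu_K|$; the key uniformity input is that for $0<K<K_0$ one has $|\mu_K|\leq C_2$ for a constant $C_2=C_2(K_0)$, exactly as used in the derivation of Lemma \ref{Pii-lem} (monotonicity of the first eigenvalue in $K$, or the variational characterization, forces $\mu_K$ to lie in a bounded interval).

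For $0<\xi\leq1$, I would argue by smoothness and nonvanishing. The function $b_K$ solves $b_K''=\frac{\xi}{2}b_K'+\mu_Kb_K$ with $b_K(0)=1$, $b_K'(0)=-K$, so $b_K$, $b_K'$, $b_K''$, and $b_K'''$ are all continuous on $[0,1]$ and bounded there uniformly in $K\in(0,K_0)$ — one gets this by differentiating the ODE once more and running a Gronwall estimate on $[0,1]$ with the uniform bounds on $|\mu_K|$ and $|K|$, or simply from the integral representation as was done for \eqref{Lunch-eq}. Moreover $b_K$ is positive and, since $b_K(0)=1$ and $|b_K'|$ is uniformly bounded on $[0,1]$, there is $\delta_0>0$ (depending only on $K_0$) with $b_K(\xi)\geq\delta_0$ for $\xi\in[0,\delta_0']$; combined with the lower bound coming from positivity and continuity on the compact remainder $[\delta_0',1]$ (uniform in $K$ by the same a priori estimates), we get $\inf_{0<K<K_0}\inf_{0<\xi\leq1}b_K(\xi)>0$. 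Hence on $(0,1]$ both $B_K=b_K'/b_K$ and $B_K'=b_K''/b_K-(b_K'/b_K)^2$ are quotients of uniformly bounded quantities by a quantity bounded below, so they are uniformly bounded.

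Patching the two regimes at $\xi=1$ gives the claimed estimate $\sup_{0<K<K_0}\sup_{\xi\in\R_+}\bigl(B_K(\xi)+|B_K'(\xi)|\bigr)\leq c(K_0)$. The only genuinely delicate point is the uniform-in-$K$ lower bound $b_K(\xi)\geq\delta_0>0$ on the unit interval: this is where one must invoke that $\mu_K$ and $b_K'(0)=-K$ stay in a fixed compact set as $K$ ranges over $(0,K_0)$, so that the solution cannot decay arbitrarily fast near $\xi=0$. Everything else is the routine integration-by-parts bookkeeping already illustrated in the lines preceding Lemma \ref{Pii-lem} and Lemma \ref{23-lem}, carried one order further.
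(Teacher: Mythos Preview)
Your proposal is correct and follows essentially the same approach as the paper: split into $\xi\geq1$ (integration-by-parts on the integral representation to bound $|b_K'|/b_K$ and $|b_K''|/b_K$ by $O(1/\xi)$ and $O(1/\xi^2)$, using $|\mu_K|\leq C(K_0)$) and $\xi\leq1$ (uniform bounds on $b_K,b_K',b_K''$ plus a uniform lower bound on $b_K$ from $b_K(0)=1$ and the ODE). Your write-up is in fact more explicit than the paper's, which simply says ``repeating the above argument'' for $b_K''/b_K$ and ``by the same way as \eqref{Lunch-eq}'' for the regime $\xi\leq1$; the formula $B_K'=b_K''/b_K-(b_K'/b_K)^2$ that you use to assemble the pieces is exactly the missing line the paper leaves implicit.
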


We introduce another function:
\[
 w_K(y,s)=e^{\mu_Ks}v(y,s)/b_K(y_n).
\]
Then $w_K(y,s)$ satisfies
\begin{equation}\label{sendai-eq}
\begin{cases}
\dis
 \pa_sw_K =
 \Delta w_K - \frac{y}{2}\cdot\nabla w_K + 2B_K(y_n)\pa_nw_K
& \text{in }\R_+^n\times(0,\infty),
\\[2mm] \dis
 \pa_{\nu}w_K = 0
& \text{on }\pa\R_+^n\times(0,\infty),
\end{cases}
\end{equation}
where the coefficient $B_K(y_n)$ is given by
\[
 B_K(y_n) = \left(\frac{b_K'(y_n)}{b_K(y_n)}\right).
\]
First
we construct a heat kernel $\Gamma_K(y,\xi,s)$ of \eqref{sendai-eq}
and provide time local heat kernel estimates.
Let $\gamma'(y',\xi',s)$ be the heat kernel of
\[
 \pa_s\theta = \Delta'\theta -\frac{y'}{2}\cdot\nabla'\theta
\hspace{5mm}\text{in }\R^{n-1}\times(0,\infty),
\]
which is explicitly given by (c.f. p.\hspace{1mm}141 in \cite{Herrero-V1})
\begin{equation*}
 \gamma'(y',\xi',s) =
 \left( \frac{1}{4\pi} \right)^{(n-1)/2}
 \left( \frac{1}{1-e^{-s}} \right)^{(n-1)/2}
 \exp\left( -\frac{|y'e^{-s/2}-\xi'|}{4(1-e^{-s})} \right).
\end{equation*}
Now we construct a heat kernel $\gamma_K(z,\zeta,s)$ of
\begin{equation}\label{w2K-eq}
\begin{cases}
\dis
 \pa_s\vartheta = \vartheta_{zz} - \frac{z}{2}\vartheta_z + 2B_K(z)\vartheta_z
& \text{in }\R_+\times(0,\infty),
\\ \dis
 \pa_{\nu}\vartheta = 0
& \text{on }z=0,\ s\in(0,\infty).
\end{cases}
\end{equation}
Let $\vartheta_K(z,s)$ be a solution of \eqref{w2K-eq} and put
\[
 U_K(x,t) = \vartheta_K\left( (1-t)^{-1/2}x,-\log(1-t) \right)
\hspace{5mm}(x\in\R_+).
\]
Then $U_K(x,t)$ solves
\begin{equation}\label{sekitome-eq}
\begin{cases}
\dis
 \pa_tU = U_{xx} + 2\left( \frac{{\cal B}_K(x,t)}{\sqrt{1-t}} \right)U_x
& \text{in }\R_+\times(0,1),
\\[2mm] \dis
 \pa_{\nu}U = 0
& \text{on }x=0,\ t\in(0,1),
\end{cases}
\end{equation}
where ${\cal B}_K(x,t)=B_K(x/\sqrt{1-t})$.
Let $G_0(x,\xi,t)$ be the standard heat kernel with the Neumann boundary condition on $\R_+$
given by
\[
 G_0(x,\zeta,t) = \frac{1}{\sqrt{4\pi t}}\left( e^{-|x-\zeta|^2/4t}+e^{-|x+\zeta|^2/4t} \right).
\]
It is known that
by the Levi parametrix method (see pp.\hspace{1mm}356--363 in \cite{Ladyzenskaja-S-U}),
\eqref{sekitome-eq} admits the heat kernel $G_K(x,\zeta,t,\tau)$ which has the following form:
\begin{equation*}
 G_K(x,\zeta,t,\tau) = G_0(x,\zeta,t-\tau) + {\cal G}_K(x,\zeta,t,\tau),
\end{equation*}
where ${\cal G}_K(x,\zeta,t,\tau)$ is given by
\begin{equation}\label{calG_K-eq}
 {\cal G}_K(x,\zeta,t,\tau) =
 \int_\tau^td\sigma\int_0^\infty G_0(x,\eta,t-\sigma)Q_K(\eta,\zeta,\sigma,\tau)d\eta
\end{equation}
and
$Q_K(\eta,\zeta,\sigma,\tau)$ is a unique solution of the integral equation of
\[
 Q_K(\eta,\zeta,\sigma,\tau) = F_K(\eta,\zeta,\sigma,\tau) +
 \int_\tau^td\nu\int_0^\infty F_K(\eta,\mu,\sigma,\nu)Q_K(\mu,\zeta,\nu,\tau)d\mu
\]
with
\[
 F_K(\eta,\zeta,\sigma,\tau) =
 2\left( \frac{{\cal B}_K(\eta,\sigma)}{\sqrt{1-\sigma}} \right)
 \pa_xG_0(\eta,\zeta,\sigma-\tau).
\]
Here we put
\[
 G_0^\epsilon(\eta,\zeta,t) =
 \frac{1}{\sqrt{4\pi t}}
 \left(
 \exp\left( -\frac{(\eta-\zeta)^2}{4(1+\epsilon)t} \right)
 +
 \exp\left( -\frac{(\eta+\zeta)^2}{4(1+\epsilon)t} \right)
 \right)
\]
and fix three constants
\[
 K_0>1,\hspace{7.5mm} \epsilon_0\in(0,1),\hspace{7.5mm} \sigma_*\in(0,1).
\]
Then
by Lemma \ref{23-lem},
for any $a\in(0,1)$
there exists $c=c(a)>0$ such that for $K\in(0,K_0)$ and $\epsilon\geq\epsilon_0$
\[
\begin{array}{c}
\dis
 |F_K(\eta,\zeta,\sigma,\tau)| \leq
 c\left( \frac{G_0^\epsilon(\eta,\zeta,\sigma-\tau)}{\sqrt{\sigma-\tau}} \right),
\\[4mm] \dis
 |F_K(\eta,\zeta,\sigma,\tau)-F_K(\eta',\zeta,\sigma,\tau)|
\leq
 c\left( \frac{|\eta-\eta'|^a}{(\sigma-\tau)^{a_1}} \right)
 \left(
 G_0^\epsilon(\eta,\zeta,\sigma-\tau)+G_0^\epsilon(\eta',\zeta,\sigma-\tau)
 \right)
\end{array}
\]
for $\eta,\zeta\in\R_+$ and $0<\tau<\sigma<\sigma_*$, where $a_1=(1+a)/2$.
Hence
by the same calculations as in pp.\hspace{1mm}360--363 in \cite{Ladyzenskaja-S-U},
for any $a'\in(0,1)$
there exist $c=c(a')>0$ such that for $K\in(0,K_0)$ and $\epsilon\geq\epsilon_0$
\begin{equation}\label{Q_K-eq}
\begin{array}{c}
\dis
 |Q_K(\eta,\zeta,\sigma,\tau)|
\leq
 \frac{c}{\sqrt{\sigma-\tau}}G_0^{\epsilon}(\eta,\zeta,\sigma-\tau)
\\[4mm] \dis
  |Q_K(\eta,\zeta,\sigma,\tau)-Q_K(\eta',\zeta,\sigma,\tau)|
\leq
 c\left( \frac{|\eta-\eta'|^{a'}}{(\sigma-\tau)^{a_1'}} \right)
 \left(
 G_0^\epsilon(\eta,\zeta,\sigma-\tau)+G_0^\epsilon(\eta',\zeta,\sigma-\tau)
 \right)
\end{array}
\end{equation}
for $\eta,\zeta\in\R_+$ and $0<\tau<\sigma<\sigma_*$,
where $a_1'=(1+a')/2$.
Therefore
by the same way as in pp.\hspace{1mm}376--378 in \cite{Ladyzenskaja-S-U},
we find that
for any $t_*\in(0,1)$
there exists $c=c(t_*)>0$ such that
for any $K\in(0,K_0)$ and $\epsilon\geq\epsilon_0$
\begin{equation}\label{GGG-eq}
\begin{array}{c}
\dis
 |\pa_x^i{\cal G}_K(x,\zeta,t,\tau)|
\leq
 c(t-\tau)^{(1-i)/2}G_0^\epsilon(x,\zeta,t-\tau)
\hspace{5mm}(i=0,1,2),
\\[3mm] \dis
|\pa_t{\cal G}_K(x,\zeta,t,\tau)|
\leq
 c(t-\tau)^{-1/2}G_0^\epsilon(x,\zeta,t-\tau)
\end{array}
\end{equation}
for $x,\zeta\in\R_+$ and $0<\tau<t<t_*$.
Moreover
from \eqref{calG_K-eq} and \eqref{Q_K-eq},
a direct computation shows that
\begin{equation}\label{GGG2-eq}
 \lim_{x\to0+}\pa_x{\cal G}_K(x,\zeta,t,\tau)=0
\hspace{5mm}\text{for } \zeta\in\overline{\R}_+,\ t>\tau\geq0.
\end{equation}
Therefore
we fined that
$G_K(x,\zeta,t,\tau)$ satisfies \eqref{sekitome-eq} for $0<\tau<t<1$.
Moreover
from \eqref{GGG-eq},
it holds that for $f\in BC(\overline{\R}_+)$
\[
 \lim_{t\to\tau}\int_0^\infty G_K(x,\zeta,t,\tau)f(\zeta)d\zeta
=
 \lim_{t\to\tau}\int_0^\infty G_0(x,\zeta,t-\tau)f(\zeta)d\zeta
=
 f(x).
\]
As a consequence,
$G_K(x,\zeta,t,\tau)$ turns out to be the heat kernel of \eqref{sekitome-eq}.
Going back to the variable $(z,s)$,
we obtain the heat kernel of \eqref{w2K-eq}: 
\[
 \gamma_K(z,\zeta,s) = G_K\left( e^{-s/2}z,\zeta,1-e^{-s},0 \right).
\]
Then from \eqref{GGG-eq},
for any $s_*>0$ there exists $c=c(s_*)>0$ such that for $K\in(0,K_0)$ and $\epsilon\geq\epsilon_0$
\begin{equation}\label{tsuyoi-eq}
\begin{array}{l}
\dis
 \gamma_K(z,\zeta,s)
\leq
 cG_0^\epsilon\left( e^{-s/2}z,\zeta,1-e^{-s},0 \right)
\\[3.5mm] \dis \hspace{5mm}
=
 \frac{c}{(1-e^{-s})^{1/2}}
 \left(
 \exp\left( -\frac{\left( ze^{-s/2}-\zeta \right)^2}{4(1+\epsilon)(1-e^{-s})} \right)
 +
 \exp\left( -\frac{\left( ze^{-s/2}+\zeta \right)^2}{4(1+\epsilon)(1-e^{-s})} \right)
 \right)
\end{array}
\end{equation}
for $z,\zeta\in\R_+$ and $s\in(0,s_*)$.
Since the heat kernel $\Gamma_K(y,\xi,s)$ of \eqref{sendai-eq} is given by
\[
 \Gamma_K(y,\xi,s) = \gamma'(y',\xi',s)\gamma_K(y_n,\xi_n,s),
\]
summarizing the above estimates,
we obtain the following lemma.

\begin{lem}\label{24-lem}
For any $K_0>1$, $\epsilon_0>0$ and $s_*>0$
there exists $c=c(K_0,\epsilon_0,s_*)>0$ such that
for $K\in(0,K_0)$ and $\epsilon\geq\epsilon_0$
\[
\begin{array}{lll}
\dis
 \Gamma_K(y,\xi,s)
\hspace{-2mm}&\leq&\hspace{-2mm} \dis
 \frac{c}{(1-e^{-s})^{n/2}}
 \exp\left( -\frac{|y'e^{-s/2}-\xi'|}{4(1-e^{-s})} \right)
 \left(
 \exp\left( -\frac{\left( ze^{-s/2}-\zeta \right)^2}{4(1+\epsilon)(1-e^{-s})} \right)
 \right.
\\[6mm] \hspace{-2mm}&&\hspace{-2mm}\hspace{5mm}\dis
 \left.
 +
 \exp\left( -\frac{\left( ze^{-s/2}+\zeta \right)^2}{4(1+\epsilon)(1-e^{-s})} \right)
 \right)
\hspace{5mm}\mathrm{for}\ y,\xi\in\R_+^n,\ s\in(0,s_*).
\end{array}
\]
\end{lem}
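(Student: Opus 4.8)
The plan is to build the heat kernel $\Gamma_K$ of the full equation \eqref{sendai-eq} from two one-dimensional pieces — the tangential kernel $\gamma'(y',\xi',s)$, which is explicit, and the normal kernel $\gamma_K(y_n,\xi_n,s)$ for \eqref{w2K-eq}, which carries all the difficulty because of the variable drift coefficient $B_K(z)$. Since \eqref{sendai-eq} separates into the $y'$-variables (a free Ornstein–Uhlenbeck operator on $\R^{n-1}$) and the $y_n$-variable (the Robin/Neumann problem \eqref{w2K-eq}), the product formula $\Gamma_K(y,\xi,s)=\gamma'(y',\xi',s)\,\gamma_K(y_n,\xi_n,s)$ is immediate, and the stated bound follows by multiplying the explicit Gaussian bound for $\gamma'$ with the bound \eqref{tsuyoi-eq} for $\gamma_K$, absorbing the two prefactors $(1-e^{-s})^{-(n-1)/2}$ and $(1-e^{-s})^{-1/2}$ into the single factor $(1-e^{-s})^{-n/2}$. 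So the real content is establishing \eqref{tsuyoi-eq} uniformly in $K\in(0,K_0)$ and $\epsilon\geq\epsilon_0$.

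First I would reduce \eqref{w2K-eq} to a genuine parabolic problem on a finite time interval by the self-similar change of variables $U_K(x,t)=\vartheta_K((1-t)^{-1/2}x,-\log(1-t))$, arriving at \eqref{sekitome-eq} on $\R_+\times(0,1)$ with Neumann boundary condition and drift coefficient $2(1-t)^{-1/2}{\cal B}_K(x,t)$, where ${\cal B}_K(x,t)=B_K(x/\sqrt{1-t})$. The point of this step is that Lemma \ref{23-lem} gives a uniform bound $\sup_{0<K<K_0}\sup_{\xi\in\R_+}(B_K(\xi)+|B_K'(\xi)|)\le c$, so on any fixed subinterval $t\in(0,t_*)$ with $t_*<1$ the drift coefficient and its modulus of continuity are controlled uniformly in $K$. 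This is precisely the setup in which the classical Levi parametrix construction (Ladyzenskaja–Solonnikov–Ural'ceva, pp.\ 356–363) applies: write $G_K=G_0+{\cal G}_K$ with $G_0$ the explicit Neumann heat kernel on $\R_+$, represent the correction ${\cal G}_K$ via \eqref{calG_K-eq} in terms of the solution $Q_K$ of the Volterra integral equation with kernel $F_K(\eta,\zeta,\sigma,\tau)=2(1-\sigma)^{-1/2}{\cal B}_K(\eta,\sigma)\,\pa_xG_0(\eta,\zeta,\sigma-\tau)$, and iterate. The Gaussian-with-slightly-larger-variance majorant $G_0^\epsilon$ is introduced to absorb the extra powers of $(\sigma-\tau)^{-1/2}$ coming from differentiating $G_0$; the uniform bounds on $F_K$ and then (by the standard iteration estimates) on $Q_K$, displayed as \eqref{Q_K-eq}, hold uniformly in $K\in(0,K_0)$ and $\epsilon\ge\epsilon_0$ precisely because of Lemma \ref{23-lem}. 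From \eqref{Q_K-eq} one reads off the bounds \eqref{GGG-eq} for $\pa_x^i{\cal G}_K$ ($i=0,1,2$) and $\pa_t{\cal G}_K$ by the same textbook computation, and \eqref{GGG2-eq}, $\lim_{x\to0+}\pa_x{\cal G}_K=0$, follows from the explicit form \eqref{calG_K-eq} together with $\pa_xG_0(0,\eta,\cdot)=0$; this gives the Neumann condition for $G_K$. The short-time delta convergence $\int G_K(x,\zeta,t,\tau)f(\zeta)\,d\zeta\to f(x)$ then identifies $G_K$ as the heat kernel of \eqref{sekitome-eq}. Finally, undoing the self-similar change of variables, $\gamma_K(z,\zeta,s)=G_K(e^{-s/2}z,\zeta,1-e^{-s},0)$, and substituting $t=1-e^{-s}$, $t-\tau=1-e^{-s}$ into \eqref{GGG-eq} (with $i=0$) yields \eqref{tsuyoi-eq} for $s\in(0,s_*)$, with $c$ depending on $s_*$ through the corresponding $t_*=1-e^{-s_*}<1$.

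The main obstacle is entirely bookkeeping rather than conceptual: one must check that every constant produced by the Levi parametrix iteration — the bound on $F_K$, the convergence of the Neumann series defining $Q_K$, the Hölder estimates \eqref{Q_K-eq}, and the resulting estimates \eqref{GGG-eq} — can be taken \emph{uniform in $K\in(0,K_0)$}. This is why the paper is careful to record Lemma \ref{23-lem} with a $K$-uniform bound, and why $K_0$, $\epsilon_0$, $\sigma_*$ are fixed once and for all before the estimates on $F_K$ are stated. A secondary point to watch is that the drift carries the mildly singular factor $(1-\sigma)^{-1/2}$, but since $\sigma$ ranges only over $(0,\sigma_*)$ with $\sigma_*<1$ this factor is bounded and plays no role beyond contributing to the constant. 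Given these uniform ingredients, the proof of Lemma \ref{24-lem} is just the product decomposition together with the two Gaussian bounds, and the absorption of prefactors noted above.
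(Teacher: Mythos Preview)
Your proposal is correct and follows essentially the same route as the paper: the product decomposition $\Gamma_K=\gamma'\cdot\gamma_K$, the self-similar change of variables reducing \eqref{w2K-eq} to \eqref{sekitome-eq}, the Levi parametrix construction $G_K=G_0+{\cal G}_K$ with estimates \eqref{Q_K-eq} and \eqref{GGG-eq} made uniform in $K$ via Lemma~\ref{23-lem}, and the final substitution yielding \eqref{tsuyoi-eq}. Indeed, the paper carries out all of this work in the paragraphs preceding the lemma and then states Lemma~\ref{24-lem} simply as a summary of those estimates.
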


As a consequence of this heat kernel estimate,
we obtain $L^r$-$L^p$ type estimates.
Here we define another weighted $L^p$-space:
\[
 \|w\|_{L_{K,\rho}^p(\R_+^n)}
=
 \int_{\R_+^n} |w(y)|^p b_K(y_n)^2\rho(y)dy.
\]

\begin{lem}\label{25-lem}
For any $K_0>1$, $p,r\in(1,\infty)$, $\delta\in(0,p-1)$ and $s_*>0$
there exists $c=c(K_0,p,r,s_*,\delta)>0$
such that for $K\in(K_0^{-1},K_0)$ and $s\in(\max\{0,\log\{(r-1)/(p-1-\delta)\}\},s_*)$
\[
\begin{array}{c}
\dis
 \left\|
 \int_{\R_+^n}\Gamma_K(y',\xi,s)|w_0(\xi)|d\xi
 \right\|_{L_{\rho}^r(\pa\R_+^n)}
\leq
 c(1-e^{-s})^{-n/2p}\|w_0\|_{L_{K,\rho}^p(\R_+^n)},
\\[6mm] \dis
 \left\|
 \int_{\pa\R_+^n}\Gamma_K(y',\xi',s)|w_0(\xi')|d\xi'
 \right\|_{L_{\rho}^r(\pa\R_+^n)}
\leq
 c(1-e^{-s})^{-((n-1)/2p+1/2)}\|w_0\|_{L_{\rho}^p(\pa\R_+^n)}.
\end{array}
\]
\end{lem}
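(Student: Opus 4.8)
\textbf{Proof proposal for Lemma \ref{25-lem}.}
The plan is to reduce both estimates to the classical $L^r$--$L^p$ smoothing bounds for the Ornstein--Uhlenbeck semigroup (as used for the operators $H$ and $I$ above), using Lemma \ref{24-lem} to control $\Gamma_K$ by a Gaussian kernel with a slightly dilated second slot. First I would factor $\Gamma_K(y,\xi,s)=\gamma'(y',\xi',s)\gamma_K(y_n,\xi_n,s)$ and handle the tangential factor $\gamma'$ and the normal factor $\gamma_K$ separately by Fubini, since the target norm on $\pa\R_+^n$ only involves $y'$ and the weight $\rho(y')=e^{-|y'|^2/4}$. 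For the tangential factor, $\gamma'(y',\xi',s)$ is (up to constants) the heat kernel of the $(n-1)$-dimensional Ornstein--Uhlenbeck operator, for which the weighted $L^r_\rho$--$L^p_\rho$ estimate with the gain $(1-e^{-s})^{-(n-1)/2p}$ is standard; the admissible range of $s$ is exactly where the hypercontractivity-type exponent relation holds, which is the source of the condition $s>\log\{(r-1)/(p-1-\delta)\}$ (the $\delta$ giving room in the exponents). For the normal factor, Lemma \ref{24-lem} bounds $\gamma_K(y_n,\xi_n,s)$ by $c\,G_0^\epsilon(y_ne^{-s/2},\xi_n,1-e^{-s})$, and after restricting to $y_n=0$ (for the boundary-to-boundary kernel) or integrating against $b_K(\xi_n)^2\rho$ in $\xi_n$ (for the interior-to-boundary kernel), one only needs to bound the resulting $\xi_n$-integral against the $L^p$ norm taken with the weight $b_K(\xi_n)^2\rho(\xi_n)$, i.e.\ exactly the weight appearing in $\|\cdot\|_{L^p_{K,\rho}}$. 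Here the uniformity over $K\in(K_0^{-1},K_0)$ comes from Lemma \ref{Pii-lem}: $b_K(\xi)\le c(1+\xi)^{2\mu_K}$ with $-\mu_K$ bounded above and below, so the Gaussian decay of $\rho$ absorbs the (at most polynomial) growth/decay of $b_K^2$ uniformly.

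Concretely, for the first inequality I would estimate, using Lemma \ref{24-lem} at $y'$ arbitrary and $y_n$ replaced by $0$,
\[
 \int_{\R_+^n}\Gamma_K(0,y',\xi,s)|w_0(\xi)|\,d\xi
 \le c\int_{\R^{n-1}}\gamma'(y',\xi',s)\Big(\int_0^\infty e^{-\xi_n^2/(4(1+\epsilon)(1-e^{-s}))}|w_0(\xi',\xi_n)|\,d\xi_n\Big)\frac{d\xi'}{(1-e^{-s})^{1/2}},
\]
then apply H\"older in $\xi_n$ against the weight $b_K(\xi_n)^2\rho(\xi_n)$ (noting $e^{-\xi_n^2/(4(1+\epsilon)(1-e^{-s}))}\,b_K(\xi_n)^{-2}\rho(\xi_n)^{-1}$ is integrable with a bound uniform in $s\in(0,s_*)$, $K\in(K_0^{-1},K_0)$, $\epsilon\ge\epsilon_0$, once $\epsilon_0$ is chosen and $s_*<\infty$), and finally apply the weighted $L^r_\rho(\R^{n-1})$--$L^p_\rho(\R^{n-1})$ bound for the Ornstein--Uhlenbeck kernel $\gamma'$ in the tangential variables, which yields the factor $(1-e^{-s})^{-(n-1)/2p}$; combined with the $(1-e^{-s})^{-1/2}$ from the normal integral this gives the total exponent $-n/2p$ once one checks $(n-1)/2p+1/2\le n/2p$ is not what is claimed — rather the $\xi_n$-integral against a Gaussian of width $(1-e^{-s})^{1/2}$ contributes $(1-e^{-s})^{1/2\cdot(1-1/p')}=(1-e^{-s})^{1/2p}$ which combines with $(1-e^{-s})^{-1/2}$ to $(1-e^{-s})^{-1/2p'}$... so I would be careful to track that the clean bookkeeping gives exactly $(1-e^{-s})^{-n/2p}$, recomputing the normal contribution as $(1-e^{-s})^{-1/2}\cdot(1-e^{-s})^{1/2p}$ from H\"older and the tangential contribution as $(1-e^{-s})^{-(n-1)/2p}$. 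For the second inequality the normal kernel is evaluated at $y_n=\xi_n=0$, so $\gamma_K(0,0,s)\le c(1-e^{-s})^{-1/2}$ is just a multiplicative constant times $(1-e^{-s})^{-1/2}$, and the tangential OU estimate supplies $(1-e^{-s})^{-(n-1)/2p}$, for the total $(1-e^{-s})^{-((n-1)/2p+1/2)}$ as stated.

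The restriction on $s$ is dictated by the tangential OU smoothing estimate: the $(n-1)$-dimensional Ornstein--Uhlenbeck semigroup $e^{\sigma L'}$ (with $e^{-\sigma}=1-e^{-s}$ after the self-similar time change — I would set up the correspondence precisely) maps $L^p_\rho$ into $L^r_\rho$ with the claimed norm only once $\sigma$ is large enough that Nelson's hypercontractivity threshold $e^{2\sigma}\ge (r-1)/(p-1)$ is met; the loss of $\delta$ from $p-1$ to $p-1-\delta$ is what one pays to get not merely boundedness but the explicit smoothing power $(1-e^{-s})^{-(n-1)/2p}$ via H\"older interpolation between the contraction ($r=p-\delta'$) estimate and the kernel bound. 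Writing $e^{-s}$ for the relevant quantity and unwinding, $e^{2\sigma}\ge(r-1)/(p-1-\delta)$ becomes precisely $s>\log\{(r-1)/(p-1-\delta)\}$ (and the $\max$ with $0$ just covers $r\le p-1-\delta$, where no restriction is needed).

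The main obstacle I expect is not any single estimate but the uniformity bookkeeping: one must track three parameters simultaneously ($K\in(K_0^{-1},K_0)$, $\epsilon\ge\epsilon_0$, $s\in(0,s_*)$ or the restricted range), verify that the Gaussian $e^{-\xi_n^2/(4(1+\epsilon)(1-e^{-s}))}$ always dominates the reciprocal weight $b_K(\xi_n)^{-2}e^{\xi_n^2/4}$ in an $L^{p'}$ sense with a uniform constant — which forces a constraint relating $\epsilon_0$ and $s_*$ (if $1-e^{-s}$ can approach $1$ and $\epsilon$ is too large, the Gaussian decay $e^{-\xi_n^2/(4(1+\epsilon))}$ could be slower than $e^{-\xi_n^2/4}$ and the weighted integral would diverge), so I would either impose $s_*$ small or track that for $s\le s_*<\infty$ one has $1-e^{-s}\le 1-e^{-s_*}<1$, giving a safety margin $e^{-\xi_n^2/(4(1+\epsilon)(1-e^{-s}))}\le e^{-\xi_n^2/(4(1+\epsilon)(1-e^{-s_*}))}$ and then choosing the comparison so that $(1+\epsilon)(1-e^{-s_*})$ is controlled — but since $\epsilon$ ranges over $[\epsilon_0,\infty)$ with no upper bound, the cleanest fix is to keep the Gaussian at scale $(1-e^{-s})^{1/2}$ and note $b_K(\xi_n)^2\rho(\xi_n)\le c(1+\xi_n)^{4\mu_K}e^{-\xi_n^2/4}$ is integrable, so one performs H\"older with exponent pair $(p,p')$ placing the full reciprocal weight on the kernel side where the Gaussian decay $e^{-\xi_n^2/(4(1+\epsilon)(1-e^{-s}))}$ times $e^{\xi_n^2/4}$ is bounded precisely when $(1+\epsilon)(1-e^{-s})<1$, i.e.\ I would after all need the standing constraint and I would state it as part of the hypotheses-handling, choosing $\epsilon_0$ small and $s_*$ accordingly, or absorb the issue by using the $-|y'e^{-s/2}-\xi'|$ term which in Lemma \ref{24-lem} is written without the square — so I would first double-check whether that is a typo for $|y'e^{-s/2}-\xi'|^2$ and proceed with the squared Gaussian form, which is what makes the tangential OU estimate literally applicable.
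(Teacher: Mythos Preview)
Your overall strategy is viable but considerably more tangled than the paper's, and your handling of the $\epsilon$ parameter has a real gap. The paper does not factor the kernel and invoke Ornstein--Uhlenbeck hypercontractivity as a black box; instead it applies H\"older with exponents $(p,p')$ directly to the full integral $\int_{\R_+^n}\Gamma_K(y',\xi,s)|w_0(\xi)|\,d\xi$, placing the weight $b_K(\xi_n)^{2/p}\rho(\xi)^{1/p}$ on $w_0$. This leaves the task of bounding $\bigl(\int\Gamma_K^{p'}(e^{|\xi|^2/4p}b_K(\xi_n)^{-2/p})^{p'}d\xi\bigr)^{1/p'}$, to which Lemma~\ref{24-lem} is applied with the \emph{specific choice} $\epsilon=(p-1)/2$. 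The resulting Gaussian integrals in $\xi'$ and $\xi_n$ are computed by completing the square; the $\xi'$-integral produces the explicit factor $\exp\bigl(\frac{e^{-s}|y'|^2}{4(p-(1-e^{-s}))}\bigr)$, and taking the $L^r_\rho(\partial\R_+^n)$ norm of this in $y'$ is exactly where the threshold $s>\log\{(r-1)/(p-1-\delta)\}$ enters, via the identity
\[
\frac{re^{-s}|y'|^2}{4(p-(1-e^{-s}))}-\frac{|y'|^2}{4}=-\frac{(p-1)-(r-1)e^{-s}}{4(p-(1-e^{-s}))}\,|y'|^2.
\]

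Your central worry---that for large $\epsilon$ the kernel Gaussian $e^{-\xi_n^2/(4(1+\epsilon)(1-e^{-s}))}$ may fail to dominate the reciprocal weight---evaporates once you see that $\epsilon$ is not a free parameter to be chosen ``small'' but is fixed as $\epsilon=(p-1)/2$. After H\"older the reciprocal weight in $\xi_n$ is $e^{\xi_n^2/4p}$, not $e^{\xi_n^2/4}$, and the relevant constraint is $p>(1+\epsilon)(1-e^{-s})=\tfrac{p+1}{2}(1-e^{-s})$, which holds for all $s>0$ whenever $p>1$. So your proposed juggling of $\epsilon_0$ against $s_*$ is unnecessary, and your stated constraint $(1+\epsilon)(1-e^{-s})<1$ is the wrong one. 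Your exponent bookkeeping also goes astray because you tried to split the $\xi_n$-H\"older from the tangential OU estimate; in the paper's unified H\"older the normal contribution to the power of $(1-e^{-s})$ is simply $-1/2p$ (from the Gaussian integral in $\xi_n$ of width $\sim(1-e^{-s})^{1/2}$ raised to the $p'$ power and then taken to the $1/p'$), which combines with the tangential $-(n-1)/2p$ to give $-n/2p$.

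Your observation that the exponent in Lemma~\ref{24-lem} should read $|y'e^{-s/2}-\xi'|^2$ rather than $|y'e^{-s/2}-\xi'|$ is correct; the completing-the-square identity used in the proof confirms the squared form.
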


\begin{proof}
Let $y'\in\pa\R_+^n$ and $p'>1$ be a dual exponent of $p$.
From the H${\rm\ddot{o}}$lder inequality,
we easily see that
\begin{equation}
\label{Gets1-eq}
\begin{array}{l}
\dis
 \int_{\R_+^n}\Gamma_K(y',\xi,s)|w_0(\xi)|d\xi
\leq
 \|w_0\|_{L_{K,\rho}^p(\R_+^n)}
 \left(
 \int_{\R_+^n}
 \Gamma_K(y',\xi,s)^{p'}
 \left( \frac{e^{|\xi|^2/4p}}{b_K(\xi_n)^{2/p}} \right)^{p'}
 d\xi
 \right)^{1/p'},
\\[6mm] \dis
 \int_{\pa\R_+^n}\Gamma_K(y',\xi',s)|w_0(\xi')|d\xi'
\leq
 \|w_0\|_{L_{\rho}^p(\pa\R_+^n)}
 \left(
 \int_{\R_+^n}
 \left( \Gamma_K(y',\xi',s)e^{|\xi'|^2/4p} \right)^{p'}d\xi'
 \right)^{1/p'}.
\end{array}
\end{equation}
Here we note that
\begin{equation}\label{Gets2-eq}
 \frac{|y'e^{-s/2}-\xi'|}{4(1-e^{-s})}-\frac{|\xi'|^2}{4p}
=
 \frac{p-(1-e^{-s})}{4p(1-e^{-s})}
 \left| \xi'-\frac{pe^{-s/2}y'}{p-(1-e^{-s})} \right|^2
-
 \frac{e^{-s}|y'|^2}{4(p-(1-e^{-s}))}
\end{equation}
and
\[
 \frac{\xi_n^2}{4(1+\epsilon)(1-e^{-s})}-\frac{\xi_n^2}{4p}
=
 \frac{\xi_n^2}{4p(1+\epsilon)(1-e^{-s})}
 \left(
 p-(1+\epsilon)(1-e^{-s})
 \right).
\]
Therefore
we apply Lemma \ref{24-lem} with $\epsilon=(p-1)/2$ to get
\[
\begin{array}{c}
\dis
 \int_{\R_+^n}
 \Gamma_K(y',\xi,s)^{p'}
 \left( \frac{e^{|\xi|^2/4p}}{b_K(\xi_n)^{2/p}} \right)^{p'}
 d\xi
\leq
 \left( \frac{c}{(1-e^{-s})^{n/2p}} \right)^{p'}
 \exp\left( \frac{p'e^{-s}|y'|^2}{4(p-(1-e^{-s}))} \right),
\\[5mm] \dis
 \int_{\pa\R_+^n}
 \left( \Gamma_K(y',\xi',s)e^{|\xi'|^2/4p} \right)^{p'}d\xi'
\leq
 \left( \frac{c}{(1-e^{-s})^{(n-1)/2p+1/2}} \right)^{p'}
 \exp\left( \frac{p'e^{-s}|y'|^2}{4(p-(1-e^{-s}))} \right),
\end{array}
\]
where we used $b_K(y_n)\leq c(1+y_n)^{2\mu_K}$ for $K\in(K_0^{-1},K_0)$
(see Lemma \ref{Pii-lem}).
From these estimates,
we obtain
\[
\begin{array}{l}
\dis
 \int_{\R_+^n}\Gamma_K(y',\xi,s)|w_0(\xi)|d\xi
\leq
 \left( \frac{c}{(1-e^{-s})^{n/2p}} \right)
 \exp\left( \frac{e^{-s}|y'|^2}{4(p-(1-e^{-s}))} \right)
 \|w_0\|_{L_{K,\rho}^p(\R_+^n)},
\\[6mm] \dis
 \int_{\pa\R_+^n}\Gamma_K(y',\xi',s)|w_0(\xi')|d\xi'
\leq
 \left( \frac{c}{(1-e^{-s})^{(n-1)/2p+1/2}} \right)
  \exp\left( \frac{e^{-s}|y'|^2}{4(p-(1-e^{-s}))} \right)
 \|w_0\|_{L_{\rho}^p(\pa\R_+^n)}.
\end{array}
\]
Moreover
we note that
\[
 \frac{re^{-s}|y'|^2}{4(p-(1-e^{-s}))} -
 \frac{|y'|^2}{4}
=
 -\left( \frac{(p-1)-(r-1)e^{-s}}{4(p-(1-e^{-s}))} \right)|y'|^2.
\]
Then
it holds that for $\delta\in(0,p-1)$
\[
 (p-1)-(r-1)e^{-s} \geq \delta
\hspace{5mm}\text{if } s\geq\log\left( \frac{r-1}{p-1-\delta} \right).
\]
Hence
it follows that
\[
\begin{array}{c}
\dis
 \left\|
 \int_{\R_+^n}\Gamma_K(y',\xi,s)|w_0(\xi)|d\xi
 \right\|_{L_{\rho}^r(\pa\R_+^n)}
\leq
 \left( \frac{c\delta^{-(n-1)/2r}}{(1-e^{-s})^{n/2p}} \right)\|w_0\|_{L_{K,\rho}^p(\R_+^n)},
\\[4mm] \dis
 \left\|
 \int_{\pa\R_+^n}\Gamma_K(y',\xi',s)|w_0(\xi')|d\xi'
 \right\|_{L_{\rho}^r(\pa\R_+^n)}
\leq
 \left( \frac{c\delta^{-(n-1)/2r}}{(1-e^{-s})^{(n-1)/2p+1/2}} \right)
 \|w_0\|_{L_{\rho}^p(\pa\R_+^n)}
\end{array}
\]
for $s\geq\log\{(r-1)/(p-1-\delta)\}$,
which completes the proof.
\end{proof}

Next
we provide time global estimates of $\gamma_K(z,\zeta,s)$
to establish time global estimates of $\Gamma_K(y,\xi,s)$.
We fix $\theta\in(0,1)$ such that $(1-\theta)^2/\theta^2=2$
and set
\[
 \Lambda(z,\zeta,s)
=
 \frac{1}{(1-e^{-s})^{1/2}}
 \left(
 \exp\left( -\frac{|ze^{-s/2}-\theta\zeta|^2}{4(1-e^{-s})} \right)
 +
 \exp\left( -\frac{|ze^{-s/2}+\theta\zeta|^2}{4(1-e^{-s})} \right)
 \right).
\]
Then $\Lambda(z,\zeta,s)$ satisfies for $\zeta\in\R_+$
\begin{equation}\label{Yuka-eq}
\begin{cases}
\dis
 \pa_s\Lambda = \Lambda_{zz} - \frac{z}{2}\Lambda_z
& \text{in }\R_+\times(0,\infty),
\\
 \Lambda_z = 0
& \text{on }z=0,\ s\in(0,\infty).
\end{cases}
\end{equation}

\begin{lem}\label{26-lem}
For any $K_0>1$
there exists $c_0=c_0(K_0)>0$ such that for $K\in(0,K_0)$
\[
 \gamma_K(z,\zeta,s) \leq c_0(1-e^{-s})^{-1/2}
\hspace{5mm}\mathrm{for}\ z,\zeta,\in\R_+,\ s\geq0.
\]
Moreover
for any $K_0>1$
there exists $c_1=c_1(K_0)>0$ such that for $K\in(0,K_0)$
\[
 \gamma_K(z,\zeta,s)\leq c_1\Lambda(z,\zeta,s)
\]
for $z\in(0,(\theta\zeta-2)e^{s/2})$, $\zeta\in(2/\theta,\infty)$ and $s\geq1$.
\end{lem}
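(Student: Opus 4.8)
The plan is to prove the two assertions separately, both by combining the time-local estimate of Lemma \ref{24-lem} (which gives control of $\gamma_K$ only on $s\in(0,s_*)$) with a monotonicity/semigroup propagation argument that upgrades it to all $s\geq 0$. For the first assertion, fix $s_*>0$ and apply Lemma \ref{24-lem} (with $n$ replaced by $1$, i.e.\ using only the $\gamma_K$ factor) to get $\gamma_K(z,\zeta,s)\leq c(s_*)(1-e^{-s})^{-1/2}$ for $s\in(0,s_*)$. For $s\geq s_*$, I would use the semigroup identity
\[
 \gamma_K(z,\zeta,s+s_*) = \int_0^\infty \gamma_K(z,\eta,s_*)\,\gamma_K(\eta,\zeta,s)\,\rho(\eta)\,d\eta
\]
(or the appropriate convolution form attached to \eqref{w2K-eq}); since $\gamma_K(\eta,\zeta,s)$ is the kernel of a mass-subconserving operator on $L^1$ (the boundary condition is Neumann, $\pa_\nu\vartheta=0$, so the total mass $\int \vartheta\,\rho$ is non-increasing), the inner integral is bounded by $\sup_\eta\gamma_K(z,\eta,s_*)\leq c(s_*)(1-e^{-s_*})^{-1/2}$, a constant depending only on $K_0$. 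Taking $s_*=1$, say, yields $\gamma_K(z,\zeta,s)\leq c_0(K_0)$ for all $s\geq 0$, and multiplying by the harmless factor $(1-e^{-s})^{-1/2}\geq 1$ gives the stated form. One must check that the mass-monotonicity really holds for \eqref{w2K-eq}, i.e.\ that the drift $2B_K(z)\vartheta_z$ does not produce mass; writing the equation in divergence form against the weight $\rho(z)b_K(z)^{-2}$ (recall $w_K=e^{\mu_Ks}v/b_K$ was introduced precisely to symmetrize) makes this transparent, and the boundary term vanishes because $\pa_\nu\vartheta=0$.

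For the second (Gaussian upper bound) assertion, the idea is to exploit the comparison between the true drift $-z/2+2B_K(z)$ and the pure Ornstein--Uhlenbeck drift $-z/2$, using that $B_K(z)=b_K'(z)/b_K(z)<0$ (since $b_K$ is positive and decreasing) and that by Lemma \ref{23-lem} $B_K$ is bounded. The function $\Lambda(z,\zeta,s)$ solves the drift-free problem \eqref{Yuka-eq} with a \emph{contracted} source point $\theta\zeta$, the contraction factor $\theta\in(0,1)$ with $(1-\theta)^2/\theta^2=2$ having been chosen exactly so that $\Lambda$ is a supersolution of \eqref{w2K-eq} in the region $z<(\theta\zeta-2)e^{s/2}$: in that region $ze^{-s/2}<\theta\zeta-2<\theta\zeta$, so the Gaussian $\exp(-|ze^{-s/2}-\theta\zeta|^2/4(1-e^{-s}))$ is increasing in $z$, hence $\Lambda_z>0$ there, and since $B_K<0$ the extra term $2B_K(z)\Lambda_z$ in $\pa_s\Lambda-\Lambda_{zz}+\tfrac z2\Lambda_z-2B_K(z)\Lambda_z$ has the favorable sign, making $\Lambda$ a supersolution. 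I would then invoke a maximum principle: $\gamma_K(z,\zeta,s)\leq c_1\Lambda(z,\zeta,s)$ will follow if the inequality holds on the parabolic boundary of the region $\{(z,s): 0<z<(\theta\zeta-2)e^{s/2},\ s\geq 1\}$ — namely on $z=0$ (Neumann condition on both sides, or direct comparison using the bound from the first assertion), on the moving lateral boundary $z=(\theta\zeta-2)e^{s/2}$ (where, for $\zeta>2/\theta$, one has $ze^{-s/2}=\theta\zeta-2$, so $|ze^{-s/2}-\theta\zeta|=2$ and $\Lambda$ is bounded below by a positive constant times $(1-e^{-s})^{-1/2}$, which dominates $\gamma_K$ there by the first part), and at the initial time $s=1$ (where both sides are bounded and $\Lambda$ is bounded below on the relevant $z$-range, by the same computation, so a large enough $c_1$ works).

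The main obstacle I expect is the lateral-boundary comparison at $z=(\theta\zeta-2)e^{s/2}$: one needs the lower bound $\Lambda(z,\zeta,s)\geq c(1-e^{-s})^{-1/2}$ there to be \emph{uniform} in $\zeta\in(2/\theta,\infty)$ and $s\geq 1$, and simultaneously the upper bound $\gamma_K(z,\zeta,s)\leq c_0(1-e^{-s})^{-1/2}$ from the first assertion — the two constants must be compatible independent of $\zeta$. This works because on that boundary the Gaussian exponent in $\Lambda$ equals exactly $-4/4(1-e^{-s})=-1/(1-e^{-s})\in[-1/(1-e^{-1}),\,0)$, a bounded quantity, so $\Lambda\geq e^{-1/(1-e^{-1})}(1-e^{-s})^{-1/2}$ uniformly; the constraint $\zeta>2/\theta$ is precisely what guarantees $(\theta\zeta-2)e^{s/2}>0$ so that the region is nonempty, and $s\geq 1$ keeps $1-e^{-s}$ bounded away from $0$. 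A secondary technical point is justifying the maximum principle on an unbounded, time-dependent domain with an unbounded (but, by Lemma \ref{23-lem}, uniformly bounded) drift coefficient; this is standard once one notes $\gamma_K$ and $\Lambda$ both decay in $z$ fast enough to exclude escape of mass at spatial infinity, so I would only sketch it.
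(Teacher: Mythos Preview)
Your overall strategy matches the paper's, but there are concrete gaps in the execution of both parts.

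For the first assertion, the semigroup identity you write carries an erroneous weight $\rho(\eta)$ (the kernel acts against Lebesgue measure), and the conserved quantity for \eqref{w2K-eq} is $\int\vartheta\,b_K^2\rho$, not $\int\vartheta\,\rho$ or $\int\vartheta\,\rho\,b_K^{-2}$; none of these directly controls the unweighted integral your estimate needs. The paper bypasses all of this with a one-line observation you missed: \emph{constants solve \eqref{w2K-eq}} (all spatial derivatives vanish and the Neumann condition is trivial), so once $\gamma_K(z,\zeta,1)\leq c_0(1-e^{-1})^{-1/2}$, the maximum principle with the constant supersolution $\bar\vartheta\equiv c_0(1-e^{-1})^{-1/2}$ propagates the bound to all $s\geq1$.

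For the second assertion, two steps are genuinely incomplete. (i)~Your argument for $\Lambda_z>0$ addresses only the first Gaussian in $\Lambda$; the reflected term $\exp(-|ze^{-s/2}+\theta\zeta|^2/4(1-e^{-s}))$ is \emph{decreasing} in $z$, and one must show it is dominated. The paper writes $\Lambda_z=\frac{e^{-s/2}}{1-e^{-s}}(g(\zeta_1)-g(\zeta_2))$ with $g(\zeta)=\zeta e^{-\zeta^2}$ and checks that the restriction $z<(\theta\zeta-2)e^{s/2}$ forces $\zeta_1\geq1$, placing both arguments in the decreasing region of $g$. (ii)~The initial comparison at $s=1$ cannot be done by ``$\Lambda$ bounded below on the relevant $z$-range'': for small $z$ and large $\zeta$ one has $\Lambda(z,\zeta,1)\sim\exp(-(\theta\zeta)^2/4(1-e^{-1}))\to0$, so there is no uniform lower bound. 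This is precisely where the value $(1-\theta)^2/\theta^2=2$ enters: it yields $|ze^{-1/2}-\theta\zeta|\leq\tfrac1{\sqrt2}|ze^{-1/2}-\zeta|$ for $z\leq\theta\zeta e^{1/2}$, so the short-time bound \eqref{tsuyoi-eq} with $\epsilon=1$ converts directly into $\gamma_K(z,\zeta,1)\leq 4c\,\Lambda(z,\zeta,1)$. You attributed the role of $\theta$ to the supersolution step, but its actual purpose is this Gaussian-to-Gaussian initial comparison.
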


\begin{proof}
We fix $\epsilon_0=s_*=1$ in \eqref{tsuyoi-eq}.
Then
there exists $c_0>0$ such that for $K\in(0,K_0)$
\[
 \gamma_K(z,\zeta,s) \leq c_0(1-e^{-s})^{-1/2}
\hspace{5mm}\text{for }s\in(0,1).
\]
Since
a constant function $\bar{\vartheta}(z,s):=c_0(1-e^{-1})^{-1/2}$ is a solution of \eqref{w2K-eq},
from a comparison argument,
we obtain $\gamma_K(z,\zeta,s)\leq\bar{\vartheta}(z,s)$ for $s\geq1$.
Since
$(1-e^{-s})^{-1/2}\geq1$ for $s\geq0$,
it holds that $\gamma_K(z,\zeta,s)\leq c_0(1-e^{-1})^{-1/2}(1-e^{-s})^{-1/2}$ for $s\geq1$,
which show the first statement.
Next
we show the second statement.
Since $(1-\theta)/\theta=\sqrt{2}$,
we note that
\begin{equation}\label{Oppi-eq}
 0<
 \theta\zeta e^{s/2}-z
\leq
 \theta\zeta e^{s/2}
\leq
 \frac{\theta}{1-\theta}(\zeta e^{s/2}-z)
=
 \frac{1}{\sqrt{2}}(\zeta e^{s/2}-z)
\hspace{5mm}\text{for }z\leq\theta\zeta e^{s/2}.
\end{equation}
We fix $\epsilon_0=s_*=1$ in \eqref{tsuyoi-eq} again.
Then
by \eqref{tsuyoi-eq} and \eqref{Oppi-eq},
we see that
\begin{equation}\label{previous-eq}
\begin{array}{lll}
\dis
 \gamma_K(z,\zeta,1)
\hspace{-2mm}&\leq&\hspace{-2mm}\dis
 \frac{2c}{(1-e^{-1})^{1/2}}
 \exp\left( -\frac{|ze^{-1/2}-\zeta|^2}{8(1-e^{-1})} \right)
\\[4.5mm] \hspace{-2mm}&\leq&\hspace{-2mm}\dis
 \frac{2c}{(1-e^{-1})^{1/2}}
 \exp\left( -\frac{|ze^{-1/2}-\theta\zeta|^2}{4(1-e^{-1})} \right)
\\[4.5mm] \hspace{-2mm}&\leq&\hspace{-2mm}\dis
 4c\Lambda(z,\zeta,1)
\hspace{5mm}\text{for }z\leq\theta\zeta e^{1/2}.
\end{array}
\end{equation}
By definition of $\Lambda$,
there exists $a_1>0$ such that
\[
 \Lambda\left( (\theta\zeta-2)e^{s/2},\zeta,s \right)
\geq
 \frac{e^{-1/(1-e^{-s})}}{(1-e^{-s})^{1/2}} \geq a_1
\hspace{5mm}\text{for }\zeta\in(2/\theta,\infty),\ s\geq1.
\]
From the first statement,
we can define
\[
 a_2 = \sup_{s\geq1}\sup_{z,\zeta\in\R_+}\gamma_K(z,\zeta,s).
\]
Hence
we obtain
\begin{equation}\label{yakedo2-eq}
 \gamma_K\left( (\theta\zeta-2)e^{s/2},\zeta,s \right)
\leq
 \left( \frac{a_2}{a_1} \right)
 \Lambda\left( (\theta\zeta-2)e^{s/2},\zeta,s \right)
\hspace{5mm}\text{for }\zeta\in(2/\theta,\infty),\ s\geq1.
\end{equation}
Now
we claim that
\begin{equation}\label{Chako-eq}
 \Lambda_z(z,\zeta,s)\geq0
\end{equation}
for $z\in(0,(\theta\zeta-2)e^{s/2})$ and $\zeta\in(2/\theta,\infty)$.
By definition of $\Lambda$,
we see that
\begin{equation}\label{Lambda_z-eq}
\begin{array}{lll}
\dis
 \Lambda_z(z,\zeta,s)
\hspace{-2mm}&=&\dis\hspace{-2mm}
 \frac{e^{-s/2}}{(1-e^{-s})}
 \left(
 \frac{\theta\zeta-ze^{-s/2}}{2\sqrt{1-e^{-s}}}
 \exp\left( -\frac{|ze^{-s/2}-\theta\zeta|^2}{4(1-e^{-s})} \right)
 \right.
\\[5mm]&&\dis\hspace{35mm}
 -\left.
 \frac{\theta\zeta+ze^{-s/2}}{2\sqrt{1-e^{-s}}}
 \exp\left( -\frac{|ze^{-s/2}+\theta\zeta|^2}{4(1-e^{-s})} \right)
 \right).
\end{array}
\end{equation}
Let
$\zeta_1=\theta\zeta-ze^{-s/2}/2\sqrt{1-e^{-s}}$
and
$\zeta_2=\theta\zeta+ze^{-s/2}/2\sqrt{1-e^{-s}}$,
and set $g(\zeta)=\zeta e^{-\zeta^2}$.
Then
\eqref{Lambda_z-eq} is rewritten by
\[
 \Lambda_z(z,\zeta,s)
=
 \frac{e^{-s/2}}{1-e^{-s}}
 \left( g(\zeta_1)-g(\zeta_2) \right).
\]
Then
we observe that
\[
 \zeta_1
=
 \frac{\theta\zeta-ze^{-s/2}}{2\sqrt{1-e^{-s}}}
\geq 
 \frac{\theta\zeta-(\theta\zeta-2)}{2\sqrt{1-e^{-s}}}
=
 \frac{1}{\sqrt{1-e^{-s}}}
\geq1
\]
for $z\in(0,(\theta\zeta-2)e^{s/2})$ and $\zeta\in(2/\theta,\infty)$.
Since $g(\zeta)=\zeta e^{-\zeta^2}$ is monotone decreasing for $\zeta\geq 1/\sqrt{2}$,
it follows from $\zeta_1<\zeta_2$ that
\[
 \Lambda_z(z,\zeta,s)
=
 \frac{e^{-s/2}}{1-e^{-s}}
 \left( g(\zeta_1)-g(\zeta_2) \right)>0
\]
for $z\in(0,(\theta\zeta-2)e^{s/2})$ and $\zeta\in(2/\theta,\infty)$,
which shows the claim.
Therefore
since $b_K'(z)<0$,
it holds from \eqref{Chako-eq} that
\[
 B_K(z)\Lambda_z(z,\zeta,s) =
 \left( \frac{b_K'(z)}{b_K(z)} \right)\Lambda_z(z,\zeta,s) \leq 0
\]
for $z\in(0,(\theta\zeta-2)e^{s/2})$ and $\zeta\in(2/\theta,\infty)$.
Therefore
since
$\Lambda(z,\zeta,s)$ satisfies \eqref{Yuka-eq} for $\zeta\in\R_+$,
we obtain
\[
 \left( \pa_ s-\pa_{zz}^2+\frac{z}{2}\pa_z-2B_K(z)\pa_z \right)\Lambda(z,\zeta,s)
\geq 0
\]
for
$z\in(0,(\theta\zeta-2)e^{s/2})$ and $\zeta\in(2/\theta,\infty)$.
Here
we apply a comparison argument in an expanding domain
$Q=\{(z,s);\in\R_+\times(1,\infty);z\in(0,(\theta\zeta-2)e^{s/2})\}$
for $\zeta\in(2/\theta,\infty)$.
From 
\eqref{previous-eq} and \eqref{yakedo2-eq},
we conclude that
\[
 \gamma_K(z,\zeta,s)
\leq
 \max\left\{ 4c,a_2/a_1 \right\}\Lambda(z,\zeta,s)
\hspace{5mm}\text{for }(z,s)\in Q,
\]
hence the proof is completed.
\end{proof}

Finally
we provide time global $L^\infty$-$L_{K,\rho}^2$ estimates of solutions of \eqref{sendai-eq}.

\begin{lem}\label{27-lem}
For any $K_0,R>1$
there exists $c=c(K_0,R)>0$ such that for $K\in(0,K_0)$
\[
 \int_{\R_+^n}\Gamma_K(y,\xi,s)|w_0(\xi)|d\xi
\leq
 c\exp\left( \frac{e^{-s}|y'|^2}{4(1+e^{-s})} \right)
 \|w_0\|_{L_{K,\rho}^2(\R_+^n)}
\]
for $y=(y',y_n)\in\R^{n-1}\times(0,R)$ and $s\geq1$.
\end{lem}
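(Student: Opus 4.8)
The plan is to combine the product structure of the kernel, $\Gamma_K(y,\xi,s)=\gamma'(y',\xi',s)\gamma_K(y_n,\xi_n,s)$, with a single Cauchy--Schwarz inequality in $L^2_{K,\rho}(\R_+^n)$.  Writing $\Gamma_K(y,\xi,s)|w_0(\xi)|=\bigl(\Gamma_K(y,\xi,s)b_K(\xi_n)^{-1}e^{|\xi|^2/8}\bigr)\bigl(|w_0(\xi)|b_K(\xi_n)e^{-|\xi|^2/8}\bigr)$ and applying Cauchy--Schwarz gives
\[
 \int_{\R_+^n}\Gamma_K(y,\xi,s)|w_0(\xi)|\,d\xi
 \le
 \Bigl(A'(y',s)\,A_n(y_n,s)\Bigr)^{1/2}\|w_0\|_{L^2_{K,\rho}(\R_+^n)},
\]
where $A'(y',s)=\int_{\R^{n-1}}\gamma'(y',\xi',s)^2e^{|\xi'|^2/4}\,d\xi'$ and $A_n(y_n,s)=\int_0^\infty\gamma_K(y_n,\xi_n,s)^2e^{\xi_n^2/4}b_K(\xi_n)^{-2}\,d\xi_n$.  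It then suffices to show $A'(y',s)\le C\exp\!\left(\frac{e^{-s}|y'|^2}{2(1+e^{-s})}\right)$ and $A_n(y_n,s)\le C(K_0,R)$ for $y_n\in(0,R)$, $s\ge1$, $K\in(0,K_0)$, and to take the square root.

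The bound on $A'$ is a direct Gaussian computation.  Inserting the explicit Mehler kernel $\gamma'$ and completing the square in $\xi'$, using $\frac{1}{2(1-e^{-s})}-\frac14=\frac{1+e^{-s}}{4(1-e^{-s})}$, yields
\[
 A'(y',s)=(4\pi)^{-(n-1)/2}(1-e^{-s})^{-(n-1)/2}(1+e^{-s})^{-(n-1)/2}\exp\!\left(\frac{e^{-s}|y'|^2}{2(1+e^{-s})}\right),
\]
and the prefactor is bounded for $s\ge1$ since $1-e^{-s}\ge1-e^{-1}$ and $1+e^{-s}\ge1$.  This is where the asserted factor $\exp\!\left(\frac{e^{-s}|y'|^2}{4(1+e^{-s})}\right)$ comes from.

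The real point is $A_n$, and this is where I expect the main difficulty.  A naive pointwise decay estimate for $\gamma_K(y_n,\xi_n,s)$ in the $\xi_n$ slot will not work: the only time-global bound available, $\gamma_K\le c_1\Lambda$ from Lemma~\ref{26-lem}, decays in $\xi_n$ only like $e^{-\theta^2\xi_n^2/4(1-e^{-s})}$, and since $\theta^2<\frac12$ this cannot absorb the weight $e^{\xi_n^2/4}b_K(\xi_n)^{-2}$.  The resolution is to use the self-adjoint structure.  The operator governing \eqref{w2K-eq}, $\partial_{zz}-\frac z2\partial_z+2B_K(z)\partial_z=m_K(z)^{-1}\partial_z\bigl(m_K(z)\partial_z\,\cdot\,\bigr)$ with $m_K(z)=b_K(z)^2e^{-z^2/4}$ (indeed $m_K'/m_K=2B_K-z/2$), is self-adjoint in $L^2(\R_+,m_K\,dz)$ under the Neumann condition, exactly as $A$ is self-adjoint in $L^2_\rho(\R_+^n)$ (Section~\ref{LinearOperator-sec}), so its heat kernel obeys the detailed-balance relation $\gamma_K(y_n,\xi_n,s)\,m_K(y_n)=\gamma_K(\xi_n,y_n,s)\,m_K(\xi_n)$.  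Hence, bounding $\gamma_K(\xi_n,y_n,s)$ by the first assertion of Lemma~\ref{26-lem},
\[
 \gamma_K(y_n,\xi_n,s)=\gamma_K(\xi_n,y_n,s)\frac{b_K(\xi_n)^2e^{-\xi_n^2/4}}{b_K(y_n)^2e^{-y_n^2/4}}
 \le\frac{c_0(1-e^{-1})^{-1/2}}{b_K(y_n)^2e^{-y_n^2/4}}\,b_K(\xi_n)^2e^{-\xi_n^2/4}\qquad(s\ge1).
\]
Since $b_K$ is positive and decreasing with $b_K(0)=1$ we have $b_K\le1$ everywhere, and since $|b_K'/b_K|\le c(K_0)$ uniformly in $K\in(0,K_0)$ (the estimates preceding Lemma~\ref{23-lem}) we get $b_K(y_n)\ge e^{-c(K_0)R}$ for $0<y_n<R$.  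Thus $\gamma_K(y_n,\xi_n,s)\le C(K_0,R)\,b_K(\xi_n)^2e^{-\xi_n^2/4}$, whence
\[
 A_n(y_n,s)\le C(K_0,R)^2\int_0^\infty b_K(\xi_n)^2e^{-\xi_n^2/4}\,d\xi_n\le C(K_0,R)^2\int_0^\infty e^{-\xi_n^2/4}\,d\xi_n=C(K_0,R)^2\sqrt\pi,
\]
uniformly in $y_n\in(0,R)$, $s\ge1$ and $K\in(0,K_0)$.  Combining the two estimates and taking the square root finishes the proof, all constants depending only on $n$, $K_0$ and $R$.  The hard part is thus purely conceptual: recognising that the roles of the evaluation and integration variables in $\gamma_K$ must be exchanged via detailed balance, because the only decay Lemma~\ref{26-lem} supplies time-globally in the $\xi_n$ slot is genuinely insufficient.
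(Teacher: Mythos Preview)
Your proof is correct, and for the $\xi_n$-integral you take a genuinely different route from the paper. The paper handles $A_n(y_n,s)=\int_0^\infty \gamma_K(y_n,\zeta,s)^2 e^{\zeta^2/4}b_K(\zeta)^{-2}\,d\zeta$ by splitting at a fixed radius $3/\theta$: on the bounded piece it uses the uniform bound $\gamma_K\le c_0(1-e^{-s})^{-1/2}$ (first assertion of Lemma~\ref{26-lem}) together with the uniform positive lower bound on $b_K$; on the tail it estimates \emph{one} factor of $\gamma_K$ by the second assertion of Lemma~\ref{26-lem}, $\gamma_K\le c_1\Lambda$, which for $y_n\le e^{s/2}$ and $\zeta\ge 3/\theta$ yields $\gamma_K(y_n,\zeta,s)\le c\,e^{-(\theta\zeta)^2/9}(1-e^{-s})^{-1/2}$, leaving $\int_0^\infty\gamma_K(y_n,\zeta,s)\bigl(e^{-(\theta\zeta)^2/9}e^{\zeta^2/4}b_K(\zeta)^{-2}\bigr)\,d\zeta$; this last integral is then controlled by Lemma~\ref{28-lem}, which exploits the conservation law $\partial_s\int_0^\infty \vartheta_K\,b_K^2e^{-z^2/4}\,dz=0$ combined with a local $L^\infty$ parabolic estimate.

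Your detailed-balance argument replaces this whole chain by a single stroke. The identity $\gamma_K(y_n,\xi_n,s)\,m_K(y_n)=\gamma_K(\xi_n,y_n,s)\,m_K(\xi_n)$ with $m_K=b_K^2e^{-z^2/4}$ is exactly the heat-kernel symmetry for the Sturm--Liouville form $L_K=m_K^{-1}\partial_z(m_K\partial_z\,\cdot)$ of \eqref{w2K-eq}, and once you swap variables the uniform pointwise bound from Lemma~\ref{26-lem} alone suffices. This bypasses the second assertion of Lemma~\ref{26-lem} and all of Lemma~\ref{28-lem}. The trade-off is that you are invoking the self-adjointness of $L_K$ on $L^2(\R_+,m_K\,dz)$ under Neumann conditions, which the paper does not state explicitly in one dimension (it proves the $n$-dimensional analogue for $A$ in the Appendix); this is, however, standard Sturm--Liouville theory and your derivation $m_K'/m_K=2B_K-z/2$ is correct. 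Overall your approach is shorter and more conceptual, while the paper's approach stays entirely within the kernel estimates it has already built and avoids invoking the symmetry.
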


\begin{proof}
We note from \eqref{Gets1-eq} that
\[
\begin{array}{l}
\dis
 \int_{\R_+^n}\Gamma_K(y,\xi,s)|w_0(\xi)|d\xi
\leq
 \|w_0\|_{L_{K,\rho}^2(\R_+^n)}
 \left(
 \int_{\R_+^n}\gamma(y',\xi',s)^2
 \right.
\\ \dis \hspace{60mm}
 \left.
\times
 \gamma_K(y_n,\xi_n,s)^2
 \left( \frac{e^{|\xi|^2/8}}{b_K(\xi_n)} \right)^2
 d\xi
 \right)^{1/2}.
\end{array}
\]
Then by using \eqref{Gets2-eq},
we get
\[
\begin{array}{l}
\dis
 \int_{\R_+^n}
 \gamma(y',\xi',s)^2\gamma_K(y_n,\xi_n,s)^2
 \left( \frac{e^{|\xi|^2/8}}{b_K(\xi_n)} \right)^2d\xi
\leq
 c\left( \frac{1}{1-e^{-s}} \right)^{(n-1)/2}
\\[4mm] \dis \hspace{40mm}
\times
 \exp\left( \frac{e^{-s}|y'|^2}{2(1+e^{-s})} \right)
 \int_0^{\infty}
 \gamma_K(y_n,\zeta,s)^2
 \left( \frac{e^{\zeta^2/4}}{b_K(\zeta)^2} \right)
 d\zeta.
\end{array}
\]
Fix $K_0>1$ and
let $\theta\in(0,1)$ be as in Lemma \ref{26-lem},
which is defined by $(1-\theta)/\theta=\sqrt{2}$.
Then since $b_K(\zeta)$ is positive on $\R_+^n$ with $b_K(0)=1$,
there exists $c=c(K_0)>0$ such that
\[
 \sup_{K\in(0,K_0)}\sup_{\zeta\in(0,3/\theta)}b_K(\zeta)^{-1}\leq c.
\]
Therefore
from the first statement of Lemma \ref{26-lem},
we obtain
\[
 \int_0^{3/\theta}
 \gamma_K(y_n,\zeta,s)^2\left( \frac{e^{\zeta^2/4}}{b_K(\zeta)^2} \right)d\zeta
\leq
 \frac{c}{(1-e^{-s})}.
\]
Next
we estimate the rest of integral over $(3/\theta,\infty)$.
Here
we note that
\[
 |y_ne^{-s/2}-\theta\zeta| = \theta\zeta-y_ne^{-s/2} \geq \theta\zeta-\frac{\theta\zeta}{3}
= \frac{2}{3}\theta\zeta
\]
for $y_n\in(0,e^{s/2})$ and $\zeta\in(3/\theta,\infty)$.
Then
we apply the second statement of Lemma \ref{26-lem} to obtain
\begin{equation*}
\begin{array}{lll}
\dis
 \int_{3/\theta}^{\infty}
 \gamma_K(y_n,\zeta,s)^2
 \left( \frac{e^{|\zeta|^2/4}}{b_K(\zeta)^2} \right)
 d\zeta
\hspace{-2mm}&=&\hspace{-2mm} \dis
 \frac{c}{(1-e^{-s})^{1/2}}
 \int_{3/\theta}^{\infty}\gamma_K(y_n,\zeta,s)
\\[2mm]
\hspace{-2mm}&&\hspace{6mm} \dis
 \times
 \exp\left( -\frac{(\theta\zeta)^2}{9(1-e^{-s})} \right)
 \left( \frac{e^{\zeta^2/4}}{b_K(\zeta)^2} \right)
 d\zeta
\\[5mm]
\hspace{-2mm}&&\hspace{-50mm} \dis
\leq
 \frac{c}{(1-e^{-s})^{1/2}}
 \int_0^{\infty}\gamma_K(y_n,\zeta,s)
 \left(
 e^{-(\theta\zeta)^2/9}
 \left( \frac{e^{\zeta^2/4}}{b_K(\zeta)^2} \right)
 \right)
 d\zeta
\end{array}
\end{equation*}
for $y_n\in(0,e^{s/2})$ and $s\geq1$.
Here applying Lemma \ref{28-lem} with
$w_0(\zeta)=e^{-(\theta\zeta)^2/9}e^{\zeta^2/4}/b_K(\zeta)^2$
and combining the above estimates,
we obtain the desired estimate.
\end{proof}

\begin{lem}\label{28-lem}
For any $M,R,K_0>1$
there exists $c=c(M,R,K_0)>0$ such that
if
\[
 \int_0^{\infty}|w_0(\zeta)|b_K(\zeta)^2e^{-\zeta^2/4}d\zeta\leq M,
\]
then it holds that
\[
 \int_0^{\infty}\gamma_K(z,\zeta,s)|w_0(\zeta)|d\zeta \leq c
\hspace{5mm}\mathrm{for}\ z\in(0,R),\ s\geq1.
\]
\end{lem}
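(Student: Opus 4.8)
The plan is to exploit the self-adjoint structure of the one-dimensional operator appearing in \eqref{w2K-eq}. Writing $L_K\vartheta = \vartheta_{zz} - \frac{z}{2}\vartheta_z + 2B_K(z)\vartheta_z$, one has $L_K\vartheta = w_K^{-1}(w_K\vartheta_z)_z$ with the weight $w_K(z):=b_K(z)^2 e^{-z^2/4}$, since $w_K'(z)/w_K(z) = 2B_K(z) - z/2$; together with the homogeneous Neumann condition $\pa_\nu\vartheta=0$ at $z=0$ (where $w_K(0)=1$) this makes $L_K$ self-adjoint on $L^2(\R_+,w_K\,dz)$, the boundary term in the integration by parts vanishing. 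By Lemma \ref{23-lem} the coefficient $B_K$ is bounded uniformly in $K\in(0,K_0)$, so $w_K$ is a genuine non-degenerate and integrable weight, and the heat kernel $\gamma_K$ of \eqref{w2K-eq} inherits the symmetry
\[
 \gamma_K(z,\zeta,s)\,b_K(z)^2 e^{-z^2/4}
 =
 \gamma_K(\zeta,z,s)\,b_K(\zeta)^2 e^{-\zeta^2/4}
\hh\mathrm{for}\ z,\zeta\in\R_+,\ s>0.
\]

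Granting this, the estimate is immediate. The first statement of Lemma \ref{26-lem} gives $\gamma_K(\zeta,z,s)\leq c_0(K_0)(1-e^{-1})^{-1/2}$ for all $z,\zeta\in\R_+$, $s\geq1$ and $K\in(0,K_0)$; on the other hand, since $(\log b_K)'=B_K$ is bounded in absolute value by some $c_1(K_0)$ (Lemma \ref{23-lem}) and $b_K(0)=1$, we have $b_K(z)\geq e^{-c_1(K_0)R}$, hence $b_K(z)^2 e^{-z^2/4}\geq e^{-2c_1(K_0)R-R^2/4}$, for $z\in(0,R)$. Combining with the symmetry,
\[
 \gamma_K(z,\zeta,s)
 =
 \gamma_K(\zeta,z,s)\,\frac{b_K(\zeta)^2 e^{-\zeta^2/4}}{b_K(z)^2 e^{-z^2/4}}
 \leq
 c_0(K_0)(1-e^{-1})^{-1/2}\,e^{\,2c_1(K_0)R+R^2/4}\,b_K(\zeta)^2 e^{-\zeta^2/4}
\]
for $z\in(0,R)$, $\zeta\in\R_+$, $s\geq1$ and $K\in(0,K_0)$. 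Multiplying by $|w_0(\zeta)|$, integrating over $\zeta\in\R_+$, and invoking the hypothesis $\int_0^\infty|w_0(\zeta)|b_K(\zeta)^2 e^{-\zeta^2/4}\,d\zeta\leq M$ then gives $\int_0^\infty\gamma_K(z,\zeta,s)|w_0(\zeta)|\,d\zeta\leq c(M,R,K_0)$ for $z\in(0,R)$ and $s\geq1$, which is the assertion.

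The only point requiring genuine work is the kernel symmetry displayed above; this is also the reason the much weaker pointwise bound $\gamma_K\leq c_1\Lambda$ of Lemma \ref{26-lem} (whose Gaussian decay rate $\theta^2\zeta^2/4$ is strictly slower than the $\zeta^2/4$ one needs here) does not suffice on its own. I would justify the symmetry either abstractly — the heat kernel of a self-adjoint operator on $L^2(\R_+,w_K\,dz)$ becomes symmetric after dividing out $w_K$, and $L_K$ with Neumann boundary condition is such an operator — or directly, by checking via the Levi representation $\gamma_K(z,\zeta,s)=G_K(e^{-s/2}z,\zeta,1-e^{-s},0)$ from Section \ref{Linearbackward-sec} that the function $(z,\zeta,s)\mapsto \gamma_K(\zeta,z,s)\,b_K(\zeta)^2 e^{-\zeta^2/4}/(b_K(z)^2 e^{-z^2/4})$ solves \eqref{w2K-eq} in $(z,s)$ for each fixed $\zeta$ with the same Dirac initial datum at $s=0$, so that uniqueness of the fundamental solution forces it to coincide with $\gamma_K(z,\zeta,s)$. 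I expect the abstract route to be the cleaner one, and everything else to be routine once the symmetry is in hand.
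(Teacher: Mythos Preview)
Your argument is correct and takes a genuinely different route from the paper. The paper exploits the same divergence structure $L_K\vartheta = w_K^{-1}(w_K\vartheta_z)_z$ only in its weakest form: multiplying \eqref{w2K-eq} by $w_K(z)=b_K(z)^2e^{-z^2/4}$ and integrating yields conservation of the weighted $L^1$-mass, $\pa_s\int_0^\infty \vartheta_K\,w_K\,dz=0$, where $\vartheta_K(z,s)=\int_0^\infty\gamma_K(z,\zeta,s)|w_0(\zeta)|\,d\zeta$. This gives a uniform-in-$s$ bound on $\int_0^{R+1}\vartheta_K(z,s)\,dz$, and the paper then invokes a local parabolic $L^\infty$-estimate to pass from this $L^1_{\loc}$ control to the desired $L^\infty_{\loc}$ bound. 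You instead use the full heat-kernel symmetry coming from self-adjointness to get a pointwise bound $\gamma_K(z,\zeta,s)\leq C(K_0,R)\,w_K(\zeta)$ for $z\in(0,R)$ directly from the first statement of Lemma~\ref{26-lem}, bypassing parabolic regularity entirely. Your route is sharper (it yields a pointwise kernel estimate) and arguably cleaner; the paper's route is more elementary in that it requires only the integrated identity, which is a one-line integration by parts, rather than the full symmetry of $\gamma_K$. One small remark: Lemma~\ref{23-lem} as stated bounds $B_K$ from above rather than $|B_K|$; the two-sided bound you use for $b_K(z)\geq e^{-c_1R}$ is nonetheless established in the computations preceding that lemma, and the uniform lower bound on $b_K$ over $(0,R)$ for $K\in(0,K_0)$ is in any case used explicitly in the proof of Lemma~\ref{27-lem}.
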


\begin{proof}
We set
\[
 \vartheta_K(z,s)
=
 \int_0^{\infty}\gamma_K(z,\zeta,s)|w_0(\zeta)|d\zeta.
\]
Then
$\vartheta_K(z,s)$ is a solution of \eqref{w2K-eq} with the initial data $|w_0(z)|$.
Multiplying \eqref{w2K-eq} by $b_K(z)^2e^{-z^2/4}$ and integrating over $(0,\infty)$,
we obtain
\[
 \pa_s
 \int_0^{\infty}\vartheta_K(z,s)b_K(z)^2e^{-z^2/4}dz = 0.
\]
Hence
by assumption and $\vartheta_K\geq0$,
it follows that
\[
 \int_0^{\infty}|\vartheta_K(z,s)|b_K(z)^2e^{-z^2/4}dz
=
 \int_0^{\infty}|w_0(z)|b_K(z)^2e^{-z^2/4}dz
\leq M.
\]
As a consequence,
for any $R>1$
there exists $c=c(R)>0$ such that
\[
 \sup_{s\in(0,\infty)}\int_0^{R+1}|\vartheta_K(z,s)|dz \leq cM.
\]
Hence by a local $L^{\infty}$-estimate for parabolic equations,
we obtain
\[
 \sup_{K\in(0,K_0)}\sup_{z\in(0,R)}|\vartheta_K(z,s)|
\leq
 c\sup_{K\in(0,K_0)}\int_{s-1/2}^{s+1/2}d\mu\int_0^{R+1}|\vartheta_K(z,\mu))|dz
\leq cM
\]
for $s\geq1$,
which completes the proof.
\end{proof}

\subsection{Representation formula}

Here
we provide a representation formula of solutions of
\begin{equation}\label{v12-eq}
\begin{cases}
\dis
 \pa_sv = \Delta v - \frac{y}{2}\cdot\nabla v - mv + g_1(y,s)
& \text{in }\R_+^n\times(0,\infty),
\\ \dis
 \pa_{\nu}v = Kv + g_2(y',s)
& \text{on }\pa\R_+^n\times(0,\infty),
\\ \dis
 v(y,0) = v_0(y)
& \text{in }\R_+^n,
\end{cases}
\end{equation}
where $K$ is a positive constant.
Let $b_K(y_n)$ and $\mu_K $ be introduced in Section \ref{Linearbackward-sec}
and set $w_K(y,s)=v(y,s)/b_K(y_n)$.
Then since $b_K(0)=1$,
we easily see that $w_K(y,s)$ solves
\begin{equation}\label{w_K-eq}
\begin{cases}
\dis
 \pa_sw_K
=
 \Delta w_K - \frac{y}{2}\cdot\nabla w_K + 2B_K(y_n)\pa_nw_K
&
\\[-2mm] \dis \hspace{50mm}
 -(m+\mu_K)w_K + \frac{g_1(y,s)}{b_K(y_n)}
& \text{in }\R_+^n\times(0,\infty),
\\[-1mm] \dis
 \pa_{\nu}w_K = g_2(y',s)
& \text{on }\pa\R_+^n\times(0,\infty),
\\ \dis
 w_K(y,0) = v_0(y)/b_K(y_n)
& \text{in }\R_+^n.
\end{cases}
\end{equation}
Define
\[
\begin{array}{c}
\dis
 P_1(y,s) =
 \int_0^se^{-(m+\mu_K)\mu}
 G_0(e^{-\mu/2}y_n,0,\mu)
 \left( \int_{\R^{n-1}}\gamma(y',\xi',\mu)g_2(\xi',s-\mu)d\xi' \right)
 d\mu,
\\[5mm] \dis
 P_2(y,s) =
 \int_0^se^{-(m+\mu_K)\mu}
 {\cal G}_K(e^{-\mu/2}y_n,0,\mu,0)
 \left( \int_{\R^{n-1}}\gamma(y',\xi',\mu)g_2(\xi',s-\mu)d\xi' \right)
 d\mu.
\end{array}
\]
Then it is known that (cf. p.\hspace{1mm}173 in \cite{Deng-F-L})
\[
 \lim_{y_n\to0}\pa_nP_1(y,s) = -g_2(y',s).
\]
On the other hand,
from \eqref{GGG-eq} and \eqref{GGG2-eq},
we see that
\[
 \lim_{y_n\to0}\pa_nP_2(y,s) = 0.
\]
Hence
by definition of $\Gamma_K(y,\xi,s,\mu)$,
we find that
\[
\begin{array}{l}
\dis\hspace{-10mm}
 \lim_{y_n\to0}
 \pa_n\left(
 \int_0^se^{-(m+\mu_K)(s-\mu)}d\mu\int_{\pa\R_+^n}
 \Gamma_K(y,\xi',s-\mu)g_2(\xi',\mu)d\xi'
 \right)
\\[5mm] \dis \dis\hspace{50mm}
=
 \lim_{y_n\to0}\pa_n( P_1(y,s)+P_2(y,s) )
=
 -g_2(y',s).
\end{array}
\]
Due to this fact,
we obtain a representation formula of a solution of \eqref{w_K-eq}.
\[
\begin{array}{l}
\dis
 w_K(y,s)
=
 e^{-(m+\mu_K)s}\int_{\R_+^n}\Gamma_K(y,\xi,s)
 \left( \frac{v_0(\xi)}{b_K(\xi_n)} \right)
 d\xi
\\[4mm] \dis \hspace{25mm}
+
 \int_0^se^{-(m+\mu_K)(s-\mu)}d\mu\int_{\R_+^n}
 \Gamma_K(y,\xi,s-\mu)
 \left( \frac{g_1(\xi,\mu)}{b_K(\xi_n)} \right)d\xi
\\[4mm] \dis \hspace{45mm}
+
 \int_0^se^{-(m+\mu_K)(s-\mu)}d\mu\int_{\pa\R_+^n}
 \Gamma_K(y,\xi',s-\mu)g_2(\xi',\mu)d\xi'.
\end{array}
\]
For simplicity of notations,
we define
\[
\begin{array}{c}
\dis
 {\cal S}_K(s)w_0
=
\int_{\R_+^n}\Gamma_K(\cdot,\xi,s)w_0(\xi)d\xi,
\\ \dis
 {\cal T}_K(s,\mu)h
=
 \int_{\pa\R_+^n}
 \Gamma_K(\cdot,\xi',s-\mu)h(\xi')d\xi'.
\end{array}
\]
Since $w_K(y,s)=v(y,s)/b_K(y_n)$,
a representation formula of a solution of \eqref{v12-eq} is given by
\begin{equation}\label{representation-eq}
\begin{array}{l}
\dis
 \left( \frac{v(s)}{b_K} \right)
=
 e^{-(m+\mu_K)s}{\cal S}_K(s)\left( \frac{v_0}{b_K} \right)
+
 \int_0^se^{-(m+\mu_K)(s-\mu)}
\\[5mm] \dis \hspace{30mm}
\times
 \left(
 {\cal S}_K(s-\mu)\left( \frac{g_1(\mu)}{b_K} \right)
 +
 {\cal T}_K(s,\mu)g_2(\mu)
 \right)
 d\mu.
\end{array}
\end{equation}

\subsubsection{Comparison argument}

Here
we provide a pointwise estimate of solutions of \eqref{v123-eq}
by using a representation formula \eqref{representation-eq}.
\begin{equation}\label{v123-eq}
\begin{cases}
\dis
 \pa_sv \leq \Delta v - \frac{y}{2}\cdot\nabla v - mv + g_1(y,s)
& \text{in }\R_+^n\times(0,\infty),
\\ \dis
 \pa_{\nu}v \leq K(y',s)v + g_2(y',s)
& \text{on }\pa\R_+^n\times(0,\infty),
\\ \dis
 v(y,0) \leq v_0(y)
& \text{in }\R_+^n.
\end{cases}
\end{equation}
For the case $K(y',s)\equiv K_0$ with some positive constant $K_0$,
by using a solution of \eqref{v12-eq} as a comparison function,
we obtain from \eqref{representation-eq}
\begin{equation}\label{representation2-eq}
\begin{array}{l}
\dis
 \left( \frac{v(s)}{b_{K_0}} \right)
\leq
 e^{-(m+\mu_{K_0})s}
 {\cal S}_{K_0}(s)\left( \frac{v_0}{b_{K_0}} \right)
 +
 \int_0^se^{-(m+\mu_{K_0})(s-\mu)}
\\[4mm] \dis \hspace{30mm}
\times
 \left(
 {\cal S}_{K_0}(s-\mu)\left( \frac{g_1(\mu)}{b_{K_0}} \right)
 +
 {\cal T}_{K_0}(s,\mu)g_2(\mu)
 \right)
 d\mu.
\end{array}
\end{equation}
For the case $K(y',s)\in L^{\infty}(\pa\R_+^n\times(0,\infty))$,
we use the following solution as a comparison function.
\[
\begin{cases}
\dis
 \pa_s\bar{v} = \Delta\bar{v} - \frac{y}{2}\cdot\nabla\bar{v} - m\bar{v} + |g_1(y,s)|
& \text{in }\R_+^n\times(0,\infty),
\\ \dis
 \pa_{\nu}\bar{v} = K_0\bar{v} + |g_2(y',s)|
& \text{on }\pa\R_+^n\times(0,\infty),
\\ \dis
 \bar{v}(y,0) = |v_0(y)|
& \text{in }\R_+^n,
\end{cases}
\]
where $K_0=\|K(y',s)\|_{L^{\infty}(\pa\R_+^n\times(0,\infty))}$.
By a comparison argument,
it is easily shown that $\bar{v}\geq0$.
Hence 
it follows that
\[
 \pa_{\nu}(\bar{v}-v)
=
 K(y',s)(\bar{v}-v) + (K_0-K(y'))\bar{v}
\geq
 K(y',s)(\bar{v}-v).
\]
Then
by using a comparison argument again,
we get
\[
 v(y,s) \leq \bar{v}(y,s).
\]
Therefore
we apply \eqref{representation-eq} to obtain
\begin{equation}
\label{representation3-eq}
\begin{array}{l}
\dis
 \left( \frac{v(s)}{b_{K_0}} \right)
\leq
 e^{-(m+\mu_{K_0})s}
 {\cal S}_{K_0}(s)\left( \frac{|v_0|}{b_{K_0}} \right)
+
 \int_0^se^{-(m+\mu_{K_0})(s-\mu)}
\\[4mm] \dis \hspace{30mm}
\times
 \left(
 {\cal S}_{K_0}(s-\mu)\left( \frac{|g_1(\mu)|}{b_{K_0}} \right)
 +
 {\cal T}_{K_0}(s,\mu)|g_2(\mu)|
 \right)
 d\mu.
\end{array}
\end{equation}
%
%
\section{Dynamical system approach}
\label{DynamicalSystemApproach-sec}
%
%
In this section,
we study the asymptotic behavior of solutions of \eqref{(B2)-eq}.
Our argument is based on the argument in \cite{Filippas-K}.
By using Lemma \ref{Compact-lem},
we slightly simplify their arguments.
Let $v(y,s)$ be a bounded solution of \eqref{(B2)-eq}
converging to zero in $L_{\rho}^2(\R_+^n)$ as $s\to\infty$.
We expand $v(y,s)$ by eigenfunctions of \eqref{eigene-eq}:
\[
 v(s) = \sum_{\alpha}a_{\alpha}(s)E_\alpha
\hspace{5mm}\text{in } L_{\rho}^2(\R_+^n),
\]
where $a_{\alpha}(s)=(v(s),E_\alpha)_{\rho}$.
We denote by $\pi_{\sf s}$, $\pi_{\sf n}$ and $\pi_{\sf u}$
projection operators onto the subspace spanned by
eigenfunctions of ($-A$) with the positive eigenvalue,
the zero eigenvalue and the negative eigenvalue
and set
\[
 v_{\sf s}=\pi_{\sf s}v,
\hspace{5mm}
 v_{\sf n}=\pi_{\sf n}v,
\hspace{5mm}
 v_{\sf u}=\pi_{\sf u}v.
\]
The subspace $\pi_{\sf s}L_{\rho}^2(\R_+^n)$ is infinite dimension
and
$\pi_{\sf u}L_{\rho}^2(\R_+^n)$, $\pi_{\sf n}L_{\rho}^2(\R_+^n)$
are finite dimension.
First
we prepare the following elementary lemma,
which is useful in this section.
We put
\[
 \Omega = \{y=(y',y_n)\in\R^{n-1}\times(0,1)\}.
\]

\begin{lem}\label{31-lem}
There exists $c>0$ such that
\[
 \int_{\pa\R_+^n}g(y')v^2\rho\hspace{0.5mm}dy'
\leq
 c\int_{\Omega}
 g(y')\left( v^2+v|\nabla v| \right)\rho\hspace{0.5mm}dy
\hspace{5mm}\mathrm{for}\ v\in H_{\rho}^1(\R_+^n).
\]
\end{lem}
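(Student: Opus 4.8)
The plan is to reduce the boundary integral to a bulk integral over the slab $\Omega$ by a standard trace‑type argument, but carried out with the Gaussian weight $\rho(y)=e^{-|y|^2/4}$ kept explicit throughout. First I would observe that it suffices to bound $\int_{\pa\R_+^n}g(y')v(y',0)^2\rho(y',0)\,dy'$, where I write $v(y',0)$ for the trace on $\{y_n=0\}$; since $\rho(y',0)=e^{-|y'|^2/4}$ and $\rho(y',y_n)=e^{-|y'|^2/4}e^{-y_n^2/4}$, the weight factorizes and the $e^{-y_n^2/4}$ factor is comparable to $1$ on the slab $0<y_n<1$. So the weight in $y'$ is simply a spectator that multiplies both sides, and I only need the one‑dimensional identity in the $y_n$ direction for a.e.\ fixed $y'$.

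The key step is the elementary identity, for $w\in H^1(0,1)$,
\[
 w(0)^2 = -\int_0^1 \frac{d}{dy_n}\!\left( (1-y_n)\, w(y_n)^2 \right) dy_n
 = \int_0^1 w(y_n)^2\,dy_n - \int_0^1 (1-y_n)\,2w(y_n)w'(y_n)\,dy_n,
\]
which gives $w(0)^2 \le \int_0^1 \big( w^2 + 2|w|\,|w'| \big)\,dy_n$. I would apply this with $w(y_n)=v(y',y_n)$ and $w'(y_n)=\pa_n v(y',y_n)$, noting $|\pa_n v|\le|\nabla v|$. Multiplying by $g(y')e^{-|y'|^2/4}$ and integrating over $y'\in\R^{n-1}$, then using $e^{-|y'|^2/4}\le c\, e^{-|y'|^2/4}e^{-y_n^2/4}=c\,\rho(y)$ for $y_n\in(0,1)$ on the right‑hand side, yields
\[
 \int_{\pa\R_+^n} g(y')\,v^2\,\rho\,dy'
 \le c\int_{\Omega} g(y')\big( v^2 + v|\nabla v| \big)\,\rho\,dy,
\]
which is exactly the claimed inequality (up to the harmless factor $2$ absorbed into $c$); here one uses that $g\ge0$ so the inequality is preserved under integration, and that $v\,|\nabla v|$ should be read as $|v|\,|\nabla v|$.

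I would also remark that the argument requires only $v\in H^1_\rho(\R_+^n)$ together with the standard density of smooth functions with bounded support in the weighted space, so that the trace is well defined and the fundamental‑theorem‑of‑calculus step is legitimate; this is routine. The only mild subtlety — the "main obstacle," though it is minor — is making sure the weight is handled on the correct side: one wants $\rho$ replaced by the pure $y'$‑Gaussian when integrating the boundary term, and then re‑inserted as the full $\rho$ (which is \emph{smaller}) on the slab, so the direction of the inequality is the favorable one. Choosing the linear cutoff $1-y_n$ (rather than a sharp cutoff) keeps the boundary term clean and produces no extra terms at $y_n=1$. No compactness or spectral input is needed; this is purely a weighted trace estimate.
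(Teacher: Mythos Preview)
Your proposal is correct and follows essentially the same approach as the paper: both arguments write $v(y',0)^2$ as $-\int_0^1 \partial_n\big(\chi(y_n)v(y',y_n)^2\big)\,dy_n$ for a cutoff $\chi$ with $\chi(0)=1$, $\chi(1)=0$, then multiply by $g(y')\rho(y')$ and use $\rho(y')\le e^{1/4}\rho(y)$ on the slab $0<y_n<1$. The only cosmetic difference is that the paper takes $\chi=\eta_1$ a smooth cutoff, while you take the linear $\chi(y_n)=1-y_n$; both give the same estimate.
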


\begin{proof}
Let $\eta_1(y_n)$ be a cut off function such that
$\eta_1(y_n)=1$ if $y_n\in(0,1/2)$ and $\eta_1(y_n)=0$ if $y_n\geq1$.
Then
it is verified that
\begin{eqnarray*}
 \int_{\pa\R_+^n}g(y')v^2\rho\hspace{0.5mm}dy'
\hspace{-2mm}&=&\hspace{-2mm}
 -\int_{\R^{n-1}}g(y')\rho\hspace{0.5mm}dy'
 \int_0^1\pa_n\left( v^2\eta_1(y_n) \right)dy_n
\\
\hspace{-2mm}&\leq&\hspace{-2mm}
 c\int_0^1dy_n\int_{\R^{n-1}}
 g(y')\left( v^2+v|\nabla v| \right)\rho\hspace{0.5mm}dy'.
\end{eqnarray*}
Since $\rho(y')|_{\pa\R_+^n}=e^{-|y'|^2/4}$,
it is clear that $\rho(y)=e^{-y_n^2/4}\rho(y')$ for $y\in\R_+^n$.
This implies that $\rho(y')\leq e^{1/4}\rho(y)$ for $y_n\leq1$.
Therefore we complete the proof.
\end{proof}

Now we state a main result in this section.

\begin{pro}\label{31-pro}
One of the following two cases holds.
\\[4mm]
$({\rm I})$
$\dis\lim_{s\to\infty}
 \left( \frac{\|v_{\sf s}(s)\|_{\rho}+\|v_{\sf u}(s)\|_{\rho}}{\|v_{\sf n}(s)\|_{\rho}} \right)=0$,
\\[4mm]
$({\rm II})$ $\|v(s)\|_{\rho}$ decays to zero exponentially.
\end{pro}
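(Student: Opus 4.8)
The plan is to treat \eqref{(B2)-eq} as a small nonlinear perturbation of the linear flow generated by $A$ and to carry out the three--mode analysis of Filippas--Kohn \cite{Filippas-K}. First I would expand $v(s)=\sum_\alpha a_\alpha(s)E_\alpha$ with $a_\alpha(s)=(v(s),E_\alpha)_\rho$; pairing \eqref{(B2)-eq} with $E_\alpha$ in $L_\rho^2(\R_+^n)$ and integrating by parts, the Robin conditions $\pa_\nu E_\alpha=qB^{q-1}E_\alpha$ and $\pa_\nu v=qB^{q-1}v+f(v)$ cancel their linear parts and one is left with the scalar equations
\[
 \frac{d}{ds}a_\alpha=-\lambda_\alpha a_\alpha+b_\alpha(s),\qquad
 b_\alpha(s)=-\int_{\pa\R_+^n}f(v)\,E_\alpha\,\rho\,dy'.
\]
By Lemma \ref{22-lem} and the product formula $\lambda_\alpha=\tfrac12\sum_{i=1}^{n-1}\alpha_i+\kappa_{\alpha_n}+m$, the eigenvalues of $(-A)$ fall into three groups: the negative ones $-1$ and $-1/2$ (finite multiplicity), the zero eigenvalue (finite multiplicity), and the rest, which are bounded below by some $\gamma>0$. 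Thus $\pi_{\sf u}L_\rho^2(\R_+^n)$ and $\pi_{\sf n}L_\rho^2(\R_+^n)$ are finite dimensional, $\pi_{\sf s}L_\rho^2(\R_+^n)$ is the infinite dimensional complement, and I write $A_{\sf u}=\|v_{\sf u}(s)\|_\rho$, $A_{\sf n}=\|v_{\sf n}(s)\|_\rho$, $A_{\sf s}=\|v_{\sf s}(s)\|_\rho$.

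Next I would derive the basic differential inequalities. Since $\sum_{\alpha\in{\sf s}}a_\alpha b_\alpha=-\int_{\pa\R_+^n}f(v)\,v_{\sf s}\,\rho\,dy'$, and likewise for ${\sf n}$, ${\sf u}$, the scalar equations together with the spectral gaps yield
\[
 \frac{d}{ds}A_{\sf s}\le-\gamma A_{\sf s}+\frac{I_{\sf s}}{A_{\sf s}},\qquad
 \Big|\frac{d}{ds}A_{\sf n}\Big|\le\frac{I_{\sf n}}{A_{\sf n}},\qquad
 \frac{d}{ds}A_{\sf u}\ge\tfrac12 A_{\sf u}-\frac{I_{\sf u}}{A_{\sf u}},
\]
where $I_{\sf *}(s)=\big|\int_{\pa\R_+^n}f(v)\,v_{\sf *}\,\rho\,dy'\big|$. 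The point is now to show that each $I_{\sf *}/A_{\sf *}$ is superlinearly small. Using the quadratic bound $|f(v)|\le c|v|^2$ on $\pa\R_+^n$ from \eqref{f(v)regularity-eq}, H\"older's inequality on $\pa\R_+^n$, and the trace/thin--slab estimate of Lemma \ref{31-lem} (which transfers boundary norms of $v$ and $v_{\sf *}$ to $H_\rho^1$--norms over the slab $\Omega$), one bounds $I_{\sf *}$ by a product of two factors carrying $v$ and one carrying $v_{\sf *}$; weighted parabolic regularity (Lemma \ref{Compact-lem}) then replaces these $H_\rho^1$--norms at time $s$ by $\sup_{[s-1,s]}\|v\|_\rho$, resp.\ $\sup_{[s-1,s]}\|v_{\sf *}\|_\rho$, and since $\|v(s)\|_\rho\to0$ this produces
\[
 \frac{I_{\sf *}(s)}{A_{\sf *}(s)}\le\varepsilon(s)\sup_{\sigma\in[s-1,s]}\big(A_{\sf u}(\sigma)+A_{\sf n}(\sigma)+A_{\sf s}(\sigma)\big),
 \qquad\varepsilon(s)\to0.
\]

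With $A_{\sf u}+A_{\sf n}+A_{\sf s}\to0$, $\varepsilon(s)\to0$, and the three inequalities above, we are exactly in the setting of the ordinary--differential--inequality lemma of Filippas--Kohn \cite{Filippas-K} (see also Merle--Zaag \cite{Merle-Z}); its conclusion is precisely the dichotomy of the Proposition: either $A_{\sf u}(s)+A_{\sf s}(s)=o(A_{\sf n}(s))$, which is case~(I), or $A_{\sf u}+A_{\sf n}+A_{\sf s}$, and hence $\|v(s)\|_\rho$, decays exponentially, which is case~(II); the alternative that the unstable part dominates is ruled out inside that lemma, since the third inequality would then force $A_{\sf u}$ to grow exponentially, contradicting $v(s)\to0$. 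I expect the genuinely delicate point to be not the ODE dichotomy but the estimate of the terms $I_{\sf *}$: because the nonlinearity of \eqref{(B2)-eq} sits on $\pa\R_+^n$ rather than in the interior, it must be moved inside through Lemma \ref{31-lem} and then absorbed with the help of weighted parabolic regularity (Lemma \ref{Compact-lem}), and one has to spend the quadratic vanishing of $f$ carefully so that the coefficient in front of $\sup_{[s-1,s]}(A_{\sf u}+A_{\sf n}+A_{\sf s})$ genuinely tends to zero; the time averaging appearing there is harmless for the ODE lemma.
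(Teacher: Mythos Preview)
Your high-level strategy matches the paper's: derive differential inequalities for $\|v_{\sf u}\|_\rho$, $\|v_{\sf n}\|_\rho$, $\|v_{\sf s}\|_\rho$, show the nonlinear remainders are $o(1)\cdot\|v\|_\rho^2$, and invoke Lemma~3.1 of \cite{Filippas-K}. The gap is in how you propose to estimate the boundary terms $I_{\sf *}=\bigl|\int_{\pa\R_+^n} f(v)\,v_{\sf *}\,\rho\,dy'\bigr|$.

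First, Lemma~\ref{Compact-lem} is not a parabolic regularity statement; it is the purely spatial inequality $\int |y_i|^2 u^2\rho\,dy\le 16\|\pa_i u\|_\rho^2+4\|u\|_\rho^2$, and it cannot convert an $H_\rho^1$-norm at time $s$ into $\sup_{[s-1,s]}\|v\|_\rho$. No smoothing estimate of that type is established in the paper, and for a nonlinear Robin problem on the half-space with growing drift it would require independent work. Second, and more importantly, your smallness factor $\varepsilon(s)$ cannot be extracted from $\|v(s)\|_\rho\to0$: the nonlinearity lives on $\pa\R_+^n$, and a weighted $L^2$-norm does not control an $L^\infty$-type quantity on the unbounded boundary.

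The paper gets around both issues by a weight-splitting trick and the uniform $C_{\loc}$ convergence of Theorem~\ref{Ua-thm}. One writes
\[
\int_{\pa\R_+^n} f(v)|v_{\sf a}|\rho\,dy'\le\epsilon\int_{\pa\R_+^n}(1+|y'|^2)^{1/2}v_{\sf a}^2\rho\,dy'+\epsilon^{-1}\int_{\pa\R_+^n}(1+|y'|^2)^{-1/2}f(v)^2\rho\,dy'.
\]
The first integral is $\le c\|v_{\sf a}\|_{H_\rho^1}^2$ by Lemma~\ref{31-lem} and Lemma~\ref{Compact-lem}. For ${\sf a}\in\{{\sf u},{\sf n}\}$ this is $\le c\|v_{\sf a}\|_\rho^2$ by finite dimensionality; for ${\sf a}={\sf s}$ it must instead be absorbed into the full dissipation $-\|\nabla v_{\sf s}\|_\rho^2-m\|v_{\sf s}\|_\rho^2+qB^{q-1}\|v_{\sf s}\|_{L_\rho^2(\pa\R_+^n)}^2$ via a continuity argument on the Rayleigh quotient for the smallest positive eigenvalue $\lambda_*$---this absorption step is missing from your outline and cannot be replaced by a bound of $\|v_{\sf s}\|_{H_\rho^1}$ in terms of $\|v_{\sf s}\|_\rho$. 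The second integral, after Lemma~\ref{31-lem}, $|f(v)|\le cv^2$, and the uniform gradient bound of Lemma~\ref{21-lem}, becomes $cM(s)\|v\|_\rho^2$ with
\[
M(s)=\sup_{y\in\Omega}(1+|y'|^2)^{-1/2}|v(y,s)|.
\]
The crucial point is that $M(s)\to0$ because $v\to0$ \emph{locally uniformly} (Theorem~\ref{Ua-thm}) while the weight $(1+|y'|^2)^{-1/2}$ together with the global $L^\infty$ bound on $v$ handles the far field. No time averaging and no $L_\rho^2\to H_\rho^1$ smoothing is used; the coefficient that tends to zero is a weighted $L^\infty$ quantity, not an $L_\rho^2$ quantity.
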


\begin{proof}
Multiplying \eqref{(B2)-eq} by $v_{\sf u}$, $v_{\sf n}$ and $v_{\sf s}$
respectively,
then
we verify that
\[
\begin{array}{c}
\dis
 \frac{1}{2}\pa_s\|v_{\sf a}\|_{\rho}^2
=
 - m\|v_{\sf a}\|_{\rho}^2 + \int_{\R_+^n}v\nabla(\rho\nabla v_{\sf a})dy
+
 \int_{\pa\R_+^n}f(v)v_{\sf a}\rho\h dy'
\hspace{5mm}
 ({\sf a}\in\{\sf n,u\})
\hspace{-10mm}
\\[6mm]
\begin{array}{lll}
\dis
 \frac{1}{2}\pa_s\|v_{\sf s}\|_{\rho}^2
\hspace{-2mm}&=&\hspace{-2mm} \dis
 -m\|v_{\sf s}\|_{\rho}^2
-
 \int_{\R_+^n}\nabla v\cdot\nabla v_{\sf s}\rho\hspace{0.5mm}dy
+
 \int_{\pa\R_+^n}
 \left( qB^{q-1}vv_{\sf s}+f(v)v_{\sf s} \right)\rho\h dy'
\\
\hspace{-2mm}&=&\hspace{-2mm} \dis
 -m\|v_{\sf s}\|_{\rho}^2 -  \|\nabla v_{\sf s}\|_{\rho}^2
+
 qB^{q-1}\|v_{\sf s}\|_{L_{\rho}^2(\pa\R_+^n)}^2
+
 \int_{\pa\R_+^n}f(v)v_{\sf s}\rho\h dy'. 
\end{array}
\end{array}
\]
Hence we obtain an ODE system.
\begin{equation}\label{Kei-eq}
\begin{cases}
\dis
 \frac{1}{2}\pa_s\|v_{\sf u}\|_{\rho}^2
\geq
 \frac{1}{2}\|v_{\sf u}\|_{\rho}^2
+
 \int_{\pa\R_+^n}f(v)v_{\sf u}\rho\h dy',
\\ \dis
\frac{1}{2}\pa_s\|v_{\sf n}\|_{\rho}^2
=
 \int_{\pa\R_+^n}f(v)v_{\sf n}\rho\h dy',
\\ \dis
\frac{1}{2}\pa_s\|v_{\sf s}\|_{\rho}^2
=
  -\|\nabla v_{\sf s}\|_{\rho}^2 -m\|v_{\sf s}\|_{\rho}^2
+
 qB^{q-1}\|v_{\sf s}\|_{L_{\rho}^2(\pa\R_+^n)}^2
+
 \int_{\pa\R_+^n}f(v)v_{\sf s}\rho\h dy'.
\end{cases}
\end{equation}
By the Schwarz inequality,
we see that for ${\sf a}\in\{\sf s,n,u\}$
\[
 \int_{\pa\R_+^n}f(v)|v_{\sf a}|\rho\h dy'
\leq
 \int_{\pa\R_+^n}
 \left( \epsilon(1+|y'|^2)^{1/2}v_{\sf a}^2+\frac{1}{\epsilon}(1+|y'|^2)^{-1/2}f(v)^2 \right)
 \rho\h dy'.
\]
From Lemma \ref{31-lem} and Lemma \ref{Compact-lem},
it holds that
\begin{eqnarray*}
 \int_{\pa\R_+^n}(1+|y'|^2)^{1/2}v_{\sf a}^2\rho\h dy'
\hspace{-2mm}&\leq&\hspace{-2mm}
 c\int_{\Omega}\left( (1+|y'|^2)v_{\sf a}^2+|\nabla v_{\sf a}|^2 \right)\rho\h dy
\leq
 c\|v_{\sf a}\|_{H_{\rho}^1(\R_+^n)}^2.
\end{eqnarray*}
Here
we recall from \eqref{f(v)regularity-eq} that $|f(v)|\leq cv^2$.
Then
from Lemma \ref{31-lem} and Lemma \ref{21-lem},
we get
\begin{equation}\label{f(v)-eq}
\begin{array}{lll}
\dis
 \int_{\pa\R_+^n}(1+|y'|^2)^{-1/2}f(v)^2\rho\h dy'
\hspace{-2mm}&\leq&\hspace{-2mm} \dis
 c\int_{\pa\R_+^n}(1+|y'|^2)^{-1/2}v^4\rho\h dy'
\\
\hspace{-2mm}&\leq&\hspace{-2mm} \dis
 c\int_{\Omega}
 (1+|y'|^2)^{-1/2}\left( |v|^3|\pa_nv|+v^4 \right)
 \rho\h dy
\\[3.5mm]
\hspace{-2mm}&\leq&\hspace{-2mm} \dis
 c\int_{\Omega}(1+|y'|^2)^{-1/2}|v|^3\rho\h dy
\leq
 cM(s)\|v\|_{\rho}^2,
\end{array}
\end{equation}
where $M(s)$ is given by
\[
 M(s)
=
 \sup_{y\in\Omega}(1+|y'|^2)^{-1/2}|v(y,s)|.
\]
As a consequence,
there exists $\epsilon_0>0$
such that for $\epsilon\in(0,\epsilon_0)$
\begin{equation}
\label{sasou-eq}
\begin{cases}
\dis
\frac{1}{2}\pa_s\|v_{\sf u}\|_{\rho}^2
\geq
 \frac{1}{4}
 \|v_{\sf u}\|_{\rho}^2 - \frac{c}{\epsilon}M(s)\|v\|_{\rho}^2,
\\[2mm] \dis
 \frac{1}{2}\left| \pa_s\|v_{\sf n}\|_{\rho}^2 \right|
\leq
 c\epsilon\|v_{\sf n}\|_{\rho}^2 + \frac{c}{\epsilon}M(s)\|v\|_{\rho}^2
\end{cases}
\end{equation}
and
\[
\frac{1}{2}\pa_s\|v_{\sf s}\|_{\rho}^2
\leq
 -(1-c\epsilon)\left(
 \|\nabla v_{\sf s}\|_{\rho}^2+m\|v_{\sf s}\|_{\rho}^2
 \right)
+
 qB^{q-1}
 \|v_{\sf s}\|_{L_{\rho}^2(\pa\R_+^n)}^2
+
 \frac{c}{\epsilon}M(s)\|v\|_{\rho}^2.
\]
Let $\lambda_*>0$ be the smallest positive eigenvalue of $(-A)$,
which is given by
\[
 \lambda_* =
 \inf_{E\in{\cal H}_{\sf s}}
 \frac{\dis\|\nabla E\|_{\rho}^2+m\|E\|_{\rho}^2-
 qB^{q-1}\|E\|_{L_{\rho}^2(\pa\R_+^n)}^2}
 {\|E\|_{\rho}^2},
\]
where
${\cal H}_{\sf s}=H_{\rho}^1(\R_+^n)\cap\pi_{\sf s}L_{\rho}^2(\R_+^n)$.
By continuity,
there exists $\epsilon_*>0$ such that
for $\epsilon\in(0,\epsilon_*)$
\[
 \inf_{E\in{\cal H}_{\sf s}}
 \frac{\dis(1-\epsilon)
 \left( \|\nabla E\|_{\rho}^2+m\|E\|_{\rho}^2 \right)
 -qB^{q-1}\|E\|_{L_{\rho}^2(\pa\R_+^n)}^2}
 {\|E\|_{\rho}^2}
\geq
 \frac{\lambda_*}{2}.
\]
Hence
there exists $\epsilon_1\in(0,\epsilon_0)$
such that for $\epsilon\in(0,\epsilon_1)$
\begin{equation}
\label{nakajima-eq}
\frac{1}{2}\pa_s\|v_{\sf s}\|_{\rho}^2
\leq
 -\frac{\lambda_*}{2}\|v_{\sf s}\|_{\rho}^2
+
 \frac{c}{\epsilon}M(s)\|v\|_{\rho}^2.
\end{equation}
Since
$\lim_{s\to\infty}M(s) = 0$,
applying Lemma 3.1 in \cite{Filippas-K} to \eqref{sasou-eq} and \eqref{nakajima-eq},
we obtain the conclusion.
\end{proof}

Next
we study the asymptotic behavior of $\nabla' v(y,s)$.
We set
\[
 \|\hspace{-0.3mm}|v\|\hspace{-0.3mm}|_{\rho}^2
=
 \|v\|_{\rho}^2 + \|\nabla'v\|_{\rho}^2.
\]

\begin{pro}\label{32-pro}
One of the following two cases holds.
\\[2mm]
$({\rm I})$
$\dis
 \lim_{s\to\infty}
 \left(
 \frac{\|\hspace{-0.3mm}|v_{\sf s}\|\hspace{-0.3mm}|_{\rho}+
 \|\hspace{-0.3mm}|v_{\sf u}\|\hspace{-0.3mm}|_{\rho}}
 {\|\hspace{-0.3mm}|v_{\sf n}\|\hspace{-0.3mm}|_{\rho}}
 \right)=0$,
\\[4mm]
$({\rm II})$ $\|\hspace{-0.3mm}|v(s)\|\hspace{-0.3mm}|_{\rho}$ decays to zero exponentially,
\end{pro}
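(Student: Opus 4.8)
The plan is to upgrade Proposition~\ref{31-pro} by running the same dynamical-systems argument on the tangential derivatives of $v$. First I would differentiate \eqref{(B2)-eq} in a tangential direction: for $1\le i\le n-1$ put $z_i=\pa_{y_i}v$. Since $y_i$ is tangential, $\pa_{y_i}$ commutes with the boundary trace and with $\pa_\nu=-\pa_n$, and $\pa_{y_i}\big(\tfrac{y}{2}\cdot\nabla v\big)=\tfrac12\pa_{y_i}v+\tfrac{y}{2}\cdot\nabla(\pa_{y_i}v)$; hence $z_i$ solves the problem \eqref{(B2)-eq} with $m$ replaced by $m+\tfrac12$ and with $f(v)$ replaced on the boundary by $f'(v)z_i$, which by \eqref{f(v)regularity-eq} satisfies $|f'(v)z_i|\le c'|v|\,|z_i|$. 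By Lemma~\ref{21-lem} the $z_i$ are bounded, and $z_i(s)\to0$ in $L_\rho^2(\R_+^n)$ follows from $v(s)\to0$ in $L_\rho^2(\R_+^n)$, the local convergence this entails, and the uniform pointwise $C^2$-bound of Lemma~\ref{21-lem}.

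Next I would record the spectral bookkeeping. Multiplying $-(H_k''-\tfrac\xi2H_k')=\tfrac{k}{2}H_k$ by $H_ke^{-\xi^2/4}$ and integrating over $\R$ gives $\int_\R(H_k')^2e^{-\xi^2/4}\,d\xi=\tfrac{k}{2}$; since the weight factorizes this yields $\|\pa_{y_i}E_\alpha\|_\rho^2=\tfrac{\alpha_i}{2}$, the family $\{\pa_{y_i}E_\alpha\}_\alpha$ is orthogonal in $L_\rho^2(\R_+^n)$ for each fixed $i$, and therefore $\|\nabla'v_{\sf a}\|_\rho^2=\sum_{\alpha\in{\sf a}}a_\alpha^2\tfrac{|\alpha'|}{2}$ with $|\alpha'|=\sum_{i=1}^{n-1}\alpha_i$. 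Because $\kappa_1=-(m+1)$ and $\kappa_2>0$ (Lemma~\ref{22-lem}), the eigenfunctions of $-A$ with eigenvalue $-1$, $-\tfrac12$, $0$ have $|\alpha'|=0,1,2$ respectively; hence $\|v_{\sf u}\|_\rho^2\le\|v_{\sf u}\|_\rho^2+\|\nabla'v_{\sf u}\|_\rho^2\le\tfrac{3}{2}\|v_{\sf u}\|_\rho^2$ and $\|v_{\sf n}\|_\rho^2+\|\nabla'v_{\sf n}\|_\rho^2=2\|v_{\sf n}\|_\rho^2$, i.e. on the finite-dimensional eigenspaces the norm $\|\hspace{-0.3mm}|\cdot\|\hspace{-0.3mm}|_\rho$ is equivalent to $\|\cdot\|_\rho$. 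Moreover $\pa_{y_i}E_\alpha$ is proportional to $E_{\alpha-e_i}$, and deleting a tangential index lowers the $-A$-eigenvalue by $\tfrac12$, so differentiation maps the stable, neutral, unstable eigenspace of $-A$ into the stable, neutral, unstable eigenspace of the shifted operator $-(A-\tfrac12)$ respectively (the shift sends the eigenvalue $0$ to $\tfrac12$, $-\tfrac12$ to $0$, $-1$ to $-\tfrac12$). Consequently $\nabla'v_{\sf s}$, $\nabla'v_{\sf n}$, $\nabla'v_{\sf u}$ are precisely the spectral components of $\nabla'v$ relative to $-(A-\tfrac12)$.

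The heart of the proof is then to repeat the computation of Proposition~\ref{31-pro} for the vector $\nabla'v=(z_1,\dots,z_{n-1})$. Multiplying the equation for $z_i$ by $\pa_{y_i}v_{\sf u}$, $\pa_{y_i}v_{\sf n}$, $\pa_{y_i}v_{\sf s}$, integrating by parts twice (so the boundary terms $qB^{q-1}$ and $f'(v)z_i$ appear) and summing over $i$, I obtain an ODE system of exactly the shape of \eqref{Kei-eq}: a growing inequality for $\|\nabla'v_{\sf u}\|_\rho^2$ (with rate $\tfrac12$, from the $-(A-\tfrac12)$-eigenvalue $-\tfrac12$ on the $|\alpha'|=0$ space), an almost-conserved equation for $\|\nabla'v_{\sf n}\|_\rho^2$ (the $-(A-\tfrac12)$-eigenvalue on the $|\alpha'|=1$ space being $0$), and a decaying inequality for $\|\nabla'v_{\sf s}\|_\rho^2$; for the last one the Rayleigh-quotient argument leading to \eqref{nakajima-eq} goes through verbatim with $m$ replaced by $m+\tfrac12$, the spectral gap now being the smallest eigenvalue of $-(A-\tfrac12)$ on $(\pi_{\sf n}+\pi_{\sf s})L_\rho^2(\R_+^n)$, which equals $\tfrac12>0$. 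The nonlinear boundary terms are handled as in \eqref{f(v)-eq}: from $|f'(v)z_i|\le c'|v|\,|z_i|$, the trace Lemma~\ref{31-lem}, the compactness Lemma~\ref{Compact-lem} and Lemma~\ref{21-lem}, one bounds $\int_{\pa\R_+^n}|f'(v)z_i|\,|\pa_{y_i}v_{\sf a}|\,\rho\,dy'$ by $\epsilon\,\|\nabla'v_{\sf a}\|_{H_\rho^1(\R_+^n)}^2+c\epsilon^{-1}M(s)\|\nabla'v\|_\rho^2$ with $M(s)=\sup_{\Omega}(1+|y'|^2)^{-1/2}|v(y,s)|\to0$, the $H_\rho^1$-term being absorbed by the good negative gradient term when ${\sf a}={\sf s}$ and by norm-equivalence on the finite-dimensional spaces when ${\sf a}\in\{{\sf n},{\sf u}\}$. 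Applying Lemma~3.1 of \cite{Filippas-K} to this system yields: either $\|\nabla'v(s)\|_\rho$ decays exponentially, or $\|\nabla'v_{\sf s}(s)\|_\rho+\|\nabla'v_{\sf u}(s)\|_\rho=o(\|\nabla'v_{\sf n}(s)\|_\rho)$ as $s\to\infty$.

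It remains to combine this with Proposition~\ref{31-pro}. If $\|v(s)\|_\rho$ decays exponentially (in particular if Proposition~\ref{31-pro} gives its case $({\rm II})$), then so do $\|v_{\sf n}\|_\rho$ and $\|\nabla'v_{\sf n}\|_\rho=\|v_{\sf n}\|_\rho$, so both alternatives above force $\|\nabla'v(s)\|_\rho$, hence $\|\hspace{-0.3mm}|v(s)\|\hspace{-0.3mm}|_\rho$, to decay exponentially --- case $({\rm II})$ here. If $\|v(s)\|_\rho$ does not decay exponentially, then Proposition~\ref{31-pro} must give case $({\rm I})$ there, so $\|v_{\sf s}\|_\rho+\|v_{\sf u}\|_\rho=o(\|v_{\sf n}\|_\rho)$ and $\|v_{\sf n}\|_\rho$, being comparable to $\|v\|_\rho$, is not exponentially small; neither then is $\|\nabla'v_{\sf n}\|_\rho$, so the $\nabla'v$-dichotomy must be its second alternative, $\|\nabla'v_{\sf s}\|_\rho+\|\nabla'v_{\sf u}\|_\rho=o(\|\nabla'v_{\sf n}\|_\rho)$. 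Combining the two $o(\cdot)$ relations with the norm equivalences on the finite-dimensional eigenspaces gives $\|\hspace{-0.3mm}|v_{\sf s}\|\hspace{-0.3mm}|_\rho+\|\hspace{-0.3mm}|v_{\sf u}\|\hspace{-0.3mm}|_\rho=o(\|\hspace{-0.3mm}|v_{\sf n}\|\hspace{-0.3mm}|_\rho)$ --- case $({\rm I})$ here. I expect the main obstacle to be the third paragraph: one must ensure that the spectral components of $\nabla'v$ really coincide with $\nabla'$ of the components of $v$ (so the ODEs close), that the shifted Rayleigh quotient on the stable subspace is still bounded below (here the $+\tfrac12$ shift only helps), and that the new boundary term $f'(v)z_i$ --- which couples $|v|$ with first-order derivatives of $v$ rather than being quadratic in $v$ --- can still be absorbed with a coefficient tending to $0$, which is where the full strength of Lemmas~\ref{31-lem}, \ref{Compact-lem} and \ref{21-lem} is needed.
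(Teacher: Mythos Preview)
Your overall strategy coincides with the paper's: differentiate \eqref{(B2)-eq} tangentially, work with the shifted operator $\tilde A=A-\tfrac12$, use the identity $\tilde\pi_{\sf a}(\partial_i v)=\partial_i(\pi_{\sf a}v)$, derive an ODE system of the shape \eqref{Kei-eq} for the tangential derivative, and invoke Lemma~3.1 of \cite{Filippas-K}. Your spectral bookkeeping (Hermite recursion, the eigenvalue shift, norm equivalence on the finite-dimensional spaces) is correct and matches what the paper uses implicitly.

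The one point that does not go through as written is your error estimate for the boundary term. You claim
\[
\int_{\pa\R_+^n}\!|f'(v)z_i|\,|\partial_i v_{\sf a}|\,\rho\,dy'\;\le\;\epsilon\,\|\nabla'v_{\sf a}\|_{H_\rho^1}^2+\frac{c}{\epsilon}M(s)\,\|\nabla'v\|_\rho^2,
\]
i.e.\ an error purely in terms of $\|\nabla'v\|_\rho^2$. But $f'(v)z_i$ carries a factor $|v|$, and following the computation in \eqref{f(v)-eq} (after the splitting $|vV|^2\le|v|^4+|V|^4$) the natural bound is
\[
\frac{c}{\epsilon}\Big(M(s)\,\|v\|_\rho^2+M_V(s)\,\|\nabla'v\|_\rho^2\Big),\qquad M_V(s)=\sup_{\Omega}(1+|y'|^2)^{-1/2}|\nabla'v(y,s)|,
\]
exactly as the paper obtains in \eqref{Shouko-eq}. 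The extra $\|v\|_\rho^2$ couples the $\nabla'v$ system back to $v$, so Lemma~3.1 of \cite{Filippas-K} cannot be applied to the $\nabla'v$ triple in isolation: the perturbation is not of the form $o(1)\cdot(\|\nabla'v_{\sf u}\|_\rho^2+\|\nabla'v_{\sf n}\|_\rho^2+\|\nabla'v_{\sf s}\|_\rho^2)$. Alternative routes (e.g.\ pulling out $M(s)^2$ and landing on $\|\nabla'v\|_{H_\rho^1}^2$) introduce second-order derivatives of $v$ that cannot be absorbed in the ${\sf u}$ and ${\sf n}$ inequalities.

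The fix --- and this is precisely what the paper does --- is to \emph{add} the $v$ system \eqref{sasou-eq}--\eqref{nakajima-eq} to the $\nabla'v$ system \eqref{Shouko-eq} before invoking \cite{Filippas-K}. The sum yields a single closed system for $\big(\|\hspace{-0.3mm}|v_{\sf u}\|\hspace{-0.3mm}|_\rho^2,\|\hspace{-0.3mm}|v_{\sf n}\|\hspace{-0.3mm}|_\rho^2,\|\hspace{-0.3mm}|v_{\sf s}\|\hspace{-0.3mm}|_\rho^2\big)$ with error $\frac{c}{\epsilon}(M(s)+M_V(s))\|\hspace{-0.3mm}|v\|\hspace{-0.3mm}|_\rho^2$, to which Lemma~3.1 of \cite{Filippas-K} applies directly and yields the dichotomy in one stroke. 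This replaces your two-step ``apply FK twice, then combine'' with a single application, and sidesteps the coupling issue entirely.
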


\begin{proof}
We repeat the proof of Proposition \ref{31-pro}. 
We set $V(y,s)=\pa_iv(y,s)$ ($i=1,\cdots,n-1$).
Then $V(y,s)$ solves
\[
 \begin{cases}
 \dis
 \pa_sV = \Delta V -\frac{y}{2}\cdot V - \left(m+\frac{1}{2}\right)V
 & \text{in }\R_+^n\times(s_T,\infty),
 \\ \dis
 \pa_{\nu}V = qB^{q-1}V + f'(v)V
 & \text{on }\pa\R_+^n\times(s_T,\infty).
 \end{cases}
\]
The main linear part (neglecting $f'(v)V$) is written by
\[
 \pa_s V = \tilde{A}V := \left( A-\frac{1}{2} \right)V,
\hspace{5mm} D(\tilde{A})=D(A).
\]
Let $\tilde{\pi}_{\sf s}$, $\tilde{\pi}_{\sf n}$
and $\tilde{\pi}_{\sf u}$
be projection operators  onto the subspace spanned by
eigenfunctions of ($-\tilde{A}$) with the positive eigenvalue,
the zero eigenvalue and the negative eigenvalue
and set
\[
 V_{\sf s}=\tilde{\pi}_{\sf s}V,
\hspace{5mm}
 V_{\sf n}=\tilde{\pi}_{\sf n}V,
\hspace{5mm}
 V_{\sf u}=\tilde{\pi}_{\sf u}V.
\]
Then
we obtain
\[
\begin{cases}
\dis
\frac{1}{2}\pa_s\|V_{\sf u}\|_{\rho}^2
=
 \frac{1}{2}\|V_{\sf u}\|_{\rho}^2
+
 \int_{\pa\R_+^n}f'(v)VV_{\sf u}\rho\h dy',
\\ \dis
\frac{1}{2}\pa_s\|V_{\sf n}\|_{\rho}^2
=
 \int_{\pa\R_+^n}f'(v)VV_{\sf n}\rho\h dy',
\\ \dis
\frac{1}{2}\pa_s\|V_{\sf s}\|_{\rho}^2
=
  -\|\nabla V_{\sf s}\|_{\rho}^2 -
 \left( m+\frac{1}{2} \right)\|V_{\sf s}\|_{\rho}^2
+
 qB^{q-1}\|V_{\sf s}\|_{L_{\rho}^2(\pa\R_+^n)}^2
+
 \int_{\pa\R_+^n}f'(v)VV_{\sf s}\rho\h dy'.
\end{cases}
\]
Here
we recall from \eqref{f(v)regularity-eq} that $|f'(v)|\leq c|v|$.
Therefore
by the same calculation as \eqref{f(v)-eq},
we verify that 
\begin{eqnarray*}
 \int_{\pa\R_+^n}|f'(v)VV_{\sf s}|\rho\h dy'
\hspace{-2mm}&\leq&\hspace{-2mm}
 c\int_{\pa\R_+^n}
 \left(
 \frac{1}{\epsilon}(1+|y'|^2)^{-1/2}|vV|^2
 +
 \epsilon(1+|y'|^2)^{1/2}|V_{\sf s}|^2
 \right)\rho\h dy'
\\
\hspace{-2mm}&\leq&\hspace{-2mm}
 \frac{c}{\epsilon}\int_{\pa\R_+^n}
 (1+|y'|^2)^{-1/2}|\left(  |v|^4+|V|^4 \right)\rho\h dy'
+
 c\epsilon\|V_{\sf s}\|_{H_{\rho}^1(\R_+^n)}^2 
\\
\hspace{-2mm}&\leq&\hspace{-2mm}
 \frac{c}{\epsilon}
 \left( M(s)\|v\|_{\rho}^2+M_V(s)\|V\|_{\rho}^2 \right)
+
 c\epsilon\|V_{\sf s}\|_{H_{\rho}^1(\R_+^n)}^2,
\end{eqnarray*}
where
\[
 M_V(s) = 
 \sup_{y\in\Omega}(1+|y'|^2)^{-1/2}|V(y,s)|.
\]
Hence
there exists $\epsilon_0>0$ such that for $\epsilon\in(0,\epsilon_0)$
\begin{equation}
\label{Shouko-eq}
\begin{cases}
\dis
 \frac{1}{2}\pa_s\|V_{\sf u}\|_{\rho}^2
\geq
 \frac{1}{4}\|V_{\sf u}\|_{\rho}^2
-
 \frac{c}{\epsilon}
 \left( M(s)\|v\|_{\rho}^2+M_V(s)\|V\|_{\rho}^2 \right),
\\[3mm] \dis
 \frac{1}{2}\left| \pa_s\|V_{\sf n}\|_{\rho}^2 \right|
\leq
 c\epsilon\|V_{\sf n}\|_{\rho}^2
+
 \frac{c}{\epsilon}
 \left( M(s)\|v\|_{\rho}^2+M_V(s)\|V\|_{\rho}^2 \right),
\\[3mm] \dis
 \frac{1}{2}\pa_s\|V_{\sf s}\|_{\rho}^2
\leq
 -\frac{\lambda_*}{2}\|\nabla V_{\sf s}\|_{\rho}^2
+
 \frac{c}{\epsilon}
 \left( M(s)\|v\|_{\rho}^2+M_V(s)\|V\|_{\rho}^2 \right).
\end{cases}
\end{equation}
Therefore
since $\tilde{\pi}_{\sf a}(\pa_iv) = \pa_i(\pi_{\sf a}v)$ ($i=1,\cdots,n-1$)
for ${\sf a}\in\{\sf s,n,u\}$,
by \eqref{sasou-eq}, \eqref{nakajima-eq} and \eqref{Shouko-eq},
we obtain
\[
\begin{cases}
\dis
 \frac{1}{2}\pa_s\|\hspace{-0.3mm}|v_{\sf u}\|\hspace{-0.3mm}|_{\rho}^2
\geq
 \frac{1}{4}\|\hspace{-0.3mm}|v_{\sf u}\|\hspace{-0.3mm}|_{\rho}^2
-
 \frac{c}{\epsilon}(M(s)+M_V(s))\|\hspace{-0.3mm}|v\|\hspace{-0.3mm}|_{\rho}^2,
\\[3mm] \dis
 \frac{1}{2}\left| \pa_s\|\hspace{-0.3mm}|v_{\sf n}\|\hspace{-0.3mm}|_{\rho}^2 \right|
\leq
 c\epsilon\|\hspace{-0.3mm}|v_{\sf n}\|\hspace{-0.3mm}|_{\rho}^2
+
 \frac{c}{\epsilon}(M(s)+M_V(s))\|\hspace{-0.3mm}|v\|\hspace{-0.3mm}|_{\rho}^2,
\\[3mm] \dis
 \frac{1}{2}\pa_s\|\hspace{-0.3mm}|v_{\sf s}\|\hspace{-0.3mm}|_{\rho}^2
\leq
 -\frac{\lambda_*}{2}\|\hspace{-0.3mm}|\nabla v_{\sf s}\|\hspace{-0.3mm}|_{\rho}^2
+
 \frac{c}{\epsilon}(M(s)+M_V(s))\|\hspace{-0.3mm}|v\|\hspace{-0.3mm}|_{\rho}^2.
\end{cases}
\]
Since $\lim_{s\to\infty}M_V(s)=0$,
applying Lemma 3.1 in \cite{Filippas-K},
we complete the proof.
\end{proof}

\subsection{Case (I)}

In this subsection,
we study a precise asymptotic behavior for the case (I).

\begin{df}
We call a function $v(y)$ $y_n$-axial symmetric,
if the function $v(y)$ can be expressed by $v(y)=v(|y'|,y_n)$.
\end{df}

For the rest of this paper,
solutions are always assumed to be $y_n$-axial symmetric.
Then
the kernel of $A$ ($=\pi_{\sf n}L_\rho^2(\R_+^n)$) turns out to be one dimension
under a symmetric assumption.
In fact,
let
\begin{equation}\label{calE-eq}
 {\cal E}(y) = c(H_2(y_1)+\cdots H_2(y_{n-1}))I_1(y_n),
\end{equation}
where $c>0$ is a normalization constant such that $\|{\cal E}\|_{\rho}=1$.
Then
it holds that ker$A=$span$\{{\cal E}\}$.

\begin{pro}\label{33-pro}
Let $v(y,s)$ be $y_n$-axial symmetric and behave as the case $({\rm I})$
in Proposition {\rm\ref{31-pro}}.
Then
it follows that
\[
 \left\|\hspace{-0.3mm}\left|v(s)+\frac{\nu_q}{s}{\cal E}\right\|\hspace{-0.3mm}\right|_{\rho} = o(s^{-1}),
\]
where $\nu_q$ is given by
\[
 \nu_q^{-1} =
 \left( \frac{q(q-1)B^{q-2}}{2} \right)
\int_{\pa\R_+^n}{\cal E}^3\rho\h dy'>0.
\]
\end{pro}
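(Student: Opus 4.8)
The plan is to reduce everything to an ODE for the single scalar $a(s):=(v(s),{\cal E})_\rho$. Since under the symmetry hypothesis $\ker A=\mathrm{span}\{{\cal E}\}$, we have $v_{\sf n}(s)=a(s){\cal E}$; writing $r(s):=v(s)-a(s){\cal E}=v_{\sf s}(s)+v_{\sf u}(s)$, case (I) of Proposition~\ref{31-pro} gives $\|r(s)\|_\rho=o(|a(s)|)$. Differentiating $a$ under the integral sign (justified by the uniform bounds of Lemma~\ref{21-lem}), using $\pa_sv=Av$ in $\R_+^n$ together with the weighted Green identity $(Av,{\cal E})_\rho-(v,A{\cal E})_\rho=\int_{\pa\R_+^n}(\pa_\nu v\,{\cal E}-v\,\pa_\nu{\cal E})\rho\,dy'$ and the relations $A{\cal E}=0$, $\pa_\nu{\cal E}=qB^{q-1}{\cal E}$, $\pa_\nu v=qB^{q-1}v+f(v)$, I obtain the exact identity
\[
 \pa_sa(s)=\int_{\pa\R_+^n}f(v)\,{\cal E}\,\rho\,dy'.
\]

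The crucial step is to upgrade this to $\pa_sa(s)=\nu_q^{-1}a(s)^2+o(a(s)^2)$ as $s\to\infty$. Inserting $f(v)=\tfrac{q(q-1)B^{q-2}}{2}v^2+O(v^3)$ and $v=a{\cal E}+r$, the main term is $\tfrac{q(q-1)B^{q-2}}{2}a^2\int_{\pa\R_+^n}{\cal E}^3\rho\,dy'=\nu_q^{-1}a^2$, and the errors are $2a\int_{\pa\R_+^n}{\cal E}^2r\,\rho\,dy'$, $\int_{\pa\R_+^n}{\cal E}r^2\rho\,dy'$, and $O(\int_{\pa\R_+^n}|v|^3|{\cal E}|\rho\,dy')$. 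For the terms containing $r$ I would use the trace inequality of Lemma~\ref{31-lem} (with the polynomial ${\cal E}$ absorbed into the weight $g$), Cauchy--Schwarz, the decay $\|r\|_\rho=o(|a|)$, and the bound $\|\nabla v_{\sf s}(s)\|_\rho=o(|a(s)|)$, which follows either from the dissipative structure of the $v_{\sf s}$-energy identity in \eqref{Kei-eq} with parabolic regularity or from the representation formula \eqref{representation-eq} and the heat-kernel estimates of Section~\ref{Preliminary-sec}; this renders both $r$-terms $o(a^2)$. For the cubic term I would split $\pa\R_+^n$ at $|y'|=R$: on $\{|y'|\le R\}$ one uses $\sup_{|y'|\le R}|v|\to0$ (Theorem~\ref{Ua-thm}) times $\int_{\pa\R_+^n}v^2|{\cal E}|\rho\,dy'=O(a^2)$, while on $\{|y'|>R\}$ the polynomial $|{\cal E}|$ is swallowed by a fraction of the Gaussian so that $\int_{|y'|>R}|v|^3|{\cal E}|\rho\,dy'\le C_R\int_{\pa\R_+^n}v^2\rho\,dy'\le C'C_Ra^2$ with $C_R\to0$; sending $s\to\infty$ then $R\to\infty$ gives $o(a^2)$. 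I expect this estimation to be the main obstacle: the naive bounds produce errors of order $o(|a|)$, which is not even enough to extract a decay rate, so one genuinely needs the sharper $o(a^2)$.

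Once $\pa_sa=\nu_q^{-1}a^2+o(a^2)$ is available, the conclusion follows by elementary ODE analysis. Since $v\to0$ in $L^2_\rho$ we have $a(s)\to0$, and $a(s)\ne0$ for all large $s$ because finiteness of the ratio in Proposition~\ref{31-pro}(I) forces $\|v_{\sf n}(s)\|_\rho\ne0$. If $a(s_0)>0$ for some large $s_0$, then $\pa_sa\ge\tfrac12\nu_q^{-1}a(s_0)^2>0$ for all later $s$, so $a$ increases away from $0$, contradicting $a(s)\to0$; hence $a(s)<0$ for all large $s$. Then $b(s):=-1/a(s)>0$ satisfies $\pa_sb=\pa_sa/a(s)^2=\nu_q^{-1}+o(1)$, whence $b(s)=\nu_q^{-1}s+o(s)$ and $a(s)=-\nu_q/s+o(s^{-1})$. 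In particular $a$ decays only polynomially, which excludes case (II) of Proposition~\ref{32-pro}; thus case (I) of that proposition holds, giving $\|\hspace{-0.3mm}|r(s)\|\hspace{-0.3mm}|_\rho=o(|a(s)|)=o(s^{-1})$. Therefore
\[
 \left\|\hspace{-0.3mm}\left|v(s)+\frac{\nu_q}{s}{\cal E}\right\|\hspace{-0.3mm}\right|_\rho
 \le\Bigl|a(s)+\frac{\nu_q}{s}\Bigr|\,\|\hspace{-0.3mm}|{\cal E}\|\hspace{-0.3mm}|_\rho+\|\hspace{-0.3mm}|r(s)\|\hspace{-0.3mm}|_\rho=o(s^{-1}).
\]

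Finally, to see $\nu_q^{-1}>0$: on $\pa\R_+^n$ one has ${\cal E}(y',0)=c\,I_1(0)\sum_{i=1}^{n-1}H_2(y_i)$ with $H_2(\xi)=c_2(\xi^2-2)$ and $c,c_2,I_1(0)>0$, so $\int_{\pa\R_+^n}{\cal E}^3\rho\,dy'$ equals a positive multiple of $\int_{\R^{n-1}}\bigl(\sum_{i=1}^{n-1}(y_i^2-2)\bigr)^3e^{-|y'|^2/4}dy'$. In the expansion of the cube, every monomial in which some factor $y_i^2-2$ appears to the first power integrates to zero because $\int_{\R}(\xi^2-2)e^{-\xi^2/4}d\xi=0$; hence only $\sum_{i=1}^{n-1}(y_i^2-2)^3$ contributes, and the integral reduces to $(n-1)\int_{\R}(\xi^2-2)^3e^{-\xi^2/4}d\xi$ times a positive constant. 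A short Gaussian-moment computation gives $\int_{\R}(\xi^2-2)^3e^{-\xi^2/4}d\xi>0$, so $\nu_q^{-1}>0$.
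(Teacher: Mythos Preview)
Your strategy is exactly the paper's: derive $\dot a_0=\int_{\partial\R_+^n}f(v)\,{\cal E}\,\rho\,dy'$, extract $\nu_q^{-1}a_0^2$ from the quadratic part of $f$, show the remainder is $o(a_0^2)$, integrate the scalar ODE, and conclude via Proposition~\ref{32-pro}. Your sign argument and your positivity computation for $\int_{\partial\R_+^n}{\cal E}^3\rho\,dy'$ are correct (the paper just cites \cite{Herrero-V1} for the latter).

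Where your sketch is thinnest is precisely the step you flag as the main obstacle. You need \emph{boundary} smallness of $r=v-v_{\sf n}$, not just $\|r\|_\rho=o(|a|)$: the terms $a\int_{\partial\R_+^n}{\cal E}^2 r\,\rho$, $\int_{\partial\R_+^n}{\cal E}\,r^2\rho$, and the cubic term all live on $\partial\R_+^n$. (Note also that your claim ``$\int_{\partial\R_+^n}v^2|{\cal E}|\rho\,dy'=O(a^2)$'' in the cubic estimate itself presupposes boundary control of $r$, so the splitting-at-$|y'|=R$ argument is circular as written.) The paper supplies this boundary control as Lemma~\ref{33-lem}: using the representation formula \eqref{representation3-eq} for $z=v-v_{\sf n}$ together with the $L^r$--$L^p$ heat-kernel bound of Lemma~\ref{25-lem}, one gets $\|v-v_{\sf n}\|_{L^r_\rho(\partial\R_+^n)}=o(\|v_{\sf n}\|_\rho)$ for every $r<\infty$; a short preliminary Lemma~\ref{32-lem} (that $\|v_{\sf n}(s)\|_\rho$ is slowly varying, via a crude Gronwall) is needed to compare different times. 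This is your ``representation formula'' option made precise, and because it yields all $L^r$ boundary norms at once, the paper can treat the cubic term by H\"older with $L^4_\rho(\partial\R_+^n)$ rather than your splitting. Your alternative route ``$\|\nabla v_{\sf s}\|_\rho=o(|a|)$ from the dissipative energy identity'' is not clearly workable: \eqref{Kei-eq} gives only time-integrated control of $\|\nabla v_{\sf s}\|_\rho^2$, upgrading to a pointwise-in-$s$ bound would require a further argument you have not supplied, and in any case the trace inequality needs $\partial_n r$, which is not covered by Proposition~\ref{32-pro}.
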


\begin{proof}[\bf Proof of Theorem \ref{1-thm}]
Theorem \ref{1-thm} follows from Proposition \ref{31-pro} and Proposition \ref{33-pro}.
\end{proof}

First
we prepare two lemmas.

\begin{lem}\label{32-lem}
Let $v(y,s)$ be as in Proposition {\rm\ref{33-pro}}.
Then
for any $\delta>0$
there exist $c_1,c_2>0$ such that
\[
 c_1\|v_{\sf n}(s-\delta)\|_{\rho}
\leq
 \|v_{\sf n}(s)\|_{\rho}
\leq
 c_2\|v_{\sf n}(s+\delta)\|_{\rho}.
\]
\end{lem}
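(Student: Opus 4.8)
The plan is to extract from the ODE system \eqref{sasou-eq} a pointwise bound on the logarithmic derivative of $s\mapsto\|v_{\sf n}(s)\|_\rho^2$ and then integrate it over the interval of length $\delta$; the two-sided comparison then follows at once, with constants of the form $e^{\pm C\delta}$.

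First I would use the standing hypothesis that $v$ behaves as in case $({\rm I})$ of Proposition \ref{31-pro}, i.e. $\|v_{\sf s}(s)\|_\rho+\|v_{\sf u}(s)\|_\rho=o(\|v_{\sf n}(s)\|_\rho)$, to fix $s_1$ with $\|v(s)\|_\rho^2\le 2\|v_{\sf n}(s)\|_\rho^2$ for $s\ge s_1$. (We may assume $v\not\equiv0$; then by forward uniqueness for \eqref{(B2)-eq} together with this domination one has $\|v_{\sf n}(s)\|_\rho>0$ for all $s\ge s_1$, while if instead $v$ vanishes eventually the statement is trivial for large $s$.) Substituting $\|v(s)\|_\rho^2\le2\|v_{\sf n}(s)\|_\rho^2$ into the second line of \eqref{sasou-eq} gives, for $s\ge s_1$ and any $\epsilon$ admissible in \eqref{sasou-eq},
\[
 \left|\pa_s\log\|v_{\sf n}(s)\|_\rho^2\right|
 = \frac{|\pa_s\|v_{\sf n}(s)\|_\rho^2|}{\|v_{\sf n}(s)\|_\rho^2}
 \le 2c\epsilon+\frac{4c}{\epsilon}M(s),
\]
where $M(s)$ is the quantity introduced in the proof of Proposition \ref{31-pro}. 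Since $M(s)\to0$ as $s\to\infty$, fixing $\epsilon$ once and for all and choosing $s_2\ge s_1$ with $M(s)\le\epsilon^2$ for $s\ge s_2$, the right-hand side is bounded by a constant $C_0$, independent of $s$, on $[s_2,\infty)$.

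Integrating $|\pa_s\log\|v_{\sf n}(s)\|_\rho^2|\le C_0$ over $[s-\delta,s]$ and over $[s,s+\delta]$ then yields, for every $s$ with $s-\delta\ge s_2$,
\[
 e^{-C_0\delta/2}\|v_{\sf n}(s-\delta)\|_\rho
 \le \|v_{\sf n}(s)\|_\rho
 \le e^{C_0\delta/2}\|v_{\sf n}(s+\delta)\|_\rho,
\]
which is the assertion for all large $s$ with $c_1=e^{-C_0\delta/2}$ and $c_2=e^{C_0\delta/2}$; for $s$ in the remaining bounded range one invokes continuity and positivity of $s\mapsto\|v_{\sf n}(s)\|_\rho$ on a compact interval and shrinks $c_1$, enlarges $c_2$ accordingly. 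The computation is entirely routine; the one point that matters is that the case-$({\rm I})$ domination turns the a priori inequality of \eqref{sasou-eq} into a genuine bound on the logarithmic derivative of $\|v_{\sf n}(s)\|_\rho^2$, which in turn forces one to first secure $\|v_{\sf n}(s)\|_\rho\neq0$ for large $s$ so that this logarithmic derivative is well defined. I do not expect any essential obstacle beyond this bookkeeping.
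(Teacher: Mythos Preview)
Your proposal is correct and follows essentially the same route as the paper: bound the logarithmic derivative of $\|v_{\sf n}(s)\|_\rho^2$ by a constant using the neutral-mode inequality together with the case-(I) domination $\|v\|_\rho^2\le C\|v_{\sf n}\|_\rho^2$, then integrate over an interval of length $\delta$. The only cosmetic difference is that the paper returns to the raw identity \eqref{Kei-eq} and re-estimates $\int_{\pa\R_+^n}f(v)v_{\sf n}\rho\,dy'$ directly (using $|f(v)|\le cv^2$, Lemma~\ref{31-lem}, and the uniform bounds on $v,\nabla v$) to reach $|\pa_s\|v_{\sf n}\|_\rho^2|\le c\|v_{\sf n}\|_\rho^2$, whereas you quote the already-processed inequality \eqref{sasou-eq}; both lead to the same differential inequality and the same conclusion.
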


\begin{proof}
Since $|f(v)|\leq cv^2$ (see \eqref{f(v)regularity-eq}),
by Lemma \ref{31-lem},
the second equation in \eqref{Kei-eq} is estimated by
\[
\begin{array}{lll}
\dis
 \left| \pa_s\|v_{\sf n}\|_{\rho}^2 \right|
\hspace{-2mm}&\leq&\hspace{-2mm} \dis
 c\int_{\pa\R_+^n}v^2|v_{\sf n}|\rho\h dy'
\leq
 c\left( \int_{\pa\R_+^n}v^4\rho\h dy'+\|v_{\sf n}\|_\rho^2 \right)
\\[4mm]
\hspace{-2mm}&\leq&\hspace{-2mm} \dis
 c\left( \int_{\R_+^n}\left( |\nabla v||v|^3+v^4 \right)\rho\h dy+\|v_{\sf n}\|_{\rho}^2 \right).
\dis
\end{array}
\]
Therefore
since $v(y,s)$ and $|\nabla v(y,s)|$ are uniformly bounded (see Theorem \ref{Ua-thm}),
from Proposition \ref{31-pro},
we obtain
\[
 \left| \pa_s\|v_{\sf n}\|_{\rho}^2 \right|
\leq
 c\left( \|v\|_{\rho}^2+\|v_{\sf n}\|_{\rho}^2 \right)
\leq
 c\|v_{\sf n}\|_{\rho}^2.
\]
Then
this implies
\[
 -c \leq \pa_s\left( \log\|v_{\sf n}\|_{\rho}^2\right) \leq c.
\]
Integrating both sides,
we obtain the conclusion.
\end{proof}

\begin{lem}\label{33-lem}
Let $v(y,s)$ be as in Proposition {\rm\ref{33-pro}}.
Then
for $r\in(1,\infty)$
there exists a positive continuous function $\nu(s)$
satisfying $\lim_{s\to\infty}\nu(s)=0$
such that
\[
 \|(v-v_{\sf n})(s)\|_{L_{\rho}^r(\pa\R_+^n)} \leq \nu(s)\|v_{\sf n}(s)\|_{\rho}.
\]
\end{lem}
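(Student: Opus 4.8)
The plan is a two–stage argument: first a crude comparability bound $\|v(s)\|_{L_\rho^r(\pa\R_+^n)}\le c\|v_{\sf n}(s)\|_\rho$, then a refinement to $o(\|v_{\sf n}(s)\|_\rho)$ obtained by treating $v-v_{\sf n}$ with the representation formula; the engine of the second stage is that, once the crude bound is available, the quadratic nonlinearity $f(v)=O(v^2)$ acts as a forcing of size $O(\|v_{\sf n}\|_\rho^2)=o(\|v_{\sf n}\|_\rho)$. As preparation I record the equation for $V:=v-v_{\sf n}$. Writing $v_{\sf n}=a_{\sf n}(s){\cal E}$ with $a_{\sf n}(s)=(v(s),{\cal E})_\rho$, the middle line of \eqref{Kei-eq} gives $a_{\sf n}'(s)=\int_{\pa\R_+^n}f(v){\cal E}\rho\,dy'$; since $A{\cal E}=0$ and $\pa_\nu{\cal E}=qB^{q-1}{\cal E}$, the function $V$ solves \eqref{v12-eq} with $K=qB^{q-1}$ (so $\mu_K=\kappa_1=-(m+1)$, $m+\mu_K=-1$ by Lemma \ref{22-lem}), interior source $g_1=-a_{\sf n}'(s){\cal E}$ and boundary source $g_2=f(v)$. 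Moreover $V=v_{\sf s}+v_{\sf u}$, and the finite–dimensional part is already harmless: all norms on $\pi_{\sf u}L_\rho^2(\R_+^n)$, whose elements have traces in every $L_\rho^r(\pa\R_+^n)$, are equivalent, so $\|v_{\sf u}(s)\|_{L_\rho^r(\pa\R_+^n)}\le c\|v_{\sf u}(s)\|_\rho=o(\|v_{\sf n}(s)\|_\rho)$ by Case $({\rm I})$ of Proposition \ref{31-pro}; thus the problem reduces to estimating $V$.

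For the crude bound, rewrite the boundary condition of \eqref{(B2)-eq} as $\pa_\nu v=K(y',s)v$ with $K(y',s)=qB^{q-1}+g(v(y',s))$, $g(w)=f(w)/w$ for $w\ne0$, $g(0)=0$; since $|g(w)|\le c|w|$ and $v$ is bounded, $K$ is a bounded continuous coefficient. Then $v$ and $-v$ both satisfy \eqref{v123-eq} with $g_1=g_2=0$ and this $K$, so the comparison estimate \eqref{representation3-eq} together with the global kernel bound of Lemma \ref{27-lem} gives, for any fixed window length $L$ with $L\ge1$ and $L>\log(r-1)$,
\[
 |v(y,s)|\le c_L\exp\!\Big(\frac{e^{-L}|y'|^2}{4(1+e^{-L})}\Big)\|v(s-L)\|_\rho\hspace{5mm}(y_n<1);
\]
raising this to the $r$-th power and integrating against $\rho$ over $\pa\R_+^n$ yields a convergent integral precisely because $L>\log(r-1)$, hence $\|v(s)\|_{L_\rho^r(\pa\R_+^n)}\le c_L\|v(s-L)\|_\rho\le c\|v_{\sf n}(s)\|_\rho$ by Case $({\rm I})$ of Proposition \ref{31-pro} and Lemma \ref{32-lem}. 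In particular $\|V(s)\|_{L_\rho^r(\pa\R_+^n)}\le c\|v_{\sf n}(s)\|_\rho$ for every $r$, and by H\"older $|a_{\sf n}'(s)|\le c\|f(v(s))\|_{L_\rho^p(\pa\R_+^n)}\le c\|v(s)\|_{L_\rho^{2p}(\pa\R_+^n)}^2\le c\|v_{\sf n}(s)\|_\rho^2$ for every $p$.

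For the refinement, apply the representation formula \eqref{representation-eq} to $V$ on $[s-L,s]$, using $p=2$ for the homogeneous term and, for the two forcing terms, a $p$ so large that $n/2p<1$, $(n-1)/2p+\tfrac12<1$ and $\log\{(r-1)/(p-1-\delta)\}<0$. Since $b_K\equiv1$ on $\pa\R_+^n$, Lemma \ref{25-lem} gives
\[
 \|V(s)\|_{L_\rho^r(\pa\R_+^n)}\le c_L\|V(s-L)\|_\rho+c_L\sup_{[s-L,s]}|a_{\sf n}'|+c_L\sup_{[s-L,s]}\|f(v(\tau))\|_{L_\rho^p(\pa\R_+^n)},
\]
the $\mu$-integrals near the right endpoint being convergent because of the choice of $p$. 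The first term is $o(\|v_{\sf n}(s)\|_\rho)$ by Case $({\rm I})$ of Proposition \ref{31-pro} and Lemma \ref{32-lem}; by the crude bound, the last two are bounded by $c\sup_{[s-L,s]}\|v_{\sf n}(\tau)\|_\rho^2\le c\,\|v_{\sf n}(s)\|_\rho\big(\sup_{[s-L,s]}\|v_{\sf n}(\tau)\|_\rho\big)$, which is $o(\|v_{\sf n}(s)\|_\rho)$ since $\|v_{\sf n}(\tau)\|_\rho\to0$ and $\|v_{\sf n}(\tau)\|_\rho\sim\|v_{\sf n}(s)\|_\rho$ on the window (Lemma \ref{32-lem}). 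Hence $\|V(s)\|_{L_\rho^r(\pa\R_+^n)}=o(\|v_{\sf n}(s)\|_\rho)$, and $\nu(s)$ is taken to be a continuous majorant of the ratio $\|V(s)\|_{L_\rho^r(\pa\R_+^n)}/\|v_{\sf n}(s)\|_\rho$.

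The step I expect to be the real obstacle is the crude bound: upgrading the merely \emph{local} smallness of $v$ — it is only known to tend to $0$ on compact sets and in $L_\rho^2(\R_+^n)$ — to a \emph{global} boundary bound in $L_\rho^r(\pa\R_+^n)$ of the right size $\|v_{\sf n}(s)\|_\rho$, since on $\pa\R_+^n$ the solution has unbounded support and $v$ need not be globally small. This is where one must remove the nonlinear boundary source by folding it into a bounded Robin coefficient and then invoke the global Gaussian-type kernel estimate of Lemma \ref{27-lem}, the decay of the weight $\rho$ being exactly what absorbs the $\exp(e^{-L}|y'|^2/\cdots)$ growth once $L>\log(r-1)$. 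A secondary technical point is the short-time singularity of Lemma \ref{25-lem} at the right end of the window in the refinement step, which is why $p$ is chosen large there.
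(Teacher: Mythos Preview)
Your argument is correct; both stages check out. In Stage~1 the fold-into-Robin trick is legitimate because $f(v)/v$ extends continuously to $0$ and is uniformly bounded, so \eqref{representation3-eq} applies with some $K_0\ge qB^{q-1}$; Lemma~\ref{27-lem} then yields the pointwise Gaussian bound you state, and the condition $L>\log(r-1)$ is exactly what makes the $L_\rho^r(\pa\R_+^n)$ integral converge. Stage~2 is routine once Stage~1 is available: the window length just has to satisfy $L>\log\{(r-1)/(1-\delta)\}$ for the $p=2$ homogeneous term, and a large $p$ (namely $p>n-1$) makes both the lower time-limit constraint in Lemma~\ref{25-lem} vacuous and the endpoint singularities integrable.

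The route, however, is genuinely different from the paper's. The paper proves the lemma in a \emph{single} pass with the representation formula for $z=v-v_{\sf n}$: it never needs your Stage~1 crude bound $\|v(s)\|_{L_\rho^r(\pa\R_+^n)}\le c\|v_{\sf n}(s)\|_\rho$, and in particular never invokes Lemma~\ref{27-lem}. Instead, the smallness of the quadratic forcing is extracted from the $L^\infty_{\loc}$ convergence $v\to0$ (Theorem~\ref{Ua-thm}) combined with uniform boundedness: the paper introduces weighted sup-quantities $M(\mu)=\sup_\Omega(1+|y'|^2)^{-1/2}|v|$ and $M_1(\mu)=\sup(1+|\xi|^2)^{-1/2}|v|^{2p-1}$, both tending to $0$, and bounds $|X(\mu)|\le cM(\mu)^{1/2}\|v(\mu)\|_\rho$ and $\|v(\mu)^2\|_{L^p}\le cM_1(\mu)^{1/p}\|v(\mu)\|_\rho^{1/p}$ directly. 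It also bypasses Lemma~\ref{25-lem} for the interior source by the exact identity ${\cal S}_K(s-\mu)({\cal E}/b_K)=e^{(m+\mu_K)(s-\mu)}{\cal E}/b_K$, which follows from $A{\cal E}=0$. Your two-stage bootstrap is more systematic and makes the quadratic mechanism $f(v)=O(\|v_{\sf n}\|_\rho^2)$ explicit, at the cost of an extra preparatory lemma (the crude bound via Lemma~\ref{27-lem}); the paper's argument is shorter but leans on the less transparent interplay of local-$L^\infty$ smallness with the weighted Cauchy--Schwarz in \eqref{f(v)-eq}. Either approach is fine.
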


\begin{proof}
We set $z(y,s)=v(y,s)-v_{\sf n}(y,s)$.
Then
$z(y,s)$ solves
\begin{equation}\label{zZz-eq}
\begin{cases}
\dis
 \pa_sz =
 \Delta z - \frac{y}{2}\cdot\nabla z - mz - X(s){\cal E}
& \text{in }\R_+^n\times(s_T,\infty),
\\[2mm] \dis
 \pa_{\nu}z = qB^{q-1}z + f(v)
& \text{on }\pa\R_+^n\times(s_T,\infty),
\end{cases}
\end{equation}
where
$X(s)$ is given by
\[
 X(s) = \int_{\pa\R_+^n}f(v(s)){\cal E}\rho\h dy'.
\]
Since $|f(v)|\leq cv^2$ (see \eqref{f(v)regularity-eq}),
we easily see that
\[
\begin{array}{c}
\dis
 \pa_{\nu}z \leq qB^{q-1}z+cv^2,
\hspace{5mm}
 \pa_{\nu}z \geq qB^{q-1}z-cv^2
\hspace{5mm}\text{on }\pa\R_+^n\times(s_T,\infty).
\end{array}
\]
Then
we apply \eqref{representation3-eq} in \eqref{zZz-eq} with $K_0:=qB^{q-1}$ to obtain
\[
\begin{array}{lll}
\dis
 |z(s)|
\hspace{-2mm}&\leq&\hspace{-2mm}\dis
 e^{-(m+\mu_{K_0})(s-s_0)}
 \left| {\cal S}_{K_0}(s-s_0)\left( \frac{z(s_0)}{b_{K_0}} \right) \right|
+
 \int_{s_0}^se^{-(m+\mu_{K_0})(s-\mu)}
\\[5mm] && \dis \hspace{0mm}
\times
 \left(
 |X(\mu)|\left| {\cal S}_{K_0}(s-\mu)\left( \frac{{\cal E}}{b_{K_0}} \right) \right|
 +
 c\left| {\cal T}_{K_0}(s,\mu)v^2 \right|
 \right)
 d\mu
\\[6mm]
\hspace{-2mm}&=:&\hspace{-2mm}\dis
 J_1+J_2+J_3
\hspace{5mm}\text{for }y\in\pa\R_+^n,\ s\geq s_0.
\end{array}
\]
To apply Lemma \ref{25-lem},
we fix $p>\max\{2,n-1\}$ and $\delta\in(0,1)$ and set
\[
 s_1 := s_0 + \max\left\{ 1,\log\left( \frac{r-1}{1-\delta} \right) \right\}.
\]
Then
from Lemma \ref{25-lem} with $p=2$,
we see that
\[
\begin{array}{lll}
\dis
 \|J_1(s_1)\|_{L_{\rho}^r(\pa\R_+^n)}
\hspace{-2mm}&\leq&\hspace{-2mm} \dis
 \frac{ce^{-(m+\mu_{K_0})(s_1-s_0)}}{(1-e^{-(s_1-s_0)})^{n/4}}\|z(s_0)\|_{\rho}
\leq
 c\|z(s_0)\|_{\rho}
\\[4mm]
\hspace{-2mm}&=&\hspace{-2mm} \dis
 c\|(v-v_{\sf n})(s_0)\|_{\rho}.
\end{array}
\]
Next
we estimate $J_2$.
From the Schwarz inequality,
we get
\[
 |X(\mu)|
\leq
 c\int_{\pa\R_+^n}v(\mu)^2|{\cal E}|\rho\h dy'
\leq
 c\left( \int_{\pa\R_+^n}(1+|y'|^2)^{-1/2}v(\mu)^4\rho\h dy' \right)^{1/2}.
\]
Then by the same calculation as \eqref{f(v)-eq},
we see that
\[
 \int_{\pa\R_+^n}(1+|y'|^2)^{-1/2}v(\mu)^4\rho\h dy'
\leq
 cM(\mu)\|v\|_{\rho}^2,
\]
where $M(s)$ is the same as in \eqref{f(v)-eq}.
Furthermore
since $A{\cal E}=0$,
we find that
${\cal S}_{K_0}(s-\mu)({\cal E}/b_{K_0})=e^{\mu_{K_0}(s-\mu)}{\cal E}/b_{K_0}$.
Therefore
since $b_{K_0}\equiv1$ on $\pa\R_+^n$,
we obtain
\[
\begin{array}{lll}
\dis
 \|J_2(s_1)\|_{L_{\rho}^r(\pa\R_+^n)}
\hspace{-2mm}&\leq&\hspace{-2mm} \dis
 c\left(
 \int_{s_0}^{s_1}M(\mu)^{1/2}\|v(\mu)\|_{\rho}d\mu
 \right)
 \left\| {\cal E} \right\|_{L_\rho^r(\pa\R_+^n)}.
\\[4mm]
\hspace{-2mm}&\leq&\hspace{-2mm} \dis
 c\left( \sup_{\mu\in(s_0,s_1)}M(\mu)^{1/2} \right)
 \left( \sup_{\mu\in(s_0,s_1)}\|v(\mu)\|_\rho \right).
\end{array}
\]
Finally
we compute $J_3$.
Since $p>2$,
by definition of $s_1$,
we easily see that
\[
 s_1-s_0 \geq \log\left( \frac{r-1}{1-\delta} \right)
>
 \log\left( \frac{r-1}{p-1-\delta} \right).
\]
Hence
we can apply Lemma \ref{25-lem} and obtain
\[
\begin{array}{l}
\dis
 \|J_3(s_1)\|_{L_{\rho}^r(\pa\R_+^n)}
\leq
 ce^{-(m+\mu_{K_0})(s_1-s_0)}
 \int_{s_0}^{s_1}
 \frac{\|v(\mu)^2\|_{L_{K_0,\rho}^p(\R_+^n)}}{(1-e^{-(s_1-\mu)})^{(n-1)/2p+1/2}}
 d\mu.
\end{array}
\]
Then
by the Schwarz inequality and $|b_{K_0}(\xi_n)|\leq K_0$ for $\xi_n\in\R_+$,
we compute
\begin{eqnarray*}
 \|v(\mu)^2\|_{L_{K_0,\rho}^p(\R_+^n)}^p
\hspace{-2mm}&=&\hspace{-2mm}
 \int_{\R_+^n}|v(\xi,\mu)|^{2p}b_{K_0}(\xi_n)^2\rho(\xi)d\xi
\\
\hspace{-2mm}&\leq&\hspace{-2mm}
 c\left(
 \int_{\R_+^n}(1+|\xi|^2)^{-1}|v(\xi,\mu)|^{4p}\rho(\xi)d\xi
 \right)^{1/2}
\\[2mm]
\hspace{-2mm}&\leq&\hspace{-2mm}
 cM_1(\mu)\|v(\mu)\|_\rho,
\end{eqnarray*}
where $M_1(\mu)=\sup_{\xi\in\R_+^n}(1+|\xi|^2)^{-1/2}|v(\xi,\mu)|^{2p-1}$.
Since
$v(y,\mu)$ is uniformly bounded and $v(y,\mu)\to0$ uniformly on any compact set in $\overline{\R_+^n}$
as $\mu\to\infty$,
we easily see that $M_1(\mu)\to0$ as $\mu\to\infty$.
Therefore
since $p>n-1$,
we obtain
\[
\begin{array}{lll}
\dis
 \|J_3(s_1)\|_{L_{\rho}^r(\pa\R_+^n)}
\hspace{-2mm}&\leq&\hspace{-2mm} \dis
 c\int_{s_0}^{s_1}
 \frac{M_1(\mu)\|v(\mu)\|_\rho}{(1-e^{-(s_1-\mu)})^{(n-1)/2p+1/2}}
 d\mu
\\[4mm]
\hspace{-2mm}&\leq&\hspace{-2mm} \dis
 c\left( \sup_{\mu\in(s_0,s_1)}M_1(\mu) \right)
 \left( \sup_{\mu\in(s_0,s_1)}\|v(\mu)\|_\rho \right).
\end{array}
\]
Put
$\nu_1(s)=M(s)^{1/2}+M_1(s)$.
Then
combining the above estimates and applying
Proposition \ref{31-pro} and Lemma \ref{32-lem},
we conclude that
\[
\begin{array}{lll}
\dis
 \|z(s_1)\|_{L_{\rho}^r(\pa\R_+^n)}
\hspace{-2mm}&\leq&\hspace{-2mm} \dis
 c\left(
 \|(v-v_{\sf n})(s_0)\|_{\rho}+
 \left( \sup_{\mu\in(s_0,s_1)}\nu_1(\mu) \right)
 \left( \sup_{\mu\in(s_0,s_1)}\|v_{\sf n}(\mu)\|_\rho \right)
 \right)
\\[6mm]
\hspace{-2mm}&\leq&\hspace{-2mm} \dis
 c\left(
 \frac{\|(v-v_{\sf n})(s_0)\|_{\rho}}{\|v_{\sf n}(s_0)\|}
 +
 \left( \sup_{\mu\in(s_0,s_1)}\nu_1(\mu) \right)
 \right)
 \|v_{\sf n}(s_1)\|_\rho.
\end{array}
\]
Since $\nu_1(\mu)\to0$ as $\mu\to0$,
the proof is completed.
\end{proof}

\begin{proof}
[Proof of Proposition {\rm\ref{33-pro}}]
Set $a_0(s)=(v(s),{\cal E})_{\rho}$.
Then
it is verified that
\[
 \dot{a}_0 = \int_{\pa\R_+^n}f(v){\cal E}\rho\h dy'.
\]
Then
since $f(v)=k_qv^2+O(v^3)$ with $k_q=q(q-1)B^{q-2}/2$ (see \eqref{f(v)regularity-eq}),
we get
\[
 \dot{a}_0
=
 k_q\int_{\pa\R_+^n}v_{\sf n}^2{\cal E}\rho\h dy'
+
 \int_{\pa\R_+^n}
 \left( k_q(v^2-v_{\sf n}^2)+O(v^3) \right){\cal E}\rho\h dy'.
\]
Since $v(y,s)$ is uniformly bounded on $\R_+^n\times(s_T,\infty)$,
the second integral on the right-hand side is estimates as follows.
\[
\begin{array}{c}
\begin{array}{lll}
 \dis
 \int_{\pa\R_+^n}|v^2-v_{\sf n}^2||{\cal E}|\rho\h dy'
\hspace{-2mm}&\leq&\hspace{-2mm} \dis
 c\int_{\pa\R_+^n}|v+v_{\sf n}||v-v_{\sf n}||{\cal E}|\rho\h dy'
\\
\hspace{-2mm}&\leq&\hspace{-2mm} \dis
 c\|v-v_{\sf n}\|_{L_{\rho}^4(\pa\R_+^n)}\|v+v_{\sf n}\|_{L_{\rho}^2(\pa\R_+^n)},
\end{array}
\\[8mm]
\begin{array}{lll}
 \dis
 \int_{\pa\R_+^n}|v^3||{\cal E}|\rho\h dy'
\hspace{-2mm}&\leq&\hspace{-2mm} \dis
 c\int_{\R_+^n}\left( |v(v^2-v_{\sf n}^2)|+|vv_{\sf n}^2| \right)
 |{\cal E}|\rho\h dy'
\\[4mm]
\hspace{-2mm}&\leq&\hspace{-2mm} \dis
 c\int_{\R_+^n}\left( |v^2-v_{\sf n}^2|+|vv_{\sf n}^2| \right)
 |{\cal E}|\rho\h dy'
\\[4mm]
\hspace{-2mm}&\leq&\hspace{-2mm} \dis
 c\left(
 \|v-v_{\sf n}\|_{L_{\rho}^4(\pa\R_+^n)}\|v+v_{\sf n}\|_{L_{\rho}^2(\pa\R_+^n)}
 +
 \|v_{\sf n}\|_{L_{\rho}^4(\pa\R_+^n)}^2\|v\|_{L_{\rho}^2(\pa\R_+^n)}
 \right).
\end{array}
\end{array}
\]
Hence
we get
\[
\begin{array}{lll}
 \left| \dot{a}_0-\nu_q^{-1}a_0^2 \right|
\hspace{-2mm}&\leq&\hspace{-2mm} \dis
 c\left(
 \frac{\|v-v_{\sf n}\|_{L_{\rho}^4(\pa\R_+^n)}\|v+v_{\sf n}\|_{L_{\rho}^2(\pa\R_+^n)}}{\|v_{\sf n}^2\|_\rho^2}
 +
 \|v\|_{L_{\rho}^2(\pa\R_+^n)}
 \right)a_0^2
=:
 c\nu(s)a_0^2,
\end{array}
\]
where $\nu_q^{-1}=k_q\int_{\pa\R_+^n}{\cal E}^3\rho dy'>0$ (see p.\h 164 in \cite{Herrero-V1}).
Then
Proposition \ref{32-pro} and Lemma \ref{33-lem} implies
\begin{equation}\label{SundayLunch-eq}
\lim_{s\to\infty}\nu(s)=0.
\end{equation}
From the above differential inequality,
we get
\[
 \left| \frac{1}{a_0(s)}-\frac{1}{a_0(s_T)} + \frac{s-s_T}{\nu_q} \right|
\leq
 c\int_{s_T}^s\nu(\tau)d\tau.
\]
Hence
it holds that
\[
 \left| \frac{1}{a_0(s)s}+ \frac{1}{\nu_q} \right|
\leq
 \frac{s_T}{\nu_qs}-\frac{1}{a_0(s_T)s} + \frac{c}{s}\int_{s_T}^s\nu(\tau)d\tau.
\]
Therefore
combining \eqref{SundayLunch-eq},
we obtain
\[
 \lim_{s\to\infty}sa_0(s) = -\nu_q.
\]
As a consequence,
since  $v_{\sf n}=a_0(s){\cal E}$,
it follows that
\[
\begin{array}{c}
\dis
 \left\| v_{\sf n}(s)+\frac{\nu_q}{s}{\cal E} \right\|_{\rho}
=
 \left( a_0(s)+\frac{\nu_q}{s} \right)\|{\cal E}\|_\rho
=
 o(s^{-1}),
\\[4mm] \dis
 \left\| \nabla'\left( v_{\sf n}(s)+\frac{\nu_q}{s}{\cal E} \right) \right\|_{\rho}
=
 \left( a_0(s)+\frac{\nu_q}{s} \right)\|\nabla'{\cal E}\|_{\rho}
=o(s^{-1}).
\end{array}
\]
Thus
by Proposition \ref{32-pro},
we conclude
\[
 \left\| v(s)+\frac{\nu_q}{s}{\cal E} \right\|_{\rho} = o(s^{-1}),
\hspace{7.5mm}
 \left\| \nabla'\left( v(s)+\frac{\nu_q}{s}{\cal E} \right) \right\|_{\rho} = o(s^{-1}),
\]
which completes the proof.
\end{proof}

\section{Estimate for a large range}
\label{LongRange-sec}

Throughout this section,
we assume that
$v(y,s)$ is $y_n$-axial symmetric and behaves as the case (I) in Proposition \ref{31-pro}.
Additionally,
we assume that $v(y,s)$ satisfies a monotonicity condition:
\begin{equation}\label{Naname-eq}
 y'\cdot\nabla'v(y,s)\leq0
\hspace{5mm}\text{for }(y,s)\in\R_+^n\times(s_T,\infty).
\end{equation}
In this section,
following the arguments in \cite{Herrero-V1} and \cite{Velazquez1},
we derive pointwise estimate of $v(y',s)$ along $|y'|\sim s^{1/2}$ on $\pa\R_+^n$
from the asymptotic behavior $v(y,s)\sim-\nu_qs^{-1}{\cal E}(y)$
(Proposition \ref{33-pro}) with global heat kernel estimates given in
Section \ref{Linearbackward-sec}.
For simplicity of notations,
we set
\[
 B_{\pa\R_+^n}(r) = \{y'\in\pa\R_+^n;|y'|<r\}.
\]
First
we show that
condition \eqref{Naname-eq} is assured
if the original initial data $u_0(x)$ satisfies \eqref{Naname-eq}.

\begin{lem}\label{41-lem}
Let $u_0(x)$ be $x_n$-axial symmetric and satisfy $x'\cdot\nabla'u_0(x)\leq0$ for $x\in\R_+^n$.
Then
$v(y,s)$ satisfies \eqref{Naname-eq} for $s\in(s_T,\infty)$.
\end{lem}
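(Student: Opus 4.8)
\emph{Proof plan.} Since $v(y,s)=\varphi(y,s)-\varphi_0(y_n)$ and $\varphi_0$ depends only on $y_n$, one has $\nabla'v=\nabla'\varphi$, and from $\varphi(y,s)=e^{-ms}u(e^{-s/2}y,T-e^{-s})$ it follows that
\[
 y'\cdot\nabla'v(y,s)=e^{-ms}(x'\cdot\nabla'u)(x,t)\big|_{x=e^{-s/2}y,\ t=T-e^{-s}}.
\]
Because $e^{-ms}>0$, the statement is equivalent to the assertion that the monotonicity $x'\cdot\nabla'u\le0$ propagates in time: if it holds at $t=0$ then it holds on $\R_+^n\times(0,T)$. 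This is what I would prove; as $s$ ranges over $(s_T,\infty)$ the pair $(x,t)$ ranges over $\R_+^n\times(0,T)$, so this indeed gives \eqref{Naname-eq}.

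Using the $x_n$-axial symmetry of $u$, write $u=u(r,x_n,t)$ with $r=|x'|$, so that $x'\cdot\nabla'u=r\,\pa_ru$ and it suffices to show $p:=\pa_ru\le0$. Differentiating $\pa_tu=\pa_r^2u+\frac{n-2}{r}\pa_ru+\pa_{x_n}^2u$ and the boundary condition $-\pa_{x_n}u=u^q$ with respect to $r$ gives
\[
\begin{cases}
\dis \pa_tp=\pa_r^2p+\frac{n-2}{r}\pa_rp-\frac{n-2}{r^2}p+\pa_{x_n}^2p & \text{in }\{r>0,\ x_n>0\},\\[2mm]
\dis \pa_{\nu}p=qu^{q-1}p & \text{on }\{x_n=0\},
\end{cases}
\]
supplemented by $p=0$ on the symmetry axis $\{r=0\}$ (by smoothness of the axially symmetric function $u$) and by $p(\cdot,0)=\pa_ru_0\le0$, which is exactly the hypothesis $x'\cdot\nabla'u_0\le0$. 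The two features to exploit are that the zeroth-order coefficient $-\frac{n-2}{r^2}$ is nonpositive (it vanishes when $n\le2$, so only $n\ge3$ needs attention) and that the boundary coefficient $qu^{q-1}$ is bounded on $\R_+^n\times[0,T']$ for each $T'<T$.

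I would then conclude by the parabolic maximum principle on $\R_+^n\times[0,T']$, $T'<T$. On such a slab $u$ and its first and second spatial derivatives are bounded, hence so is $p$. A positive maximum of $p$ cannot occur: in the interior, the nonpositive sign of the zeroth-order term forces $\pa_tp<0$ at a maximum, contradicting $\pa_tp\ge0$ there; on $\{x_n=0\}$, after the standard change of unknown $p\mapsto e^{-\mu x_n}p$ with $\mu>\|qu^{q-1}\|_{L^\infty}$ (which turns the Robin coefficient nonpositive) Hopf's lemma applies; and the axis $\{r=0\}$ is excluded since $p\equiv0$ there. Unboundedness in the space variable is handled by comparison with $\varepsilon(1+|x|^2+Mt)$ and letting $\varepsilon\to0$. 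Therefore $p\le0$ on every slab, i.e.\ $\pa_ru\le0$ on $\R_+^n\times(0,T)$, and the first paragraph gives the claim.

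The main difficulty is precisely making this maximum-principle argument rigorous on an unbounded domain that carries both a Robin-type boundary condition and a lower-order coefficient singular on the symmetry axis; the three remedies indicated above (a comparison function at spatial infinity, the exponential change of unknown for the boundary term, continuity across the axis) are all classical but must be combined carefully. A minor additional point is regularity at $t=0$: if $u_0$ is only $C^1$, one first approximates it by smooth $x_n$-axially symmetric data that are still nonincreasing in $|x'|$, argues as above, and passes to the limit.
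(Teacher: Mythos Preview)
Your reduction is exactly the paper's: note that $y'\cdot\nabla'v(y,s)=e^{-ms}\,x'\cdot\nabla'u(x,t)$ and then show that the monotonicity $x'\cdot\nabla'u\le0$ is preserved along the flow. The paper simply cites Lemma~2.1 in \cite{Harada} for this propagation step, whereas you supply a direct maximum-principle argument for $p=\partial_r u$; both are the same route, and your sketch is sound (for $n\ge3$ the strictly negative zeroth-order term $-\tfrac{n-2}{r^2}p$ at a positive maximum indeed gives the needed strict inequality, and your handling of the Robin boundary, the axis, and spatial infinity is standard).
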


\begin{proof}
Since $x=(T-t)^{1/2}y$,
it is clear that
$y'\cdot\nabla'v(y,s)\leq0$ is equivalent to $x'\cdot\nabla'u(x,t)\leq0$.
From Lemma 2.1 in \cite{Harada},
it holds that $x'\cdot\nabla'u(x,t)\leq0$ if $x'\cdot\nabla'u_0(x)\leq0$.
Hence the proof is completed.
\end{proof}

As a consequence of assumption \eqref{Naname-eq},
we obtain the following lemma immediately.

\begin{lem}\label{42-lem}
There exists $c>0$ such that
\[
 \varphi(y',s) \leq B + cs^{-1}\chi_{|y'|<\sqrt{2n}}
\hspace{5mm}\mathrm{for}\ y'\in\pa\R_+^n,\ s\gg1.
\]
\end{lem}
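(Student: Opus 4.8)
The plan is to derive the bound from a \emph{pointwise} form of the refined asymptotics of Proposition~\ref{33-pro}, valid on a fixed ball of $\partial\R_+^n$, combined with the monotonicity \eqref{Naname-eq}.

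First I would reduce the statement: on $\partial\R_+^n$ we have $y_n=0$, hence $v(y',s)=\varphi(y',s)-\varphi_0(0)=\varphi(y',s)-B$, so it suffices to prove $v(y',s)\le cs^{-1}\chi_{|y'|<\sqrt{2n}}$ for $s\gg1$. Next I would identify the leading profile on the boundary: since $H_2(\xi)=c_2(\xi^2-2)$ with $c_2>0$ and $I_1(0)>0$, the eigenfunction in \eqref{calE-eq} satisfies ${\cal E}(y')=c_*\big(|y'|^2-2(n-1)\big)$ on $\partial\R_+^n$ for some $c_*>0$; therefore $-\nu_q{\cal E}(y')=\nu_q c_*\big(2(n-1)-|y'|^2\big)$, which is $\le 2(n-1)\nu_q c_*$ for every $y'$, and is $\le-2\nu_q c_*<0$ as soon as $|y'|\ge\sqrt{2n}$ (the zero of ${\cal E}$ restricted to $\partial\R_+^n$ sits at $|y'|=\sqrt{2(n-1)}<\sqrt{2n}$).

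The heart of the proof is to upgrade the $L_\rho^2$ estimate $\|v(s)+\nu_q s^{-1}{\cal E}\|_\rho=o(s^{-1})$ to a pointwise one on $\partial\R_+^n$: for every $R>0$,
\[
 \sup_{|y'|\le R}\big|v(y',s)+\nu_q s^{-1}{\cal E}(y')\big|=o(s^{-1})\qquad(s\to\infty).
\]
I would obtain this from the representation formula \eqref{representation-eq} with $K=K_0=qB^{q-1}$ over a fixed window $[s-T,s]$ ($T$ fixed), splitting $v(s-T)=-\nu_q(s-T)^{-1}{\cal E}+z(s-T)$ with $\|z(s-T)\|_\rho=o(s^{-1})$. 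The leading term reproduces itself through the formula, because ${\cal E}$ is a stationary solution of the linearized problem \eqref{eigene-eq} ($A{\cal E}=0$, $\partial_\nu{\cal E}=K_0{\cal E}$, and correspondingly $\mu_{K_0}=\kappa_1=-(m+1)$), so it contributes $-\nu_q(s-T)^{-1}{\cal E}(y')=-\nu_q s^{-1}{\cal E}(y')+O(s^{-2})$ on the boundary; the $L_\rho^2$ remainder $z(s-T)$ is turned into an $o(s^{-1})$ pointwise quantity on $\{y_n\le R\}$ by the heat-kernel estimate of Lemma~\ref{27-lem}; and the nonlinear boundary term is controlled by Lemma~\ref{25-lem} (and the pointwise kernel bounds of Section~\ref{Linearbackward-sec}) together with $|f(v)|\le cv^2$ and the local decay of $v$. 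If convenient, this pointwise refined asymptotics may instead be quoted from \cite{Harada}.

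Finally I would take $R=\sqrt{2n}$ and conclude: for $|y'|<\sqrt{2n}$, $v(y',s)\le2(n-1)\nu_q c_* s^{-1}+o(s^{-1})\le c_1 s^{-1}$; for $|y'|=\sqrt{2n}$, $v(y',s)\le-2\nu_q c_* s^{-1}+o(s^{-1})<0$ once $s$ is large; and for $|y'|>\sqrt{2n}$ the monotonicity \eqref{Naname-eq} --- which passes to $\partial\R_+^n$ by continuity of $\nabla'v$ up to the boundary --- gives $v(y',s)\le v\big(\sqrt{2n}\,y'/|y'|,s\big)<0$. Hence $v(y',s)\le c_1 s^{-1}\chi_{|y'|<\sqrt{2n}}$ for $s\gg1$, which is the assertion. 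The main obstacle is the pointwise upgrade: keeping the sharp rate $s^{-1}$ uniformly up to $s=\infty$, in particular absorbing the quadratic boundary nonlinearity $f(v)$, needs the global heat-kernel estimates of Section~\ref{Linearbackward-sec}, together with a short bootstrap (starting from the local uniform convergence $v(y,s)\to0$ of Theorem~\ref{Ua-thm}) if a decay rate for the local sup-norm of $v$ is not yet in hand.
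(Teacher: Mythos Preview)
Your approach is correct and follows the same line as the paper: compute the sign of ${\cal E}$ on $\partial\R_+^n$, use the refined asymptotics at $|y'|=\sqrt{2n}$ to get $v<0$ there, then propagate outward by the monotonicity \eqref{Naname-eq}. The paper's own proof is much terser---it simply asserts that ``$v(y,s)\sim -\nu_q s^{-1}{\cal E}$'' pointwise on the sphere $|y'|=\sqrt{2n}$ and does not spell out the upgrade from the $L_\rho^2$ statement of Proposition~\ref{33-pro}; you are right to flag that this upgrade is where the work lies, and your mechanism (representation formula on a window of fixed length, Lemma~\ref{27-lem} for the linear remainder, plus a short bootstrap through $|v|\le C/s$ on compacts to absorb the quadratic boundary term) is a valid way to justify it. An equally clean alternative, and arguably what the paper has in mind, is a two-step local parabolic $L^\infty$--$L_\rho^2$ estimate: first deduce $\sup_{|y|\le R}|v(y,s)|\le C_R/s$ from $\|v(s)\|_\rho\le C/s$, then apply the same local estimate to $w=v+\nu_q s^{-1}{\cal E}$, whose interior and boundary forcings are now $O(s^{-2})$ on compacts, to get $\sup_{|y|\le R}|w(y,s)|=o(s^{-1})$.
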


\begin{proof}
Since ${\cal E}(y)=c(H_2(y_1)+\cdots+H_2(y_{n-1}))I_1(y_n)$ and $H_2(\xi)=c'(\xi^2-2)$ with $c,c'>0$,
${\cal E}(y)$ is explicitly expressed by
\[
 {\cal E}(y') = cc'\left( |y'|^2-2(n-1) \right)I_1(0)
\hspace{5mm}\text{for }y'\in\pa\R_+^n.
\]
Hence
it follows that ${\cal E}(y')<0$ for $y'\in\R_+^n$ with $|y'|>\sqrt{2(n-1)}$.
By assumption,
we recall that
$v(y,s)$ behaves $v(y,s)\sim -\nu_qs^{-1}{\cal E}$ as $s\to\infty$.
Therefore
we find that $v(y',s)|_{\pa\R_+^n}<0$ for $|y'|=\sqrt{2n}$ and large $s>s_T$.
As a consequence,
by using \eqref{Naname-eq},
we see that $v(y',s)|_{\pa\R_+^n}<0$ for $|y'|>\sqrt{2n}$ and large $s>s_T$.
Thus
we complete the proof.
\end{proof}

A goal of this section is
to show the following pointwise estimate along $|y'|\sim s^{1/2}$ on $\pa\R_+^n$.

\begin{pro}\label{41-pro}
There exist $\theta\in(0,1)$ and $0<k_1<k_2<1$ such that
\[
 k_1B \leq \varphi(y',s)|_{\pa\R_+^n} \leq  k_2B
\hspace{5mm}\mathrm{for}\ |y'|=\theta\sqrt{s}.
\]
\end{pro}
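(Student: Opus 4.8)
\emph{Strategy.} On $\pa\R_+^n$ we have $\varphi(y',s)|_{\pa\R_+^n}=B+v(y',s)|_{\pa\R_+^n}$, and (as in the proof of Lemma \ref{42-lem}) ${\cal E}(y')=cc'(|y'|^2-2(n-1))I_1(0)$ with $c,c',I_1(0)>0$. Since Proposition \ref{33-pro} says the dominant mode of $v$ is $a_0(s){\cal E}$ with $sa_0(s)\to-\nu_q$, one expects along $|y'|=\theta\sqrt s$
\[
 v(y',s)|_{\pa\R_+^n}\ \approx\ a_0(s)\,{\cal E}(\theta\sqrt s\,)\ \approx\ -\nu_q cc'I_1(0)\,\theta^2\ =:\ -C\theta^2<0 .
\]
So the plan is: (i) prove a two--sided version of this at the ``matching scale'' $|y'|\sim\sqrt s$ on $\pa\R_+^n$, namely $v(y',s)|_{\pa\R_+^n}<0$ and $C_1\le-v(y',s)|_{\pa\R_+^n}\le C_2$ uniformly for $s\gg1$, with constants $0<C_1<C_2$; then (ii) choose $\theta>0$ small enough that $C_2<B$ and set $k_1:=1-C_2/B$, $k_2:=1-C_1/B$, so that $0<k_1<k_2<1$ and $k_1B\le\varphi(y',s)|_{\pa\R_+^n}\le k_2B$ on $|y'|=\theta\sqrt s$.

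\emph{Preliminaries.} First I would upgrade Proposition \ref{33-pro} to a pointwise statement: since $\varphi$, hence $v$, has uniformly bounded derivatives up to second order (Lemma \ref{21-lem}), the convergence $s\,v(\cdot,s)+\nu_q{\cal E}\to0$ in $L_\rho^2(\R_+^n)$ together with interior/boundary parabolic estimates for the equation satisfied by $sv$ gives $s\,v(y,s)\to-\nu_q{\cal E}(y)$ in $C_{\loc}^1(\overline{\R_+^n})$. Then I decompose $v=v_{\sf n}+z$ with $v_{\sf n}(y,s)=a_0(s){\cal E}(y)$ and $z=v_{\sf s}+v_{\sf u}$: the term $v_{\sf n}$ is explicit and is evaluated at $|y'|=\theta\sqrt s$ directly via $sa_0(s)\to-\nu_q$, so the whole issue is to bound $z(y',s)|_{\pa\R_+^n}$ along $|y'|=\theta\sqrt s$ by a quantity small compared with $C\theta^2$.

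\emph{The core estimate.} Adapting the matched--asymptotics arguments of \cite{Herrero-V1,Velazquez1}, I would control $z$ at the matching scale from the representation formula \eqref{representation-eq}/\eqref{representation3-eq} with $K_0:=qB^{q-1}$: from a large time $s_0$ the datum is $o(s_0^{-1})$ pointwise on compact sets (by the $C_{\loc}^1$ convergence) and merely uniformly bounded outside (Theorem \ref{Ua-thm}), and these are propagated using the time--global heat--kernel bounds of Section \ref{Linearbackward-sec}: the Gaussian bound of Lemma \ref{24-lem}, the shifted bound $\gamma_{K_0}\le c_1\Lambda$ of Lemma \ref{26-lem} (which is what carries mass from $\zeta=O(1)$ out to $z\sim\theta\zeta e^{s/2}$), and above all Lemma \ref{27-lem}, whose prefactor $\exp\!\big(e^{-s}|y'|^2/4(1+e^{-s})\big)$ stays bounded --- indeed tends to $1$ --- exactly when $|y'|=\theta\sqrt s$ and $y_n$ is bounded. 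The boundary nonlinearity enters only as $f(v)=O(v^2)$; using $v=O(s^{-1})$ in the inner region and, after a bootstrap over the intermediate region, $v=O(\theta^2)$ there, its contribution to \eqref{representation3-eq} is of strictly higher order. The comparison form \eqref{representation3-eq} then yields $-v(y',s)|_{\pa\R_+^n}\le C_2$ on $|y'|=\theta\sqrt s$ (hence $\varphi\ge k_1B$), while for the matching lower bound $-v(y',s)|_{\pa\R_+^n}\ge C_1$ (hence $\varphi\le k_2B$, Lemma \ref{42-lem} already giving $v<0$ there) one keeps the explicit leading term $a_0(s){\cal E}$ in \eqref{representation-eq} and shows the remainder cannot cancel it. Throughout, the monotonicity \eqref{Naname-eq} (Lemma \ref{41-lem}, Lemma \ref{42-lem}) supplies the uniform a priori control of the sign and size of $v$ needed to keep the iteration under control.

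\emph{Main difficulty.} The heart of the matter is the core estimate, i.e. transporting the inner asymptotics of Proposition \ref{33-pro} --- which lives in $L_\rho^2(\R_+^n)$ and therefore only ``sees'' $|y|\lesssim1$ --- out to $|y'|\sim\sqrt s$, where the weight $e^{-|y'|^2/4}=e^{-\theta^2s/4}$ is exponentially small and the $L_\rho^2$ theory is of no use. One must work with unweighted pointwise kernel bounds, and the subtlety is that in the representation formula these are multiplied by the growing factor $e^{s-s_0}$ (because $m+\mu_{K_0}=-1$): showing that this factor is absorbed by the decay of the non--central part $z$ and by the quadratic smallness of $f(v)$ --- so that no information is lost between scale $1$ and scale $\sqrt s$ --- is precisely what the remainder of Section \ref{LongRange-sec} is devoted to.
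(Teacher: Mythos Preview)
Your overall strategy is correct and matches the paper's: on $|y'|=\theta\sqrt s$ the principal part $a_0(s){\cal E}(y')\sim -C\theta^2$ is negative and bounded away from both $0$ and $-B$ once $\theta$ is small, so the whole task is to show the remainder cannot destroy this. The lower bound $\varphi\ge k_1B$ (equivalently $-v\le C_2$) is exactly Lemma~\ref{43-lem}: since $f(v)\ge0$, one simply drops the boundary source in \eqref{representation2-eq} with $K=qB^{q-1}$, propagates $v(s_0)\approx -\nu_q s_0^{-1}{\cal E}$ over the window $s-s_0=\log s$, and Lemma~\ref{27-lem} handles the $o(s_0^{-1})$ error exactly as you describe.

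The genuine gap is in the other direction, $\varphi\le k_2B$ (equivalently $v\le -C_1<0$). Your plan is to ``keep the explicit leading term and show the remainder cannot cancel it'', with a bootstrap feeding $v=O(\theta^2)$ back into $f(v)=O(v^2)$. But a direct estimate of the boundary contribution
\[
\int_{s_0}^s e^{s-\mu}\,{\cal T}_{K_0}(s,\mu)f(v(\mu))\,d\mu
\]
does not close: since $m+\mu_{K_0}=-1$ the prefactor $e^{s-\mu}$ integrates to order $e^{s-s_0}=s$, and the only a~priori bound on $|v(\xi',\mu)|$ for $|\xi'|\le\sqrt\mu$ is Lemma~\ref{43-lem}'s $|v|\le c|\xi'|^2/\mu+\nu(\mu)$ with a \emph{non-quantitative} $\nu(\mu)=o(1)$. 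The first piece contributes $O(\theta^4)$ as you hope, but the second contributes $c\,\nu(\mu)^2\cdot s$, which cannot be made $o(\theta^2)$; iterating makes this worse, not better.

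The paper therefore does \emph{not} argue directly. Lemma~\ref{48-lem} proceeds by contradiction: if $v(y_j',\tau_j)\to0$ along $|y_j'|=\theta\sqrt{\tau_j}$, then by monotonicity \eqref{Naname-eq} one gets $v\to0$ on all of $B_{\partial\R_+^n}(\sqrt{\tau_j})$, and the separate estimate $\sup_{|y'|\le\sqrt s}|\partial_s\varphi(y',s)|\le cs^{-1}$ of Lemma~\ref{45-lem} (resting on Lemmas~\ref{46-lem}--\ref{47-lem}) extends this to $\sup_{B(\sqrt\mu)\times(s_j,\tau_j)}|v|=:\epsilon_j\to0$. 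This smallness lets one rewrite the boundary condition as $\partial_\nu v\le K_j v+c(\epsilon_j+\mu^{-1})\mu^{-1}+c\chi_{|y'|>\sqrt\mu}$ with $K_j=qB^{q-1}(1-c\epsilon_j)$; the boundary source is now genuinely small (not merely bounded), and the representation formula yields $v(y',\tau_j)\le -\nu_q s_j^{-1}{\cal E}(y')+o(1)$, contradicting $v(y_j',\tau_j)\to0$. You do not mention the $\partial_s\varphi$ estimate, and without it---or an equivalent device that makes $|v|$ small \emph{uniformly over the whole time window} $(s_0,s)$, not just at the endpoints---your bootstrap cannot converge.
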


\begin{lem}\label{43-lem}
There exists a positive continuous function $\nu(s)$ satisfying
$\lim_{s\to\infty}\nu(s)=0$ such that
\[
 v(y',s)|_{\pa\R_+^n} \geq -\frac{\nu_q}{s}{\cal E}(y') - \nu(s)
\hspace{5mm}\mathrm{for}\ y'\in B_{\pa\R_+^n}(\sqrt{s}).
\]
\end{lem}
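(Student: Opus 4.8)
The plan is to convert the weighted $L^2$-asymptotics of Proposition \ref{33-pro} into a pointwise lower bound on the expanding ball $\{|y'|<\sqrt s\}$ by running the representation formula \eqref{representation-eq} backward over a suitably chosen time, discarding the nonnegative Duhamel term produced by $f(v)$, and absorbing the remaining error through the global heat kernel estimate of Lemma \ref{27-lem}. Set $w(y,s)=v(y,s)+\tfrac{\nu_q}{s}{\cal E}(y)$, so Proposition \ref{33-pro} gives $\|w(s)\|_{\rho}\le\epsilon_1(s)$ with $s\,\epsilon_1(s)\to0$ (and we may take $\epsilon_1$ nonincreasing). Since $t\mapsto t^q$ is convex and $\varphi=v+\varphi_0\ge0$, $f(v)=\varphi^q-B^q-qB^{q-1}v\ge0$. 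Fix $K=qB^{q-1}$; by Lemma \ref{22-lem} $\mu_K=\kappa_1=-(m+1)$, hence $-(m+\mu_K)=1$.

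Applying \eqref{representation-eq} on a time interval $[s_1,s]$ (interior source $0$, boundary source $f(v)\ge0$) and throwing away the nonnegative ${\cal T}_K$-term gives, on $\pa\R_+^n$ where $b_K(0)=1$,
\[
 v(y',s)\ \ge\ e^{\,s-s_1}\big[\,{\cal S}_K(s-s_1)(v(s_1)/b_K)\,\big](y',0).
\]
The same formula applied to ${\cal E}$, which is a stationary solution of the linearized problem \eqref{v12-eq} with $K=qB^{q-1}$ and vanishing sources, yields the exact reproduction identity $e^{\tau}\big[{\cal S}_K(\tau)({\cal E}/b_K)\big](y',0)={\cal E}(y')$ for every $\tau>0$. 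Writing $v(s_1)=-\tfrac{\nu_q}{s_1}{\cal E}+w(s_1)$ and $\tau:=s-s_1$, we obtain
\[
 v(y',s)\ \ge\ -\frac{\nu_q}{s_1}\,{\cal E}(y')\ -\ e^{\tau}\!\!\int_{\R_+^n}\Gamma_K\big((y',0),\xi,\tau\big)\,\frac{|w(\xi,s_1)|}{b_K(\xi_n)}\,d\xi .
\]

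Next I would fix $R_0\ge1$ and set, for $|y'|<\sqrt s$, $\tau=\tau(|y'|,s):=\max\{1,\ 2\log(2|y'|/R_0)\}$, so that $\Gamma_K((y',0),\cdot,\tau)$ concentrates at bounded spatial scale. Then $\tau\le\log s+\log4$, whence $s_1=s-\tau\ge s/2$ for $s$ large and $\|w(s_1)\|_{\rho}=o(s^{-1})$. By the choice of $\tau$, $e^{-\tau}|y'|^2\le R_0^2/4$, so the exponential factor $\exp\big(\tfrac{e^{-\tau}|y'|^2}{4(1+e^{-\tau})}\big)$ in Lemma \ref{27-lem} is bounded by a constant, while $e^{\tau}\le4|y'|^2/R_0^2\le4s/R_0^2$. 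Hence, applying Lemma \ref{27-lem} with $w_0=w(s_1)/b_K$ (whose $L_{K,\rho}^2$-norm is controlled by $\|w(s_1)\|_{\rho}$), the error integral is $O\big(s\,\|w(s_1)\|_{\rho}\big)=o(1)$. Finally, since ${\cal E}(y')$ is quadratic in $|y'|$ (proof of Lemma \ref{42-lem}), $\big|\tfrac{\nu_q}{s_1}{\cal E}(y')-\tfrac{\nu_q}{s}{\cal E}(y')\big|=\nu_q|{\cal E}(y')|\tfrac{\tau}{s s_1}=O(s^{-1}\log s)$ uniformly for $|y'|<\sqrt s$. Collecting the two error contributions defines a continuous $\nu(s)\to0$ with $v(y',s)|_{\pa\R_+^n}\ge-\tfrac{\nu_q}{s}{\cal E}(y')-\nu(s)$ on $B_{\pa\R_+^n}(\sqrt s)$ for $s$ large (and $\nu$ is enlarged trivially on the remaining bounded range of $s$).

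The heart of the matter is that Proposition \ref{33-pro} controls $w$ only in $L_{\rho}^2(\R_+^n)$, whose Gaussian weight $e^{-|y|^2/4}$ degenerates precisely on the range $|y'|\sim\sqrt s$ of interest; the remedy is that Lemma \ref{27-lem} upgrades this weighted smallness to a genuine pointwise estimate, at the price of an amplification factor that is kept harmless because (i) we only look back a time $\tau=O(\log s)$ — enough, since the self-similar drift $\dot y=-\tfrac12 y$ sweeps the heat-kernel mass toward the origin as time runs backward, so the bounded-scale data controls the large-scale value — and (ii) $R_0$ is chosen so that $e^{-\tau}|y'|^2$ stays bounded. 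I expect the delicate point to be making the three quantities $e^{\tau}$, the Lemma \ref{27-lem} exponential, and the a priori decay $\|w(s_1)\|_{\rho}$ quantitatively compatible; the remaining items — verifying the ${\cal E}$-reproduction identity directly from \eqref{representation-eq}, the uniform treatment of small $|y'|$ via $\tau=1$, and the uniformity of $\|w(s_1)\|_{\rho}=o(s^{-1})$ for $s_1\in[s/2,s]$ — are routine.
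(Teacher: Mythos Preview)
Your argument is correct and follows the same strategy as the paper: drop the nonnegative Duhamel boundary term coming from $f(v)\ge 0$, use the reproduction identity for ${\cal E}$ under ${\cal S}_K$, and control the remainder $w(s_1)=v(s_1)+\nu_q s_1^{-1}{\cal E}$ via Lemma \ref{27-lem} together with Proposition \ref{33-pro}. The only difference is your choice of time lag. You pick a point-dependent $\tau(|y'|,s)=\max\{1,\,2\log(2|y'|/R_0)\}$, whereas the paper takes a single uniform lag defined implicitly by $s=e^{s-s_0}$, i.e.\ $\tau=s-s_0=\log s$. With this uniform choice one has $e^{-\tau}|y'|^2\le 1$ for all $|y'|\le\sqrt{s}$ (so the exponential in Lemma \ref{27-lem} is bounded by $e^{1/4}$) and $e^{\tau}\|w(s_0)\|_{\rho}=s\cdot o(s_0^{-1})=o(1)$ since $s_0=s-\log s\sim s$. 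This avoids the $|y'|$-dependence of $s_1$ and the attendant uniformity check on $\|w(s_1)\|_{\rho}$ over $s_1\in[s/2,s]$; otherwise the two arguments are identical.
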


\begin{proof}
We set $K=qB^{q-1}$ and ${\cal D}(y)={\cal E}(y)/b_K(y_n)$.
Then
since $f(v)\geq0$ in \eqref{(B2)-eq},
by using \eqref{representation2-eq},
we get
\begin{eqnarray*}
 v(s)
\hspace{-2mm}&\geq&\hspace{-2mm}
 e^{-(m+\mu_K)(s-s_0)}{\cal S}_K(s-s_0)\left( \frac{v(s_0)}{b_K} \right)
\\[1mm]
\hspace{-2mm}&=&\hspace{-2mm}
 e^{-(m+\mu_{K})(s-s_0)}
 {\cal S}_K(s-s_0)\left( -\frac{\nu_q}{s_0}{\cal D} \right)
+
 e^{-(m+\mu_K)(s-s_0)}
\\
&& \hspace{0mm}
\times
 {\cal S}_K(s-s_0)\left( \frac{v(s_0)+\nu_qs_0^{-1}{\cal E}}{b_K} \right)
\hspace{5mm}\text{for }y'\in\pa\R_+^n,\ s>s_0.
\end{eqnarray*}
Here
since
\begin{equation*}
\begin{cases}
\dis
 -\left(
 \Delta-\frac{y}{2}\cdot\nabla+\left(\frac{2b_{K}'}{b_{K}}\right)\pa_n
 \right){\cal D}
=
 -(m+\mu_K){\cal D}
& \text{in }\R_+^n,
\\ \dis
 \pa_{\nu}{\cal D} = 0
& \text{on } \pa\R_+^n,
\end{cases}
\end{equation*}
we note that
\[
 {\cal S}_K(s-s_0){\cal D} = e^{(m+\mu_K)(s-s_0)}{\cal D}.
\]
Therefore
since ${\cal D}(y')={\cal E}(y')$ on $\pa\R_+^n$,
it follows that
\[
 e^{-(m+\mu_K)}{\cal S}_K(s-s_0){\cal D}
 \left( \frac{\nu_q}{s_0}{\cal D} \right)
=
 \frac{\nu_q}{s_0}{\cal E}
\hspace{5mm}\text{on }\pa\R_+^n.
\]
Furthermore
from Lemma \ref{27-lem},
we observe that
\[
\begin{array}{lll}
\dis
 \left|
 {\cal S}_K(s-s_0)
 \left( \frac{v(s_0)+\nu_qs_0^{-1}{\cal E}}{b_K} \right)
 \right|
\hspace{-2mm}&\leq&\hspace{-2mm} \dis
 c\exp\left( \frac{e^{-(s-s_0)}|y'|^2}{4(1+e^{-(s-s_0)})} \right)
 \left\| v(s_0)+\frac{\nu_q}{s_0}{\cal E} \right\|_{\rho}
\end{array}
\]
for $y'\in\pa\R_+^n$ and $s\geq s_0+1$.
By the same way as in \cite{Herrero-V1},
we choose $s>s_0$ such that $s=e^{s-s_0}$.
This choice is equivalent to
\[
 1-\frac{s_0}{s} = \frac{\log s}{s}.
\]
Here
we recall from Lemma \ref{22-lem} that $\mu_K=-(m+1)$.
Therefore
since $e^{-(s-s_0)}|y'|^2\leq1$ for $|y'|\leq\sqrt{s}$,
Proposition \ref{33-pro} implies
\[
\begin{array}{lll}
\dis
 e^{-(m+\mu_{K})(s-s_0)}
 \left|
 {\cal S}_{K}(s-s_0)
 \left( \frac{v(s_0)+\nu_qs_0^{-1}{\cal E}}{b_{K}} \right)
 \right|=
 so(s_0^{-1}) 
=
 o(1)
\hspace{5mm}\text{for }y'\in B_{\pa\R_+^n}(\sqrt{s}).
\end{array}
\]
Thus the proof is completed.
\end{proof}

\begin{lem}\label{44-lem}
For any $R>0$ there exists $c_R>0$ such that
\[
 |\nabla'v(y,s)|\leq c_Rs^{-1/2}
\hspace{5mm}\mathrm{for}\ |y'|\leq R\sqrt{s},\ y_n\in(0,R).
\]
\end{lem}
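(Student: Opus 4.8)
The plan is to bound a single tangential derivative $V=\pa_iv$ for a fixed $i\in\{1,\dots,n-1\}$, since $|\nabla'v|\le\sum_{i=1}^{n-1}|\pa_iv|$. As computed in the proof of Proposition \ref{32-pro}, $V$ solves the linear problem
\[
\pa_sV=\Delta V-\frac{y}{2}\cdot\nabla V-\left(m+\frac12\right)V
\quad\text{in }\R_+^n\times(s_T,\infty),
\qquad
\pa_{\nu}V=q\varphi^{q-1}V
\quad\text{on }\pa\R_+^n\times(s_T,\infty),
\]
because $qB^{q-1}V+f'(v)V=q(v+B)^{q-1}V=q\varphi^{q-1}V$. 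I would fix a large $s_0>s_T$, set $W(y,s)=e^{(s-s_0)/2}V(y,s)$ to remove the shift $\tfrac12$, and observe that both $W$ and $-W$ satisfy \eqref{v123-eq} (with the parameter ``$m$'' there equal to the present $m$) for $g_1=g_2=0$, $v_0=\pm V(s_0)$, and the nonnegative time dependent Robin coefficient $K(y',s)=q\varphi(y',s)^{q-1}$.

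The heart of the matter is a sharp bound on $K$. By Lemma \ref{42-lem} one has $\varphi(y',s)\le B+cs^{-1}$ on $\pa\R_+^n$ for $s\gg1$, so that $K_0:=\sup_{s\ge s_0}\|K(\cdot,s)\|_{L^\infty(\pa\R_+^n)}\le qB^{q-1}+cs_0^{-1}$. Since the first eigenvalue $\mu_K$ of Section \ref{Linearbackward-sec} depends smoothly on $K$ and $\mu_{qB^{q-1}}=\kappa_1=-(m+1)$ by Lemma \ref{22-lem}, this gives $\mu_{K_0}=-(m+1)+O(s_0^{-1})$, hence $m+\tfrac12+\mu_{K_0}=-\tfrac12+O(s_0^{-1})$. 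Applying the comparison estimate \eqref{representation3-eq} to $\pm W$ and using $|V(s)|=e^{-(s-s_0)/2}|W(s)|$, I obtain for $s>s_0$
\[
\frac{|V(y,s)|}{b_{K_0}(y_n)}
\le
e^{-(m+1/2+\mu_{K_0})(s-s_0)}\,
{\cal S}_{K_0}(s-s_0)\!\left(\frac{|V(s_0)|}{b_{K_0}}\right)\!(y).
\]

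Then I would take $s_0=s-\log s$ as in Lemma \ref{43-lem}, so $e^{-(s-s_0)}=1/s$ and $s-s_0=\log s\ge1$ for large $s$; Lemma \ref{27-lem} (applied with its parameter a fixed number larger than $2qB^{q-1}$ and radius $R$) gives, for $|y'|\le R\sqrt{s}$ and $y_n\in(0,R)$,
\[
{\cal S}_{K_0}(s-s_0)\!\left(\frac{|V(s_0)|}{b_{K_0}}\right)\!(y)
\le
c\exp\!\left(\frac{|y'|^2/s}{4(1+1/s)}\right)
\left\|\frac{|V(s_0)|}{b_{K_0}}\right\|_{L_{K_0,\rho}^2(\R_+^n)}
\le
ce^{R^2/4}\,\|\nabla'v(s_0)\|_{\rho},
\]
since $\left\||V(s_0)|/b_{K_0}\right\|_{L_{K_0,\rho}^2(\R_+^n)}^2=\int_{\R_+^n}|V(s_0)|^2\rho=\|V(s_0)\|_{\rho}^2\le\|\nabla'v(s_0)\|_{\rho}^2$. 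Here $\|\nabla'v(s_0)\|_{\rho}\le Cs_0^{-1}\le 2C/s$ for large $s$ by Proposition \ref{33-pro}, $b_{K_0}(y_n)\le c$ by Lemma \ref{Pii-lem}, and $e^{-(m+1/2+\mu_{K_0})(s-s_0)}=s^{1/2}e^{O(s_0^{-1}\log s)}\le 2s^{1/2}$ for large $s$ since $s_0^{-1}\log s\to0$. Multiplying these together yields $|V(y,s)|\le c_Rs^{-1/2}$ on the asserted region for large $s$; for bounded $s$ the estimate follows at once from the uniform bound of Lemma \ref{21-lem}. Summing over $i$ completes the argument.

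The main obstacle is that the exponent is attained exactly, with no slack. If one only bounded $K$ by a fixed constant $qB^{q-1}+\varepsilon$ with $\varepsilon>0$, then $m+\tfrac12+\mu_{K_0}=-\tfrac12-\delta(\varepsilon)<-\tfrac12$ and the method would produce only $s^{-1/2+\delta(\varepsilon)}$. It is therefore essential to use the $O(s^{-1})$-precision of Lemma \ref{42-lem} for $\varphi$ on $\pa\R_+^n$ in order to force $\mu_{K_0}=-(m+1)+O(s_0^{-1})$, and to combine it with the borderline time scale $s_0=s-\log s$ so that the semigroup growth $s^{1/2}$ is precisely cancelled by the decay $s_0^{-1}\sim s^{-1}$ of $\|\nabla'v(s_0)\|_{\rho}$ coming from Proposition \ref{33-pro}.
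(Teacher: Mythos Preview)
Your argument is correct and follows essentially the same route as the paper: derive the linear equation for $V=\pa_iv$, apply the comparison/representation estimate \eqref{representation3-eq}, use the global $L^\infty$--$L_{K,\rho}^2$ bound of Lemma~\ref{27-lem}, and choose the time shift $s_0=s-\log s$ so that the semigroup growth $s^{1/2}$ exactly balances $\|V(s_0)\|_\rho=O(s_0^{-1})$ from Proposition~\ref{33-pro}.

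The only difference lies in how the Robin coefficient is controlled. The paper keeps $K_0=qB^{q-1}$ \emph{exactly} (so that $\mu_{K_0}=-(m+1)$ exactly by Lemma~\ref{22-lem}) by splitting the boundary condition: on $|y'|\le\sqrt{2n}$ one uses $|V|\le cs^{-1}$ to write $q\varphi^{q-1}V\le qB^{q-1}V+cs^{-2}$, while on $|y'|>\sqrt{2n}$ Lemma~\ref{42-lem} already gives $\varphi\le B$; this produces an extra source term $g_2=cs^{-2}$ whose contribution is harmless. You instead absorb the whole coefficient into an $s_0$-dependent $K_0=qB^{q-1}+O(s_0^{-1})$ and invoke Lipschitz dependence of $\mu_K$ on $K$ (which follows from the variational characterization, since $K\mapsto\mu_K$ is monotone and the first eigenvalue is simple), yielding $\mu_{K_0}=-(m+1)+O(s_0^{-1})$ and hence the extra factor $e^{O(s_0^{-1})\log s}=1+o(1)$. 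Your route avoids the source term at the price of this perturbation step; the paper's route avoids perturbation theory at the price of a boundary source integral. Both hit the critical exponent with no loss.
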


\begin{proof}
We set $V(y,s)=\pa_iv(y,s)$ ($i=1,\cdots,n-1$),
then $V(y,s)$ solves
\[
\begin{cases}
 \dis
 \pa_sV = \Delta V - \frac{y}{2}\cdot\nabla V - \left( m + \frac{1}{2} \right)V
& \text{in }\R_+^n\times(s_T,\infty),
\\ \dis
 \pa_{\nu}V = q\varphi^{q-1}V
& \text{on }\pa\R_+^n\times(s_T,\infty).
\end{cases}
\]
Since $v(,s)$ behaves $v(y,s)\sim-\nu_qs^{-1}{\cal E}$ as $s\to\infty$,
we note from Proposition \ref{33-pro} that $|V(y',s)|\leq cs^{-1}$
for $y'\in B_{\pa\R_+^n}(\sqrt{2n})$.
Hence
by Lemma \ref{42-lem},
we observe that
\[
 q\varphi^{q-1}V
=
 qB^{q-1}V + q(\varphi^{q-1}-B^{q-1})V
\leq
 qB^{q-1}V + cs^{-2}
\hspace{5mm}\text{for }y'\in B_{\pa\R_+^n}(\sqrt{2n}).
\]
We set
\[
 K(y',s) =
 \begin{cases}
 qB^{q-1} & \text{if } |y'|\leq\sqrt{2n},
 \\
 q\varphi(y',s)^{q-1} & \text{if } |y'|\geq\sqrt{2n}.
 \end{cases}
\]
Then
we see that
\[
 \pa_{\nu}V \leq K(y',s)V + cs^{-2}
\hspace{3mm}\text{on } \pa\R_+^n.
\]
We note from Lemma \ref{42-lem} with \eqref{Naname-eq} that
\[
 |K(y',s)| \leq qB^{q-1} =: K_0.
\]
Therefore
we 
apply the estimate \eqref{representation3-eq} to $\pm V(y,s)$ and obtain
\[
 \left( \frac{|V(s)|}{b_{K_0}} \right)
\leq
 e^{(s-s_0)/2}{\cal S}_{K_0}(s-s_0)
 \left( \frac{|V(s_0)|}{b_{K_0}} \right)
+
 c\int_{s_0}^s
 e^{(s-\mu)/2}\mu^{-2}{\cal T}_{K_0}(s,\mu)d\mu.
\]
Here
we used $\mu_{K_0}=-(m+1)$ (Lemma \ref{22-lem}).
By Lemma \ref{27-lem},
for any $R>0$ there exists $c=c(R)>0$ such that
\begin{eqnarray*}
 \left( \frac{|V(y,s)|}{b_{K_0}(y_n)} \right)
\hspace{-2mm}&\leq&\hspace{-2mm}
 ce^{(s-s_0)/2}
 \left(
 \exp\left( \frac{e^{-(s-s_0)}|y'|^2}{4(1+e^{-(s-s_0)})} \right)
 \|V(s_0)\|_{\rho}
 +
 \int_{s_0}^s\frac{\mu^{-2}d\mu}{\sqrt{1-e^{-(s-\mu)}}}
 \right)
\end{eqnarray*}
for $y'\in\R^{n-1}$, $y_n\leq R$.
As in the proof of Lemma \ref{43-lem},
we choose $s>s_0$ such that $s=e^{s-s_0}$.
Then
from Proposition \ref{33-pro},
we obtain
\[
 |V(y,s)| \leq cb_{K_0}(y_n)s^{-1/2}
\hspace{5mm}\text{for } |y'|\leq R\sqrt{s},\ y_n\leq R,
\]
which completes the proof.
\end{proof}

\begin{lem}\label{45-lem}
There exists $c>0$ such that
\[
 \sup_{y'\in B_{\pa\R_+^n}(\sqrt{s})}|\pa_s\varphi(y',s)|\leq cs^{-1}.
\]
\end{lem}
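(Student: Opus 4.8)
The plan is to transfer the argument of Lemma~\ref{44-lem} to $w(y,s):=\pa_s\varphi(y,s)=\pa_sv(y,s)$, which, upon differentiating \eqref{(B)-eq} in $s$ (recall $\pa_s\varphi_0=0$), solves
\[
 \pa_sw=\Delta w-\frac{y}{2}\cdot\nabla w-mw\quad\text{in }\R_+^n\times(s_T,\infty),
 \qquad
 \pa_{\nu}w=q\varphi^{q-1}w\quad\text{on }\pa\R_+^n\times(s_T,\infty).
\]
Lemma~\ref{21-lem} only gives $|w(y,s)|\leq c(1+|y|)^{1/2}$, which is useless on $|y'|\sim\sqrt s$, so we must exploit the refined decay. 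We shall use two facts: \emph{(A)} $\|\pa_sv(s)\|_\rho\leq cs^{-2}$; and \emph{(B)} $|\pa_sv(y,s)|+|v(y,s)|\leq cs^{-1}$ for $|y'|<\sqrt{2n}$, $0<y_n<1$. Note that, since $w$ solves the equation with $-mw$ (not $-(m+\tfrac12)w$, as the tangential derivative in Lemma~\ref{44-lem}), the representation formula will carry a growth factor $e^{s-s_0}$, which is exactly why \emph{(A)} must be of order $s^{-2}$, one power faster than $\|v(s)\|_\rho\sim s^{-1}$.

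For \emph{(A)}, the main input is $\|f(v(s))\|_{L_\rho^2(\pa\R_+^n)}\leq cs^{-2}$: by Lemma~\ref{33-lem} with $r=4$, Proposition~\ref{33-pro}, and $\|v_{\sf n}(s)\|_\rho=|a_0(s)|\leq cs^{-1}$ one gets $\|v(s)\|_{L_\rho^4(\pa\R_+^n)}\leq cs^{-1}$, and then $|f(v)|\leq cv^2$ (see \eqref{f(v)regularity-eq}) gives $\|f(v(s))\|_{L_\rho^2(\pa\R_+^n)}\leq c\|v(s)\|_{L_\rho^4(\pa\R_+^n)}^2\leq cs^{-2}$. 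Split $\pa_sv=\pa_sv_{\sf n}+\pa_sv_{\sf u}+\pa_sv_{\sf s}$. Under the symmetry assumption the only eigenvalues of $(-A)$ which are $\leq0$ are $0$, with eigenfunction ${\cal E}$ (see \eqref{calE-eq}), and $-1$, with eigenfunction proportional to $I_1(y_n)$ (hence constant on $\pa\R_+^n$): indeed $\kappa_{\alpha_n}\geq\kappa_2>0$ for $\alpha_n\geq2$ (Lemma~\ref{22-lem}), and the mode carrying $\kappa_1$ together with a tangential index $1$ has eigenvalue $-\tfrac12$ but its eigenfunction is odd in a tangential variable, so it does not enter the expansion of $v$. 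Thus $v_{\sf n},v_{\sf u}$ are one-dimensional, and their coefficients obey scalar ODEs $\dot a=-\lambda a+\int_{\pa\R_+^n}f(v)E\,\rho\,dy'$ whose forcing is $O(\|f(v(s))\|_{L_\rho^2(\pa\R_+^n)})=O(s^{-2})$; for the zero mode this is the identity $\dot a_0=\nu_q^{-1}a_0^2+O(\nu(s)s^{-2})$ already used in the proof of Proposition~\ref{33-pro}, and for $\lambda=-1$ one integrates the unstable ODE backward from $+\infty$. In both cases $|\dot a(s)|\leq cs^{-2}$, so $\|\pa_sv_{\sf n}(s)\|_\rho+\|\pa_sv_{\sf u}(s)\|_\rho\leq cs^{-2}$. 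For the stable part, repeating the derivation of \eqref{nakajima-eq} but estimating $\int_{\pa\R_+^n}f(v)v_{\sf s}\,\rho\,dy'$ directly by $\|f(v(s))\|_{L_\rho^2(\pa\R_+^n)}\leq cs^{-2}$, the energy identity (third line of \eqref{Kei-eq}) and the coercivity defining $\lambda_*$ give
\[
 \tfrac12\,\pa_s\|v_{\sf s}(s)\|_\rho^2\leq-\tfrac{\lambda_*}{2}\|v_{\sf s}(s)\|_\rho^2+cs^{-4},
\]
hence $\|v_{\sf s}(s)\|_\rho\leq cs^{-2}$. Combining this with the analytic smoothing of the semigroup generated by $A$ on $\pi_{\sf s}L_\rho^2(\R_+^n)$ (whose spectrum lies in $(-\infty,-\lambda_*]$) applied to the representation of $v_{\sf s}$ over $[s-1,s]$, and handling the boundary source $f(v)$ with $\|f(v(s))\|_{L_\rho^2(\pa\R_+^n)}\leq cs^{-2}$, one upgrades this to $\|\pa_sv_{\sf s}(s)\|_\rho\leq cs^{-2}$. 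This gives \emph{(A)}.

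For \emph{(B)}: by Proposition~\ref{33-pro}, $\|v(s)\|_{L_\rho^2(\R_+^n)}\leq cs^{-1}$, and $v$ solves the homogeneous linear parabolic problem \eqref{(B2)-eq} with the boundary term regarded as the bounded lower-order coefficient $qB^{q-1}+f(v)/v$ (since $|f(v)/v|\leq c|v|\leq c$); parabolic regularity on a fixed space--time cylinder then bootstraps the $L_\rho^2$ bound into $|\pa_sv|+|v|\leq cs^{-1}$ on $\{|y'|<\sqrt{2n},\ 0<y_n<1\}$. Now we repeat Lemma~\ref{44-lem}: set $K(y',s)=qB^{q-1}$ for $|y'|<\sqrt{2n}$ and $K(y',s)=q\varphi(y',s)^{q-1}$ otherwise, so that $0\leq K(y',s)\leq qB^{q-1}=:K_0$ by Lemma~\ref{42-lem}, and, using \emph{(B)} on the inner ball, $\pa_{\nu}(\pm w)\leq K(y',s)(\pm w)+cs^{-2}$ on $\pa\R_+^n$. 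Applying \eqref{representation3-eq} to $\pm w$ with this $K_0$ (note $m+\mu_{K_0}=-1$ by Lemma~\ref{22-lem}), bounding the ${\cal S}_{K_0}$-term by Lemma~\ref{27-lem} and the ${\cal T}_{K_0}$-term by Lemmas~\ref{24-lem}--\ref{26-lem}, and choosing $s_0=s-\log s$ (so $e^{s-s_0}=s$ and $e^{-(s-s_0)}|y'|^2\leq1$ for $|y'|\leq\sqrt s$), we obtain for $y'\in B_{\pa\R_+^n}(\sqrt s)$
\[
 |w(y',s)|\leq cs\,\|\pa_sv(s_0)\|_\rho+c\int_{s_0}^s\frac{e^{s-\mu}\,\mu^{-2}}{\sqrt{1-e^{-(s-\mu)}}}\,d\mu\leq cs\cdot cs_0^{-2}+cs^{-1}\leq cs^{-1},
\]
which is the assertion.

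The essentially new and delicate point is estimate \emph{(A)}, and inside it the bound $\|\pa_sv_{\sf s}(s)\|_\rho\leq cs^{-2}$ for the infinite-dimensional stable component: unlike the finitely many neutral and unstable modes it is not governed by a scalar ODE, and its proof combines the Gr\"onwall decay of $\|v_{\sf s}(s)\|_\rho$ with the analyticity and spectral gap of $A$ on $\pi_{\sf s}L_\rho^2(\R_+^n)$ and a careful treatment of the limited regularity of the boundary source $f(v)$ in the Duhamel formula. Once \emph{(A)} is available, estimate \emph{(B)} and the final comparison step are routine adaptations of Lemma~\ref{44-lem}.
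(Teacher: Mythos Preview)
Your overall strategy is exactly the paper's: establish the weighted estimate \emph{(A)} $\|\pa_sv(s)\|_\rho\leq cs^{-2}$, then rerun the representation-formula argument of Lemma~\ref{44-lem} with $s_0=s-\log s$ (so that the extra growth factor $e^{-(m+\mu_{K_0})(s-s_0)}=e^{s-s_0}=s$ is compensated). Your reasoning for why $s^{-2}$ rather than $s^{-1}$ is needed, your derivation of \emph{(B)} via local parabolic regularity, and your final computation are all correct and parallel the paper.

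The substantive difference lies in how you prove \emph{(A)}. The paper (Lemmas~\ref{46-lem}--\ref{47-lem}) works directly with $Y=\pa_sv$: it expands $Y$ in the eigenbasis of $A$, bounds each low-mode coefficient $(Y,E_\alpha)_\rho$ by $c_\alpha s^{-2}$ via the identity $q(\varphi^{q-1}-B^{q-1})Y=\pa_s f(v)$ and an integration by parts in $s$, and then does an energy estimate on the high-mode tail $P_\lambda=Y-\sum_{\lambda_\alpha<\lambda}(Y,E_\alpha)_\rho E_\alpha$ to get $\|P_\lambda\|_\rho\leq cs^{-2}$. Your route for the neutral and unstable modes is equivalent to Lemma~\ref{46-lem} restricted to those $\alpha$; your Gr\"onwall bound $\|v_{\sf s}\|_\rho\leq cs^{-2}$ is also correct.

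The gap is the step upgrading $\|v_{\sf s}\|_\rho\leq cs^{-2}$ to $\|\pa_sv_{\sf s}\|_\rho\leq cs^{-2}$ by ``analytic smoothing.'' The forcing $f(v)$ lives on $\pa\R_+^n$, not in $L_\rho^2(\R_+^n)$, so differentiating a Duhamel formula for $v_{\sf s}$ over $[s-1,s]$ produces a term $\pi_{\sf s}[\text{boundary lift of }f(v(s))]$ that is \emph{a priori} only in a negative-order space, not in $L_\rho^2(\R_+^n)$; the standard bound $\|Ae^{At}\|\leq ct^{-1}$ does not by itself yield an $L_\rho^2$ estimate on $\pa_sv_{\sf s}$. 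Making this rigorous requires either a careful transposition/trace-space setup or an energy estimate directly on $\pi_{\sf s}Y$ --- and the latter, once you track the cross-terms coming from the variable boundary coefficient $q(\varphi^{q-1}-B^{q-1})$, is precisely Lemma~\ref{47-lem}. So your sketch is on the right track, but the step you flag as ``essentially new and delicate'' is not actually carried out; the paper's energy argument on the tail of $Y$ is the clean way to close it.
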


We set
$Y(y,s)=\pa_sv(y,s)$,
then $Y(y,s)$ solves
\[
\begin{cases}
\dis
 \pa_sY = \Delta Y - \frac{y}{2}\cdot\nabla Y - mY
& \text{in }\R_+^n\times(s_T,\infty),
\\ \dis
 \pa_{\nu}Y = q\varphi^{q-1}Y
& \text{on }\pa\R_+^n\times(s_T,\infty).
\end{cases}
\]
We expand $Y(y,s)$ by using eigenfunctions
$\{E_\alpha\}_{\alpha\in{\cal A}}$ of
\eqref{eigene-eq}:
\[
 Y(s) = \sum_{\alpha\in{\cal A}}a_\alpha(s)E_\alpha,
\hspace{5mm}\text{in } H_{\rho}^1(\R_+^n),
\]
where $a_\alpha(s)=(v(s),E_\alpha)_{\rho}$.
Since $v\in C^1((s_T,\infty);L_{\rho}^2(\R_+^n))$,
it is verified that $a_\alpha(s)=\pa_s(v(s),E_\alpha)_{\rho}$.
Let $\lambda_{\alpha}$ be the eigenvalue corresponding to $E_\alpha$.
Then
it holds that
\begin{equation}
\label{b_alpha-eq}
 a_\alpha'
=
 -(m+\lambda_{\alpha})a_\alpha
+
 q\int_{\pa\R_+^n}
 (\varphi^{q-1}-B^{q-1})YE_\alpha\rho\h dy'.
\end{equation}
The proof of Lemma \ref{45-lem} follows immediately from
the following two lemmas.

\begin{lem}\label{46-lem}
For any $\alpha\in{\cal A}$
there exists $c_{\alpha}>0$ such that
\[
 |a_\alpha(s)| \leq c_{\alpha}s^{-2}.
\]
\end{lem}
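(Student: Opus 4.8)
The plan is \emph{not} to use \eqref{b_alpha-eq} directly, but to integrate it once. Since $v\in C^1((s_T,\infty);L_\rho^2(\R_+^n))$, the coefficient $a_\alpha(s)=(Y(s),E_\alpha)_\rho$ is the derivative of $b_\alpha(s):=(v(s),E_\alpha)_\rho$. Testing \eqref{(B2)-eq} against $E_\alpha$ in $L_\rho^2(\R_+^n)$ and integrating by parts in the way the $qB^{q-1}$-boundary terms are treated in the proofs of Propositions \ref{31-pro}--\ref{33-pro} (these terms cancel by the self-adjointness of $A$ on $D(A)$), one obtains
\[
 \dot b_\alpha = -\lambda_\alpha b_\alpha + \int_{\pa\R_+^n}f(v)E_\alpha\rho\h dy'.
\]
The reason to work with $b_\alpha$ rather than with $a_\alpha$ is that this forcing is \emph{quadratic} in $v$ --- recall $|f(v)|\leq cv^2$ from \eqref{f(v)regularity-eq} --- so it decays at exactly the rate $s^{-2}$ that we want for $a_\alpha=\dot b_\alpha$, whereas the forcing in \eqref{b_alpha-eq} is only linear in $v$ and would give $|a_\alpha(s)|=o(s^{-1})$ at best.

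The quantitative input I would establish first is $\|v(s)\|_{L_\rho^4(\pa\R_+^n)}\leq cs^{-1}$ for $s$ large. Writing $v=v_{\sf n}+(v-v_{\sf n})$ with $v_{\sf n}=(v(s),{\cal E})_\rho\,{\cal E}$, Proposition \ref{33-pro} gives $|(v(s),{\cal E})_\rho|\leq c/s$, hence $\|v_{\sf n}(s)\|_{L_\rho^4(\pa\R_+^n)}=|(v(s),{\cal E})_\rho|\,\|{\cal E}\|_{L_\rho^4(\pa\R_+^n)}\leq cs^{-1}$ (${\cal E}$ restricted to $\pa\R_+^n$ is a polynomial, see \eqref{calE-eq}), while Lemma \ref{33-lem} with $r=4$ gives $\|(v-v_{\sf n})(s)\|_{L_\rho^4(\pa\R_+^n)}\leq\nu(s)\|v_{\sf n}(s)\|_\rho=\nu(s)|(v(s),{\cal E})_\rho|\leq cs^{-1}$. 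Then, by the Schwarz inequality and $|f(v)|\leq cv^2$,
\[
 \left| \int_{\pa\R_+^n}f(v(s))E_\alpha\rho\h dy' \right|
\leq c\,\|v(s)\|_{L_\rho^4(\pa\R_+^n)}^2\,\|E_\alpha\|_{L_\rho^2(\pa\R_+^n)}\leq cs^{-2}
\]
for each fixed $\alpha\in{\cal A}$ and $s$ large (here $\|E_\alpha\|_{L_\rho^2(\pa\R_+^n)}<\infty$ since $E_\alpha|_{\pa\R_+^n}$ is a polynomial).

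It remains to solve the scalar ODE for $b_\alpha$. Since $v$ is $y_n$-axial symmetric it suffices to take $E_\alpha$ symmetric ($b_\alpha\equiv0$ otherwise), and then the only symmetric eigenfunction of $(-A)$ with negative eigenvalue is $I_1(y_n)$ (with $\lambda_\alpha=-1$), the only one with zero eigenvalue is ${\cal E}$, and all other symmetric eigenvalues are positive. If $\lambda_\alpha>0$, forward Duhamel integration with the bound above (the homogeneous part decaying exponentially) gives $|b_\alpha(s)|\leq cs^{-2}$, hence $|a_\alpha(s)|=|\dot b_\alpha(s)|\leq\lambda_\alpha|b_\alpha(s)|+cs^{-2}\leq cs^{-2}$. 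If $\lambda_\alpha=0$, then $a_\alpha(s)=\dot b_\alpha(s)=\int_{\pa\R_+^n}f(v(s))E_\alpha\rho\h dy'$, already $\leq cs^{-2}$ in absolute value. If $\lambda_\alpha=-1$, then $\|v(s)\|_\rho\leq cs^{-1}\to0$ (Proposition \ref{33-pro}) forces $b_\alpha(s)\to0$ as $s\to\infty$, so $b_\alpha$ is given by the backward Duhamel formula $b_\alpha(s)=-\int_s^\infty e^{\lambda_\alpha(\tau-s)}\big(\int_{\pa\R_+^n}f(v(\tau))E_\alpha\rho\h dy'\big)d\tau$; since the kernel $e^{\lambda_\alpha(\tau-s)}$ decays in $\tau$, this yields $|b_\alpha(s)|\leq cs^{-2}$ and again $|a_\alpha(s)|\leq cs^{-2}$. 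This exhausts all cases.

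The only genuine difficulty is the initial observation: one has to move from $Y$ back to $v$ (equivalently, integrate \eqref{b_alpha-eq} once) so that the boundary forcing becomes quadratic, and then feed in the $L_\rho^4(\pa\R_+^n)$-decay of $v$ at rate $s^{-1}$, which is exactly the combination of Proposition \ref{33-pro} (leading part $v_{\sf n}$) and Lemma \ref{33-lem} (remainder $v-v_{\sf n}$). After that the ODE analysis is routine, and in particular no regularity of $Y$ or of its boundary traces is needed.
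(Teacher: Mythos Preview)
Your proof is correct and is essentially the paper's own argument: the paper observes that the forcing in \eqref{b_alpha-eq} equals $\pa_sX_\alpha$ with $X_\alpha:=\int_{\pa\R_+^n}f(v)E_\alpha\rho\,dy'$ and integrates by parts in $s$, which is precisely your passage from $a_\alpha$ to $b_\alpha=(v,E_\alpha)_\rho$ (indeed $a_\alpha-X_\alpha=-\lambda_\alpha b_\alpha$, so the paper's integrated identity \eqref{grape-eq} is just the Duhamel formula for your $b_\alpha$); both then bound $|X_\alpha|\leq cs^{-2}$ via Proposition~\ref{33-pro} and Lemma~\ref{33-lem} in the same way. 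Your explicit case split on the sign of $\lambda_\alpha$, with backward Duhamel for the unstable mode $\lambda_\alpha=-1$, is a bit more transparent than the paper's uniform treatment, but the substance is identical.
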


\begin{proof}
Since $Y(y,s)=\pa_sv(y,s)$,
a direct computation shows that
\begin{eqnarray*}
 q\int_{\pa\R_+^n}(\varphi^{q-1}-B^{q-1})YE_\alpha\rho\h dy'
\hspace{-2mm}&=&\hspace{-2mm}
 q\int_{\pa\R_+^n}((v+B)^{q-1}-B^{q-1})
 (\pa_sv)E_\alpha\rho\h dy'
\\
\hspace{-2mm}&=&\hspace{-2mm}
 \pa_s\int_{\pa\R_+^n}f(v)E_\alpha\rho\h dy'.
\\[-4mm]
\end{eqnarray*}
For simplicity,
we set
\[
 X_{\alpha} = \int_{\pa\R_+^n}f(v)E_\alpha\rho\h dy'.
\vspace{-3mm}
\]
Then
\eqref{b_alpha-eq} is written by
\begin{eqnarray*}
\dis
 \pa_s(e^{(m+\lambda_{\alpha})s}a_\alpha)
\hspace{-2mm}&=&\hspace{-2mm}
 e^{(m+\lambda_{\alpha})s}\pa_sX_{\alpha}
\\
\hspace{-2mm}&=&\hspace{-2mm}
 \pa_s\left(
 e^{(m+\lambda_{\alpha})s}X_{\alpha}
 \right)
 -
 (m+\lambda_{\alpha})e^{(m+\lambda_{\alpha})s}X_{\alpha}.
\end{eqnarray*}
We fix $s_1>1$.
Then
we integrate both sides over $(s,s_1)$ to obtain
\begin{equation}
\label{grape-eq}
\begin{array}{l}
\dis
 e^{(m+\lambda_{\alpha})s}(a_\alpha(s)-X_{\alpha}(s))
-
 e^{(m+\lambda_{\alpha})s_1}(a_\alpha(s_1)-X_{\alpha}(s_1))
\\[2mm] \dis \hspace{40mm}
=
 -(m+\lambda_{\alpha})
 \int_{s_1}^se^{(m+\lambda_{\alpha})\mu}X_{\alpha}(\mu)d\mu.
\end{array}
\end{equation}
Here
we recall from \eqref{f(v)regularity-eq} that $|f(v)|\leq cv^2$.
Then by the H${\rm\ddot{o}}$lder inequality,
we see that
\begin{eqnarray*}
 |X_{\alpha}|
\hspace{-2mm}&\leq&\hspace{-2mm}
 c\int_{\pa\R_+^n}v^2|E_\alpha|\rho\h dy'
\leq
 c\int_{\pa\R_+^n}\left( |vv_{\sf n}|+|v(v-v_{\sf n})| \right)|E_\alpha|\rho\h dy'
\\[-2mm]
\hspace{-2mm}&\leq&\hspace{-2mm}
 c\|v\|_{L_{\rho}^2(\pa\R_+^n)}
 \left(
 \left( \int_{\pa\R_+^n}v_{\sf n}^2|E_\alpha|^2\rho\h dy' \right)^{1/2}
 +
 \|E_\alpha\|_{L_{\rho}^4(\pa\R_+^n)}
 \|v-v_{\sf n}\|_{L_{\rho}^4(\pa\R_+^n)}
 \right)
\\
\hspace{-2mm}&\leq&\hspace{-2mm}
 c\|v\|_{L_{\rho}^2(\pa\R_+^n)}
 \left(
 \|v_{\sf n}\|_{\rho}
 +
 \|v-v_{\sf n}\|_{L_{\rho}^4(\pa\R_+^n)}
 \right).
\end{eqnarray*}
Since $\|v\|_{L_{\rho}^2(\pa\R_+^n)}\leq\|v_{\sf n}\|_{L_\rho^2(\pa\R_+^n)}+\|v-v_{\sf n}\|_{L_\rho^2(\pa\R_+^n)}$,
it follows from Lemma \ref{33-lem} and Proposition \ref{33-pro} that
\[
 |X_{\alpha}(s)| \leq c\|v_{\sf n}(s)\|_\rho^2 \leq \frac{c}{s^2}.
\]
Then
the right-hand side on \eqref{grape-eq} is estimated by
\[
\begin{array}{lll}
\dis
 \int_{s_1}^se^{(m+\lambda_\alpha)\mu}X_\alpha(\mu)d\mu
\hspace{-2mm}&\leq&\hspace{-2mm} \dis
 \left( \int_{s_1}^{(s_1+s)/2}+\int_{(s_1+s)/2}^s \right)
 e^{(m+\lambda_\alpha)\mu}X_\alpha(\mu)d\mu
\\[6mm]
\hspace{-2mm}&\leq&\hspace{-2mm} \dis
 c\left( e^{(m+\lambda_\alpha)(s+s_1)/2}+e^{(m+\lambda_\alpha)s}(s_1+s)^{-2} \right).
\end{array}
\]
Therefore
applying this estimate in \eqref{grape-eq},
we obtain the conclusion.
\end{proof}

We define ${\cal A}_{\lambda}\subset{\cal A}=\N_0^{n-1}\times\N$ by
\[
 {\cal A}_{\lambda} = \{\alpha\in{\cal A};\lambda_{\alpha}<\lambda\}.
\]

\begin{lem}\label{47-lem}
There exists $\lambda_*>0$ such that
\[
 \left\|
 Y(s)-\sum_{\alpha\in{\cal A}_{\lambda_*}}a_\alpha(s)E_\alpha
 \right\|_{\rho}
\leq
 cs^{-2}.
\]
\end{lem}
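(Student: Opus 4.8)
The plan is to fix a number $\lambda_*>0$ and bound the high-mode remainder $Z(s):=Y(s)-\sum_{\alpha\in{\cal A}_{\lambda_*}}a_\alpha(s)E_\alpha$ by an energy argument; the set ${\cal A}_{\lambda_*}$ is finite because $\lambda_\alpha\to\infty$ as $|\alpha|\to\infty$. Multiplying \eqref{b_alpha-eq} by $a_\alpha$ and summing over the modes with $\lambda_\alpha\geq\lambda_*$ gives the energy identity
\[
 \frac12\frac{d}{ds}\|Z(s)\|_\rho^2
=
 -\sum_{\lambda_\alpha\geq\lambda_*}(m+\lambda_\alpha)a_\alpha(s)^2
 +
 q\int_{\pa\R_+^n}(\varphi^{q-1}-B^{q-1})YZ\rho\h dy',
\]
whose first term is $\leq-(m+\lambda_*)\|Z(s)\|_\rho^2$. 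Combining the identity $\sum_{\lambda_\alpha\geq\lambda_*}\lambda_\alpha a_\alpha^2=\|\nabla Z\|_\rho^2+m\|Z\|_\rho^2-qB^{q-1}\|Z\|_{L_\rho^2(\pa\R_+^n)}^2$ with Lemma \ref{31-lem} (to absorb the boundary term) and Lemma \ref{Compact-lem} yields, for any fixed $\lambda_*>0$, the coercivity estimate
\[
 \|Z\|_{H_\rho^1(\R_+^n)}^2
 +
 \|Z\|_{L_\rho^2(\pa\R_+^n)}^2
 +
 \int_{\pa\R_+^n}(1+|y'|^2)^{1/2}Z^2\rho\h dy'
\leq
 c\sum_{\lambda_\alpha\geq\lambda_*}(m+\lambda_\alpha)a_\alpha^2 .
\]

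The core of the proof is to show that the boundary forcing is negligible against $\sum_{\lambda_\alpha\geq\lambda_*}(m+\lambda_\alpha)a_\alpha^2$ plus an $O(s^{-4})$ term. Write $Y=Z+Y_{\sf low}$ with $Y_{\sf low}=\sum_{\alpha\in{\cal A}_{\lambda_*}}a_\alpha E_\alpha$; since ${\cal A}_{\lambda_*}$ is finite, Lemma \ref{46-lem} and the equivalence of norms on a finite-dimensional space give $\|Y_{\sf low}\|_{H_\rho^1(\R_+^n)}\leq cs^{-2}$, hence $\|Y_{\sf low}\|_{L_\rho^2(\pa\R_+^n)}\leq cs^{-2}$ by Lemma \ref{31-lem}. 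The forcing then splits into $q\int_{\pa\R_+^n}(\varphi^{q-1}-B^{q-1})Z^2\rho\h dy'$ and $q\int_{\pa\R_+^n}(\varphi^{q-1}-B^{q-1})Y_{\sf low}Z\rho\h dy'$. For the first I would cut $\pa\R_+^n$ at $|y'|=R$: on $\{|y'|\leq R\}$, Theorem \ref{Ua-thm} gives $|\varphi^{q-1}-B^{q-1}|\leq\omega_R(s)$ with $\omega_R(s)\to0$ as $s\to\infty$; on $\{|y'|>R\}$, the positivity and uniform boundedness of $\varphi$ give $|\varphi^{q-1}-B^{q-1}|\leq c$, while $\int_{|y'|>R}Z^2\rho\h dy'\leq(1+R^2)^{-1/2}\int_{\pa\R_+^n}(1+|y'|^2)^{1/2}Z^2\rho\h dy'$. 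By the coercivity estimate the first integral is therefore at most $\big(c\,\omega_R(s)+c(1+R^2)^{-1/2}\big)\sum_{\lambda_\alpha\geq\lambda_*}(m+\lambda_\alpha)a_\alpha^2$. The second integral is handled by the Schwarz inequality, the bound $|\varphi^{q-1}-B^{q-1}|\leq c$, the coercivity estimate and $\|Y_{\sf low}\|_{L_\rho^2(\pa\R_+^n)}\leq cs^{-2}$, which give, for each $\epsilon\in(0,1)$, a bound $\epsilon\sum_{\lambda_\alpha\geq\lambda_*}(m+\lambda_\alpha)a_\alpha^2+c\epsilon^{-1}s^{-4}$.

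Collecting the above and choosing first $\epsilon$ small, then $R$ large, then $s_0$ large so that $c\,\omega_R(s)+c(1+R^2)^{-1/2}+\epsilon\leq\tfrac12$ for $s\geq s_0$, one obtains
\[
 \frac{d}{ds}\|Z(s)\|_\rho^2
\leq
 -(m+\lambda_*)\|Z(s)\|_\rho^2 + cs^{-4}
\hspace{5mm}\text{for }s\geq s_0 .
\]
Since $\|Z(s_0)\|_\rho\leq\|Y(s_0)\|_\rho<\infty$ and $m+\lambda_*>0$, integrating $\frac{d}{ds}\big(e^{(m+\lambda_*)s}\|Z(s)\|_\rho^2\big)\leq ce^{(m+\lambda_*)s}s^{-4}$ over $[s_0,s]$ gives $\|Z(s)\|_\rho^2\leq cs^{-4}$ for $s$ large, i.e. $\|Z(s)\|_\rho\leq cs^{-2}$, which is the assertion.

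The step I expect to be the main obstacle is the forcing estimate on the far region $\{|y'|>R\}$: there $\varphi^{q-1}-B^{q-1}$ is only bounded, not small, so one cannot use smallness of $\varphi-B$ and must instead exploit the Gaussian weight through the trace inequality of Lemma \ref{31-lem} together with Lemma \ref{Compact-lem}, taking care to fix $R$ before letting $s\to\infty$. A secondary point is the rigorous justification of the term-by-term differentiation leading to the energy identity, which is routine given that $Y(s)\in H_\rho^1(\R_+^n)$ and the coefficients decay.
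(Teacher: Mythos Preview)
Your proof is correct and is, at its core, the same energy argument as the paper's: one derives a differential inequality for $\|P_{\lambda_*}\|_\rho^2$ (your $Z$) by testing against $P_{\lambda_*}\rho$, absorbs the boundary perturbation into the coercive part, and integrates. The one genuine difference is in how the quadratic boundary term $q\int_{\pa\R_+^n}(\varphi^{q-1}-B^{q-1})Z^2\rho\,dy'$ is controlled. The paper invokes Lemma~\ref{42-lem}, which gives the \emph{global} boundary bound $\varphi^{q-1}\leq B^{q-1}(1+cs^{-1})$ on $\pa\R_+^n$; this in turn relies on the monotonicity hypothesis \eqref{Naname-eq} and the case-(I) asymptotics that are standing assumptions in Section~\ref{LongRange-sec}. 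You bypass Lemma~\ref{42-lem} entirely: your near/far splitting at $|y'|=R$ uses only the local uniform convergence $\varphi\to\varphi_0$ from Theorem~\ref{Ua-thm} together with the weighted trace estimate (Lemma~\ref{31-lem} plus Lemma~\ref{Compact-lem}) to turn the Gaussian tail into smallness. This is a legitimate and slightly more elementary route; it even shows that any $\lambda_*>0$ works, whereas the paper's continuity argument leaves the impression that $\lambda_*$ must be chosen suitably. The price you pay is a forcing term of order $s^{-4}$ rather than the paper's $s^{-5}$, but this is irrelevant for the stated $O(s^{-2})$ conclusion. Two small remarks: (i) the term $\sum_{\alpha\in{\cal A}_{\lambda_*}}Q_\alpha(P_{\lambda_*},E_\alpha)_\rho$ that the paper bounds is in fact zero by orthogonality, so your formulation is already optimal here; (ii) the justification of the energy identity is indeed routine since $Y\in H_\rho^1$ by Lemma~\ref{21-lem} and the $E_\alpha$ form an orthonormal basis.
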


\begin{proof}
We set
$P_{\lambda}(y,s)=Y(y,s)
 -\sum_{\alpha\in{\cal A}_{\lambda}}a_\alpha(s)E_\alpha(y)$,
then $P_{\lambda}(y,s)$ is a solution of
\[
\begin{cases}
\dis
 \pa_sP_{\lambda}
=
 \Delta P_{\lambda} - \frac{y}{2}\cdot\nabla P_{\lambda} -mP_{\lambda}
 -
 \sum_{\alpha\in{\cal A}_{\lambda}}Q_{\alpha}E_\alpha
& \text{in }\R_+^n\times(s_T,\infty),
\\[5mm] \dis
 \pa_{\nu}P_{\lambda}
=
 q\varphi^{q-1}P_{\lambda} + q(\varphi^{q-1}-B^{q-1})
 \sum_{\alpha\in{\cal A}_{\lambda}}a_\alpha E_\alpha
& \text{on }\pa\R_+^n\times(s_T,\infty),
\end{cases}
\]
where $Q_{\alpha}$ is given by
\[
 Q_{\alpha}
=
 q\left(
 \int_{\pa\R_+^n}(\varphi^{q-1}-B^{q-1})YE_\alpha\rho\h dy'
 \right).
\]
Multiplying by $P_{\lambda}\rho$ and integrating over $\R_+^n$,
we get
\begin{equation}\label{Tame-eq}
\begin{array}{l}
\dis
 \frac{1}{2}\pa_s\|P_{\lambda}\|_{\rho}^2
=
 -\|\nabla P_{\lambda}\|_{\rho}^2 - m\|P_{\lambda}\|_{\rho}^2
-
 \sum_{\alpha\in{\cal A}_{\lambda}}Q_{\alpha}(P_{\lambda},E_\alpha)_{\rho}
\\ \dis \hspace{15mm}
 +q\int_{\pa\R_+^n}\varphi^{q-1}P_{\lambda}^2\rho\h dy'
+
 q\sum_{\alpha\in{\cal A}_{\lambda}}a_\alpha
 \int_{\pa\R_+^n}(\varphi^{q-1}-B^{q-1})P_{\lambda}E_\alpha\rho\h dy'.
\end{array}
\end{equation}
Since $\varphi(y,s)$ is positive and uniformly bounded,
the third term and the last term on the right-hand side are estimated by
\[
\begin{array}{c}
\begin{array}{lll}
\dis
 \frac{Q_{\alpha}}{q}
\hspace{-2mm}&=&\hspace{-2mm} \dis
 \int_{\pa\R_+^n}(\varphi^{q-1}-B^{q-1})
 \left( P_{\lambda}+\left( \sum_{\beta\in{\cal A}_{\lambda}}a_{\beta}E_{\beta} \right) \right)
 E_\alpha\rho\h dy'
\\
\hspace{-2mm}&\leq&\hspace{-2mm} \dis
 c\|E_\alpha\|_{L_{\rho}^4(\pa\R_+^n)}
 \left(
 \|v\|_{L_{\rho}^4(\pa\R_+^n)}
 \|P_{\lambda}\|_{L_{\rho}^2(\pa\R_+^n)}
 +
 \sum_{\beta\in{\cal A}_{\lambda}}a_{\beta}
 \|E_{\beta}\|_{L_{\rho}^4(\pa\R_+^n)}
 \|v\|_{L_{\rho}^2}
 \right),
\end{array}
\\[14mm] \dis
 \sum_{\alpha\in{\cal A}_{\lambda}}a_\alpha
 \int_{\pa\R_+^n}\left| \varphi^{q-1}-B^{q-1} \right|P_{\lambda}E_\alpha
 \rho\h dy'
\leq
 \sum_{\alpha\in{\cal A}_{\lambda}}a_\alpha
 \|E_\alpha\|_{L_{\rho}^4(\R_+^n)}\|v\|_{L_{\rho}^4(\R_+^n)}
 \|P_{\lambda}\|_{L_{\rho}^2(\R_+^n)}.
\end{array}
\]
Here
we apply Lemma \ref{33-lem} and Proposition \ref{33-pro} to obtain
\[
 \|v\|_{L_{\rho}^4(\pa\R_+^n)}
\leq
 \|v_{\sf n}\|_{L_{\rho}^4(\pa\R_+^n)}
 +
 \|v-v_{\sf n}\|_{L_{\rho}^4(\pa\R_+^n)}
\leq
 c\|v_{\sf n}\|_{\rho}
\leq
 cs^{-1}.
\]
Therefore
there exists $c_{\lambda}>0$ such that
\[
\begin{array}{l}
\dis
 \sum_{\alpha\in{\cal A}_{\lambda}}
 |Q_{\alpha}(P_{\lambda},E_\alpha)_{\rho}| +q\sum_{\alpha\in{\cal A}_{\lambda}}a_\alpha
 \int_{\pa\R_+^n}|\varphi^{q-1}-B^{q-1}|P_{\lambda}E_\alpha\rho\h dy'
\\ \dis \hspace{10mm}
\leq
 c_{\lambda}
 \left(
 s^{-1}\|P_{\lambda}\|_{H_{\rho}^1(\R_+^n)}^2
 +
 s^{-1}\sum_{\alpha\in{\cal A}_{\lambda}}a_\alpha^2
 \right)
\leq
 c_{\lambda}
 \left(
 s^{-1}\|P_{\lambda}\|_{H_{\rho}^1(\R_+^n)}^2
 +
 s^{-5}
 \right),
\end{array}
\]
where we used Lemma \ref{46-lem} in the last inequality.
Substituting this estimate into \eqref{Tame-eq}
and noting that $\varphi^{q-1}|_{\pa\R_+^n}\leq B^{q-1}(1+cs^{-1})$ from Lemma \ref{42-lem},
we obtain
\[
 \frac{1}{2}\pa_s\|P_{\lambda}\|_{\rho}^2
\leq
 -\left( 1-c_{\lambda}s^{-1} \right)
 \left(
 \|\nabla P_{\lambda}\|_{\rho}^2
 +
 m\|P_{\lambda}\|_{\rho}^2
 \right)
 +(1+cs^{-1})qB^{q-1}\|P_{\lambda}\|_{L_{\rho}^2(\pa\R_+^n)}^2
+
 c_{\lambda}s^{-5}.
\]
Let $\Pi_{\lambda}$ be a subspace of $H_{\rho}^1(\R_+^n)$  defined by
\[
 \Pi_{\lambda} = \{E\in H_1^{\rho}(\R_+^n);(E,E_\alpha)=0
 \text{ for any } \alpha\in{\cal A}_{\lambda}\}.
\]
Then
it holds that
\[
 \inf_{E\in \Pi_{\lambda}}
 \frac{\|\nabla E\|_{\rho}^2-qB^{q-1}\|E\|_{L_{\rho}^2(\pa\R_+^n)}^2}
 {\|E\|_{\rho}^2}
\geq \lambda.
\]
By continuity,
there exists $\epsilon_0>0$ such that
for $\epsilon\in(0,\epsilon_0)$
\[
 \inf_{E\in\Pi_{\lambda}}
 \frac{(1-\epsilon)\|\nabla E\|_{\rho}^2-
 (1+\epsilon)qB^{q-1}\|E\|_{L_{\rho}^2(\pa\R_+^n)}^2}
 {\|E\|_{\rho}^2}
\geq \lambda/2.
\]
Hence
there exist $\lambda_*>0$ and $s_1>s_T$ such that
for $\lambda\geq\lambda_*$
\[
 \frac{1}{2}\pa_s\|P_{\lambda}\|_{\rho}^2
\leq
 -\frac{m}{2}\|P_{\lambda}\|_{\rho}^2 + c_{\lambda}s^{-5}
\hspace{5mm}\text{for } s>s_1,
\]
which implies
\[
\begin{array}{lll}
\dis
 \|P_{\lambda}(s)\|_{\rho}^2
\hspace{-2mm}&\leq&\hspace{-2mm}\dis
 e^{-m(s-s_T)}\|P_{\lambda}(s_T)\|_{\rho}^2
+
 c_{\lambda}\int_{s_T}^se^{-m(s-\mu)}(1+\mu)^{-5}d\mu
\\[4mm] \dis
\hspace{-2mm}&\leq&\hspace{-2mm}\dis
 e^{-m(s-s_T)}\|P_{\lambda}(s_T)\|_{\rho}^2
+
 c_{\lambda}s^{-4}
\hspace{5mm}\text{for } s>s_1.
\end{array}
\]
Therefore
the proof is completed.
\end{proof}

\begin{proof}[Proof of Lemma {\rm\ref{45-lem}}]
From Lemma \ref{46-lem} and Lemma \ref{47-lem},
we obtain
\[
 \|Y(s)\|_{\rho} \leq cs^{-2}.
\]
Repeating the argument in the proof of Lemma \ref{44-lem},
we obtain the conclusion.
\end{proof}

\begin{lem}\label{48-lem}
For any $\theta>0$
there exist $l_1\in(0,1)$ and $s_*>0$ such that
\[
 v(y',s)|_{\pa\R_+^n} \leq - l_1B
\hspace{5mm}\mathrm{for}\ |y'|=\theta\sqrt{s},\ s\geq s_*.
\]
\end{lem}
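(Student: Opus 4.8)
The plan is to reduce the statement to small $\theta$ by monotonicity, and then to extract the bound from the leading asymptotics $v\sim-\nu_qs^{-1}{\cal E}$ (Proposition~\ref{33-pro}) after transporting it to the intermediate scale $|y'|\sim\sqrt{s}$ by the heat kernel estimates of Section~\ref{Linearbackward-sec}. Since $y'\cdot\nabla'v\leq0$, for every unit vector $\omega$ the map $r\mapsto v(r\omega,0,s)$ is nonincreasing; hence, if the conclusion holds for one value $\theta_0$ (with constants $l_1,s_*$), it holds for all $\theta\geq\theta_0$ with the same constants. Thus it suffices to treat $\theta\in(0,\theta_0]$, $\theta_0<1$ to be chosen small. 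On $\pa\R_+^n$ one has ${\cal E}(y')=cc'(|y'|^2-2(n-1))I_1(0)$ with $c,c'>0$ (proof of Lemma~\ref{42-lem}), so $-\nu_qs^{-1}{\cal E}(y')$ at $|y'|=\theta\sqrt{s}$ formally tends to $-\nu_qcc'I_1(0)\theta^2<0$, which for $\theta_0$ small is $>-B$; this indicates the choice $l_1=\tfrac{1}{2B}\nu_qcc'I_1(0)\theta^2\in(0,1)$.

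To make this rigorous, fix $s$ large and set $s_0=s-\log s$, so $e^{s-s_0}=s$ and $m+\mu_{K_0}=-1$ by Lemma~\ref{22-lem}, where $K_0=qB^{q-1}$. Write the boundary condition of $v$ as $\pa_\nu v=K_0v+g_2$ with $g_2=(q\varphi^{q-1}-K_0)v$; then \eqref{representation2-eq} on $[s_0,s]$ yields, on $\pa\R_+^n$ (where $b_{K_0}\equiv1$),
\[
 v(y',s)\leq e^{s-s_0}\big({\cal S}_{K_0}(s-s_0)(v(s_0)/b_{K_0})\big)(y')
 +\int_{s_0}^se^{s-\mu}\big({\cal T}_{K_0}(s,\mu)g_2(\mu)\big)(y')\,d\mu.
\]
Decompose $v(s_0)=a_0(s_0){\cal E}+r_0$ with $\|r_0\|_\rho=o(s_0^{-1})$ (case (I) and Proposition~\ref{33-pro}). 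Using ${\cal S}_{K_0}(s-s_0)({\cal E}/b_{K_0})=e^{(m+\mu_{K_0})(s-s_0)}{\cal E}/b_{K_0}$ (as in the proof of Lemma~\ref{43-lem}) and ${\cal E}/b_{K_0}={\cal E}$ on $\pa\R_+^n$, the ${\cal E}$-part of the first term equals $a_0(s_0){\cal E}(y')$, while Lemma~\ref{27-lem} bounds the $r_0$-part, for $|y'|\leq\sqrt{s}$, by $cse^{1/4}\|r_0\|_\rho=o(s/s_0)=o(1)$. Since $s_0a_0(s_0)\to-\nu_q$ (proof of Proposition~\ref{33-pro}), $a_0(s_0){\cal E}(y')=a_0(s_0)cc'I_1(0)(\theta^2s-2(n-1))\to-\nu_qcc'I_1(0)\theta^2$ at $|y'|=\theta\sqrt{s}$.

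The genuine difficulty — the main obstacle — is the nonlinear boundary integral, because $g_2$ is not globally small: for $|\xi'|\gg\sqrt{s}$ one has $v\approx-B$ and $g_2\approx qB^q$. This is handled by splitting the $\xi'$-integral at $|\xi'|=2\theta\sqrt{s}$. Since $\Gamma_{K_0}((y',0),(\xi',0),s-\mu)=\gamma'(y',\xi',s-\mu)\gamma_{K_0}(0,0,s-\mu)$ concentrates $\xi'$ near $y'e^{-(s-\mu)/2}$, of modulus at most $\theta\sqrt{s}<\sqrt{s}$, the outer piece is $\leq ce^{-c\theta^2s}$ by the Gaussian tails; on the inner piece Lemmas~\ref{42-lem}--\ref{43-lem} give $|v(\xi',\mu)|\leq c\mu^{-1}(|\xi'|^2+1)+\nu(\mu)$ with $\nu(\mu)\to0$, so $\varphi\geq B/2$ there for $\theta_0$ small and $|g_2|\leq c|\varphi^{q-1}-B^{q-1}|\,|v|\leq c|v|^2$ is quadratically small; using $\gamma_{K_0}(0,0,\sigma)\leq c(1-e^{-\sigma})^{-1/2}$ (Lemma~\ref{26-lem}) together with the Gaussian moments of $\gamma'$ one checks that the $\mu$-integral over $[s_0,s]$ converges and is $\leq c\theta^4+o(1)$. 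As in \cite{Herrero-V1,Velazquez1}, obtaining this bound uniformly along the curve $s_0=s-\log s$ is where the real work lies. Combining the three estimates, $v(y',s)\leq-\nu_qcc'I_1(0)\theta^2+c\theta^4+o(1)\leq-\tfrac12\nu_qcc'I_1(0)\theta^2=-l_1B$ at $|y'|=\theta\sqrt{s}$ for $\theta\leq\theta_0$ small and $s\geq s_*$, which proves the lemma.
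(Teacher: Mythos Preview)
Your overall strategy is sensible, and the reduction to small $\theta$ via monotonicity together with the splitting of the $\xi'$--integral at $2\theta\sqrt{s}$ is correct in spirit. The main term and the outer piece are handled essentially as you say. However, there is a genuine gap in your treatment of the inner boundary integral. You bound $|v(\xi',\mu)|\leq c\mu^{-1}(|\xi'|^2+1)+\nu(\mu)$ from Lemmas~\ref{42-lem}--\ref{43-lem} and then claim the resulting $\mu$--integral is $\leq c\theta^4+o(1)$. The contribution of the term $c\mu^{-2}(|\xi'|^2+1)^2$ does indeed give $O(\theta^4)+o(1)$ after computing the Gaussian moments of $\gamma'$, but the cross term $\nu(\mu)^2$ does not: since $m+\mu_{K_0}=-1$ exactly, the exponential weight is $e^{s-\mu}$, and one is left with a term of order
\[
\int_{s_0}^{s}e^{s-\mu}(1-e^{-(s-\mu)})^{-1/2}\nu(\mu)^2\,d\mu
\;\approx\;
\Big(\sup_{\mu\geq s_0}\nu(\mu)^2\Big)\cdot s.
\]
Lemma~\ref{43-lem} gives only $\nu(\mu)\to0$ with no rate (its proof produces $\nu(s)=s\cdot o(s_0^{-1})$, which is merely $o(1)$), so $\nu(s_0)^2\cdot s$ cannot be shown to be $o(1)$. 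This single term destroys the inequality $v\leq -\tfrac12\nu_qcc'I_1(0)\theta^2$ that you aim for.

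The paper's proof circumvents precisely this obstacle by arguing by contradiction: assuming $v(y_j',\tau_j)\to0$ along $|y_j'|=\theta\sqrt{\tau_j}$, monotonicity and the time--derivative bound of Lemma~\ref{45-lem} force $|v|\leq\epsilon_j\to0$ uniformly on $B_{\partial\R_+^n}(\sqrt{\mu})$ for \emph{all} $\mu\in(s_j,\tau_j)$. This replaces your uncontrolled $\nu(\mu)$ by the contradiction parameter $\epsilon_j$. Moreover, the paper linearizes the boundary condition around $K_j=qB^{q-1}(1-c\epsilon_j)<K_0$; this both makes the exponential rate $-(m+\mu_j)$ strictly less than $1$ and, more importantly, produces a remainder $g_2\leq c(\epsilon_j+\mu^{-1})\mu^{-1}$ carrying an extra factor $\mu^{-1}$, so that the time integral is $O((\epsilon_j+s_j^{-1}))$ rather than $O(\epsilon_j^2\cdot s)$. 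Your direct approach lacks both ingredients; without a quantitative rate on the error in Lemma~\ref{43-lem}, it cannot close. (A minor point: your expression $g_2=(q\varphi^{q-1}-K_0)v$ is not the correct remainder; the boundary condition gives $g_2=f(v)=(v+B)^q-B^q-qB^{q-1}v$, though both are $O(v^2)$ and this does not affect the main issue.)
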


\begin{proof}
We fix $\theta=1$.
The proof for the other case $\theta\not=1$ follows from the quite same argument as $\theta=1$.
Since $v(y,s)$ behaves as the case (I),
we note from Lemma \ref{42-lem} that
\begin{equation}\label{Bach-eq}
 v(y',s)|_{\pa\R_+^n} \leq cs^{-1}\chi_{|y'|<\sqrt{2n}}
\hspace{5mm}\text{for }s\gg1.
\end{equation}
To derive a contradiction,
we suppose that
there exist sequences
$\{\tau_j\}_{j\in\N}$ and $\{y_j'\}_{k\in\N}\subset\pa\R_+^n$
with $|y_j'|=\sqrt{\tau_j}$
such that
$\tau_j\to\infty$ and
\[
 \lim_{j\to\infty}v(y_j',\tau_j) = 0.
\]
Then
it follows from \eqref{Naname-eq} and \eqref{Bach-eq} that
\begin{equation}\label{Jap-eq}
 \lim_{j\to\infty}\sup_{y'\in B_{\pa\R_+^n}(\sqrt{\tau_j})}|v(y',\tau_j)| = 0.
\end{equation}
Now we define $s_j<\tau_j$ by 
\[
 e^{\tau_j-s_j} = \tau_j.
\]
From this definition,
we find that
\[
 1 - \left( \frac{s_j}{\tau_j} \right)
=
 \left( \frac{\log\tau_j}{\tau_j} \right).
\]
Since $B_{\pa\R_+^n}(\sqrt{s})\subset B_{\pa\R_+^n}(\sqrt{\mu})$ if $s<\mu$,
we apply Lemma \ref{45-lem} to get
\begin{equation}\label{vbars_k-eq}
 |v(y',s) - v(y',\tau_j)|
\leq
 \int_s^{\tau_j}|\pa_sv(y',\mu)|d\mu
\leq
 c\log\left( \frac{\tau_j}{s_j} \right)
\end{equation}
for $y'\in B_{\pa\R_+^n}(\sqrt{s})$ and $s\in(s_j,\tau_j)$.
Now we set
\[
 \epsilon_j =
 \sup_{y'\in B_{\pa\R_+^n}(\sqrt{s}),\ s\in(s_j,\tau_j)}|v(y',s)|.
\]
Then
\eqref{Jap-eq} and \eqref{vbars_k-eq} imply
\[
 \lim_{j\to\infty}\epsilon_j=0.
\]
Furthermore
by definition of $\epsilon_j$ and \eqref{Bach-eq},
we get
\begin{eqnarray*}
 \pa_{\nu}v
\hspace{-2mm}&=&\hspace{-2mm}
 q\left( \int_0^1(B+\theta v)^{q-1}d\theta \right)v
\\[2mm]
\hspace{-2mm}&\leq&\hspace{-2mm}
 \left( \int_0^1\left( B+cs^{-1} \right)^{q-1}d\theta \right)v_+
-
 \left( \int_0^1\left( B-c\epsilon_j \right)^{q-1}d\theta \right)v_-
\\[2mm]
\hspace{-2mm}&\leq&\hspace{-2mm}
 (qB^{q-1}+cs^{-1})v_+
-
 qB^{q-1}(1-c\epsilon_j)v_-
\hspace{5mm}\text{for } y'\in B_{\pa\R_+^n}(\sqrt{s}),\ s\in(s_j,\tau_j).
\end{eqnarray*}
Here
we note from \eqref{Bach-eq} that $v_+(y,s)\leq cs^{-1}$.
Therefore
we obtain
\[
 \pa_{\nu}v
\leq
 qB^{q-1}(1-c\epsilon_j)v
+
 c(\epsilon_j+s^{-1})s^{-1}
+
 c\chi_{|y'|>\sqrt{s}}
\hspace{5mm}\text{for } y'\in\pa\R_+^n,\ s\in(s_j,\tau_j).
\]
We set $K_j=qB^{q-1}(1-c\epsilon_j)$
and
\[
 \mu_j:=\mu_{K_j},
\hspace{5mm}
 {\cal S}_j(s) := {\cal S}_{K_j}(s),
\hspace{5mm}
 {\cal T}_j(s,\mu) := {\cal T}_{K_j}(s,\mu),
\hspace{5mm}
 b_j(y_n) := b_{K_j}(y_n). 
\]
Then
we apply \eqref{representation2-eq} to obtain
\begin{eqnarray*}
 v(s)|_{\pa\R_+^n}
\hspace{-2mm}&\leq&\hspace{-2mm}
 e^{-(m+\mu_k)(s-s_k)}
 {\cal S}_j(s-s_j)\left( \frac{v(s_j)}{b_j} \right)
 +
 c\int_{s_j}^s
 e^{-(m+\mu_j)(s-\mu)}
\\
\hspace{-2mm}&&\hspace{0mm}
\times
 {\cal T}_j(s,\mu)
 \left( (\epsilon_j+\mu^{-1})\mu^{-1}+\chi_{\{|\xi'|>\sqrt{\mu}\}} \right)
 d\mu
\hspace{5mm}\text{for }s\in(s_j,\tau_j).
\end{eqnarray*}
Since $e^{-(\tau_j-s_j)}|y'|^2\leq1$ for $y'\in B_{\pa\R_+^n}(\sqrt{\tau_j})$,
it follows from Lemma \ref{27-lem} and Proposition \ref{33-pro} that
\[
\begin{array}{lll}
\dis
 \left|
 {\cal S}_j(\tau_j-s_j)
 \left( \frac{v(s_j)+\nu_qs_j^{-1}{\cal E}}{b_j} \right)
 \right|
\hspace{-2mm}&\leq&\hspace{-2mm} \dis
 \exp\left(
 \frac{e^{-(\tau_j-s_j)}|y'|^2}{4(1+e^{-(\tau_j-s_j)})}
 \right)
 o(s_j^{-1})
\\[5mm]
\hspace{-2mm}&=&\hspace{-2mm} \dis
 o(s_j^{-1})
\hspace{5mm}\text{for }y'\in B_{\pa\R_+^n}(\sqrt{\tau_j}).
\end{array}
\]
We set ${\cal D}_j={\cal E}/b_j$.
Then
by the same way as in the proof of Lemma \ref{43-lem},
we see that
\[
 {\cal S}_j(\tau_j-s_j){\cal D}_j = e^{(m+\mu_j)(\tau_j-s_j)}{\cal D}_j.
\]
Since $K_j=qB^{q-1}(1-c\epsilon_j)<qB^{q-1}$,
we note from Lemma \ref{22-lem} that
\[
 \mu_j > \mu|_{K=qB^{q-1}} = -(m+1).
\]
Hence
by using $e^{\tau_j-s_j}=\tau_j$ and ${\cal D}_j={\cal E}$ on $\pa\R_+^n$,
we obtain
\[
\begin{array}{l}
\dis
 e^{-(m+\mu_j)(\tau_j-s_j)}{\cal S}_j(\tau_j-s_j)\left( \frac{v(s_j)}{b_j} \right)
=
 e^{-(m+\mu_j)(\tau_j-s_j)}{\cal S}_j(\tau_j-s_j)
\\[2mm] \dis \hspace{60mm}
\times
 \left( -\frac{\nu_q}{s_j}{\cal D}_j+
 \left( \frac{v(s_j)+\nu_qs_j^{-1}{\cal E}}{b_j} \right) \right)
\\[6mm] \dis \hspace{5mm}
=
 -\frac{\nu_q}{s_j}{\cal D}_j + e^{-(m+\mu_j)(\tau_j-s_j)}o(s_j^{-1})
=
 -\frac{\nu_q}{s_j}{\cal E} + \tau_j^{-(m+\mu_j)}o(s_j^{-1})
\\[4mm] \dis \hspace{5mm}
=
 -\frac{\nu_q}{s_j}{\cal E} + o(1)
\hspace{5mm}\text{for }y'\in B_{\pa\R_+^n}(\sqrt{\tau_j}).
\end{array}
\]
Furthermore
by Lemma \ref{26-lem},
we compute
\[
\begin{array}{l}
\dis
 \left|
 {\cal T}_j
 \left( (\epsilon_j+\mu^{-1})\mu^{-1}+\chi_{\{|\xi'|>\sqrt{\mu}\}} \right)
 \right|
\leq
 c\left(
 \frac{(\epsilon_j+\mu^{-1})\mu^{-1}}{\sqrt{1-e^{-(s-\mu)}}}
 \right.
\\[4mm] \dis \hspace{10mm}
\left.
+
 \int_{\R^{n-1}}
 \frac{\gamma(y',\xi',s-\mu)}{\sqrt{1-e^{-(s-\mu)}}}
 \chi_{\{|\xi'|>\sqrt{\mu}\}}d\xi'
 \right)
\hspace{5mm}\text{for }y'\in\pa\R_+^n.
\end{array}
\]
Here
since $\tau_j\leq9s_j/4$ for large $j\in\N$,
we observe that
\[
 |y'|
 \leq \frac{1}{2}\sqrt{\tau_j}
 \leq \frac{3}{4}\sqrt{s_j}
 \leq \frac{3}{4}|\xi'|
\hspace{5mm}\text{for }|y'|<\frac{\sqrt{\tau_j}}{2},\ |\xi'|>\sqrt{s_j}.
\]
This implies
\[
 |y'e^{-(s-\mu)/2}-\xi'| \geq |\xi'|-|y'| \geq \frac{|\xi'|^2}{4}
\hspace{5mm}\text{for }|y'|<\frac{\sqrt{\tau_j}}{2},\ |\xi'|>\sqrt{s_j}.
\]
Therefore
we get
\vspace{-2mm}
\[
\begin{array}{l}
\dis
 \int_{\R^{n-1}}
 \gamma(y',\xi',s-\mu)\chi_{\{|\xi'|>\sqrt{\mu}\}}d\xi'
\leq
 c\int_{|\xi'|\geq\sqrt{\mu}}
 \frac{\dis\exp\left( -\frac{|\xi'|^2}{64(1-e^{-(s-\mu)})} \right)}
 {(1-e^{-(s-\mu)})^{(n-1)/2}}
 d\xi'
\\[6mm] \dis \hspace{5mm}
=
 c\int_{|\xi'|\geq\sqrt{\mu/(1-e^{-(s-\mu)})}}
 e^{-|\xi'|^2/64}
 d\xi'
\leq
 c\int_{|\xi'|\geq\sqrt{\mu}}
 e^{-|\xi'|^2/64}
 d\xi'
\\[7mm] \dis \hspace{5mm}
\leq
 c\mu^{(n-3)/2}e^{-\mu/64}
\hspace{5mm}\text{for }y'\in B_{\pa\R_+^n}\left( \frac{\sqrt{\tau_j}}{2} \right),\
 \mu>s_j.
\end{array}
\]
As a consequence,
it follows that
\[
\begin{array}{l}
\dis
 \int_{s_j}^{\tau_j}
 e^{-(m+\mu_j)(\tau_j-\mu)}
 {\cal T}_j(s,\mu)
 \left( (\epsilon_j+\mu^{-1})\mu^{-1}+\chi_{\{|\xi'|>\sqrt{\mu}\}} \right)
 d\mu
\\[4mm] \dis \hspace{10mm}
\leq
 c\left(
 \left( \epsilon_j+s_j^{-1} \right)s_j^{-1}
 +
 s_j^{(n-3)/2}e^{-s_j/64}
 \right)
 \left( 1+e^{-(m+\mu_j)(\tau_j-s_j)} \right)
\\[3mm] \dis \hspace{10mm}
=
 c\left( \epsilon_j+O(s_j^{-1}) \right)
\hspace{5mm}\text{for }y'\in B_{\pa\R_+^n}\left( \frac{\sqrt{\tau_j}}{2} \right).
\end{array}
\]
Thus finally
we obtain
\[
 v(y',\tau_j) \leq -\frac{\nu_q}{s_j}{\cal E}(y') + o(1)
\hspace{5mm}\text{for }y'\in B_{\pa\R_+^n}\left( \frac{\sqrt{\tau_j}}{2} \right).
\]
Noting that ${\cal E}(y')=c(|y'|^2-2(n-1))$ on $\pa\R_+^n$ $(c>0)$,
then we find that
\[
 v(y',\tau_j)|_{\pa\R_+^n}
\leq
 -\frac{c\nu_q}{4}\left( \frac{\tau_j}{s_j} \right) + o(1)
\hspace{5mm}\text{for }|y'|=\frac{\sqrt{\tau_j}}{2}.
\]
However
since $\tau_j/s_j\to1$ as $j\to\infty$,
this contradicts \eqref{Jap-eq},
which completes the proof.
\end{proof}

\begin{proof}[\bf Proof of Proposition {\rm\ref{41-pro}}]
We recall from Lemma \ref{43-lem} that
\[
 \varphi(y',s)|_{\pa\R_+^n}\geq B -\nu_qs^{-1}{\cal E} + o(1)
\hspace{5mm}\text{for }y'\in B_{\pa\R_+^n}(\sqrt{s}).
\]
Since ${\cal E}(y')=c(|y'|^2-2(n-1))$ on $\pa\R_+^n$ ($c>0$),
there exists $\theta\in(0,1)$ such that
\[
 \varphi(y',s)|_{\pa\R_+^n} \geq \frac{B}{2}
\hspace{5mm}\text{for }|y'|=\theta\sqrt{s},\ s\gg1.
\]
On the other hand,
from Lemma \ref{48-lem},
there exists $l_1\in(0,1)$ such that
\[
 \varphi(y',s)|_{\pa\R_+^n} = v(y',s)|_{\pa\R_+^n}+B \leq (1-l_1)B
\hspace{5mm}\text{for }|y'|=\theta\sqrt{s},\ s\gg1.
\]
Thus the proof is completed.
\end{proof}

\section{Spacial singularities for a blow-up profile}
\label{Spacial-sec}

Here we apply methods given in \cite{Herrero-V3,Merle-Z}
to investigate spacial singularities of blow-up profiles. 
As in Section \ref{LongRange-sec},
$v(y,s)$ is assumed to be $y_n$-axial symmetric,
behave as the case (I) in Proposition \ref{31-pro}
and satisfy \eqref{Naname-eq}.
Let $\theta\in(0,1)$ be given in Proposition \ref{41-pro} and
set $\vec{e}=(1,0,\cdots,0)$.
We introduce
\[
 v_s(x,t) = e^{-ms}u(e^{-s/2}x+\theta\sqrt{s}e^{-s/2}\vec{e},T+(t-1)e^{-s}),
\]
where
$s>s_T$ is a parameter.
Then $v_s(x,t)$ satisfies
\[
\begin{cases}
 \pa_tv = \Delta v & \text{in }\R_+^n\times(0,1),
\\ \dis
 \pa_{\nu}v = v^q & \text{on }\pa\R_+^n\times(0,1).
\end{cases}
\]
The following proposition gives a key estimate,
whose proof is given in the next subsection.

\begin{pro}\label{51-pro}
There exist $0<c_-<c_+$, $s^*\gg1$ and $t_1\in(0,1)$ such that
\[
 c_- \leq v_s(0,t) \leq c_+
\hspace{5mm}\mathrm{for}\ s>s^*,\ t\in(t_1,1).
\]
\end{pro}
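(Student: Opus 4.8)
The plan is to use the identity, immediate from the definitions of $\varphi$ and $v_s$,
\[
 v_s(x,t)=(1-t)^{-m}\varphi\big((1-t)^{-1/2}(x+\theta\sqrt{s}\,\vec{e}),\ s-\log(1-t)\big),
\]
so that $v_s(\cdot,0)=\varphi(\cdot+\theta\sqrt{s}\,\vec{e},s)$ and $v_s$ itself solves $({\rm P})$ on $\R_+^n\times(0,1)$. First I would record uniform (in $s\gg1$) bounds on the initial datum: by Theorem \ref{Ua-thm}, $0\le v_s(\cdot,0)\le C$ with $C$ independent of $s$; by Proposition \ref{41-pro}, $v_s(0,0)=\varphi(\theta\sqrt{s}\,\vec{e},s)|_{\pa\R_+^n}\in[k_1B,k_2B]$; combining Lemma \ref{44-lem} (giving $|\nabla'v|\le cs^{-1/2}$ for $|y'|\lesssim\sqrt{s}$) with Lemma \ref{21-lem} (controlling $\pa_n\varphi$), one obtains a fixed $r_0>0$ and $c_0>0$, both independent of $s$, with $v_s(x,0)\ge c_0$ for $|x|<r_0$. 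Finally, by the monotonicity \eqref{Naname-eq}, $\varphi(\cdot,s)$ is maximal on $\{y_n=0\}$ at the axis, so $v_s(x,0)|_{\pa\R_+^n}\le\varphi(0,s)|_{\pa\R_+^n}\le B+o(1)$, and near $x=0$ Lemma \ref{48-lem} gives the sharper $v_s(x,0)|_{\pa\R_+^n}\le(1-l_1)B$.

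For the \textbf{lower bound}, the boundary condition $\pa_{\nu}v_s=v_s^q\ge0$ makes $v_s$ a supersolution of the linear heat equation with homogeneous Neumann condition and the same initial datum; hence $v_s\ge\underline v$, where $\underline v$ is the convolution of $v_s(\cdot,0)$ against the reflected Gaussian kernel $G_0$. Since $v_s(\cdot,0)\ge c_0\chi_{\{|x|<r_0\}}$, this yields $v_s(0,t)\ge c_0\int_{|x|<r_0}G_0(0,x,t)\,dx\ge c_-$ on $(t_1,1)$ for any fixed $t_1\in(0,1)$, because that integral is bounded below by a positive constant for $t\le1$. This step is routine.

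The \textbf{upper bound} is the substantive point, and I expect it to be the main obstacle: it is precisely the statement that the rescaled solution $v_s$ does not blow up (or grow without control) before $t=1$, uniformly in $s$. The plan is to dominate $v_s$ on $\R_+^n\times[0,1)$ by a supersolution of $({\rm P})$ that stays finite up to $t=1$. Natural building blocks are the self-similar profiles $(T^*-t)^{-m}\varphi_0\big((T^*-t)^{-1/2}x_n\big)$ with $T^*>1$, which control the region $x_n\gtrsim1$ (where $\varphi$ is bounded by Theorem \ref{Ua-thm} while $\varphi_0$ grows), and boundary-layer profiles $A(\delta+x_n)^{-1/(q-1)}$, which — since $2m+1-2mq=0$ — are stationary supersolutions of the nonlinear boundary condition for $A\le(q-1)^{-1/(q-1)}$ and control the layer $x_n\lesssim1$ where the flux is active; one takes a maximum, or a smooth interpolation, of such pieces. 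The delicate step is choosing $A$, $\delta$, $T^*$ so that the resulting supersolution dominates $v_s(\cdot,0)$ on \emph{all} of $\R_+^n$: near $\pa\R_+^n$ this uses $v_s(\cdot,0)|_{\pa\R_+^n}\le B+o(1)$ (and $\le(1-l_1)B$ near $x=0$) from \eqref{Naname-eq} and Lemma \ref{48-lem}, together with Lemma \ref{21-lem} to pass to $x_n>0$, while away from $\pa\R_+^n$ one uses only the uniform bound $\varphi\le C$. The comparison principle then gives $v_s\le\bar w$ on $\R_+^n\times[0,1)$, and evaluating at $x=0$ gives $v_s(0,t)\le\bar w(0,t)\le c_+$ for $t\in(0,1)$.

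An alternative route for the upper bound, closer in spirit to \cite{Merle-Z}, is: first establish the bound on $\R_+^n\times(0,1-\epsilon)$ for each $\epsilon>0$ (by the comparison above restricted to an earlier time), then use parabolic regularity together with the identity above to get precompactness of $\{v_s\}$ in $C^{2,1}_{\rm loc}$; any limit is a bounded, positive, global-in-space solution of $({\rm P})$ on $\R_+^n\times(0,1)$, which a Liouville-type classification identifies as a time-translate of the self-similar solution, hence bounded on $[0,1)$; this gives the bound for $v_s(0,t)$ near $t=1$. Either way, the crux — the part that genuinely uses the Section \ref{LongRange-sec} estimates — is the a priori control of $v_s$ all the way up to $t=1$.
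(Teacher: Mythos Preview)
Your lower bound is fine and matches the paper's argument: comparison with the linear Neumann problem, then read off the $e^{-m\tau}$ rate of the constant mode. The upper bound, however, has a genuine gap, and the paper takes a quite different route.

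The first supersolution scheme you propose cannot work as stated. Observe that $v_s(x,t)=(1-t)^{-m}\varphi(\cdots)$ \emph{does} blow up at $t=1$: the blow-up point of $u$ is the origin, which in the $x$-coordinate sits at $x=-\theta\sqrt{s}\,\vec{e}$. So there is no function $\bar w$, bounded on all of $\R_+^n\times[0,1)$, that dominates $v_s$ on the whole half-space; the task is to bound $v_s$ \emph{only at $x=0$}, and a one-dimensional barrier depending only on $(x_n,t)$ cannot distinguish $x'=0$ from $x'=-\theta\sqrt{s}\,\vec e$. Concretely, your two building blocks fail individually: the self-similar profile $(T^*-t)^{-m}\varphi_0((T^*-t)^{-1/2}x_n)$ with $T^*>1$ has boundary value $(T^*)^{-m}B<B$, but by Lemma~\ref{42-lem} $v_s(\cdot,0)|_{\pa\R_+^n}$ reaches $B+cs^{-1}$ near $x'=-\theta\sqrt{s}\,\vec e$; and $A(\delta+x_n)^{-1/(q-1)}$ has $\Delta\bar u=A\frac{q}{(q-1)^2}(\delta+x_n)^{-1/(q-1)-2}>0$, so it is a \emph{sub}solution of the heat equation, not a supersolution. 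The Liouville alternative you sketch is also incomplete: the limit you obtain is a solution on $\R_+^n\times(0,1)$, not an ancient solution, and no classification of such objects is available here.

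The paper avoids all of this by passing to the self-similar variables centered at $x=0$, setting $w_s(y,\tau)=e^{-m\tau}v_s(e^{-\tau/2}y,1-e^{-\tau})=\varphi(y+\theta\sqrt{s}\,e^{\tau/2}\vec e,\tau+s)$. The gain is twofold: $w_s$ is uniformly bounded (since $\varphi$ is), and the distant blow-up region is pushed to $|y|\to\infty$, invisible to the weighted norm $\|\cdot\|_\rho$. The argument then runs: (i) by Lemma~\ref{44-lem} any subsequential limit $w_\infty(\cdot,0)$ is independent of $y'$ and, by Proposition~\ref{41-pro}, strictly below $\varphi_0$ near $y_n=0$; (ii) a one-dimensional dynamical lemma (Lemma~\ref{51-lem}) shows that a solution of the rescaled 1D problem starting strictly below $\varphi_0$ tends to $0$---this uses that $\varphi_0$ is an unstable equilibrium and is the substantive step; (iii) compactness transfers smallness of $\|w_s(\tau)\|_\rho$ to all large $s$ at some fixed $\tau_\epsilon$ (Lemma~\ref{52-lem}), and an energy argument keeps it small thereafter (Lemma~\ref{54-lem}); (iv) a spectral splitting $w_s=w_{s0}+w_{s-}$ upgrades ``small'' to the sharp rate $\|w_s(\tau)\|_\rho\le ce^{-m\tau}$ (Lemmas~\ref{55-lem}--\ref{KY-lem}), which is exactly $v_s(0,t)\le c$. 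The moral is that the comparison you want must be carried out in the rescaled equation, where the correct barrier is the \emph{dynamical} one $\bar w(\xi,\tau)\to0$ of Lemma~\ref{51-lem}, not a stationary profile in the original variables.
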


The proof of Theorem \ref{2-thm} follows directly from Proposition \ref{51-pro}.

\begin{proof}
[\bf Proof of Theorem \ref{2-thm}]
Now
we define an inverse function ${\sf s}(r)$ by
$\sqrt{\sf s}e^{-{\sf s}/2}=r/\theta$ (${\sf s}>1$).
Then from Proposition \ref{51-pro},
there exist $r_*\in(0,1)$ and $t_1\in(0,1)$ such that
\[
 c_-e^{m{\sf s}(r)}
\leq
 u(r\vec{e},T+(t-1)e^{-{\sf s}(r)})
\leq
 c_+e^{m{\sf s}(r)}
\hspace{5mm}\text{for }r\in(0,r_*),\ t\in(t_1,1).
\]
Here
we take $t=1$ to obtain
\[
 c_-e^{m{\sf s}(r)}
\leq
 u(r\vec{e},T)
\leq
 c_+e^{m{\sf s}(r)}
\hspace{5mm}\text{for }r\in(0,r_*).
\]
From definition of ${\sf s}(r)$,
we compute
\[
\begin{array}{c}
\dis
 {\sf s} = \log{\sf s} + 2|\log r| + 2\log\theta,
\\[2mm] \dis
 e^{-{\sf s}}{\sf s}
=
 e^{-{\sf s}}
 \left( \log {\sf s}+2|\log r|+2\log{\theta} \right) = (r/\theta)^2.
\end{array}
\]
Hence it follows that
\[
 e^{-{\sf s}}
=
 \frac{r^2}{2\theta^2|\log r|}
 \left( \frac{2|\log r|}{\log {\sf s}+2|\log r|+2\log\theta} \right).
\]
Since ${\sf s}(r)\leq c|\log r|$ for small $r>0$,
it is clear that
\[
 \left( \frac{2|\log r|}{\log {\sf s}+2|\log r|+2\log\theta} \right)
=
 1 + o(1).
\]
Therefore
there exists $0<c_-'<c_+'$ such that
\[
 c_-'
 \left( \frac{|\log r|}{r^2} \right)^m(1+o(1))
\leq
 u(r\vec{e},T)
\leq
 c_+'
 \left( \frac{|\log r|}{r^2} \right)^m(1+o(1)).
\]
Since $u(x,t)$ is $x_n$-axial symmetric,
we obtain the conclusion.
\end{proof}

\subsection{Proof of Proposition \ref{51-pro} (upper bound)}
\label{upper-sec}

We consider a rescaled solution $w_s(y,\tau)$ defined by
($y\in\R_+^n$, $\tau\in\R_+$)
\begin{eqnarray*}
\dis
 w_s(y,\tau)
\hspace{-2mm}&=&\hspace{-2mm}
 e^{-m\tau}v_s(e^{-\tau/2}y,1-e^{-\tau})
\\[1mm]
\hspace{-2mm}&=&\hspace{-2mm}
 e^{-m(\tau+s)}u(e^{-(\tau+s)/2}y+\theta \sqrt{s}e^{-s/2}\vec{e},T-e^{-(\tau+s)})
\\[1mm]
\hspace{-2mm}&=&\hspace{-2mm}
 \varphi(y+\theta\sqrt{s}e^{\tau/2}\vec{e},\tau+s).
\end{eqnarray*}
Then
$w_s(y,\tau)$ satisfies
\begin{equation}\label{ppooii-eq}
\begin{cases}
\dis
 \pa_{\tau}w_s = \Delta w_s - \frac{y}{2}\cdot\nabla w_s - mw_s
& \text{in }\R_+^n\times(0,\infty),
\\ \dis
 \pa_{\nu}w_s = w_s^q
& \text{on }\pa\R_+^n\times(0,\infty)
\end{cases}
\end{equation}
and
\[
 w_s(y,0) =  \varphi(y+\theta\sqrt{s}\vec{e},s).
\]
Moreover
since
$\varphi(y,s)$ is uniformly bounded on $\R_+^n\times(s_T,\infty)$,
it follows that
\begin{equation}\label{M_0-eq}
 M_0 :=
 \sup_{s\in(s_T,\infty)}\sup_{y\in\R_+^n,\tau\in(0,\infty)}w_s(y,\tau)
< \infty.
\end{equation}
From Proposition \ref{41-pro},
we note that $k_1B\leq w_s(0,0)\leq k_2B$ for some $k_1,k_2\in(0,1)$.
Hence
since $|\nabla\varphi(y,s)|$ is uniformly bounded (Lemma \ref{21-lem}) and $B=\varphi_0(0)$,
there exists $\delta_0>0$ such that
\begin{equation}\label{Kenchan1-eq}
 \left( \frac{k_1}{2} \right)\varphi_0(y_n)
\leq
 w_s(y_n,0)|_{y'=0}
\leq
 \left( \frac{1+k_2}{2} \right)\varphi_0(y_n)
\hspace{5mm}\text{for }y_n\in(0,\delta_0).
\end{equation}
On the other hand,
since $\varphi(y,s)\to\varphi_0(y_n)$
in $C_{\loc}(\overline{\R_+^n})$ as $s\to\infty$,
from \eqref{Naname-eq},
we obtain
\begin{equation}\label{Kenchan2-eq}
 \limsup_{s\to\infty}w_s(y_n,0)|_{y'=0}
\leq
 \lim_{s\to\infty}\varphi(y_n,s)|_{y'=0} = \varphi_0(y_n)
\end{equation}
uniformly on $y_n\in[0,R]$ for any $R>0$.
We fix a function $w_*(\xi)\in C^2(\overline{\R}_+)$ such that
\begin{equation}\label{Kenchan3-eq}
\begin{array}{cl}
\dis
 \left( \frac{1+k_2}{2} \right)\varphi_0(\xi)\leq w_*(\xi)<\varphi_0(\xi)
& \text{if } \xi\in(0,\delta_0),
\\ \dis
 w_*(\xi)=\varphi_0(\xi)
& \text{if } \xi\in(\delta_0,\infty).
\end{array}
\end{equation}
Let $\bar{w}(\xi,\tau)$ be a solution of
\begin{equation}\label{wbar-eq}
\begin{cases}
\dis
 \pa_{\tau}\bar{w} = \bar{w}_{\xi\xi} - \frac{\xi}{2}\bar{w}_\xi - m\bar{w}
& \text{in }\R_+\times(0,\infty),
\\ \dis
 \pa_{\nu}\bar{w} = \bar{w}^q
& \text{on }\xi=0,\ \tau\in(0,\infty),
\\ \dis
 \bar{w} = w_*
& \text{in }\R_+.
\end{cases}
\end{equation}

\begin{lem}\label{51-lem}
Let $\bar{w}(\xi,\tau)$ be given above.
Then
$\bar{w}(\xi,\tau)$ is uniformly bounded on $\R_+\times(0,\infty)$
and
converges to zero uniformly on $\R_+$ as $\tau\to\infty$.
\end{lem}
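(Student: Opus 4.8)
The plan is to regard \eqref{wbar-eq} as a gradient system on $L_\rho^2(\R_+)$ whose only bounded nonnegative equilibria are $0$ and $\varphi_0$, and to show that the orbit issuing from $w_*$ stays strictly below the (unstable) equilibrium $\varphi_0$ and hence must converge to $0$. The uniform bound comes from comparison with $\varphi_0$: since $\varphi_0$ is a stationary solution of \eqref{wbar-eq} and $0\le w_*\le\varphi_0$ by \eqref{Kenchan3-eq}, the maximum principle for the Robin problem — in the interior and on $\{\xi=0\}$, using that $s\mapsto s^q$ is increasing — gives $0\le\bar w(\xi,\tau)\le\varphi_0(\xi)$ for all $\tau>0$; as $\varphi_0\in BC^2(\R_+)$ this is the asserted bound.

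Next I would record the strict inequality and the variational structure. Setting $z:=\varphi_0-\bar w\ge0$, $z$ solves $\partial_\tau z=z_{\xi\xi}-\tfrac\xi2 z_\xi-mz$ with $\partial_\nu z(0)=B^q-(B-z(0))^q$ (a linear Robin condition with bounded nonnegative coefficient) and $z(\cdot,0)=\varphi_0-w_*\not\equiv0$, so the strong maximum principle and Hopf's lemma give $z>0$ on $\R_+\times(0,\infty)$, i.e. $\bar w<\varphi_0$ for $\tau>0$. Multiplying \eqref{wbar-eq} by $(\partial_\tau\bar w)\rho$ and integrating (the boundary term contributing $\tfrac1{q+1}\partial_\tau\bar w(0,\tau)^{q+1}$) yields
\[
 \frac{d}{d\tau}\mathcal J[\bar w(\cdot,\tau)]=-\int_0^\infty(\partial_\tau\bar w)^2\rho\,d\xi\le0,\qquad
 \mathcal J[w]:=\frac12\int_0^\infty w_\xi^2\rho\,d\xi+\frac m2\int_0^\infty w^2\rho\,d\xi-\frac{w(0)^{q+1}}{q+1},
\]
and $\mathcal J$ is bounded below by the uniform bound. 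Interior and boundary parabolic estimates make $\{\bar w(\cdot,\tau):\tau\ge1\}$ precompact in $C^1_{\loc}(\overline{\R_+})$, so its $\omega$-limit set is nonempty, connected, and consists of bounded nonnegative stationary solutions of \eqref{wbar-eq}; by the uniqueness of such a solution (Lemma 3.1 of \cite{Fila-Q}, Theorem 3.1 of \cite{Chlebik-F2}) the $\omega$-limit set is $\{0\}$ or $\{\varphi_0\}$.

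The heart of the matter — and the step I expect to be the main obstacle — is to exclude $\{\varphi_0\}$. Assume $\bar w(\cdot,\tau)\to\varphi_0$; since $0\le z\le\varphi_0\in L_\rho^2$, also $z(\cdot,\tau)\to0$ in $L_\rho^2$ and in $C^1_{\loc}$, so $\beta(\tau):=(z(\cdot,\tau),I_1)_\rho>0$ tends to $0$, where $I_1>0$ is the first eigenfunction of \eqref{Ieigen-eq} with $\kappa_1=-(m+1)$ (Lemma \ref{22-lem}). Using $I_1''-\tfrac\xi2 I_1'=(m+1)I_1$, $I_1'(0)=-qB^{q-1}I_1(0)$, and the boundary condition for $z$, a short integration by parts gives
\[
 \beta'(\tau)=\beta(\tau)+I_1(0)\bigl(B^q-(B-z(0,\tau))^q-qB^{q-1}z(0,\tau)\bigr),
\]
where the bracket is nonpositive and $O(z(0,\tau)^2)$ by convexity of $s\mapsto s^q$. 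A parabolic Harnack inequality for the positive solution $z$ (coefficients bounded uniformly in $\tau$), together with a control of how fast $\beta$ varies over unit time intervals, bounds $z(0,\tau)^2$ by $C\beta(\tau)^2$ for $\tau$ large; hence $\beta'(\tau)\ge\beta(\tau)(1-C\beta(\tau))\ge\tfrac12\beta(\tau)$ for $\tau$ large, so $\beta(\tau)\to\infty$ — contradicting $\beta(\tau)\to0$. Therefore the $\omega$-limit set is $\{0\}$, so $\bar w(\cdot,\tau)\to0$ in $C^1_{\loc}(\overline{\R_+})$, and since $0\le\bar w(\xi,\tau)\le\varphi_0(\xi)$ with $\varphi_0(\xi)\to0$ as $\xi\to\infty$, this convergence is uniform on all of $\R_+$. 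Everything but this last exclusion — the comparison, the Lyapunov identity, the parabolic compactness, the classification of equilibria — is standard; the delicate point is the Harnack comparison $z(0,\tau)^2\lesssim\beta(\tau)^2$ that turns the instability of $\varphi_0$ into exponential growth of $\beta$.
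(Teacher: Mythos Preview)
Your argument is correct and takes a genuinely different route from the paper's.

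The paper avoids the gradient-flow/$\omega$-limit machinery entirely. It constructs an explicit one-parameter family of super-solutions $\psi_a(\xi)=\varphi_0(\xi)-ah_{\epsilon_0}(\xi)$ for $\xi<\xi_{\epsilon_0}$ and $\psi_a=\varphi_0$ beyond, where $h_{\epsilon_0}$ solves the linear ODE $h''-\tfrac\xi2 h'-mh=0$ with $\partial_\nu h(0)=(1-\epsilon_0)qB^{q-1}h(0)$ and $h(0)=1$, and has a zero $\xi_{\epsilon_0}$ (this is proved by a Wronskian argument against $\varphi_0/B$). For small $a$ one checks $\partial_\nu\psi_a(0)\ge\psi_a(0)^q$, so $\psi_a$ is a super-solution of the nonlinear problem; the strong maximum principle gives $\bar w(\cdot,1)\le\psi_a$ for some $a>0$, and then the solution $W_a$ starting from $\psi_a$ is monotone nonincreasing in $\tau$ (since $\psi_a$ is a super-solution), hence converges to an equilibrium strictly below $\varphi_0$, which by uniqueness must be $0$. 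Comparison then gives $\bar w(\cdot,\tau+1)\le W_a(\cdot,\tau)\to0$.

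Both proofs ultimately exploit the same fact---that $\varphi_0$ is linearly unstable in the direction $I_1$ with rate $+1$---but encode it differently. The paper's super-solution $\psi_a$ is essentially $\varphi_0$ minus a perturbation along (a slight variant of) $I_1$, and the monotonicity of $W_a$ replaces your spectral growth argument. The trade-off: the paper's approach is self-contained and needs no Harnack inequality, but requires the ODE construction of $h_\epsilon$ and the zero-of-$h_0$ lemma; your approach is more conceptual and transportable to other unstable equilibria, but the step $z(0,\tau)\lesssim\beta(\tau)$ does rely on a boundary Harnack estimate for a Robin problem, which---while true (e.g.\ after converting to Neumann via a factor $e^{c(\tau)\xi}$ with bounded $c(\tau)$)---is a nontrivial citation. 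Your observation that $\beta'\le\beta$ automatically controls $\beta(\tau+1)/\beta(\tau)$, closing the time-shift in Harnack, is exactly the right complement.
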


\begin{proof}
Let $\epsilon\in[0,1)$ and $h_\epsilon(\xi)$ be the unique solution of
\[
\begin{cases}
\dis
 h_\epsilon'' - \frac{\xi}{2}h_\epsilon' = mh_\epsilon
\hspace{5mm}\text{in } \R_+,
\\ \dis
 h_\epsilon(0) = 1,
\hspace{2mm}
 \pa_{\nu}h_\epsilon(0) = (1-\epsilon)qB^{q-1}.
\end{cases}
\]
First
we claim that $h_{\epsilon=0}(\xi)=h_0(\xi)$ has at least one zero on $\R_+$.
To derive a contradiction,
we suppose $h_0(\xi)>0$ on $\R_+$.
We set $h_*(\xi)=\varphi_0(\xi)/B$,
then $h_*(\xi)$ solves
\[
\begin{cases}
\dis
 h_*'' - \frac{\xi}{2}h_*' = mh_*
\hspace{5mm}\text{in } \R_+,
\\ \dis
 h_*(0) = 1,
\hspace{2mm}
 \pa_{\nu}h_*(0) = B^{q-1}.
\end{cases}
\]
By a boundary condition of $h_0(\xi)$ and $h_*(\xi)$,
we find that $h_0(\xi)<h_*(\xi)$ for small $\xi>0$.
Then there are two possibilities:
(i) there exists $\xi_1\in\R_+$ such that $h_0(\xi_1)=h_*(\xi_1)$ and $h_0(\xi)<h_*(\xi)$
for $\xi\in(0,\xi_1)$,
(ii) $0<h_0(\xi)<h_*(\xi)$ for $\xi\in\R_+$.
We introduce
\[
 g(\xi) = e^{-\xi^2/4}
 \left( h_0'(\xi)h_*(\xi)-h_0(\xi)h_*'(\xi) \right).
\]
Then
we easily see that
\[
\begin{array}{c}
\dis
 g'(\xi)=0 \hspace{5mm}\text{for } \xi\in\R_+,
\hspace{7.5mm}
  g(0) = -(q-1)B^{q-1}<0.
\end{array}
\]
For the case (i),
by definition of $\xi_1$,
it holds that $h_0'(\xi_1)-h_*'(\xi_1)=e^{\xi^2/4}g(\xi_1)/h_*(\xi_1)=g(0)/h_*(\xi_1)<0$.
However
this contradicts the definition of $\xi_1$.
For the case (ii),
since
$\varphi_0(\xi)$ is uniformly bounded on $\R_+$,
it is verified that
$h(\xi)$, $h_0(\xi)$ and their derivatives
are uniformly bounded on $\R_+$.
Hence
it follows that $\lim_{\xi\to\infty}g(\xi)=0$.
However
since $g(\xi)\equiv g(0)<0$,
this is a contradiction.
Therefore
the claim is proved.
We denote by $\xi_0$ the first zero of $h_0(\xi)$.
Since $h_0'(\xi_0)<0$,
by continuity,
$h_\epsilon(\xi)$ has a unique zero near $\xi=\xi_0$ for small $\epsilon\in(0,1)$,
which is denoted by $\xi_\epsilon$.
We fix $\epsilon=\epsilon_0$ small enough.
Now
we construct a super-solution by using $h_{\epsilon_0}(\xi)$.
We set
\[
 \psi_a(\xi) =
 \begin{cases}
 \varphi_0(\xi) - ah_{\epsilon_0}(\xi)
 & \text{if } \xi\in(0,\xi_0),
 \\
 \varphi_0(\xi)
 & \text{if } \xi\in(\xi_0,\infty),
 \end{cases}
\]
where $a\in(0,1)$ is a parameter.
Then
since $(B-a)^q=B^q-qB^{q-1}a+O(a^2)$,
we get
\[
\begin{array}{lll}
 \pa_{\nu}\psi_a
\hspace{-2mm}&=&\hspace{-2mm} \dis
 B^q - (1-\epsilon_0)qB^{q-1}a
\\[2mm]
\hspace{-2mm}&=&\hspace{-2mm} \dis
 (B-a)^q +\epsilon_0qB^{q-1}a + O(a^2)
\\[2mm]
\hspace{-2mm}&\geq&\hspace{-2mm} \dis
 (B-a)^q = \psi_a^q
\hspace{5mm}\text{for }\xi=0,\ 0<a\ll1.
\end{array}
\]
Since $w_*(\xi)\leq\varphi_0(\xi)$,
a comparison argument implies $\bar{w}(\xi,s)\leq\varphi_0(\xi)$.
Hence, by a strong maximum principle,
there exists $a_1\in(0,1)$ such that for $a\in(0,a_1)$
\[
 \bar{w}(\xi,1) \leq \psi_a(\xi)
\hspace{5mm}\text{for }\xi\in\R_+.
\]
Let $W_a(\xi,s)$ be a solution of \eqref{wbar-eq} with the initial data $\psi_a(\xi)$.
Then a comparison argument implies that
\[
 \bar{w}(\xi,s+1) \leq W_a(\xi,s).
\]
Now
we claim that $W_a(\xi,s)$ converges to zero uniformly on $\R_+$
as $s\to\infty$.
Since $\psi_a(\xi)$ is a super-solution,
it holds that $W_a(\xi,s)\leq\psi_b(\xi)$ for $s>0$.
By the unique solvability of solutions of \eqref{wbar-eq}
and a comparison argument,
we see that
\[
 W_a(\xi,s+s') \leq W_a(\xi,s')
\hspace{5mm}\text{for }s,s'>0.
\]
Hence
it follows that $\pa_sW_a(\xi,s)\leq0$ for $s>0$.
As a consequence,
$W_a(\xi,s)$ converges to some function $W_{\infty}(\xi)$
satisfying $0\leq W_{\infty}(\xi)<\varphi_0(\xi)$
uniformly on $\R_+$ as $s\to\infty$.
By a standard argument,
we find that
$W_{\infty}(\xi)$ is one of stationary solutions of \eqref{wbar-eq}.
Since $\varphi_0(\xi)$ is the unique bounded positive solution of \eqref{wbar-eq}
(see Lemma 3.1 \cite{Fila-Q}),
$W_{\infty}(\xi)$ must be zero,
which shows the claim.
Therefore from $\bar{w}(\xi,s+1) \leq W_a(\xi,s)$,
$\bar{w}(\xi,s)$ also converges to zero uniformly on $\R_+$ as $s\to\infty$,
which completes the proof.
\end{proof}

The function $\bar{w}(\xi,\tau)$ is naturally extended to
a function $\hat{w}(y,\tau)$ defined on $\R_+^n\times(0,\infty)$ by
\[
 \hat{w}(y,s) = \bar{w}(y_n,s).
\]
From Lemma \ref{51-lem},
it is clear that $\lim_{\tau\to\infty}\|\hat{w}(\tau)\|_{\rho}=0$. 
Let $\tau_{\epsilon}>0$ be the first time of
\[
 \|\hat{w}(\tau_{\epsilon})\|_{\rho}=\epsilon.
\]

\begin{lem}\label{52-lem}
For any $\epsilon>0$
there exist $s_{\epsilon}>0$ such that
\[
 \|w_s(\tau_{\epsilon})\|_{\rho} \leq 2\epsilon
\hspace{5mm}\mathrm{for}\ s\geq s_{\epsilon}.
\]
\end{lem}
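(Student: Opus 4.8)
The plan is to transfer the decay of $\hat{w}$ established in Lemma~\ref{51-lem} to $w_s$ by a \emph{one-sided} comparison on the fixed time interval $[0,\tau_\epsilon]$. First observe that $\hat{w}(y,\tau)=\bar{w}(y_n,\tau)$ itself solves \eqref{ppooii-eq} (the $y'$-derivatives drop out and the boundary condition reads $\pa_\nu\hat{w}=\hat{w}^q$), so $\hat{w}$ is a genuine solution of the same problem as $w_s$; by Lemma~\ref{51-lem}, $\|\hat{w}(\tau)\|_\rho\to0$ as $\tau\to\infty$, and since $\bar{w}$ does not depend on $s$, the time $\tau_\epsilon$ is finite and depends only on $\epsilon$, with $\|\hat{w}(\tau_\epsilon)\|_\rho=\epsilon$ by definition. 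Writing $\Phi_s=w_s-\hat{w}$, the difference solves the \emph{linear} backward heat equation $\pa_\tau\Phi_s=\Delta\Phi_s-\tfrac{y}{2}\cdot\nabla\Phi_s-m\Phi_s$ in $\R_+^n\times(0,\infty)$ with the Robin condition $\pa_\nu\Phi_s=K_s(y',\tau)\Phi_s$, where $K_s=\int_0^1q(\hat{w}+\sigma\Phi_s)^{q-1}d\sigma$. Since $0\le w_s\le M_0$ by \eqref{M_0-eq} and $0\le\hat{w}\le\varphi_0(0)$ by Lemma~\ref{51-lem}, we have $0\le K_s\le K_0$ for a fixed constant $K_0$ independent of $s$ and $\tau$.

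The main step is to prove that $\eta(s):=\|(w_s(\cdot,0)-\hat{w}(\cdot,0))_+\|_\rho\to0$ as $s\to\infty$; recalling $w_s(y,0)=\varphi(y+\theta\sqrt{s}\vec{e},s)$ and $\hat{w}(y,0)=w_*(y_n)$, this means $\varphi(y+\theta\sqrt{s}\vec{e},s)\le w_*(y_n)+o(1)$ in the $L_{\rho}^2$-sense. Fix first $R>1$ so large that $\int_{|y|>R}M_0^2e^{-|y|^2/4}dy$ is small; on $\{|y|>R\}$ the integrand is bounded by $M_0^2$, so that region is negligible. On $\{|y|\le R\}$ (hence $y_n\le R$) use the monotonicity \eqref{Naname-eq}, i.e.\ that $\varphi(y,s)=\varphi(|y'|,y_n,s)$ is nonincreasing in $|y'|$. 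For $\delta_0\le y_n\le R$ one has $w_*(y_n)=\varphi_0(y_n)$ and $w_s(y,0)\le\varphi(y_n,s)|_{y'=0}$, which by Theorem~\ref{Ua-thm} converges to $\varphi_0(y_n)$ uniformly on $[0,R]$, so the positive part tends to $0$ uniformly there. For $y_n<\delta_0$ we distinguish: when $|y'+\theta\sqrt{s}\vec{e}|\ge\theta\sqrt{s}$ (in particular when $y_1\ge0$) monotonicity gives $w_s(y,0)\le w_s(y_n,0)|_{y'=0}\le\tfrac{1+k_2}{2}\varphi_0(y_n)\le w_*(y_n)$ by \eqref{Kenchan1-eq} and \eqref{Kenchan3-eq}, so the positive part vanishes; otherwise (e.g.\ $y_1<0$) one still has $|y'+\theta\sqrt{s}\vec{e}|\ge\theta\sqrt{s}-|y'|\ge\tfrac{\theta}{2}\sqrt{s}$ for $s$ large, so monotonicity, Lemma~\ref{48-lem} applied with parameter $\theta/2$, and the gradient bound of Lemma~\ref{21-lem} yield $w_s(y,0)\le(1-l_1)\varphi_0(0)+cy_n$, which is $\le w_*(y_n)$ once $\delta_0$ is small and $w_*$ is chosen — a harmless strengthening of \eqref{Kenchan3-eq} — to dominate this barrier near $y_n=0$. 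Altogether $\eta(s)\to0$.

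Finally, let $z_s$ solve the linear backward heat equation with Robin coefficient $K_0$, vanishing forcing, and initial datum $(w_s(\cdot,0)-\hat{w}(\cdot,0))_+\ge0$. A comparison argument as in Section~\ref{Linearbackward-sec} shows $z_s\ge0$ and $\Phi_s\le z_s$ (indeed $\pa_\nu(z_s-\Phi_s)=K_0z_s-K_s\Phi_s\ge K_s(z_s-\Phi_s)$ and $z_s(\cdot,0)-\Phi_s(\cdot,0)\ge0$), hence $0\le w_s\le\hat{w}+z_s$. Applying the representation formula \eqref{representation-eq}, \eqref{representation3-eq} to $z_s$ together with the heat-kernel and $L^\infty$–$L_{K_0,\rho}^2$ estimates of Section~\ref{Linearbackward-sec}, one gets $\|z_s(\tau)\|_\rho\le C(\tau)\,\eta(s)$ with $C(\tau)$ depending only on $K_0$ and $\tau$, not on $s$. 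Consequently
\[
 \|w_s(\tau_\epsilon)\|_\rho\le\|\hat{w}(\tau_\epsilon)\|_\rho+\|z_s(\tau_\epsilon)\|_\rho\le\epsilon+C(\tau_\epsilon)\,\eta(s),
\]
and choosing $s_\epsilon$ so that $C(\tau_\epsilon)\,\eta(s)\le\epsilon$ for $s\ge s_\epsilon$ completes the proof.

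The hard part is the region-by-region initial estimate of the second paragraph: it is the only place where the monotonicity \eqref{Naname-eq}, the pointwise control of $\varphi$ on the spheres $|y'|\sim\theta\sqrt{s}$ coming from Proposition~\ref{41-pro}/Lemma~\ref{48-lem}, the uniform gradient bound, and the design of $\delta_0$ and $w_*$ must all be used simultaneously; once $\eta(s)\to0$ is in hand, the comparison and the Gronwall-type estimate for $z_s$ are routine given the machinery of Section~\ref{Linearbackward-sec}.
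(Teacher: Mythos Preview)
Your approach is correct and gives a direct, quantitative alternative to the paper's argument. The paper proceeds by contradiction and compactness: assuming a bad sequence $s_k\to\infty$, it uses the uniform bounds of Lemma~\ref{21-lem} to extract a limit $w_\infty$ in $C_{\loc}\cap C([0,\tau'];L^2_\rho)$, observes via Lemma~\ref{44-lem} that $\nabla' w_\infty(\cdot,0)=0$ so that $w_\infty(\cdot,0)$ depends only on $y_n$, and then applies the one-dimensional comparison $w_\infty\le\hat w$ to reach a contradiction. You instead bound $(w_s(\cdot,0)-\hat w(\cdot,0))_+$ in $L^2_\rho$ for each $s$ and propagate this by a linear Robin comparison. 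Both routes ultimately hinge on the same one-sided inequality $w_s(\cdot,0)\lesssim w_*(y_n)$ on compacts, but the paper's compactness step sidesteps the ``bad region'' $|y'+\theta\sqrt{s}\vec e|<\theta\sqrt{s}$ entirely, because the limit is automatically $y'$-independent.

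Two remarks on your execution. First, your treatment of the bad region is heavier than needed: rather than invoking Lemma~\ref{48-lem} with parameter $\theta/2$ and then re-tuning $\delta_0,w_*$, you can simply use Lemma~\ref{44-lem}, which gives $|\nabla' v|\le c_R s^{-1/2}$ on $|y'|\le R\sqrt{s}$, $y_n\le R$; integrating over a path of length $\le R$ in $y'$ yields $|w_s(y,0)-w_s(0,y_n,0)|\le Rc_Rs^{-1/2}\to0$ uniformly on $|y|\le R$, so \eqref{Kenchan1-eq}--\eqref{Kenchan3-eq} already give $(w_s(y,0)-w_*(y_n))_+\to0$ there with no strengthening required. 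Second, for the bound $\|z_s(\tau)\|_\rho\le C(\tau)\eta(s)$ the heat-kernel machinery of Section~\ref{Linearbackward-sec} is overkill: a direct energy estimate (multiply the linear Robin problem by $z_s\rho$, use Lemma~\ref{31-lem} to absorb the trace term) gives $\partial_\tau\|z_s\|_\rho^2\le C\|z_s\|_\rho^2$, hence $\|z_s(\tau)\|_\rho\le e^{C\tau}\eta(s)$, which is all you need on the fixed interval $[0,\tau_\epsilon]$.
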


\begin{proof}
Suppose that
there exist $\epsilon_0>0$ and a sequence $\{s_k\}_{k\in\N}$
($s_k\to\infty$)
such that
\begin{equation}
\label{contradiction-eq}
 \|w_{s_k}(\tau_{\epsilon_0})\|_{\rho}< 2\epsilon_0.
\end{equation}
Since
$w_s(y,\tau)$ is defined by
$w_s(y,\tau)=\varphi(y+{\theta}e^{\tau/2}s^{1/2}\vec{e},\tau+s)$,
we apply Lemma \ref{21-lem} to obtain
\begin{equation}\label{Bacon-eq}
 \sup_{s\in(s_T,\infty)}\sup_{(y,\tau)\in\R_+^n\times(0,\infty)}
 \left(
 \sum_{|\alpha|=0}^2|D_y^{\alpha}w_s(y,\tau)|
 \right)
< \infty.
\end{equation}
Furthermore
since $w_s(y,\tau)$ satisfies
\[
 \pa_{\tau}w_s =
 \Delta_y w_s - \frac{y}{2}\cdot\nabla_yw_s - mw_s
\hh \text{in }\R_+^n\times(0,\infty),
\]
it follows from \eqref{Bacon-eq} that
\[
 \sup_{s\in(s_T,\infty)}\sup_{(y,\tau)\in\R_+^n\times(0,\infty)}
 \left( (1+|y|^2)^{-1/2}|\pa_{\tau}w_s(y,\tau)| \right)
< \infty.
\]
Hence
there exist a limiting function
$w_{\infty}(y,\tau)\in
 C^{2,1}(\overline{\R_+^n}\times[0,\infty))\cap L^\infty(\R_+^n\times(0,\infty))$
and a subsequence
$\{w_{s_k}(y,\tau)\}_{k\in\N}$ which is denoted by the same symbol such that
\[
 w_{s_k}(y,\tau) \to w_{\infty}(y,\tau)
\]
in
$C_{\loc}(\overline{\R_+^n}\times[0,\infty))\cap
 C([0,\tau');L_{\rho}^2(\R_+^n))$
for any $\tau'>0$.
Then
from \eqref{Kenchan1-eq}, \eqref{Kenchan2-eq} and \eqref{Kenchan3-eq},
we see that
\[
 0\leq w_{\infty}(y_n,0)|_{y'=0} \leq w_*(y_n).
\]
Moreover
from Lemma \ref{44-lem},
it follows that 
$\nabla'w_{\infty}(y,0) = 0$.
Hence
we obtain
\[
 0\leq w_{\infty}(y,0) \leq w_*(y_n).
\]
As a consequence,
since $w_\infty(y,\tau)$ satisfies \eqref{ppooii-eq},
a comparison argument shows that for $\tau\geq0$ 
\[
 0\leq w_{\infty}(y,\tau) \leq \hat{w}(y,\tau).
\]
Hence
by definition of $\tau_{\epsilon_0}>0$,
it follows that $\|w_{\infty}(\tau_{\epsilon_0})\|_{\rho}\leq\epsilon_0$.
Since $w_{s_k}\to w_{\infty}$
in $C([0,\tau');L_{\rho}^2(\R_+^n))$ for any $\tau'>0$,
it follows that
$\|w_{s_k}(\tau_{\epsilon_0})\|_{\rho}<2\epsilon_0$
for large $k\in\N$.
However
this contradicts \eqref{contradiction-eq},
which completes the proof.
\end{proof}

We prepare a local $L^{\infty}$-estimate which is directly derived
from a standard linear parabolic theory.

\begin{lem}\label{53-lem}
For any $R>0$ there exists $c_R>0$ such that
\[
 \sup_{|y|<R}w_s(y,\tau)
\leq
 c_R\sup_{\tau'\in(\tau-4R^2,\tau)}\|w_s(\tau')\|_{\rho}
\hspace{5mm}\mathrm{for}\ \tau>4R^2.
\]
\end{lem}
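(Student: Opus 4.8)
The plan is to regard the equation \eqref{ppooii-eq} for $w_s$, restricted to a fixed bounded space-time cylinder, as a uniformly parabolic linear problem with bounded coefficients, and then to apply the classical parabolic local boundedness estimate up to the boundary. First I would fix $R>0$ and $\tau>4R^2$ and work on the cylinder $Q=(\{|y|<2R\}\cap\R_+^n)\times(\tau-4R^2,\tau)$, which lies inside $\R_+^n\times(0,\infty)$ exactly because $\tau>4R^2$. On $Q$ the transport coefficient $-y/2$ is bounded by $R$, the zeroth-order term is the constant $-m$, and, since $w_s\ge0$ is bounded by the $s$-independent constant $M_0$ of \eqref{M_0-eq}, the nonlinear Neumann condition $\pa_{\nu}w_s=w_s^q$ can be written as a Robin condition $\pa_{\nu}w_s=c(y',\tau)w_s$ with $0\le c=w_s^{q-1}\le M_0^{q-1}$.

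The second step is to invoke the local boundedness estimate for nonnegative solutions of such mixed initial-boundary value problems, in the form available in \cite{Ladyzenskaja-S-U}: there is a constant $c=c(n,q,R,M_0)>0$, independent of $s$ and $\tau$, such that
\[
 \sup_{\{|y|<R\}\cap\R_+^n}w_s(y,\tau)
 \le c\left( \int_{\tau-4R^2}^{\tau}\int_{\{|y|<2R\}\cap\R_+^n}w_s(y,\tau')^2\,dy\,d\tau' \right)^{1/2}.
\]
The proportions of the cylinder (spatial radius $2R$, time length $4R^2$) are the standard parabolic ones, and this is precisely what fixes the factor $4$ in the statement of the lemma.

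Finally I would convert the right-hand side into the weighted norm $\|\cdot\|_{\rho}$ by the elementary pointwise bound $\rho(y)=e^{-|y|^2/4}\ge e^{-R^2}$ on $\{|y|<2R\}$, which gives $\int_{\{|y|<2R\}\cap\R_+^n}w_s(y,\tau')^2\,dy\le e^{R^2}\|w_s(\tau')\|_{\rho}^2$; integrating in $\tau'$ over an interval of length $4R^2$ and then taking the supremum yields the claim with $c_R:=2Re^{R^2/2}c$. The only point that requires any care — and it is entirely classical — is the boundary version of the local maximum estimate: one must observe that its constant depends on the Robin coefficient only through its $L^\infty$-bound $M_0^{q-1}$, so that the resulting $c_R$ is uniform in the parameter $s$. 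Everything else is a routine rescaling.
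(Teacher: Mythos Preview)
Your argument is correct and is essentially identical to the paper's own proof: both invoke the classical parabolic local $L^\infty$-estimate (the paper cites Theorem~6.17 in \cite{Lieberman} rather than \cite{Ladyzenskaja-S-U}) after using the uniform bound $M_0$ of \eqref{M_0-eq} to linearize the boundary condition, and then pass from the unweighted $L^2$-norm on $\{|y|<2R\}$ to $\|\cdot\|_\rho$ via $\rho(y)\ge e^{-R^2}$. Your explicit rewriting of $\pa_\nu w_s=w_s^q$ as a Robin condition with coefficient bounded by $M_0^{q-1}$ is a nice clarification of why the constant is uniform in $s$.
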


\begin{proof}
From \eqref{M_0-eq},
applying a local $L^{\infty}$-estimate for a linear parabolic equation to \eqref{ppooii-eq}
(see Theorem 6.17 in \cite{Lieberman}),
we obtain
\begin{eqnarray*}
 \left(
 \sup_{|y|<R,\tau-R^2<\tau'<\tau}w_s(y,\tau')
 \right)^2
\hspace{-2mm}&\leq&\hspace{-2mm}
 c_R^2\int_{\tau-4R^2}^{\tau}d\tau'\int_{|y|<2R}w_s(y,\tau')^2dy
\\
\hspace{-2mm}&\leq&\hspace{-2mm}
 c_R^2e^{R^2}\int_{\tau-4R^2}^{\tau}d\tau'\int_{|y|<2R}
 w_s(y,\tau)^2e^{-|y|^2/4}dy
\\
\hspace{-2mm}&\leq&\hspace{-2mm}
 c_R^2R^2e^{R^2}
 \left(
 \sup_{\tau'\in(\tau-4R^2,\tau)}\|w_s(\tau')\|_{\rho}
 \right)^2,
\end{eqnarray*}
which completes the proof.
\end{proof}

\begin{lem}\label{54-lem}
For any $\delta_0>0$
there exists $\delta_1\in(0,\delta_0)$ such that
if $\|w_s(\tau_0)\|_{\rho}\leq\delta_1$ holds for some $\tau_0>0$ and $s>s_T$,
then it holds that
\[
 \|w_s(\tau)\|_{\rho} \leq \delta_0
\hspace{5mm}\mathrm{for}\ \tau\geq\tau_0.
\]
\end{lem}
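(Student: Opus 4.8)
\medskip

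The plan is to run an energy argument for $\|w_s(\tau)\|_\rho^2$ and close it with a continuity (maximal-time) argument. Multiplying \eqref{ppooii-eq} by $w_s\rho$, integrating over $\R_+^n$, and using $\nabla\cdot(\rho\nabla w_s)=\rho(\Delta w_s-\tfrac{y}{2}\cdot\nabla w_s)$ together with $\pa_\nu w_s=w_s^q$, one obtains
\[
 \tfrac12\,\pa_\tau\|w_s(\tau)\|_\rho^2
 = -\|\nabla w_s(\tau)\|_\rho^2-m\|w_s(\tau)\|_\rho^2+\int_{\pa\R_+^n}w_s(y',\tau)^{q+1}\rho\,dy'
\]
(note $w_s\ge 0$). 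Everything reduces to estimating the boundary integral, which is borderline with respect to the two dissipative terms.

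The gain comes from combining two observations. First, $\varphi_0(y_n)$ solves $-(\Delta-\tfrac{y}{2}\cdot\nabla-m)\varphi_0=0$ with $\pa_\nu\varphi_0=B^{q-1}\varphi_0$ on $\pa\R_+^n$ and is positive, hence is the principal eigenfunction, with eigenvalue $0$, of this Robin problem; consequently
\[
 B^{q-1}\int_{\pa\R_+^n}v^2\rho\,dy'\ \le\ \|\nabla v\|_\rho^2+m\|v\|_\rho^2\hh\text{for all }v\in H_\rho^1(\R_+^n).
\]
Second, since $w_s(y,\tau)=\varphi(y+\theta\sqrt{s}\,e^{\tau/2}\vec{e},\tau+s)$, for $\tau$ (or $s$) large the translation is so strong that, for $|y'|\le R_\tau:=\tfrac{\theta}{2}\sqrt{s}\,e^{\tau/2}$, the point $y'+\theta\sqrt{s}\,e^{\tau/2}\vec{e}$ lies at distance $\ge R_\tau\ge\theta\sqrt{\tau+s}$ from the origin; by Lemma~\ref{48-lem} and the monotonicity \eqref{Naname-eq} this gives $l_1\in(0,1)$ with $w_s(y',\tau)|_{\pa\R_+^n}\le(1-l_1)B$ for $|y'|<R_\tau$, whereas on $\{|y'|\ge R_\tau\}$ we merely keep $0\le w_s\le M_0$ from \eqref{M_0-eq}. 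Splitting $\int_{\pa\R_+^n}w_s^{q+1}\rho$ at $|y'|=R_\tau$, bounding $\int_{|y'|<R_\tau}w_s^{q+1}\rho\le[(1-l_1)B]^{q-1}\int_{\pa\R_+^n}w_s^2\rho$ and invoking the spectral inequality, and absorbing the tail into the super-exponentially small $\varepsilon(\tau):=M_0^{q+1}\int_{|y'|\ge R_\tau}e^{-|y'|^2/4}\,dy'$, one reaches
\[
 \tfrac12\,\pa_\tau\|w_s(\tau)\|_\rho^2\ \le\ -\gamma\bigl(\|\nabla w_s\|_\rho^2+m\|w_s\|_\rho^2\bigr)+\varepsilon(\tau)\ \le\ -\gamma m\,\|w_s(\tau)\|_\rho^2+\varepsilon(\tau),
\]
with $\gamma:=1-(1-l_1)^{q-1}\in(0,1)$ and $\varepsilon(\tau)\to 0$ (uniformly once $s$ is bounded below), valid for $\tau\ge\tau_{**}$ with $\tau_{**}$ a constant.

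To finish, I would also record a crude bound for bounded times: by Lemma~\ref{31-lem} with $g\equiv1$, $\int_{\pa\R_+^n}w_s^{q+1}\rho\le c\int_\Omega(w_s^{q+1}+w_s^q|\nabla w_s|)\rho\,dy$, and using $w_s\le M_0$ together with Young's inequality to absorb the gradient into $-\|\nabla w_s\|_\rho^2$ yields $\pa_\tau\|w_s(\tau)\|_\rho^2\le 2C\|w_s(\tau)\|_\rho^2$ with $C$ independent of $s$; hence over $[\tau_0,\tau_{**}]$ the norm grows by at most the fixed factor $e^{C\tau_{**}}$. Given $\delta_0>0$ (which may be taken small), one then picks $s$ large enough that $\varepsilon(\tau)\le\tfrac{\gamma m}{4}\delta_0^2$ for all $\tau$, sets $\delta_1:=\tfrac{\delta_0}{2}e^{-C\tau_{**}}<\delta_0$, and argues by contradiction: if $\|w_s(\tau_0)\|_\rho\le\delta_1$ then the crude bound keeps $\|w_s(\tau)\|_\rho\le\tfrac{\delta_0}{2}$ on $[\tau_0,\max\{\tau_0,\tau_{**}\}]$, and were $\tau_2>\max\{\tau_0,\tau_{**}\}$ the first time $\|w_s(\tau_2)\|_\rho=\delta_0$, the good inequality would give $\pa_\tau\|w_s\|_\rho^2|_{\tau_2}\le-2\gamma m\delta_0^2+\tfrac{\gamma m}{2}\delta_0^2<0$, contradicting $\|w_s(\tau)\|_\rho<\delta_0$ for $\tau<\tau_2$.

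The main obstacle is the boundary term $\int_{\pa\R_+^n}w_s^{q+1}\rho$: because the linearization of \eqref{ppooii-eq} at $0$ is only borderline dissipative through the Robin boundary, neither $w_s\le M_0$ nor even the sharper $w_s|_{\pa\R_+^n}\le B+o(1)$ of Lemma~\ref{42-lem} is enough to beat the dissipation; the strict room is produced by Lemma~\ref{48-lem}, since the bump of $w_s$ has been pushed out past scale $\sqrt{\tau+s}$ and so $w_s$ is strictly below $B$ on the part of $\pa\R_+^n$ where the weight $\rho$ lives. Arranging the bounded-time part uniformly in $s$, so that a single $\delta_1$ works for every $s>s_T$ rather than only for $s$ large, is the remaining point requiring care.
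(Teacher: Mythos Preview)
Your approach differs substantially from the paper's, and as written it has a real gap, though the underlying idea is sound.

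The paper does not use Lemma~\ref{48-lem}, the monotonicity \eqref{Naname-eq}, or your spectral inequality. From the same energy identity it bounds the boundary term by
\[
\int_{\pa\R_+^n}w_s^{q+1}\rho\,dy'\le cK_s(\tau)\bigl(\|w_s\|_\rho^2+\|w_s\|_\rho\|\nabla w_s\|_\rho\bigr),
\qquad
K_s(\tau)^2=\sup_{y'\in\pa\R_+^n}(1+|y'|^2)^{-1/2}w_s(y',\tau)^{2(q-1)},
\]
and then makes $K_s(\tau)$ small \emph{using the hypothesis itself}: on the interval where $\|w_s(\tau')\|_\rho\le\delta_2$, the local parabolic $L^\infty$ estimate of Lemma~\ref{53-lem} gives $\sup_{|y|<R_1}w_s\le c_{R_1}\delta_2$, while on $|y'|>R_1$ the weight $(1+|y'|^2)^{-1/2}$ and the uniform bound $M_0$ suffice. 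Choosing $R_1$ large and then $\delta_2$ small makes $K_s\le\epsilon_0$, so $\pa_\tau\|w_s\|_\rho^2<0$ and the first exit time cannot exist. The whole argument depends only on $M_0$, hence is automatically uniform in $s>s_T$.

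Your route instead imports the pointwise bound $w_s\le(1-l_1)B$ from Lemma~\ref{48-lem} and combines it with the Robin spectral inequality (which is correct: $\varphi_0$ is the positive ground state with eigenvalue $0$). This is legitimate, but the step ``picks $s$ large enough that $\varepsilon(\tau)\le\tfrac{\gamma m}{4}\delta_0^2$ for all $\tau$'' is not: $s$ is given and $\delta_1$ must work for every $s$. The repair is to let the threshold $\tau_{**}$ depend on $\delta_0$ (so that $\varepsilon(\tau)\le\tfrac{\gamma m}{4}\delta_0^2$ for $\tau\ge\tau_{**}(\delta_0)$, uniformly once $s$ is bounded below by a fixed positive constant) and set $\delta_1=\tfrac{\delta_0}{2}e^{-C\tau_{**}(\delta_0)}$; your final paragraph flags this but does not carry it out. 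The paper's bootstrap sidesteps the issue entirely because its smallness input is the hypothesis $\|w_s\|_\rho\le\delta_1$ rather than an external structural bound, which is what makes the constants uniform in $s$ without further work.
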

%
%
\begin{proof}
Multiplying \eqref{ppooii-eq} by $w_s\rho$ and integrating over $\R_+^n$,
we get
\[
 \frac{1}{2}\pa_{\tau}\|w_s\|_{\rho}^2
=
 -\|\nabla w_s\|_{\rho}^2 -m\|w_s\|_{\rho}^2
+
 \|w_s\|_{L_{\rho}^{q+1}(\pa\R_+^n)}^{q+1}.
\]
From Lemma \ref{31-lem} and Lemma \ref{Compact-lem},
we verify that
\begin{eqnarray*}
 \|w_s\|_{L_{\rho}^{q+1}(\pa\R_+^n)}^{q+1}
\hspace{-2mm}&\leq&\hspace{-2mm}
 \left( \int_{\pa\R_+^n}(1+|y'|^2)^{1/2}w_s^2\rho\h dy' \right)^{1/2}
 \left( \int_{\pa\R_+^n}(1+|y'|^2)^{-1/2}w_s^{2q}\rho\h dy' \right)^{1/2}
\\[1mm]
\hspace{-2mm}&\leq&\hspace{-2mm}
 cK_s(\tau)\left(
 \|w_s\|_{\rho}^2+\|w_s\|_{\rho}\|\nabla w_s\|_{\rho}
 \right),
\end{eqnarray*}
where $K_s(\tau)$ is given by
\vspace{-2mm}
\[
 K_s(\tau)
=
 \left(
 \sup_{y'\in\pa\R_+^n}
 (1+|y'|^2)^{-1/2}w_s(y',\tau)^{2(q-1)}
 \right)^{1/2}.
\]
Hence
it holds that
\begin{equation}\label{Check-eq}
 \frac{1}{2}\pa_{\tau}\|w_s\|_{\rho}^2
\leq
 -\|\nabla w_s\|_{\rho}^2 -m\|w_s\|_{\rho}^2
+
 cK_s(\tau)
 \left(
 \|w_s\|_{\rho}^2+\|w_s\|_{\rho}\|\nabla w_s\|_{\rho}
 \right).
\end{equation}
We note from \eqref{M_0-eq} that
$K_s(\tau)$ is uniformly bounded on $s\in(s_T,\infty)$ and $\tau\in(0,\infty)$.
Therefore
there exists $\alpha_0>0$ independent of $s>s_T$ such that
\[
 \pa_{\tau}\|w_s\|_{\rho}^2 \leq \alpha_0^2\|w_s\|_{\rho}^2,
\]
which implies
\begin{equation}\label{alpha_0-eq}
 \|w_s(\tau)\|_{\rho} \leq e^{\alpha_0(\tau-\tau_0)}\|w_s(\tau_0)\|_{\rho}
\hspace{5mm}\text{for }\tau>\tau_0.
\end{equation}
Let $\delta_0>0$ be a constant given in this lemma and $\epsilon_0>0$ be a small constant.
Then
we can fix $0<\delta_1<\delta_2<\delta_0$ and $R_1>0$ such that
\begin{equation}\label{Choice-eq}
 (1+R_1^2)^{-1/2}M_0^{2(q-1)}
+
 (c_{R_1}\delta_2)^{2(q-1)} < \epsilon_0^2,
\hspace{10mm}
 \delta_1e^{4\alpha_0R_1^2}< \delta_2,
\end{equation}
where $c_{R_1}>0$ is given in Lemma \ref{53-lem}
and $M_0>0$ is given in \eqref{M_0-eq}.
Here
we assume that
$\|w_s(\tau_0)\|_{\rho}\leq\delta_1$ for some $\tau_0>0$ and $s>s_T$.
Then
we will see that $\|w_s(\tau)\|_\rho<\delta_2$ for $\tau>\tau_0$.
In fact,
we first define
\[
 \bar{\tau} = \inf\{\tau>\tau_0;\|w_s(\tau)\|_{\rho}=\delta_2\}.
\]
To derive a contradiction,
we suppose $\bar{\tau}<\infty$.
Then
it follows from \eqref{alpha_0-eq} and \eqref{Choice-eq} that
\[
 \bar{\tau} > \tau_0 + 4R_1^2.
\]
Furthermore
we apply Lemma \ref{53-lem} with \eqref{Choice-eq} to obtain
\[
\begin{array}{lll}
\dis
 \sup_{|y|<R_1}w_s(y,\tau)
\hspace{-2mm}&\leq&\hspace{-2mm} \dis
 c_{R_1}\sup_{\tau'\in(\tau-4R_1^2,\tau)}\|w_s(\tau')\|_{\rho}
\leq
 c_{R_1}\sup_{\tau'\in(\tau_0,\bar{\tau})}\|w_s(\tau')\|_{\rho}
\\[5mm]
\hspace{-2mm}&\leq&\hspace{-2mm} \dis
 c_{R_1}\delta_2
\hspace{5mm}\text{for }\tau\in(\tau_0+4R_1^2,\bar{\tau}).
\end{array}
\]
Hence
by using this estimate and \eqref{Choice-eq},
we get
\[
\begin{array}{lll}
\dis
 K_s(\tau)^2
\hspace{-2mm}&=&\hspace{-2mm} \dis
 \sup_{y'\in\pa\R_+^n}(1+|y'|^2)^{-1/2}w_s(y',\tau)^{2(q-1)}
\\[4mm]
\hspace{-2mm}&\leq&\hspace{-2mm} \dis
 \max\left\{ (c_{R_1}\delta_2)^{2(q-1)},(1+R_1^2)^{-1/2}M_0^{2(q-1)} \right\}
\\[4mm]
\hspace{-2mm}&\leq&\hspace{-2mm} \dis
 \epsilon_0^2
\hspace{5mm}\text{for }\tau\in(\tau_0+4R_1^2,\bar{\tau}).
\end{array}
\]
Therefore
substituting this estimate into \eqref{Check-eq}
and
taking $\epsilon_0>0$ small enough,
we obtain
\[
 \pa_{\tau}\|w_s(\tau)\|_{\rho}^2 < 0
\hspace{5mm}\text{for }\tau\in(\tau_0+4R^2,\bar{\tau}),
\]
which implies
$\|w_s(\bar{\tau})\|_\rho < \|w_s(\tau_0+4R_1^2)\|_{\rho}$.
Furthermore
by \eqref{alpha_0-eq}, \eqref{Choice-eq} and $\|w_s(\tau_0)\|_\rho\leq\delta_1$,
we see that
\[
 \|w_s(\tau_0+4R_1^2)\|_{\rho} \leq e^{4\alpha_0^2R_1^2}\|w_s(\tau_0)\| < \delta_2.
\]
Therefore we obtain
$\|w_s(\bar{\tau})\|_\rho<\delta_2$.
However
this contradicts definition of $\bar{\tau}$,
which assures $\bar{\tau}=\infty$.
Thus the proof is completed.
\end{proof}

Next
we provide uniform decay estimates.

\begin{lem}\label{55-lem}
For any $\nu>0$
there exist
$c,s_1^*,\tau_1>0$ depending only on $\nu>0$
such that for $s\geq s_1^*$
\[
 \|w_s(\tau)\|_{\rho} \leq ce^{-(m-\nu)(\tau-\tau_1)}
\hspace{5mm}\mathrm{for}\ \tau\geq\tau_1.
\]
\end{lem}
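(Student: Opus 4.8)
The plan is to run an energy estimate for $w_s$ in the weighted space $L_\rho^2(\R_+^n)$, exploiting the fact that once $\|w_s(\tau)\|_\rho$ is small the superlinear boundary term $w_s^q$ is negligible, so that $w_s$ essentially obeys the \emph{linear} equation with Neumann condition, whose associated operator has bottom of spectrum $-m$. This forces $L_\rho^2$-decay at a rate arbitrarily close to $e^{-m\tau}$, and the point of the lemma is that this happens uniformly in the parameter $s$. We may assume $\nu<m$, the statement being trivial otherwise.

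Fix $\nu>0$. The first step is to enter the small regime uniformly in $s$. For a threshold $\delta_0>0$ (pinned down only at the end) let $\delta_1=\delta_1(\delta_0)$ be as in Lemma \ref{54-lem}, set $\epsilon:=\delta_1/2$, $\tau_*:=\tau_\epsilon$ and $s_1^*:=s_\epsilon$ from Lemma \ref{52-lem}. For $s\geq s_1^*$ we then have $\|w_s(\tau_*)\|_\rho\leq 2\epsilon=\delta_1$, so Lemma \ref{54-lem} gives $\|w_s(\tau)\|_\rho\leq\delta_0$ for all $\tau\geq\tau_*$ and all $s\geq s_1^*$. Crucially, $\hat w$ and the threshold time $\tau_\epsilon$ do not depend on $s$, so every quantity introduced here depends on $\nu$ alone.

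Next I would convert this $L_\rho^2$-smallness into smallness of the coefficient $K_s(\tau)$ of the nonlinear term in the energy inequality \eqref{Check-eq}. Splitting $\pa\R_+^n$ at radius $R$, on $\{|y'|\geq R\}$ the bound \eqref{M_0-eq} gives $(1+|y'|^2)^{-1/2}w_s(y',\tau)^{2(q-1)}\leq(1+R^2)^{-1/2}M_0^{2(q-1)}$, which is $\leq\eta^2$ once $R$ is large; on $\{|y'|<R\}$, for $\tau\geq\tau_*+4R^2$ we have $\|w_s(\tau')\|_\rho\leq\delta_0$ throughout $(\tau-4R^2,\tau)$, so the local estimate of Lemma \ref{53-lem} yields $\sup_{|y|<R}w_s(y,\tau)\leq c_R\delta_0$, whence this part is $\leq(c_R\delta_0)^{2(q-1)}\leq\eta^2$ once $\delta_0$ is small ($R$, hence $c_R$, being already fixed). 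Thus $K_s(\tau)\leq\eta$ for $\tau\geq\tau_1:=\tau_*+4R^2$ and $s\geq s_1^*$.

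Finally, inserting $K_s(\tau)\leq\eta$ into \eqref{Check-eq} and using Young's inequality $cK_s(\tau)\|w_s\|_\rho\|\nabla w_s\|_\rho\leq\|\nabla w_s\|_\rho^2+\frac14 c^2\eta^2\|w_s\|_\rho^2$ (with $c$ the constant of \eqref{Check-eq}) to cancel the gradient terms, I obtain
\[
\frac12\pa_\tau\|w_s(\tau)\|_\rho^2\leq-\Bigl(m-c\eta-\frac{c^2\eta^2}{4}\Bigr)\|w_s(\tau)\|_\rho^2\qquad\text{for }\tau\geq\tau_1,\ s\geq s_1^*.
\]
Choosing $\eta$ so small that $c\eta+c^2\eta^2/4\leq\nu$ — which fixes $R$, then $\delta_0$, then $\delta_1,\epsilon,\tau_1,s_1^*$, all as functions of $\nu$ only — Gronwall's lemma gives $\|w_s(\tau)\|_\rho\leq\|w_s(\tau_1)\|_\rho e^{-(m-\nu)(\tau-\tau_1)}\leq\delta_0 e^{-(m-\nu)(\tau-\tau_1)}$, which is the asserted estimate (with constant $\delta_0$). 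The main obstacle is the third step: obtaining the $s$-uniform smallness of $K_s(\tau)$ forces one to coordinate the global sup bound $M_0$ (to handle large $|y'|$), the parabolic smoothing of Lemma \ref{53-lem} (to pass from $L_\rho^2$ to pointwise control on $|y'|<R$), and the propagation Lemma \ref{54-lem} (to keep $\|w_s\|_\rho\leq\delta_0$ on the whole averaging window $(\tau-4R^2,\tau)$), all while verifying that the waiting time $4R^2$ and every constant remain independent of $s$ — which ultimately rests on $\hat w$ and $\tau_\epsilon$ being $s$-independent.
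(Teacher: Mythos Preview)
Your proof is correct and follows essentially the same approach as the paper's. The paper's argument is more terse---it simply writes ``From Lemma \ref{52-lem}--Lemma \ref{54-lem}, for any $\nu\in(0,1)$ there exist $s_1^*,\tau_1>0$ such that $\sup_{s>s_1^*,\ \tau>\tau_1}K_s(\tau)\leq\nu$''---but the mechanism behind that line is exactly the splitting at radius $R$, the parabolic smoothing of Lemma \ref{53-lem}, and the propagation of smallness from Lemma \ref{54-lem} that you have spelled out; then both proofs substitute into \eqref{Check-eq}, absorb the cross term by Young's inequality, and integrate.
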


\begin{proof}
From
Lemma \ref{52-lem}\hspace{0.2mm}--\hspace{0.2mm}Lemma \ref{54-lem},
for any $\nu\in(0,1)$
there exists $s_1^*,\tau_1>0$ depending only on $\nu>0$
such that
\[
 \sup_{s>s_1^*,\ \tau>\tau_1}K_s(\tau)\leq \nu.
\]
Hence
substituting this estimate into \eqref{Check-eq},
we get for $s>s_1^*$
\[
 \pa_{\tau}\|w_s(\tau)\|_{\rho}^2
\leq
 -2(m-c\nu)\|w_s(\tau)\|_{\rho}^2
\hspace{5mm}\text{for }\tau\geq\tau_1.
\]
As a consequence,
it holds that for $s\geq s_1^*$
\[
  \|w_s(\tau)\|_{\rho}^2
\leq
 e^{-2(m-c\nu)(\tau-\tau_1)}\|w_s(\tau_1)\|_{\rho}^2
\hspace{5mm}\text{for }\tau\geq\tau_1.
\]
Here
we note from \eqref{M_0-eq} that
$\|w_s(\tau)\|_\rho$ is uniformly bounded on $s>s_T$ and $\tau>0$.
Thus we completes the proof.
\end{proof}

Let ${\sf E}_0$ be a positive constant such that $\|{\sf E}_0\|_{\rho}=1$.
Here
we decompose $w_s(y,\tau)$ by
\[
 w_s(\tau) = w_{s0}(\tau) + w_{s-}(\tau),
\]
where $w_{s0}(\tau)=(w_s(\tau),{\sf E}_0)_{\rho}{\sf E}_0$.

\begin{lem}\label{56-lem}
There exists $c,\nu_0,s_2^*,\tau_2>0$
such that for $s\geq s_2^*$
\[
 \|w_{s-}(\tau)\|_{\rho} \leq ce^{-(1+\nu_0)m\tau},
\hspace{7.5mm}
 \int_{\tau_1}^{\infty}
 e^{2(1+\nu_0)m\tau}\|\nabla w_{s-}(\tau)\|_{\rho}^2d\tau \leq c
\hspace{5mm}\mathrm{for}\ \tau\geq\tau_2.
\]
\end{lem}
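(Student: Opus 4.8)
\smallskip

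The plan is to obtain a differential inequality for $\|w_{s-}(\tau)\|_\rho^2$ whose leading coefficient is $-(1+\nu_0)m$ rather than $-m$; the improvement over Lemma \ref{55-lem} will come from the spectral gap enjoyed by functions orthogonal to the constants. I would first record the energy identity. Since $w_{s-}(\tau)=w_s(\tau)-(w_s(\tau),{\sf E}_0)_\rho{\sf E}_0$ is orthogonal to the constant functions in $L_\rho^2(\R_+^n)$ and ${\sf E}_0$ is itself constant, multiplying \eqref{ppooii-eq} by $w_{s-}\rho$, integrating over $\R_+^n$ and integrating by parts — the projection term drops out by orthogonality and every $\nabla{\sf E}_0$ contribution vanishes — one gets
\[
 \frac12\pa_\tau\|w_{s-}\|_\rho^2
 = -\|\nabla w_{s-}\|_\rho^2 - m\|w_{s-}\|_\rho^2
 + \int_{\pa\R_+^n}w_s^q w_{s-}\rho\h dy'.
\]

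Next I would use the spectral gap. The quadratic form $v\mapsto\|\nabla v\|_\rho^2$ on $L_\rho^2(\R_+^n)$ with the natural (Neumann) boundary condition is the Ornstein--Uhlenbeck form, whose eigenvalues are $\sum_{i=1}^{n-1}k_i/2+l$ with $k_i,l\in\N_0$; its first nonzero eigenvalue is at least $1/2$, so $\|\nabla v\|_\rho^2\ge\frac12\|v\|_\rho^2$ for every $v\in H_\rho^1(\R_+^n)$ with $(v,1)_\rho=0$. Applying this to $w_{s-}$ and reserving a small fraction of the Dirichlet energy, there are $\beta>0$ small and $\nu_0\in(0,q-1)$ (for instance $\nu_0=(q-1)/4$) with
\[
 \|\nabla w_{s-}\|_\rho^2+m\|w_{s-}\|_\rho^2
 \ge 2\beta\|\nabla w_{s-}\|_\rho^2+(1+2\nu_0)m\|w_{s-}\|_\rho^2 .
\]

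The heart of the matter is the boundary integral. On $\pa\R_+^n$ one has $w_s=a_0(\tau){\sf E}_0+w_{s-}$ with $|a_0(\tau)|=|(w_s(\tau),{\sf E}_0)_\rho|\le\|w_s(\tau)\|_\rho$, hence by Lemma \ref{55-lem} $|a_0(\tau)|\le ce^{-(m-\nu)(\tau-\tau_1)}$. Writing $\int_{\pa\R_+^n}w_s^q w_{s-}\rho\h dy'=\int_{\pa\R_+^n}w_s^{q+1}\rho\h dy'-a_0 e_0\int_{\pa\R_+^n}w_s^q\rho\h dy'$, where $e_0$ is the (constant) value of ${\sf E}_0$, and using $(A+B)^q\le c(A^q+B^q)$ for $A,B\ge0$, each piece splits into a pure power of $a_0$ — bounded by $C|a_0|^{q+1}\le Ce^{-(q+1)(m-\nu)\tau}$, the exponent $q+1$ being $>2$ — and a remainder controlled by $\eta(\tau)\|w_{s-}\|_{H_\rho^1(\R_+^n)}^2$ with $\eta(\tau)\to0$. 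The latter bound would come from the trace inequality of Lemma \ref{31-lem} together with Lemma \ref{Compact-lem} (to convert $\int_{\pa\R_+^n}(1+|y'|^2)^{\pm1/2}(\cdot)\rho\h dy'$ into $H_\rho^1$ norms), the uniform bound $M_0$ of \eqref{M_0-eq}, and the local $L^\infty$ decay $\sup_{|y|\le R}w_s(\cdot,\tau)\to0$ provided by Lemma \ref{53-lem} and Lemma \ref{55-lem}: on $\{|y'|\ge R\}$ the factor $(1+|y'|^2)^{-1/2}$ supplies the small constant after fixing $R$ large, and on $\{|y'|<R\}$ the local $L^\infty$ smallness does. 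Hence, for $s$ and $\tau$ large,
\[
 \left|\int_{\pa\R_+^n}w_s^q w_{s-}\rho\h dy'\right|
 \le \beta\|\nabla w_{s-}\|_\rho^2+\nu_0 m\|w_{s-}\|_\rho^2+Ce^{-(q+1)(m-\nu)\tau}.
\]

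Finally I would close the argument. With $\nu_0=(q-1)/4$ fixed, choose $\nu>0$ so small that $(q+1)(m-\nu)>2(1+\nu_0)m$ (possible since $q+1>2+(q-1)/2=2(1+\nu_0)$), and invoke Lemma \ref{55-lem} with this $\nu$. Combining the three displays gives, for $s\ge s_2^*$ and $\tau\ge\tau_2$,
\[
 \frac12\pa_\tau\|w_{s-}\|_\rho^2
 \le -\beta\|\nabla w_{s-}\|_\rho^2-(1+\nu_0)m\|w_{s-}\|_\rho^2+Ce^{-(q+1)(m-\nu)\tau}.
\]
Multiplying by $e^{2(1+\nu_0)m\tau}$ and integrating from $\tau_2$ to $\tau$ — the source term is integrable against this weight because $(q+1)(m-\nu)>2(1+\nu_0)m$, and $\|w_{s-}(\tau_2)\|_\rho$ is bounded by \eqref{M_0-eq} — yields simultaneously the pointwise estimate $\|w_{s-}(\tau)\|_\rho\le ce^{-(1+\nu_0)m\tau}$ and, since $e^{2(1+\nu_0)m\tau}\|w_{s-}\|_\rho^2\ge0$, the bound $\int_{\tau_2}^\infty e^{2(1+\nu_0)m\tau}\|\nabla w_{s-}(\tau)\|_\rho^2\h d\tau\le c$ (on the bounded interval below $\tau_2$ the integrand is controlled by \eqref{M_0-eq}). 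I expect the boundary-term estimate to be the main obstacle: when $q$ is close to $1$ the nonlinearity $w_s^q$ is barely super-linear, so the gain $(q+1)-2$ and the room left for $\nu$ and $\nu_0$ are small, and it is essential to route every non-absorbable contribution through a power of $a_0$ strictly above the second, rather than through $\|\nabla w_s\|_\rho$, which decays only at the rate of $\|w_s\|_\rho$ and would be too slow.
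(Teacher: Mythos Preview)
Your proof is correct and follows essentially the same strategy as the paper: the energy identity for $\|w_{s-}\|_\rho^2$, the spectral gap for functions orthogonal to constants, and a splitting of the boundary integral into an absorbable piece $\eta(\tau)\|w_{s-}\|_{H_\rho^1}^2$ plus a source decaying strictly faster than $e^{-2m\tau}$, followed by Gronwall. The only cosmetic difference is the boundary splitting: the paper writes $w_s^q = w_{s0}^q + (w_s^q-w_{s0}^q)$ and handles the second piece by the mean value theorem (getting source $c\|w_s\|_\rho^{2q}$), whereas you write $w_s^q w_{s-} = w_s^{q+1} - a_0 e_0\int w_s^q$ and expand via $(|A|+|B|)^q$; both routes yield the same differential inequality.
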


\begin{proof}
Since $(w_{s-},w_{s0})_\rho=0$,
we easily see that
\[
 \frac{1}{2}\pa_{\tau}\|w_{s-}\|_{\rho}^2
=
 -\|\nabla w_{s-}\|_{\rho}^2 - m\|w_{s-}\|_{\rho}^2
+
 \int_{\pa\R_+^n}w_s^qw_{s-}\rho\h dy'.
\]
Now we estimate the last term on the right-hand side.
\[
 \int_{\pa\R_+^n}w_s^q|w_{s-}|\rho\h dy'
=
 \int_{\pa\R_+^n}w_{s0}^q|w_{s-}|\rho\h dy'
+
 \int_{\pa\R_+^n}
 \left( w_s^q-w_{s0}^q \right)|w_{s-}|
 \rho\h dy'.
\]
Then by Lemma \ref{31-lem},
it holds that
\[
\begin{array}{lll}
\dis
 \int_{\pa\R_+^n}w_{s0}^q|w_{s-}|\rho\h dy'
\hspace{-2mm}&\leq&\hspace{-2mm} \dis
 \epsilon\|w_{s-}\|_{L_{\rho}^2(\pa\R_+^n)}^2
+
 \frac{c}{\epsilon}\|w_{s0}\|_{\rho}^{2q}
\\ \dis
\hspace{-2mm}&\leq&\hspace{-2mm} \dis
 c\epsilon\|w_{s-}\|_{H_{\rho}^1(\R_+^n)}^2
+
 \frac{c}{\epsilon}\|w_{s0}\|_{\rho}^{2q}.
\end{array}
\]
Furthermore
the mean value theorem implies
\begin{eqnarray*}
 \int_{\pa\R_+^n}
 \left( w_s^q-w_{s0}^q \right)|w_{s-}|
 \rho\h dy'
\hspace{-2mm}&\leq&\hspace{-2mm}
 q\int_{\pa\R_+^n}(w_s+|w_{s0}|)^{q-1}|w_{s-}|^2\rho\h dy'
\\
\hspace{-2mm}&\leq&\hspace{-2mm}
 qM_s(\tau)
 \int_{\pa\R_+^n}(1+|y'|^2)^{1/2}|w_{s-}|^2\rho\h dy',
\end{eqnarray*}
where $M_s(\tau)$ is given by
\[
 M_s(\tau)
=
 \sup_{y'\in\pa\R_+^n}
 (1+|y'|^2)^{-1/2}(w_s(y',\tau)+|w_{s0}(y',\tau)|)^{q-1}.
\]
Hence
from Lemma \ref{31-lem} and Lemma \ref{Compact-lem},
we get
\[
 \int_{\pa\R_+^n}
 \left( w_s^q-w_{s0}^q \right)|w_{s-}|
 \rho\h dy'
\leq cM_s(\tau)\|w_{s-}\|_{H_\rho^1(\R_+^n)}^2.
\]
Therefore
since $\|w_{s0}\|_\rho\leq\|w_s\|_\rho$,
we obtain
\[
 \frac{1}{2}\pa_{\tau}\|w_{s-}\|_{\rho}^2
\leq
 -(1-c\epsilon-cM_s(\tau))
 \left( \|\nabla w_{s-}\|_{\rho}^2 + m\|w_{s-}\|_{\rho}^2 \right)
+
 \frac{c}{\epsilon}\|w_s\|_{\rho}^{2q}.
\]
Since $q>1$,
from  Lemma \ref{55-lem},
there exists $s_1^*,\tau_1,\nu_1>0$ such that for $s>s_1^*$
\[
 \|w_s(\tau)\|_{\rho}^{2q} \leq ce^{-2(m+\nu_1)(\tau-\tau_1)}
\hspace{5mm}\text{for }\tau>\tau_1.
\]
Moreover
by the same estimate as $K_s(\tau)$ in the proof of Lemma \ref{54-lem},
we find that
\[
 \lim_{s,\tau\to\infty}M_s(\tau)=0.
\]
Here
we note that
${\sf E}_0$ is the first eigenfunction with zero eigenvalue of
\begin{equation}
\label{eigenvaluehat-eq}
 -\left( \Delta-\frac{y}{2}\cdot\nabla \right){\sf E}
=
 \lambda{\sf E}
\hspace{3mm}\text{in } \R_+^n,
\hspace{5mm}
 \pa_{\nu}{\sf E} =0
\hspace{3mm}\text{on } \pa\R_+^n.
\end{equation}
Since the second eigenvalue of \eqref{eigenvaluehat-eq} is one,
it holds that
$\|\nabla w_{s-}\|_{\rho}\geq \|w_{s-}\|_{\rho}$
Hence
we take $\epsilon>0$ small enough,
then
there exists $\nu_0\in(0,\nu_1)$, $s_2^*>s_1^*$ and $\tau_2>\tau_1$ such that
for $s>s_2^*$
\[
 \pa_{\tau}\|w_{s-}\|_{\rho}^2
\leq
 -\|\nabla w_{s-}\|_{\rho}^2
-
 2(1+\nu_0)m\|w_{s-}\|_{\rho}^2
+
 ce^{-2(1+\nu_1)m(\tau-\tau_1)}
\hspace{5mm}\text{for }\tau>\tau_2.
\]
Multiplying by $e^{2(1+\nu_0)m\tau}$ and integrating both sides over
$(\tau_2,\tau)$,
we obtain the conclusion.
\end{proof}

\begin{lem}\label{KY-lem}
There exists $c,s_3^*,\tau_3>0$ such that for $s\geq s_3^*$
\[
 \|w_{s0}(\tau)\|_{\rho} \leq ce^{-m\tau}
\hspace{5mm}\mathrm{for}\ \tau\geq\tau_3.
\]
\end{lem}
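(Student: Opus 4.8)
The plan is to reduce the claim to a scalar ordinary differential equation for the zero mode coefficient
\[
 a_0(\tau) := (w_s(\tau),{\sf E}_0)_\rho,
\]
for which $\|w_{s0}(\tau)\|_\rho = |a_0(\tau)| = a_0(\tau)$, the last equality because $w_s\ge0$, ${\sf E}_0>0$ and $\|{\sf E}_0\|_\rho=1$. First I would multiply \eqref{ppooii-eq} by ${\sf E}_0\rho$ and integrate over $\R_+^n$: since ${\sf E}_0$ is a positive constant, $\nabla{\sf E}_0\equiv0$, the Dirichlet form term vanishes, and only the boundary contribution survives, so that
\[
 \dot a_0 + ma_0 = {\sf E}_0\int_{\pa\R_+^n}w_s^q\rho\h dy' =: g_s(\tau)\ge 0.
\]
Using the integrating factor $e^{m\tau}$ and the fact that $a_0(\tau_3)\le\|w_s(\tau_3)\|_\rho$ is bounded uniformly in $s$ by \eqref{M_0-eq}, the statement reduces to the uniform bound
\[
 \int_{\tau_3}^\infty e^{m\sigma}g_s(\sigma)\,d\sigma \le c \hh\text{for }s\ge s_3^*
\]
with suitable $\tau_3,s_3^*$.

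The second step is to estimate $g_s$ by splitting $w_s = w_{s0}+w_{s-}$ on $\pa\R_+^n$. For the diagonal part, $w_{s0}=a_0{\sf E}_0$ is constant on $\pa\R_+^n$, so $\int_{\pa\R_+^n}w_{s0}^q\rho\h dy' = c_*a_0^q$ for a fixed $c_*>0$; by Lemma \ref{55-lem}, $a_0(\tau)\le\|w_s(\tau)\|_\rho\le ce^{-(m-\nu)(\tau-\tau_1)}$ for $s\ge s_1^*$, hence $e^{m\tau}a_0(\tau)^q\le c\,e^{(m(1-q)+q\nu)\tau}$, which is integrable on $(\tau_1,\infty)$ once $\nu$ is chosen in Lemma \ref{55-lem} with $\nu<m(q-1)/q$. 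For the remainder, the mean value theorem together with the uniform bound \eqref{M_0-eq} gives $|w_s^q-w_{s0}^q|\le c|w_{s-}|$ on $\pa\R_+^n$, so by H\"older's inequality, Lemma \ref{31-lem} and Lemma \ref{Compact-lem},
\[
 \int_{\pa\R_+^n}\left|w_s^q-w_{s0}^q\right|\rho\h dy'
 \le c\left(\int_{\pa\R_+^n}(1+|y'|^2)^{1/2}w_{s-}^2\rho\h dy'\right)^{1/2}
 \le c\left(\|w_{s-}\|_\rho+\|\nabla w_{s-}\|_\rho\right).
\]
Now Lemma \ref{56-lem} supplies both $\|w_{s-}(\tau)\|_\rho\le ce^{-(1+\nu_0)m\tau}$ and $\int_{\tau_1}^\infty e^{2(1+\nu_0)m\tau}\|\nabla w_{s-}(\tau)\|_\rho^2\,d\tau\le c$ for $s\ge s_2^*$; the first makes $e^{m\tau}\|w_{s-}(\tau)\|_\rho$ exponentially small, and Cauchy--Schwarz turns the second into $\int_{\tau_2}^\infty e^{m\tau}\|\nabla w_{s-}(\tau)\|_\rho\,d\tau\le c$, all uniformly in $s$. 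Taking $\tau_3=\max\{\tau_1,\tau_2\}$ and $s_3^*=\max\{s_1^*,s_2^*\}$ then yields the required integral bound for $g_s$, and feeding it back into the variation of constants formula gives $e^{m\tau}a_0(\tau)\le c$ for $\tau\ge\tau_3$, which is the assertion.

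The only genuinely nontrivial point is the treatment of the nonlinear boundary term $g_s$. The key observation is that the projection $w_{s0}$ onto the zero mode has a spatially constant trace, so its $q$-th power is again supported in the zero mode with coefficient $\le\|w_{s0}\|_\rho^q$; since $q>1$, Lemma \ref{55-lem} then forces this contribution to decay strictly faster than $e^{-m\tau}$, which is exactly what is needed for integrability against $e^{m\tau}$ — and this is the reason the decay loss $\nu$ in Lemma \ref{55-lem} must be taken below $m(q-1)/q$. The transversal part $w_{s-}$ is handled purely by the quantitative decay and the weighted space--time gradient estimate already obtained in Lemma \ref{56-lem}, with Lemma \ref{31-lem} and Lemma \ref{Compact-lem} used to pass from the boundary to the interior. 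Beyond this I expect no obstacle; the remaining manipulations are the standard trace inequality, the mean value theorem, and Cauchy--Schwarz.
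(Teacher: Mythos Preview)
Your proposal is correct and follows essentially the same line as the paper's proof: derive a scalar ODE for the zero-mode component, split the nonlinear boundary term into a diagonal piece controlled by Lemma~\ref{55-lem} (exploiting $q>1$) and a transversal piece controlled by Lemma~\ref{56-lem}, then integrate. The only cosmetic differences are that the paper works with the differential inequality for $\|w_{s0}\|_\rho^2$ rather than your exact scalar equation for $a_0$, and splits via $w_s^q\le c(|w_{s0}|^q+|w_{s-}|^q)$ together with a H\"older trick on $\|w_{s0}\|_\rho\int|w_{s-}|^q\rho\,dy'$, whereas you use the mean value theorem bound $|w_s^q-w_{s0}^q|\le c|w_{s-}|$; both routes land on the same inputs from Lemmas~\ref{55-lem} and~\ref{56-lem}.
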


\begin{proof}
From definition of $w_{s0}$,
we easily see that
\[
\begin{array}{lll}
\dis
 \frac{1}{2}\pa_{\tau}\|w_{s0}\|_{\rho}^2
\hspace{-2mm}&\leq&\hspace{-2mm} \dis
 -m\|w_{s0}\|_{\rho}^2
+
 c\|w_{s0}\|_{\rho}\int_{\pa\R_+^n}w_s^q\rho\h dy'
\\ \hspace{-2mm}&\leq&\hspace{-2mm} \dis
 -m\|w_{s0}\|_{\rho}^2
+
 c\|w_{s0}\|_{\rho}\int_{\pa\R_+^n}
 \left( |w_{s0}|^q+|w_{s-}|^q \right)\rho\h dy'.
\end{array}
\]
Since $q>1$,
we fix $r>2$ such that $r'q>2$,
where $r'$ is defined by $1=1/r+1/r'$.
Then
by the H${\rm \ddot{o}}$lder inequality and a boundedness of $w_{s-}(y,\tau)$,
we obtain
\[
\begin{array}{lll}
\dis
 \|w_{s0}\|_{\rho}\int_{\pa\R_+^n}|w_{s-}|^q\rho\h dy'
\hspace{-2mm}&\leq&\hspace{-2mm} \dis
 c\|w_{s0}\|_{\rho}^r +\int_{\pa\R_+^n}|w_{s-}|^{r'q}\rho\h dy'
\\[4mm]
\hspace{-2mm}&\leq&\hspace{-2mm} \dis
 c\|w_{s0}\|_{\rho}^r +c\int_{\pa\R_+^n}|w_{s-}|^2\rho\h dy'
\\[4mm]
\hspace{-2mm}&\leq&\hspace{-2mm} \dis
 c\|w_{s0}\|_{\rho}^r +c\|w_{s-}\|_{H_{\rho}^1(\R_+^n)}^2.
\end{array}
\]
From Lemma \ref{55-lem},
there exists $s_1,\tau_1,\nu_0>0$ such that
for $s>s_1$ and $\tau>\tau_1$
\[
 \frac{1}{2}\pa_{\tau}\|w_{s0}\|_{\rho}^2
\leq
 -m\|w_{s0}\|_{\rho}^2
+
 ce^{-2(1+\nu_0)m\tau}
+
 c\|w_{s-}\|_{H_{\rho}^1(\R_+^n)}^2.
\]
Thus multiplying by $e^{2m\tau}$ and integrating over $(\tau_1,\tau)$,
from Lemma \ref{56-lem},
we obtain the conclusion.
\end{proof}

\begin{proof}
[\bf Proof of Proposition {\rm\ref{51-pro}} $($upper bound$)$]
We apply Lemma \ref{53-lem} with $R=1$ to obtain
\[
 w_s(0,\tau)
\leq
 c\sup_{y\in B_1,\tau'\in(\tau-1,\tau)}w_s(y,\tau)
\leq
 c\sup_{\tau'\in(\tau-4,\tau)}\|w_s(\tau')\|_{\rho}.
\]
Therefore
from Lemma \ref{56-lem} and Lemma \ref{KY-lem},
there exists $s_0^*,\tau_0>0$ such that for $s>s_0^*$
\[
 w_s(0,\tau) \leq ce^{-m\tau}
\hspace{5mm}\text{for }\tau>\tau_0.
\]
By definition of $w_s(y,\tau)$,
we note that
$v_s(0,1-e^{-\tau})=e^{m\tau}w_s(0,\tau)$.
Thus
we conclude that for $s\geq s_0^*$
\[
 v_s(0,1-e^{-\tau}) \leq c
\hspace{5mm}\text{for }\tau\geq\tau_1,
\]
which completes the proof.
\end{proof}

\subsection{Proof of Proposition \ref{51-pro} (lower bound)}

\begin{proof}
[\bf Proof of Proposition {\rm\ref{51-pro}} $($lower bound$)$]
The proof of lower bound is much easier than that of upper bound.
From \eqref{Kenchan1-eq} and $|\nabla w_s(y,\tau)|\leq c$,
there exist a nonnegative smooth function $w_*(y)\not\equiv0$ and $s_1^*\gg1$
such that for $s>s_1^*$
\[
 w_s(y,0) \geq w_*(y)
\hspace{5mm}\text{for } y\in\R_+^n.
\]
Let $\tilde{w}(y,\tau)$ be a solution of
\[
\begin{cases}
\dis
 \pa_{\tau}\tilde{w}
=
 \Delta\tilde{w} - \frac{y}{2}\cdot\nabla\tilde{w} - m\tilde{w}
& \text{in }\in\R_+\times(0,\infty),
\\ \dis
 \pa_{\nu}\tilde{w} = 0
& \text{on }\in\pa\R_+^n\times(0,\infty),
\\ \dis
 \tilde{w}(y,0) = w_*(y)
& \text{in }\R_+^n.
\end{cases}
\]
Then
a comparison argument shows that for $s>s_1^*$
\[
 w_s(y,\tau) \geq \tilde{w}(y,\tau).
\]
As in Section \ref{upper-sec},
we expand $\tilde{w}(y,s)$ by using eigenfunctions of
\eqref{eigenvaluehat-eq}.
Let ${\sf E}_0$ be a positive constant with $\|{\sf E}_0\|_{\rho}=1$.
Then
${\sf E}_0$ turns out to be the first eigenfunction of \eqref{eigenvaluehat-eq}.
We decompose $\tilde{w}(y,\tau)$ as follows.
\[
 \tilde{w}(\tau) = \tilde{w}_0(\tau) +\tilde{w}_-(\tau),
\]
where $\tilde{w}_0=(\tilde{w},{\sf E}_0)_\rho{\sf E}_0$.
Since the second eigenvalue of \eqref{eigenvaluehat-eq} is one,
we get
\[
 \tilde{w}_0(\tau) = (w_*,{\sf E}_0)_{\rho}e^{-m\tau}{\sf E}_0,
\hspace{7.5mm}
 \|\tilde{w}_-(\tau)\|_{\rho} \leq \|w_*\|_{\rho}e^{-(m+1)\tau}
\hspace{5mm}\text{for }\tau>0.
\]
Hence
a local parabolic regularity theory shows that
\[
 \sup_{y\in B_1}|\tilde{w}_{s-}(y,\tau)| \leq ce^{-(m+1)\tau}
\hspace{5mm}\text{for }\tau>0.
\]
As a consequence,
since $a_0:=(w_*,{\sf E}_0)_{\rho}>0$,
we obtain
\[
 \tilde{w}(y,\tau) = a_0\left( 1+O(e^{-\tau}) \right)e^{-m\tau}{\sf E}_0
\hspace{5mm}\text{for }y\in B_1.
\]
Thus
we conclude that for $s>s_1^*$
\[
 w_s(0,\tau) \geq \tilde{w}(0,\tau) = a_*\left( 1+O(e^{-\tau}) \right)e^{-m\tau}{\sf E}_0,
\]
which completes the proof.
\end{proof}

\appendix
\section{Appendix}

\subsection{Compact embedding inequality}

Here
we provide
the embedding inequality on a weighted Sobolev space.

\begin{lem}\label{Compact-lem}
It holds that for $u\in H_{\rho}^1(\R_+^n)$
\[
 \int_{\R_+^n}|y_i|^2u^2\rho\hspace{0.5mm}dy
\leq
 16\|\pa_iu\|_{\rho}^2+4\|u\|_{\rho}^2
\hspace{5mm}(i=1,\cdots,n).
\]
In particular,
it holds that
for $u\in H_{\rho}^1(\R_+^n)$
\[
 \int_{\R_+^n}|y|^2u^2\rho\hspace{0.5mm}dy
\leq
 16\|\nabla u\|_{\rho}^2+4n\|u\|_{\rho}^2.
\]
\end{lem}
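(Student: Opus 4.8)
The inequality is a weighted Poincaré-type estimate, and the whole argument hinges on the single identity $\pa_i\rho=-\tfrac{y_i}{2}\rho$ for $\rho(y)=e^{-|y|^2/4}$. I would first prove it for $u\in C_c^\infty(\overline{\R_+^n})$ and then extend to general $u\in H_\rho^1(\R_+^n)$ by density together with Fatou's lemma. So fix an index $i\in\{1,\dots,n\}$ and a smooth $u$. Writing $y_i^2u^2\rho=y_iu^2(y_i\rho)=-2\,y_iu^2\,\pa_i\rho$ and integrating by parts in the $y_i$ variable, I obtain
\[
 \int_{\R_+^n}y_i^2u^2\rho\,dy
 = -2\,[\,\text{bdry}\,] + 2\int_{\R_+^n}\pa_i(y_iu^2)\rho\,dy
 = -2\,[\,\text{bdry}\,] + 2\int_{\R_+^n}\bigl(u^2+2y_iu\,\pa_iu\bigr)\rho\,dy .
\]
For $i\le n-1$ the boundary contribution vanishes because the $y_i$-integration runs over all of $\R$ and the Gaussian weight decays at infinity; for $i=n$ the would-be boundary term at $y_n=0$ is $[\,y_nu^2\rho\,]_{y_n=0}=0$ since it carries the explicit factor $y_n$. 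Hence in every case
\[
 \int_{\R_+^n}y_i^2u^2\rho\,dy = 2\|u\|_{\rho}^2 + 4\int_{\R_+^n}y_iu\,\pa_iu\,\rho\,dy .
\]

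Next I would bound the remaining term by Cauchy--Schwarz and then Young's inequality in the form $4ab\le\tfrac12a^2+8b^2$ (equivalently $\tfrac12(a-4b)^2\ge0$), with $a=\bigl(\int_{\R_+^n}y_i^2u^2\rho\,dy\bigr)^{1/2}$ and $b=\|\pa_iu\|_{\rho}$:
\[
 4\int_{\R_+^n}y_iu\,\pa_iu\,\rho\,dy
 \le 4\Bigl(\int_{\R_+^n}y_i^2u^2\rho\,dy\Bigr)^{1/2}\|\pa_iu\|_{\rho}
 \le \tfrac12\int_{\R_+^n}y_i^2u^2\rho\,dy + 8\|\pa_iu\|_{\rho}^2 .
\]
Absorbing the first term on the right into the left-hand side gives $\int_{\R_+^n}y_i^2u^2\rho\,dy\le 4\|u\|_{\rho}^2+16\|\pa_iu\|_{\rho}^2$, which is the first assertion; summing over $i=1,\dots,n$ and using $\sum_{i=1}^n\|\pa_iu\|_{\rho}^2=\|\nabla u\|_{\rho}^2$ yields the second.

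The only point needing care is the passage from smooth $u$ to an arbitrary $u\in H_\rho^1(\R_+^n)$, since a priori $\int_{\R_+^n}y_i^2u^2\rho\,dy$ may be infinite. I would take a sequence $u_k\in C_c^\infty(\overline{\R_+^n})$ with $u_k\to u$ in $H_\rho^1(\R_+^n)$ (standard density), so that $\|u_k\|_{\rho}\to\|u\|_{\rho}$, $\|\pa_iu_k\|_{\rho}\to\|\pa_iu\|_{\rho}$, and, along a subsequence, $u_k\to u$ almost everywhere; then Fatou's lemma gives $\int_{\R_+^n}y_i^2u^2\rho\,dy\le\liminf_k\bigl(4\|u_k\|_{\rho}^2+16\|\pa_iu_k\|_{\rho}^2\bigr)=4\|u\|_{\rho}^2+16\|\pa_iu\|_{\rho}^2$. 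There is no serious obstacle here; the most delicate step is simply keeping track of the boundary term at $y_n=0$, which is handled by the explicit factor $y_n$ as noted above.
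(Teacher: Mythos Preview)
Your proof is correct and rests on exactly the same integration-by-parts identity as the paper's proof; the only cosmetic difference is that the paper packages the final step as the nonnegativity of $\|\pa_i(u\,e^{-|y|^2/8})\|_{L^2(\R_+^n)}^2$ (the ground-state substitution $v=u\rho^{1/2}$), which when expanded is precisely your Cauchy--Schwarz/Young absorption. Your treatment of the boundary term at $y_n=0$ and the density/Fatou passage to general $u\in H_\rho^1(\R_+^n)$ are fine and in fact spelled out more explicitly than in the paper.
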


\begin{proof}
The proof is based on that of Lemma 2.1 in \cite{Naito-S}
(see also Lemma 2.1 in \cite{Kavian} p.\hspace{1mm}430).
Since $C_c^{\infty}(\overline{\R_+^n})$ is dense in $H^1_{\rho}(\R_+^n)$,
we assume that $u\in C_c^{\infty}(\overline{\R_+^n})$.
We set
\[
 v(y)=u(y)e^{-|y|^2/8}.
\]
A direct computations shows that
\[
\begin{array}{lll}
\dis
 |\pa_iv|^2
\hspace{-2mm}&=&\hspace{-2mm} \dis
 |\pa_iu|^2e^{-|y|^2/4}
+
 \frac{y_i^2}{16}u^2e^{-|y|^2/4}
-
 \frac{1}{2}y_i(\pa_iu)ue^{-|y|^2/4}
\\[3mm]
\hspace{-2mm}&=&\hspace{-2mm} \dis
 |\pa_iu|^2e^{-|y|^2/4}
+
 \frac{y_i^2}{16}u^2e^{-|y|^2/4}
-
 \frac{1}{4}y_i(\pa_iu^2)e^{-|y|^2/4}.
\end{array}
\]
Then integrating by parts,
we see that
\[
\begin{array}{lll}
\dis
 \int_{\R_+^n}y_i(\pa_iu^2)e^{-|y|^2/4}dy
\hspace{-2mm}&=&\hspace{-2mm} \dis
 -\int_{\R_+^n}\pa_i\left( y_ie^{-|y|^2/4} \right)u^2dy
\\[4mm]
\hspace{-2mm}&=&\hspace{-2mm} \dis
 -\int_{\R_+^n}u^2e^{-|y|^2/4}dy
+
 \frac{1}{2}
 \int_{\R_+^n}|y_i|^2u^2e^{-|y|^2/4}dy.
\end{array}
\]
Hence it holds that
\[
 \|\pa_iv\|_{L^2(\R_+^n)}^2
=
 \|\pa_iu\|_{\rho}^2
+
 \frac{1}{4}\|u\|_{\rho}^2
-
 \frac{1}{16}
 \int_{\R_+^n}
 y_i^2u^2e^{-|y|^2/4}dy,
\]
which completes the proof.
\end{proof}

From this inequality,
we obtain a compact embedding
from $H_{\rho}^1(\R_+^n)$ to $L_{\rho}^2(\R_+)$.

\begin{lem}
The embedding from $H_{\rho}^1(\R_+^n)$ to $L_{\rho}^2(\R_+^n)$ is compact.
\end{lem}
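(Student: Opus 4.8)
The plan is to combine the weighted Poincar\'e-type inequality of Lemma~\ref{Compact-lem}, which controls the mass of a function near spatial infinity, with the classical Rellich--Kondrachov theorem on bounded domains. Let $(u_k)_{k\in\N}$ be a bounded sequence in $H_\rho^1(\R_+^n)$, say $\|u_k\|_{H_\rho^1(\R_+^n)}\le M$ for all $k$; the goal is to extract a subsequence converging strongly in $L_\rho^2(\R_+^n)$. For $R>0$ put $\Omega_R=\{y\in\R_+^n;|y|<R\}$.

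First I would handle the behavior near infinity. By Lemma~\ref{Compact-lem},
\[
 \int_{\R_+^n\setminus\Omega_R}u_k^2\rho\,dy
\le
 \frac{1}{R^2}\int_{\R_+^n}|y|^2u_k^2\rho\,dy
\le
 \frac{1}{R^2}\left( 16\|\nabla u_k\|_\rho^2+4n\|u_k\|_\rho^2 \right)
\le
 \frac{CM^2}{R^2},
\]
so the tails are small uniformly in $k$: for every $\epsilon>0$ there is $R_\epsilon>0$ with $\int_{\R_+^n\setminus\Omega_{R_\epsilon}}u_k^2\rho\,dy<\epsilon$ for all $k$. Next I would treat the bounded pieces. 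On each $\Omega_R$ the weight satisfies $e^{-R^2/4}\le\rho(y)\le1$, hence the norms $\|\cdot\|_{H^1(\Omega_R)}$ and $\|\cdot\|_{H_\rho^1(\Omega_R)}$ are equivalent, and likewise for the $L^2$ norms. Since $\Omega_R$ is a bounded Lipschitz domain, the Rellich--Kondrachov theorem yields a compact embedding $H^1(\Omega_R)\hookrightarrow L^2(\Omega_R)$, so $(u_k)$ has a subsequence converging in $L_\rho^2(\Omega_R)$. Applying this successively for $R=1,2,3,\dots$ and passing to a diagonal subsequence, I obtain one subsequence, still denoted $(u_k)$, converging in $L_\rho^2(\Omega_R)$ for every $R\in\N$.

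Finally I would combine the two ingredients. Given $\epsilon>0$, fix $R=R_\epsilon$ as above; then for all $k,l$,
\[
 \|u_k-u_l\|_{L_\rho^2(\R_+^n)}^2
\le
 \|u_k-u_l\|_{L_\rho^2(\Omega_R)}^2
 + 2\int_{\R_+^n\setminus\Omega_R}(u_k^2+u_l^2)\rho\,dy
\le
 \|u_k-u_l\|_{L_\rho^2(\Omega_R)}^2 + 4\epsilon,
\]
and the first term tends to $0$ as $k,l\to\infty$. Thus $(u_k)$ is Cauchy in $L_\rho^2(\R_+^n)$ and therefore convergent. The only mildly delicate point is the uniform tail estimate, and that is precisely what Lemma~\ref{Compact-lem} supplies; the remaining steps are the standard Rellich argument on bounded Lipschitz domains together with a diagonal extraction.
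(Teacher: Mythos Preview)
Your proof is correct and follows essentially the same route as the paper: uniform tail control via Lemma~\ref{Compact-lem} combined with Rellich--Kondrachov on the bounded half-balls $\Omega_R$. The only cosmetic difference is that the paper first extracts a weak limit $u$ and applies the tail estimate to $u_k-u$, whereas you apply it to $u_k$ directly and finish with a diagonal/Cauchy argument; both organizations are standard and equivalent.
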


\begin{proof}
Let $\{u_k\}_{k\in\N}$ be a bounded sequence in $H_{\rho}^1(\R_+^n)$.
Then
there exists $u\in H_{\rho}^1(\R_+^n)$ and a subsequence $\{u_k\}_{k\in\N}$
which is denoted by the same symbol such that
$u_k\rightharpoonup u$ weakly in $H_{\rho}^1(\R_+^n)$. 
Then
from Lemma \ref{Compact-lem},
we verify that
\begin{eqnarray*}
 \int_{|y|>R}|u_k(y)-u(y)|^2\rho(y)dy
\hspace{-2mm}&\leq&\hspace{-2mm}
 R^{-2}
 \int_{|y|>R}|y|^2|u_k(y)-u(y)|^2\rho(y)dy
\\
\hspace{-2mm}&\leq&\hspace{-2mm}
 cR^{-2}\|u_k-u\|_{H_{\rho}^1(\R_+^n)}^2.
\end{eqnarray*}
Hence
for any $\epsilon>0$ there exists $R_0>0$ such that
\[
 \int_{|y|>R_0}|u_k(x)-u(x)|^2\rho(y)dy \leq \epsilon/2.
\]
Since the embedding from $H_1(B_{R_0})$ to $L^2(B_{R_0})$ is compact,
there exists $k_0\in\N$ such that for $k\geq k_0$
\[
 \int_{|y|<R_0}|u_k(x)-u(x)|^2\rho(y)dy \leq \epsilon/2.
\]
Combining these estimates,
we obtain for $k\geq k_0$
\[
 \|u_k-u\|_{\rho}^2 \leq \epsilon,
\]
which completes the proof.
\end{proof}

\subsection{Linear operator $A$}

In this subsection,
we show the operator
\[
 A_0v = \left( \Delta-\frac{y}{2}\cdot\nabla \right)v
\]
with
$D(A_0) = \{v\in H_{\rho}^2(\R_+^n);\pa_{\nu}v=Kv \text{ on } \pa\R_+^n\}$
($K\in\R$ is a constant) is self-adjoint.
From Lemma \ref{31-lem} with $g(y')\equiv1$,
there exists $c>0$ such that for $v\in H_{\rho}^1(\R_+^n)$
\[
 \int_{\pa\R_+^n}v^2\rho\h dy'
\leq
 \epsilon\|\nabla v\|_{\rho}^2 + \frac{c}{\epsilon}\|v\|_{\rho}^2.
\]
Hence there exists $\lambda_0>0$ such that for $v\in H_{\rho}^1(\R_+^n)$
\[
 K\int_{\pa\R_+^n}v^2\rho\h dy'
\leq
 \frac{1}{2}\|\nabla v\|_{\rho}^2
+
 \frac{\lambda_0}{2}\|v\|_{\rho}^2.
\]
Here
we show that the operator $A_{\lambda_0}=A_0-\lambda_0$ with $D(A_{\lambda_0})=D(A_0)$
is self-adjoint on $L_\rho^2(\R_+^n)$.
Once this is proved,
it is clear that the operator $A_0$ with $D(A_0)$ is also self-adjoint on $L_\rho^2(\R_+^n)$. 
By definition of $\lambda_0$,
it is verified that
$A_{\lambda_0}$ is a symmetric operator and satisfies
\begin{equation}\label{monotone-eq}
 (-A_{\lambda_0}v,v)_{\rho} \geq 0,
\hspace{5mm} v\in D(A_0).
\end{equation}
Hence
it is sufficient to show that 
for any $f\in L_{\rho}^2(\R_+^n)$
there exist $v\in D(A_0)$ such that $-A_{\lambda_0}v=f$.
First
we assume that $f\in C_c^{\infty}(\overline{\R_+^n})$.
From \eqref{monotone-eq},
there exists a weak solution $v\in H_{\rho}^1(\R_+^n)$
such that
for $\psi\in H_{\rho}^1(\R_+^n)$
\[
 \int_{\R_+^n}
 \left( \nabla v\cdot\nabla\psi+\lambda_0v\psi \right)
 \rho\hspace{0.5mm}dy
-
 K\int_{\pa\R_+^n}v\psi\rho\h dy'
=
 \int_{\R_+^n}f\psi\rho\hspace{0.5mm}dy.
\]
By using $\psi=v$ as a test function,
we obtain
\begin{equation}
\label{Bodinitial-eq}
 \|v\|_{H_{\rho}^1(\R_+^n)} \leq c\|f\|_{\rho}.
\end{equation}
In particular,
since $f\in C_c^{\infty}(\overline{\R_+^n})$ is a smooth function,
a standard elliptic regularity theory shows that
$v\in C^{\infty}(\overline{\R_+^n})$.
Let $\eta_k(r)\in C_c^{\infty}(\overline{\R_+})$ be a cut off function
such that
$\eta_k(r)=1$ if $r\in(0,k)$,
$\eta_k(r)=0$ if $r\in(2k,\infty)$.
By using
$\psi_1=|y'|^2v\eta_k(|y|)^2$ and $\psi_2=y_n^2v\eta_k(|y|)^2$
as test functions respectively
and
from Lemma \ref{31-lem} and Lemma \ref{Compact-lem},
we obtain
\begin{equation}\label{Bod-eq}
\begin{array}{c}
\dis
 \int_{\R_+^n}
 |y'|^2|\nabla v|^2
 \eta_k^2\rho\hspace{0.5mm}dy
\leq
 2\int_{\R_+^n}|y'|^2|fv|\eta_k^2\rho\hspace{0.5mm}dy
+
 c\|v\|_{H_{\rho}^1(\R_+^n)}^2,
\\[4mm] \dis
 \int_{\R_+^n}
 y_n^2|\nabla v|^2
 \eta_k^2\rho\hspace{0.5mm}dy
\leq
 2\int_{\R_+^n}y_n^2|fv|\eta_k^2\rho\hspace{0.5mm}dy
+
 c\|v\|_{H_{\rho}^1(\R_+^n)}^2.
\end{array}
\end{equation}
Next
we use $\psi=\nabla'(\eta_k(|y|)^2\nabla'v)$ as a test function.
Here we note that
\[
 \nabla\{\nabla'(\eta_k^2\nabla'v)\}
=
 \nabla'\{\nabla(\eta_k^2\nabla'v)\}
=
 \nabla'(\eta_k^2\nabla\nabla'v)
+
 \nabla'\{(\nabla\eta_k^2)\nabla'v\}.
\]
Hence integrating by parts,
we get
\[
\begin{array}{l}
\dis
 \int_{\R_+^n}\nabla v\cdot\nabla\{\nabla'(\eta_k^2\nabla'v)\}
 \rho\hspace{0.5mm}dy
=
 -\sum_{i=1}^n\sum_{j=1}^{n-1}\int_{\R_+^n}
 (\pa_i\pa_jv)^2\eta_k^2\rho\hspace{0.5mm}dy
\\ \dis \hspace{25mm}
+
 \frac{1}{2}\sum_{j=1}^{n-1}\int_{\R_+^n}
 y_j(\nabla v\cdot\nabla\pa_jv)
 \eta_k^2\rho\hspace{0.5mm}dy
+
 \int_{\R_+^n}
 \nabla v\cdot\nabla'(\nabla\eta_k^2\nabla'v)\rho\hspace{0.5mm}dy.
\end{array}
\]
Hence
it follows that
\begin{equation}
\label{Bodol-eq}
\begin{array}{l}
\dis
 -\int_{\R_+^n}\nabla v\cdot\nabla\{\nabla'(\eta_k^2\nabla'v)\}
 \rho\hspace{0.5mm}dy
\geq
 \frac{1}{2}\sum_{j=1}^{n-1}\int_{\R_+^n}
 |\nabla\pa_jv|^2\eta_k^2\rho\hspace{0.5mm}dy
\\ \dis \hspace{65mm}
-
 c\int_{\R_+^n}
 |y'|^2|\nabla v|^2\eta_k^2\rho\hspace{0.5mm}dy
-
 c\|v\|_{H_{\rho}^1(\R_+^n)}^2.
\end{array}
\end{equation}
The boundary integral is calculated as follows:
\begin{eqnarray*}
 \left|
 \int_{\pa\R_+^n}v\nabla'(\eta_k^2\nabla'v)\rho\h dy'
 \right|
\hspace{-2mm}&=&\hspace{-2mm}
 \left|
 -\int_{\pa\R_+^n}|\nabla'v|^2\eta_k^2\rho\h dy'
 +
 \frac{1}{2}\int_{\pa\R_+^n}
 y'\cdot(\nabla'v)v\eta_k^2\rho\h dy'
 \right|
\\
\hspace{-2mm}&\leq&\hspace{-2mm}
 c\int_{\pa\R_+^n}
 \left( |\nabla'v|^2\eta_k^2+|y'|^2v^2\eta_k^2 \right)\rho\h dy'.
\end{eqnarray*}
We set $V_1=|\nabla'v|\eta_k$ and $V_2=v\eta_k$.
Then
from Lemma \ref{31-lem},
it is verified that
\[
\begin{array}{lll}
\dis
 \int_{\pa\R_+^n}
 \left( V_1^2+|y'|^2V_2^2 \right)\rho\h dy'
\leq
 c\int_{\R_+^n}
 \left(
 \left( V_1^2+V_1|\nabla V_1| \right)
 +
 |y'|^2\left( V_2^2+V_2|\nabla V_2| \right)
 \right)
 \rho\hspace{0.5mm}dy
\\ \dis \hspace{20mm}
\leq
 c\sum_{j=1}^{n-1}\int_{\R_+^n}
 |\nabla'v||\nabla\pa_jv|\eta_k^2\rho\hspace{0.5mm}dy
+
 c\int_{\R_+^n}
 |y'|^2|\nabla v|^2\eta_k^2\rho\hspace{0.5mm}dy
+
 c\|v\|_{H_{\rho}^1(\R_+^n)}^2.
\end{array}
\]
Therefore we find that
\begin{equation}\label{Bodon-eq}
\begin{array}{l}
\dis 
\left|
 \int_{\pa\R_+^n}v\nabla'(\eta_k^2\nabla'v)\rho\h dy'
 \right|
\leq
 c\sum_{j=1}^{n-1}\int_{\R_+^n}
 |\nabla'v||\nabla\pa_jv|\eta_k^2\rho\hspace{0.5mm}dy
\\ \dis \hspace{60mm}
+
 c\int_{\R_+^n}
 |y'|^2|\nabla v|^2\eta_k^2\rho\hspace{0.5mm}dy
+
 c\|v\|_{H_{\rho}^1(\R_+^n)}^2.
\end{array}
\end{equation}
From \eqref{Bod-eq}, \eqref{Bodol-eq} and \eqref{Bodon-eq},
we get
\[
 \sum_{j=1}^{n-1}\int_{\R_+^n}
 |\nabla\pa_jv|^2\eta_k^2\rho\hspace{0.5mm}dy
\leq
 c\|f\|_{\rho}
 \left(
 \int_{\R_+^n}
 |y'|^4v^2\eta_k^2\rho\hspace{0.5mm}dy
 \right)^{1/2}
+
 c\|v\|_{H_{\rho}^1(\R_+^n)}^2.
\]
Then
applying Lemma \ref{Compact-lem} two times,
we compute the first term on the right-hand side.
\begin{equation}\label{Bodp-eq}
\begin{array}{lll}
\dis
 \int_{\R_+^n}
 |y'|^2(|y'||v|\eta_k)^2\rho\hspace{0.5mm}dy
\hspace{-2mm}&\leq&\hspace{-2mm} \dis
 c\int_{\R_+^n}
 \left(
 |\nabla'(|y'||v|\eta_k)|^2+(|y'||v|\eta_k)^2
 \right)
 \rho\hspace{0.5mm}dy
\\
\hspace{-2mm}&\leq&\hspace{-2mm} \dis
 c\int_{\R_+^n}
 |y'|^2(|\nabla'v|\eta_k)^2\rho\hspace{0.5mm}dy
+
 c\|v\|_{H_{\rho}^1(\R_+^n)}^2
\\
\hspace{-2mm}&\leq&\hspace{-2mm} \dis
 c\sum_{i,j=1}^{n-1}\int_{\R_+^n}
 |\pa_i\pa_jv|^2\eta_k^2\rho\hspace{0.5mm}dy
+
 c\|v\|_{H_{\rho}^1(\R_+^n)}^2.
\end{array}
\end{equation}
Thus finally we obtain
\begin{equation}
\label{Bodbod-eq}
 \sum_{j=1}^{n-1}\int_{\R_+^n}
 |\nabla\pa_jv|^2\eta_k^2\rho\hspace{0.5mm}dy
\leq
 c\|f\|_{\rho}^2
+
 c\|v\|_{H_{\rho}^1(\R_+^n)}^2.
\end{equation}
Since $v$ is a solution of $-A_{\lambda_0}v=f$,
it follows that
\[
 \pa_n^2v = \frac{y_n}{2}\pa_nv + F,
\]
where $F$ is given by
\[
 F = -\Delta'v + \frac{y'}{2}\cdot\nabla'v - \lambda_0v + f.
\]
Multiplying by $(\pa_n^2v)\rho\eta_k^2$ and integrating over $\R_+^n$,
we obtain
\[
 \|(\pa_n^2v)\eta_k\|_{\rho}^2
\leq
 c\int_{\R_+^n}y_n^2|\nabla v|^2\eta_k^2\rho\hspace{0.5mm}dy
+
 c\|F\|_{\rho}^2.
\]
Then
it follows from \eqref{Bod-eq} that
\[
 \|(\pa_n^2v)\eta_k\|_{\rho}^2
\leq
 c\|f\|_{\rho}
 \left(
 \int_{\R_+^n}y_n^4v^2\eta_k^2\rho\hspace{0.5mm}dy
 \right)^{1/2}
+
 c\|v\|_{H_{\rho}^1(\R_+^n)}^2
+
 c\|F\|_{\rho}^2.
\]
By the same calculation as \eqref{Bodp-eq},
we see that
\[
 \int_{\R_+^n}y_n^4v^2\eta_k^2\rho\hspace{0.5mm}dy
\leq
 c\|(\pa_n^2v)\eta_k\|_{\rho}^2
+
 c\|v\|_{H_{\rho}^1(\R_+^n)}^2,
\]
which implies
\begin{equation}\label{Bodfinal-eq}
 \|(\pa_n^2v)\eta_k\|_{\rho}^2
\leq
 c\|f\|_{\rho}^2
+
 c\|v\|_{H_{\rho}^1(\R_+^n)}^2
+
 c\|F\|_{\rho}^2.
\end{equation}
Thus
combining \eqref{Bodinitial-eq}, \eqref{Bodbod-eq} and \eqref{Bodfinal-eq}
and
taking $k\to\infty$,
we conclude that
\[
 \|v\|_{H_{\rho}^2(\R_+^n)}
\leq
 c\|f\|_{\rho}.
\]
Since
$C_c^{\infty}(\overline{\R_+^n})$ is dense in $L_{\rho}^2(\R_+^n)$,
by a density argument,
we complete the proof.

\section*{Acknowledgement}
The author would like to express his gratitude to Professor
Mitsuharu \^Otani for his valuable comments, suggestions and his encouragements.


          \end{document}